\newtheorem{theorem}{Theorem}
\newtheorem{lemma}[theorem]{Lemma}
\newtheorem{remark}[theorem]{Remark}
\newtheorem{proposition}[theorem]{Proposition}
\numberwithin{equation}{section}
\numberwithin{theorem}{section}
\providecommand{\keywords}[1]
{
  \small	
  \textbf{Keywords } #1
}
\providecommand{\thank}[1]
{
  \small	
  \textbf{Acknowledgment } #1
}
\title{\textbf{Existence of modified wave operators and infinite cascade result for a half wave Schrödinger equation on the plane}}
\author{Xi Chen}
\date{}
\begin{document}

\maketitle

\begin{abstract}
We consider the following half wave Schrödinger equation,
\begin{center}
$\left(i \partial_{t}+\partial_{x }^2-\left|D_{y}\right|\right) U=|U|^{2} U$
\end{center}
on the plane $\mathbb{R}_x \times \mathbb{R}_y$.  We prove the existence of modified wave operators between small decaying solutions to this equation and small decaying solutions to the non chiral cubic Szeg\H o equation, which is similar to the the existence result of modified wave operators on $\mathbb{R}_x \times \mathbb{T}_y$ obtained by H. Xu [\ref{[2]}]. We then combine our modified wave operators result with a recent cascade result [\ref{[[1]]}] for the cubic Szeg\H o equation by P. Gérard and A. Pushnitski to deduce that there exists solutions $U$ to the half wave Schrödinger equation such that $\|U(t)\|_{L_{x}^{2} H_{y}^{1}}$ tends to infinity as $\log t$ when $t \rightarrow +\infty$. It indicates that the half wave Schrödinger equation on the plane is one of the very few dispersive equations admitting global solutions with small and smooth data such that the $H^s$ norms are going to infinity as $t$ tends to infinity.
\end{abstract}
\keywords{Half wave Schrödinger equation, Modified wave operators, Energy cascade.}\\\\
\thank{I am currently a PhD student under the supervision of Patrick Gérard, and I would like to thank him for his supervision of this paper. I also would like to thank the authors of all the articles I cited in this paper, their work has been a great help and inspiration to me. Finally, I would like to thank every friend who has helped me in my research career. }
\tableofcontents{}

\section{Introduction}
Consider the following half wave Schrödinger equation on the plane,
\begin{equation}
\label{1.1}
\left(i \partial_{t}+\mathcal{A}\right) U=|U|^{2} U, \qquad (x, y) \in \mathbb{R} \times \mathbb{R},
\end{equation}
where
\begin{align*}
\mathcal{A}:=\partial_{x }^2-\left|D_{y}\right|.
\end{align*}
The corresponding Hamiltonian function is 
\begin{align*}
H(U)=\frac{1}{2} \int_{\mathbb{R} \times \mathbb{R}}\left(\left|\partial_{x} U(x, y)\right|^{2}+\left|D_{y}\right| U(x, y) \overline{U}(x, y)\right) d x d y+\frac{1}{4} \int_{\mathbb{R} \times \mathbb{R}}|U(x, y)|^{4} d x d y.
\end{align*}
We observe that this equation enjoys the mass conservation 
\begin{align*}
\int_{\mathbb{R} \times \mathbb{R}}|U(t, x, y)|^{2} d x d y=\int_{\mathbb{R} \times \mathbb{R}}|U(0, x, y)|^{2} d x d y.
\end{align*}
The local well-posedness of the Cauchy problem in the energy space $H_{x}^{1} L_{y}^{2} \cap L_{x}^{2} H_{y}^{\frac{1}{2}}$ is still an open problem. Moreover, Y. Bahri, S. Ibrahim and H. Kikuchi have proved the local well-posedness of the Cauchy problem in higher regularity spaces $H_{x}^{2s} L_{y}^{2} \cap L_{x}^{2} H_{y}^{s} \text{ with } s > \frac{1}{2}$ [\ref{[2.5]}], but we still do not have the global existence in these higher regularity spaces. In lower regularity space, as N. Burq, P. Gérard and N. Tzvetkov did in [\ref{[2.1]}], one can prove that the time flow map on $H_{x}^{2s} L_{y}^{2} \cap L_{x}^{2} H_{y}^{s} \text{ with } \frac{1}{4} \leq s  < \frac{1}{2}$ is not $C^{3}$ at the origin. Also, an adaptation of the arguments from [\ref{[2.1]}] implemented in I. Kato [\ref{[2.2]}] implies the ill-posedness in $H_{x}^{2s} L_{y}^{2} \cap L_{x}^{2} H_{y}^{s} \text{ with } s  < \frac{1}{4}$. In this paper, we prove the existence of modified wave operators and corresponding cascade result for (\ref{1.1}) with small decaying data.
\begin{remark}
The sign in front of the nonlinearity is not relevant as far as we are dealing with small data, so the conclusions we obtain in Theorem \ref{Theorem 1.5} and Theorem \ref{Theorem 1.6} are also available in the focusing case. 
\end{remark}
\subsection{Some previous results}
In this subsection, we introduce some important results for the cubic Szeg\H o equation, the half wave equation and the half wave Schrödinger equation.
\subsubsection{The cubic Szeg\H o equation}
The cubic Szeg\H o equation on the circle reads as follows,
\begin{equation}
\label{1.505}
\left\{\begin{array}{l}
i \partial_{t} u=\Pi_{+}\left(|u|^{2} u\right), \quad (t,x) \in \mathbb{R} \times \mathbb{T},\\ u(0) = u_0,
\end{array}\right.
\end{equation}
where 
\begin{equation}
\label{1.405}
\widehat{\Pi_{+} u}(p)=\left\{\begin{array}{l}\widehat{u}(\xi), \text { if } p \in \mathbb{Z}_{\geq 0}, \\ 0, \text { if } p \in \mathbb{Z}-\mathbb{Z}_{\geq 0}.\end{array}\right.
\end{equation}
Also, the cubic Szeg\H o equation on the line is stated as follows,
\begin{equation}
\label{1.51}
\left\{\begin{array}{l}
i \partial_{t} u=\Pi_{+}\left(|u|^{2} u\right), \quad (t,x) \in \mathbb{R} \times \mathbb{R},\\ u(0) = u_0,
\end{array}\right.
\end{equation}
where
\begin{equation}
\label{1.41}
\widehat{\Pi_{+} u}(\xi)=\left\{\begin{array}{l}\widehat{u}(\xi), \text { if } \xi \geq 0, \\ 0, \quad \text { if } \xi<0.\end{array}\right.
\end{equation}
For the cubic Szeg\H o equation on the circle (\ref{1.505}), P. Gérard and S. Grellier have proved the global well-posedness in $H_{+}^{s}(\mathbb{T}) \text { for } s \geq \frac{1}{2}$ [\ref{[3.5]}]. With slight modifications of the proof in [\ref{[3.5]}], O. Pocovnicu has shown the global well-posedness of the cubic Szeg\H o equation (\ref{1.51}) in $H_{+}^{s}(\mathbb{R}) \text { for } s \geq \frac{1}{2}$ [\ref{[3.6]}].
\begin{theorem}
[\ref{[3.6]},  Theorem 1.1]
The cubic Szeg\H o equation (\ref{1.51}) is globally well-posed in $H_{+}^{s}(\mathbb{R})$ for $s \geq \frac{1}{2}$, i.e. given $u_{0} \in H_{+}^{s}(\mathbb{R})$ with $s\geq\frac{1}{2}$, there exists a unique global-in-time solution $u \in C\left(\mathbb{R} ; H_{+}^{s}(\mathbb{R})\right)$ of (\ref{1.51}).
\end{theorem}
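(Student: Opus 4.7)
The plan is to follow Gérard--Grellier's proof of global well-posedness for the cubic Szegő equation on the circle, adapting the harmonic analysis to the line. First I would establish local well-posedness in $H_+^s(\mathbb{R})$ for $s>1/2$ by a fixed point argument on the Duhamel formula $u(t)=u_0-i\int_0^t\Pi_+(|u|^2u)\,d\tau$. Since $H^s(\mathbb{R})$ is a Banach algebra for $s>1/2$ and $\Pi_+$ is a bounded Fourier multiplier on it, the estimate $\|\Pi_+(|u|^2u)\|_{H^s}\lesssim\|u\|_{H^s}^3$ yields a contraction on a small interval of length $T\sim\|u_0\|_{H^s}^{-2}$, producing a unique local solution.

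Next I would identify enough conservation laws to close an a priori $H^{1/2}$ bound. Pairing the equation with $\overline{u}$ and integrating gives conservation of the mass $Q(u)=\|u\|_{L^2}^2$; pairing with $\overline{Du}$ gives conservation of the momentum $M(u)=\|u\|_{\dot H^{1/2}}^2$; and the Hamiltonian is $E(u)=\tfrac{1}{4}\int|u|^4$. Since $Q(u)+M(u)$ is equivalent to $\|u\|_{H^{1/2}}^2$, the solution is uniformly bounded in $H_+^{1/2}$ for all time. Combined with a tame estimate $\|\Pi_+(|u|^2u)\|_{H^s}\lesssim\|u\|_{L^\infty}^2\|u\|_{H^s}$ and a Brezis--Gallouet type inequality bounding $\|u\|_{L^\infty}$ by $\|u\|_{H^{1/2}}$ and $\log\|u\|_{H^s}$, a logarithmic Gronwall argument promotes the $H^{1/2}$ bound to an a priori bound on $\|u(t)\|_{H^s}$ for every $s>1/2$, thereby globalising the local solutions.

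The critical case $s=1/2$ is more delicate and constitutes the main obstacle, because $H^{1/2}$ is not an algebra and the contraction above does not close. I would instead approximate $u_0\in H_+^{1/2}$ by a sequence $u_{0,n}\in H_+^s$ with $s>1/2$, take the corresponding global smooth solutions $u_n$, and pass to the limit. The $H^{1/2}$ a priori bound gives weak-$\ast$ compactness; to upgrade to strong convergence in $C([0,T];H_+^{1/2})$ and obtain uniqueness at this regularity level, I would exploit the Lax pair structure of the Szegő equation via the Hankel operator $H_u$, whose unitary evolution conserves the whole singular-value spectrum. This spectral control prevents any loss of compactness by frequency escape and allows the limit $u$ to be identified as a genuine solution of the equation.

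The principal obstacle is precisely this endpoint: at $s=1/2$ the nonlinearity fails to be Lipschitz on bounded sets, so no soft PDE perturbation argument can close, and the proof must genuinely invoke the integrable structure of the Szegő equation. Everything else -- the local theory, the conservation laws, and the logarithmic globalisation for $s>1/2$ -- proceeds in a standard way once the algebra property and the two quadratic conservation laws $Q$ and $M$ are in hand.
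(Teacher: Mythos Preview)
The paper does not contain a proof of this theorem. It is stated as a cited result from Pocovnicu \cite{[3.6]}, with the remark that it follows by ``slight modifications of the proof in [\ref{[3.5]}]'' (the G\'erard--Grellier argument on the circle). Your proposal correctly identifies exactly this route: adapt the G\'erard--Grellier proof from $\mathbb{T}$ to $\mathbb{R}$, establishing local well-posedness for $s>1/2$ via the algebra property of $H^s$, globalising through the conserved quantities $Q+M\simeq\|u\|_{H^{1/2}}^2$ and a Brezis--Gallouet/logarithmic Gronwall argument, and handling the endpoint $s=1/2$ by approximation together with the Lax pair (Hankel operator) structure. Since the present paper offers no proof of its own, there is nothing further to compare; your sketch matches the method the paper attributes to the cited references.
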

See also [\ref{[3.61]}] for the explicit formula of the solution to the Szegö equation (\ref{1.51}) with some special initial data.\\

In fact, P. Gérard and S. Grellier have also studied the large time behavior of solutions to the cubic Szeg\H o equation with some initial data in [\ref{[4]}], and this result is stated as follows.
\begin{proposition}
[\ref{[4]}, Theorem 1]
\label{Proposition 1.39}
There exist initial data $u_0 \in C_{+}^{\infty}(\mathbb{T}):=\bigcap_{s} H^{s}(\mathbb{T})$ and sequences $\left(\overline{t}_{n}\right),\left(\underline{t}^{n}\right)$ tending to infinity such that
$\forall s>\frac{1}{2}, \forall M \in \mathbb{Z}_{+}$, the corresponding solution to (\ref{1.505}) satifies 
\begin{align*}
\frac{\left\|u(\overline{t}_{n})\right\|_{H^{s}}}{\left|\overline{t}_{n}\right|^{M}} \underset{n \rightarrow \infty} \rightarrow \infty,
\end{align*}
and $u(\underline{t}^{n}) \underset{n \rightarrow \infty}{\longrightarrow} u_{0} \text { in } C_{+}^{\infty}$. Furthermore, the set of such initial data is a dense $G_{\delta}$ subset of $C_{+}^{\infty}(\mathbb{T})$.
\end{proposition}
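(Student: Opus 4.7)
The plan is to exploit the Lax pair structure of the cubic Szegő equation, $\dot{H}_u = [B_u, H_u]$, where $H_u$ denotes the Hankel operator with symbol $u$. This Lax pair generates a family of finite-dimensional symplectic submanifolds $\mathcal{M}(N) \subset H_+^{1/2}(\mathbb{T})$ consisting of rational fractions of bounded degree, each invariant under the flow and on which the restricted dynamics is completely integrable. I would concentrate first on a low-dimensional example (for instance the four-real-dimensional submanifold of functions of the form $(az+b)/(1+cz+dz^2)$) and introduce action--angle style coordinates reducing the flow to an explicit ODE system.

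The core step is to produce, on such a submanifold, a single orbit exhibiting simultaneously super-polynomial Sobolev growth and $C^\infty$ recurrence. The reduced Hamiltonian system admits a hyperbolic stationary point; an orbit that enters a small neighborhood of this point gets exponentially stretched along the unstable direction, and, in the Fourier picture, this stretching translates into a migration of $L^2$ mass toward arbitrarily high modes. A direct computation of $\|u(t)\|_{H^s}$ in the action--angle variables should then exhibit a sequence $\overline{t}_n \to \infty$ along which $\|u(\overline{t}_n)\|_{H^s}/|\overline{t}_n|^M \to \infty$ for every $s>\tfrac{1}{2}$ and every integer $M$. Meanwhile, the almost-periodic behaviour of the non-hyperbolic angle variables on the same submanifold supplies a distinct sequence $\underline{t}^n\to\infty$ along which $u(\underline{t}^n) \to u_0$ in every Sobolev norm, hence in $C_+^\infty(\mathbb{T})$.

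To upgrade this single example to a dense $G_\delta$ statement, I would invoke a Baire category argument. For each $s>\tfrac{1}{2}$ and each $M,n\in\mathbb{Z}_+$, set
\begin{align*}
\mathcal{O}_{s,M,n} &:= \{v_0 \in C_+^\infty(\mathbb{T}) : \exists\, t \geq n,\ \|v(t)\|_{H^s} > n\,|t|^M\}, \\
\mathcal{R}_n &:= \{v_0 \in C_+^\infty(\mathbb{T}) : \exists\, t \geq n,\ \|v(t) - v_0\|_{H^n} < 1/n\}.
\end{align*}
Continuous dependence of the Szegő flow on initial data (in every $H^s$) makes each of these sets open in the Fréchet topology of $C_+^\infty(\mathbb{T})$. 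Density of the $\mathcal{O}_{s,M,n}$ would follow by approximating an arbitrary smooth datum by elements of $\bigcup_N \mathcal{M}(N)$ and transporting the orbit from the previous step nearby; density of the $\mathcal{R}_n$ would follow from the standard recurrence property of integrable dynamics on these submanifolds. Intersecting countably many such open dense sets yields the required dense $G_\delta$.

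The main obstacle is the core step: producing one orbit on $\mathcal{M}(N)$ that simultaneously cascades energy super-polynomially to high frequencies \emph{and} returns in $C^\infty$ to its starting point. The two requirements pull in opposite directions---growth demands a genuinely hyperbolic mechanism, recurrence demands almost-periodicity---and they must be reconciled within the same integrable reduction. Making the rate of high-frequency cascade explicit enough to defeat every polynomial in $t$, and controlling the approach to the hyperbolic point quantitatively, is the technical crux of the proof.
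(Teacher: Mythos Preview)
The paper does not prove Proposition~\ref{Proposition 1.39}; it is quoted verbatim from G\'erard--Grellier~[\ref{[4]}, Theorem~1] as background, so there is no proof in this paper to compare your attempt against. What follows are comments on your sketch relative to the actual argument in~[\ref{[4]}].

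Your overall architecture---Lax pair, invariant finite-rank manifolds $\mathcal{M}(N)$, an explicit turbulent orbit on some low $\mathcal{M}(N)$, then a Baire category argument over the open sets $\mathcal{O}_{s,M,n}$ and $\mathcal{R}_n$---matches the strategy of~[\ref{[4]}] quite closely. The Baire step in particular is essentially what they do, and your formulation of the open sets is correct.

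The point where your sketch diverges from the actual mechanism is the ``hyperbolic stationary point'' picture. On each generic $\mathcal{M}(N)$ the Szeg\H{o} flow is Liouville-integrable and conjugate to a linear flow on a torus; there is no hyperbolic equilibrium in the usual sense, and orbits on a fixed generic torus have \emph{bounded} Sobolev norms for all time. The growth in~[\ref{[4]}] comes instead from the \emph{singular strata} where the action--angle coordinates degenerate---concretely, where two consecutive singular values of the Hankel and shifted Hankel operators collide. Near such a stratum the angle variables rotate with a frequency that blows up, and an explicit computation (already visible on the three-dimensional manifold associated to data of the form $u_0 = x + \varepsilon$) shows that the pole of the rational function approaches the unit circle along a sequence of times, forcing $\|u(t)\|_{H^s}$ to become large. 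So the ``stretching'' you want is not an unstable manifold of an equilibrium but rather an excursion toward a degenerate torus; the recurrence is automatic because on the same finite-dimensional manifold the motion is quasi-periodic. Once you replace the hyperbolic picture by this degeneration picture, your outline becomes the proof in~[\ref{[4]}].
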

\begin{remark}
As shown in Proposition \ref{Proposition 1.39}, for $s > \frac{1}{2}$, there exist solutions to (\ref{1.505}) with the initial data in a dense $G_\delta$ subset of $C_{+}^{\infty}(\mathbb{T})$ which satisfy
\begin{align*}
\limsup_{|t| \to \infty} \frac{\|u(t)\|_{H^{s}}}{|t|^{M}} & = \infty, \\
\liminf_{|t| \to \infty} \|u(t)\|_{H^{s}} & < \infty.
\end{align*}
However, we do not know if there exists a solution to (\ref{1.505}) such that the $H^s$ norm of this solution tends to infinity as $t$ tends to infinity.
\end{remark}
Before introducing the following proposition, we recall the definition (\ref{4.401}) of Hankel operator $H_u$ in Section 4, and we say that $\lambda > 0$ is a singular value of $H_u$ if the corresponding Schmidt subspace
\begin{align*}
E_{H_{u}}(\lambda)=\operatorname{Ker}\left(H_{u}^{2}-\lambda^{2} I\right)
\end{align*}
is not $\{ 0 \}$. 

Recently, P. Gérard and A. Pushnitski have shown the cascade result for the equation (\ref{1.51}) [\ref{[[1]]}], and this result is stated as follows.
\begin{proposition}
[\ref{[[1]]}, Proposition 9.3]
\label{Proposition 1.4}
Let $u \in H_{+}^1(\mathbb{R})$ be a rational solution of the cubic Szeg\H o equation on the line (\ref{1.51})  such that the Hankel operator $H_u$ has singular values $\lambda_{1}, \cdots, \lambda_{N}$, with $\lambda_1$ being multiple and $\lambda_j$ being simple for every $j \geq 2$. Then
\begin{align*}
0<\liminf_{t \rightarrow\infty} \frac{\left\|\partial_{x} u(t)\right\|_{L^{2}}}{|t|} \leq \limsup_{t \rightarrow \infty} \frac{\left\|\partial_{x} u(t)\right\|_{L^{2}}}{|t|} < +\infty.
\end{align*}
\end{proposition}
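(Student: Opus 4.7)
The plan is to exploit the integrable structure of the cubic Szegő equation on the line. First I would recall that (\ref{1.51}) admits a Lax pair involving the Hankel operator $H_u$ (and the companion shifted Hankel $K_u$), so that $H_{u(t)}$ is unitarily equivalent to $H_{u(0)}$ along the flow. Consequently the singular values $\lambda_1,\dots,\lambda_N$ are conserved and the dimensions of the Schmidt subspaces $E_{H_u}(\lambda_j)$ do not change in time. For a rational datum with the multiplicity profile described in the statement, this confines the flow to a finite-dimensional invariant submanifold $\mathcal{M}\subset H_+^1(\mathbb{R})$ of rational functions sharing the same spectral data, on which the Szegő dynamics becomes a finite-dimensional Hamiltonian system.

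Second, I would introduce action-angle coordinates on $\mathcal{M}$ adapted to the singular-value decomposition of $H_u$. The actions are symmetric functions of $\lambda_1^2,\dots,\lambda_N^2$, hence conserved, while the angles evolve linearly in $t$ with frequencies determined by the Hamiltonian. The crucial feature is that on the simple-spectrum strata the dynamics is quasi-periodic and yields bounded $H^1$ norms; by contrast, the presence of the \emph{multiple} eigenvalue $\lambda_1$ makes $\mathcal{M}$ no longer a torus but a bundle whose fiber over the action variables carries a residual non-compact direction (roughly, the inner motion inside the 2-dimensional Schmidt subspace $E_{H_u}(\lambda_1)$). Solving the restricted Hamiltonian flow in this fiber should produce an explicit expression for $u(t)$ in which a parameter grows linearly in $t$.

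Third, I would compute $\|\partial_x u(t)\|_{L^2}$ in terms of the above coordinates. Using $\|\partial_x u\|_{L^2}^2$ written against the spectral resolution of $H_u$ (together with $K_u$), I expect to obtain an expression of the form $\|\partial_x u(t)\|_{L^2}^2 = P(t,\theta(t))$, where $P$ is polynomial in the secular variable coming from the multiple eigenvalue and trigonometric in the simple-eigenvalue angles. A careful extraction of the leading $t^2$ coefficient in $P$ should give $\|\partial_x u(t)\|_{L^2} \asymp |t|$ up to oscillations, which immediately yields both the finite $\limsup$ (the degree of $P$ in $t$ is at most $2$ because $\mathcal{M}$ is finite-dimensional and the pole positions of $u(t)$ move at bounded speed) and a positive $\liminf$ whenever the leading coefficient is not identically zero.

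The main obstacle is the lower bound on the $\liminf$. The $\limsup$ is easier: rational functions of fixed degree have their $H^1$ norm controlled by the poles, which move along $\mathcal{M}$ at bounded velocity. For the $\liminf$, one must show that the oscillating coefficient of the secular term never cancels the $|t|$ growth along a subsequence, i.e.\ that no resonance between the angle frequencies and the secular motion produces cancellations at arbitrarily large times. I would address this by proving that the leading $t^2$-coefficient of $\|\partial_x u(t)\|_{L^2}^2$ is, after the angle averaging, a \emph{strictly positive} function of the action variables; the non-degeneracy should follow directly from the hypothesis that $\lambda_1$ has multiplicity at least two, which forces a non-trivial component of $u$ along the corresponding Schmidt subspace to move non-periodically.
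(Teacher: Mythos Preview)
The paper does not contain its own proof of this proposition: it is quoted verbatim as Proposition~9.3 of Gérard--Pushnitski~[\ref{[[1]]}] and used as a black box to deduce Theorem~\ref{Theorem 1.6}. There is therefore no proof in the paper to compare your proposal against.

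That said, your outline is in the right spirit (Lax pair, finite-dimensional rational manifold, explicit dynamics), but it remains a plan rather than a proof. The genuine content of the Gérard--Pushnitski argument lies in the \emph{inverse spectral construction}: they parametrize rational solutions explicitly in terms of spectral data of $H_u$ and $K_u$, and the formula for $u(t)$ makes the linear-in-$t$ drift of a pole towards the real axis completely explicit when $\lambda_1$ is multiple. Your step~3 (``I expect to obtain an expression of the form $P(t,\theta(t))$'') and the handling of the $\liminf$ are exactly where the hard work is, and your proposal does not supply it: asserting that the leading coefficient is ``strictly positive after angle averaging'' is not an argument, and averaging does not by itself rule out cancellation along subsequences. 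In the actual proof, the positivity of the $\liminf$ comes from an explicit computation showing that one pole $p(t)$ satisfies $\mathrm{Im}\,p(t)\sim c/t$ with $c\neq 0$, which forces $\|\partial_x u(t)\|_{L^2}\gtrsim |t|$ directly. Without that explicit formula (or an equivalent quantitative lower bound on the secular coefficient), your sketch has a gap at the key step.
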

\subsubsection{The half wave equation}
The half wave equation reads as follows,
\begin{equation}
\label{1.35}
\left\{\begin{array}{l}i \partial_{t} v-|D| v=|v|^{2} v, \\ v(0)=v_{0}.\end{array}\right.
\end{equation}
The equation (\ref{1.35}) is usually studied on $\mathbb{T}$ or $\mathbb{R}$. P. Gérard and S. Grellier have proved the global well-posedness of (\ref{1.35}) in $H^s(\mathbb{T})$ with $s\geq\frac{1}{2}$ [\ref{[5.5]}], and one can use the analogous method to deduce the global well-posedness of (\ref{1.35}) in $H^s(\mathbb{R})$ with $s\geq \frac{1}{2}$.
\begin{theorem}
[\ref{[5.5]}, Proposition 1] 
Given $u_0 \in H^{s}(\mathbb{T})$ with $s\geq\frac{1}{2}$, there exists a unique solution $u \in C\left(\mathbb{R}, H^{s}(\mathbb{T})\right)$ to (\ref{1.35}). Also, for $u_0 \in H^{s}(\mathbb{R})$ with $s\geq\frac{1}{2}$, there exists a unique solution $u \in C\left(\mathbb{R}, H^{s}(\mathbb{R})\right)$ to (\ref{1.35}).
\end{theorem}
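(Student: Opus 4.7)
The plan is to establish local well-posedness by a contraction argument and then promote it to a global statement using the conservation laws of the equation. For $s>\tfrac12$, the one-dimensional Sobolev embedding $H^{s}\hookrightarrow L^{\infty}$ makes $H^{s}$ a Banach algebra on both $\mathbb{T}$ and $\mathbb{R}$, so $|v|^{2}v$ is locally Lipschitz from $H^{s}$ into itself and a standard Picard iteration on Duhamel's formula
\[
v(t)=e^{-it|D|}v_{0}-i\int_{0}^{t}e^{-i(t-\tau)|D|}|v|^{2}v(\tau)\,d\tau
\]
yields a unique local solution in $C_{T}H^{s}$ with $T=T(\|v_{0}\|_{H^{s}})>0$. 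The endpoint case $s=\tfrac12$ is more delicate because the embedding into $L^{\infty}$ just fails; I would handle it by approximating $v_{0}\in H^{1/2}$ by smoother data, solving each regularized problem by the previous step, and passing to the limit using the a priori bounds below together with a Brezis--Gallouet logarithmic refinement of the failing embedding.

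To globalize, I would exploit the $L^{2}$ mass and the Hamiltonian
\[
E(v)=\tfrac12\big\||D|^{1/2}v\big\|_{L^{2}}^{2}+\tfrac14\|v\|_{L^{4}}^{4},
\]
which jointly yield $\|v(t)\|_{H^{1/2}}^{2}\le\|v_{0}\|_{L^{2}}^{2}+2E(v_{0})$ and thus prevent blow-up at the critical regularity $s=\tfrac12$. For $s>\tfrac12$, I would differentiate $\|v(t)\|_{H^{s}}^{2}$ in time, use the self-adjointness of $|D|$, and bound the nonlinear contribution by $C\|v\|_{L^{\infty}}^{2}\|v\|_{H^{s}}^{2}$ through a Kato--Ponce commutator estimate. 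Inserting the Brezis--Gallouet inequality
\[
\|v\|_{L^{\infty}}^{2}\le C\,\big(1+\|v\|_{H^{1/2}}^{2}\big)\log\!\big(e+\|v\|_{H^{s}}\big)
\]
and the already controlled $H^{1/2}$ norm produces a logarithmic Gronwall estimate ruling out finite-time blow-up of $\|v(t)\|_{H^{s}}$, at worst permitting doubly exponential growth; the argument on $\mathbb{R}$ is identical up to a cutoff check ensuring integrability at spatial infinity during the regularization step.

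The main obstacle is the absence of genuine dispersion for the half-wave propagator $e^{-it|D|}$ in one space dimension: no useful Strichartz or local smoothing estimate is available, so the whole scheme must rest on energy identities. Consequently $s=\tfrac12$ is simultaneously the natural Hamiltonian regularity and the critical threshold for the fixed-point scheme, and the step deserving the most care is the construction and uniqueness of the solution at that endpoint, which is precisely where the logarithmic Brezis--Gallouet refinement of the failing embedding $H^{1/2}\hookrightarrow L^{\infty}$ becomes indispensable. Once this is handled, the log-Gronwall mechanism for $s>\tfrac12$ is essentially routine.
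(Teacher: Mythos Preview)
The paper does not supply its own proof of this theorem; it simply quotes the result from reference [\ref{[5.5]}] (G\'erard--Grellier) and remarks that the argument on $\mathbb{R}$ is analogous. Your proposal follows essentially the same scheme as the original proof in that reference: local well-posedness for $s>\tfrac12$ from the Banach algebra property of $H^{s}$, a priori control of $\|v(t)\|_{H^{1/2}}$ from mass and Hamiltonian conservation, and propagation of higher $H^{s}$ regularity via a Brezis--Gallouet logarithmic inequality feeding into a log-Gronwall bound (yielding at worst double-exponential growth). The only point where you should be slightly more explicit is uniqueness at the endpoint $s=\tfrac12$: the approximation-by-smooth-data argument gives existence, but uniqueness in $C_{T}H^{1/2}$ is not automatic from the contraction since the algebra property fails there; in [\ref{[5.5]}] this is handled by a Yudovich-type argument (again resting on the Brezis--Gallouet inequality applied to the difference of two solutions), which you allude to but do not spell out.
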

Moreover, O. Pocovnicu has studied partially about its long time behavior for the problem (\ref{1.35}), she has proved that if the initial condition is of order $O(\varepsilon)$ and supported on positive frequencies only, then the corresponding solution can be approximated by the solution of the Szeg\H o equation [\ref{[6]}].

\subsubsection{The half wave Schrödinger equation on the plane}
For (\ref{1.1}), by using the endpoint Strichartz estimate, Y. Bahri, S. Ibrahim and H. Kikuchi have deduced the local well-posedness of the Cauchy problem in $H_{x}^{2 s} L_{y}^{2} \cap L_{x}^{2} H_{y}^{s}$ with $s>\frac{1}{2}$ [\ref{[2.5]}]. We state this result as follows.
\begin{theorem}
[\ref{[2.5]}, Theorem 1.6]
For any $U_0 \in H_{x}^{2s} L_{y}^{2} \cap L_{x}^{2} H_{y}^{s}$ with $s>\frac{1}{2}$, there exists $T_{\max} > 0$ and a unique local solution $U \in C\left(\left(-T_{\max }, T_{\max }\right) ; H_{x}^{2 s} L_{y}^{2} \cap L_{x}^{2} H_{y}^{s}\right)$ to (\ref{1.1}) with the initial data $U_0$. 
\end{theorem}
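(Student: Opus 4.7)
The plan is a standard contraction mapping argument based on the Duhamel formulation
\[
U(t) = e^{it\mathcal{A}} U_{0} - i \int_{0}^{t} e^{i(t-\tau)\mathcal{A}} \bigl(|U(\tau)|^{2} U(\tau)\bigr) \, d\tau.
\]
The symbol of $\mathcal{A}$ is $-\xi^{2} - |\eta|$, so the propagator factorizes as $e^{it\mathcal{A}} = e^{it\partial_{x}^{2}}\, e^{-it|D_{y}|}$. The half-wave factor $e^{-it|D_{y}|}$ is a unitary group on every $H_{y}^{s}$ and commutes with the Schrödinger factor in $x$, so all of the smoothing/dispersion must come from $e^{it\partial_{x}^{2}}$ acting in one variable. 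The first step is to transfer the one-dimensional Strichartz theory to our anisotropic setting: combining the endpoint $(4,\infty)$ pair for 1D Schrödinger with the fact that $e^{-it|D_{y}|}$ commutes with any $y$-multiplier yields
\[
\| e^{it\mathcal{A}} U_{0} \|_{L_{t}^{4} L_{x}^{\infty} H_{y}^{s}} \lesssim \| U_{0} \|_{L_{x}^{2} H_{y}^{s}},
\]
together with the dual inhomogeneous bound on the Duhamel integral, and the obvious energy estimate preserving $H_{x}^{2s} L_{y}^{2} \cap L_{x}^{2} H_{y}^{s}$.

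I would then define the resolution space $X_{T}$ as functions $U \in C\bigl([-T,T]; H_{x}^{2s} L_{y}^{2} \cap L_{x}^{2} H_{y}^{s}\bigr)$ with the additional Strichartz norm $\| U \|_{L_{t}^{4} L_{x}^{\infty} H_{y}^{s}} + \| \langle \partial_{x} \rangle^{2s} U \|_{L_{t}^{4} L_{x}^{\infty} L_{y}^{2}}$ finite, and set up the contraction on a small ball in $X_{T}$. The crucial product estimate is that, since $s > 1/2$, the space $H_{y}^{s}(\mathbb{R})$ is a Banach algebra, so pointwise in $(t,x)$,
\[
\bigl\| |U|^{2} U(t,x,\cdot) \bigr\|_{H_{y}^{s}} \lesssim \| U(t,x,\cdot) \|_{H_{y}^{s}}^{3}.
\]
Taking an $L_{t}^{1} L_{x}^{2}$ norm and distributing factors via Hölder (one copy in $L_{t}^{2} L_{x}^{2}$ coming from the energy bound, two copies in $L_{t}^{4} L_{x}^{\infty}$ coming from Strichartz) produces a gain of $T^{\alpha}$ for some $\alpha > 0$ that closes the estimate.

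The principal obstacle is the $H_{x}^{2s} L_{y}^{2}$ component, where the fractional derivative $\langle \partial_{x} \rangle^{2s}$ must be distributed over the cubic nonlinearity via a Kato--Ponce commutator estimate. The resulting pieces all have the schematic form $\| \langle \partial_{x} \rangle^{2s} U \|_{L_{t,x}^{2}} \| U \|_{L_{t}^{4} L_{x}^{\infty} L_{y}^{\infty}}^{2}$, and to convert the inner $L_{y}^{\infty}$ back to something controlled by our norms I would again invoke the embedding $H_{y}^{s} \hookrightarrow L_{y}^{\infty}$; this embedding is exactly what forces $s > 1/2$. Once every nonlinear term is bounded by $T^{\alpha}$ times a cube of the $X_{T}$-norm, the Banach fixed point theorem yields a unique local solution on a maximal interval $(-T_{\max}, T_{\max})$ depending on $\| U_{0} \|_{H_{x}^{2s} L_{y}^{2} \cap L_{x}^{2} H_{y}^{s}}$, and the standard blow-up alternative gives the maximal $T_{\max}$ in the statement.
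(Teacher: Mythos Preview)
The paper does not actually prove this theorem: it is quoted verbatim from the cited reference [\ref{[2.5]}] (Bahri--Ibrahim--Kikuchi), and the only information the paper gives about the argument is the one-line remark that the authors of [\ref{[2.5]}] deduce local well-posedness ``by using the endpoint Strichartz estimate.'' So there is no in-paper proof to compare against.

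That said, your sketch is entirely consistent with that description and is the natural route. The factorization $e^{it\mathcal{A}}=e^{it\partial_x^2}e^{-it|D_y|}$ with the half-wave factor unitary on $H_y^s$ reduces matters to vector-valued $1$D Schr\"odinger Strichartz; the $(4,\infty)$ pair is precisely the $1$D endpoint, and combined with the algebra property of $H_y^s$ for $s>\tfrac12$ and a Kato--Ponce distribution of $\langle\partial_x\rangle^{2s}$ on the cubic term, the contraction closes with a positive power of $T$. This is the standard scheme and matches what [\ref{[2.5]}] does; there is no gap in your outline.
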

An adaptation of the arguments from [\ref{[2.1]}] implemented in [\ref{[2.2]}] implies the following ill-posedness result for (\ref{1.1}).
\begin{theorem}
[\ref{[2.2]}, Norm inflation]
Let $s<\frac{1}{4}$ and $\mathcal{H}^s : = H_{x}^{2 s} L_{y}^{2} \cap L_{x}^{2} H_{y}^{s}$. There exists a positive sequence $(t_n)_{n \in \mathbb{N}}$ tending to zero and a sequence $(u_n(t))_{n \in \mathbb{N}}$ of $C^{\infty}\left(\mathbb{R}^{2}\right)$ solutions to (\ref{1.1}) defined for $t \in\left[0, t_{n}\right]$, which satisfy
\begin{align*}
\left\|u_{n}(0)\right\|_{\mathcal{H}^s} \rightarrow 0
\end{align*}
and
\begin{align*}
\left\|u_{n}\left(t_{n}\right)\right\|_{\mathcal{H}^s} \rightarrow \infty
\end{align*}
as $n \to \infty$.
\end{theorem}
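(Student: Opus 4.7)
The plan is to adapt the high-frequency concentration/ODE reduction mechanism of Burq--Gérard--Tzvetkov [\ref{[2.1]}], already carried out for (\ref{1.1}) by I.~Kato [\ref{[2.2]}], so as to exploit the anisotropic scaling of $\mathcal{A}=\partial_x^2-|D_y|$. The first observation is that (\ref{1.1}) is invariant under $U(t,x,y)\mapsto\lambda U(\lambda^2 t,\lambda x,\lambda^2 y)$, so the critical regularity for $\mathcal{H}^s=H_x^{2s}L_y^2\cap L_x^2 H_y^s$ is $s_c=\frac14$. Any $s<\frac14$ is subcritical from below, which opens a window where initial data of arbitrarily small $\mathcal{H}^s$ norm can still carry arbitrarily large $L^\infty$ mass -- precisely what feeds the nonlinear phase in the step below.

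Concretely, for each large $\lambda$ I would take initial data $U_0^{(\lambda)}(x,y)=\lambda\,\phi(\lambda x,\lambda^2 y)$ with a fixed Schwartz profile $\phi$, so that $\|U_0^{(\lambda)}\|_{\mathcal{H}^s}\sim\lambda^{2s-1/2}\to 0$ while $\|U_0^{(\lambda)}\|_{L^\infty}\sim\lambda\to\infty$. On an interval $[0,t_\lambda]$ where the dispersion $e^{it\mathcal{A}}$ has not yet had time to act on the concentrated wave packet, (\ref{1.1}) is well approximated by the pointwise, gauge-preserving ODE $i\partial_t V=|V|^2V$, whose explicit solution starting from $U_0^{(\lambda)}$ is
\begin{align*}
V^{(\lambda)}(t,x,y)=e^{-it|U_0^{(\lambda)}(x,y)|^2}\,U_0^{(\lambda)}(x,y).
\end{align*}
Differentiating this phase factor in $y$ yields a contribution of size $t\,\bigl\|\partial_y(|U_0^{(\lambda)}|^2)\bigr\|_{L^\infty}\,\|U_0^{(\lambda)}\|_{L^\infty}$, which in our scaling grows as a positive power of $\lambda$. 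Choosing $t_\lambda\to 0$ as a suitable power $\lambda^{-\kappa}$ then inflates $\|V^{(\lambda)}(t_\lambda)\|_{\mathcal{H}^s}$ to infinity precisely when $s<\frac14$, while $\|U_0^{(\lambda)}\|_{\mathcal{H}^s}\to 0$ and $t_\lambda\to 0$.

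The main obstacle is to justify the ODE approximation in a functional framework where the full solution is known to exist on $[0,t_\lambda]$, since we are below the regularity of the local theory of [\ref{[2.5]}]. The standard remedy, as in [\ref{[2.2]}], is to solve (\ref{1.1}) with $U_0^{(\lambda)}$ in a stronger Sobolev space $H_x^{2\sigma}L_y^2\cap L_x^2 H_y^\sigma$ with $\sigma>\frac12$, decompose $U^{(\lambda)}=V^{(\lambda)}+W^{(\lambda)}$, and bound the remainder $W^{(\lambda)}$ by a Duhamel/trilinear argument adapted to the anisotropic symbol $\xi^2+|\eta|$. The core technical task is to show that on the time scale $t_\lambda$ the dispersion has not yet dispersed the wave packet too much, so that the ODE profile remains the leading order, while the phase $t_\lambda|U_0^{(\lambda)}|^2$ has already accumulated enough to dominate the derivatives entering the $\mathcal{H}^s$-norm. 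Reading off $\mathcal{H}^s$-norms of $V^{(\lambda)}(t_\lambda)$ and $W^{(\lambda)}(t_\lambda)$ at the final time then yields the sequences $u_n(0)=U_0^{(\lambda_n)}$ and $t_n=t_{\lambda_n}$ required by the statement.
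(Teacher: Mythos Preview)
The paper does not supply its own proof of this statement: it is quoted verbatim as a result of I.~Kato [\ref{[2.2]}], with the single comment that it follows from ``an adaptation of the arguments from [\ref{[2.1]}] implemented in [\ref{[2.2]}].'' There is therefore no proof in the paper to compare your proposal against.

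That said, your outline is exactly the mechanism the paper is pointing to. The anisotropic scaling $U\mapsto\lambda U(\lambda^2 t,\lambda x,\lambda^2 y)$ is correct for $\mathcal{A}=\partial_x^2-|D_y|$, the computation $\|U_0^{(\lambda)}\|_{\mathcal{H}^s}\sim\lambda^{2s-1/2}$ identifying $s_c=\tfrac14$ is right, and the ODE-phase/remainder decomposition $U^{(\lambda)}=V^{(\lambda)}+W^{(\lambda)}$ with $V^{(\lambda)}=e^{-it|U_0^{(\lambda)}|^2}U_0^{(\lambda)}$ is the Burq--G\'erard--Tzvetkov template. What you have written is a faithful plan, not a proof: the actual work --- quantifying the time scale $t_\lambda$ on which dispersion is negligible, running the Duhamel estimate for $W^{(\lambda)}$ in a space where local well-posedness holds, and checking that the phase growth in $\|V^{(\lambda)}(t_\lambda)\|_{\mathcal{H}^s}$ beats both the remainder and the vanishing of the data --- is precisely the content of [\ref{[2.2]}], and you would need to carry it out (or cite it) to have a proof. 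As a roadmap it is correct and matches what the paper attributes to the reference.
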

\subsubsection{The half wave Schrödinger equation on the cylinder}
Consider the half wave Schrödinger equation on the cylinder,
\begin{equation}
\label{1.2}
\left(i \partial_{t}+\mathcal{A}\right) U=|U|^{2} U, \qquad (x, y) \in \mathbb{R} \times \mathbb{T}.
\end{equation}
Inspired by the work of Kato-Pusateri [\ref{[2.201]}], Z. Hani, B. Pausader, N. Tzvetkov and N. Visciglia have proved modified scattering for the cubic Schrödinger equation on $\mathbb{R}\times\mathbb{T}^d$ [\ref{[7]}]. H. Xu has adapted the method in [\ref{[7]}] to establish a modified scattering theory between small decaying solutions to (\ref{1.2}) and small decaying solutions to the non-chiral cubic Szeg\H o equation (\ref{1.4}).

Before introducing H. Xu's result, we define the following norms,
\begin{align*}
\|F\|_{V}:=\|F\|_{H_{x, y}^{N'}}+\|x F\|_{L_{x, y}^{2}}, \quad\|F\|_{V^{+}}:=\|F\|_{V}+\left\|\left(1-\partial_{x}^2\right)^{4} F\right\|_{V}+\|x F\|_{V}. 
\end{align*}
The non chiral cubic Szeg\H o equation on $\mathbb{R} \times \mathbb{T}$ is stated as follows,
\begin{equation}
\label{1.4}
\begin{array}{l}i \partial_{t} G(t)=\mathcal{R}\left[G(t), G(t), G(t)\right], \\ \mathcal{F}_{x\rightarrow\xi} \mathcal R[G,G,G]:=
\Pi_+(|\widehat{G}_+|^2 \widehat{G}_+)+
\Pi_-(|\widehat {G}_-|^2\widehat{G}_-). \end{array}
\end{equation}
Here $\widehat{G}(\xi, \cdot)=\mathcal{F}_{\mathbb{R}_x} G(\xi, \cdot)$ with $\xi \in \mathbb{R}$, $\Pi_{+}$ is the Szeg\H o projector onto the non-negative Fourier modes on the variable $y$, $\Pi_{-}:=\mathrm{Id}-\Pi_{+}$, and $G_{\pm}:=\Pi_{\pm}(G)$. 

To obtain the following results, H. Xu assumed that the initial data satisfies
\begin{equation}
\label{1.71}
U_{0}(x, y+\pi)=-U_{0}(x, y).
\end{equation}
In fact, the modified scattering consists of two parts: the existence of modified wave operators and the asymptotic completeness. We introduce the existence of modified wave operators obtained by H. Xu in [\ref{[2]}] as follows.
\begin{theorem}
[\ref{[2]}, Theorem 1.4]
\label{theorem 1.2}
Given $N' \geq 13$, there exists $\varepsilon=\varepsilon(N')>0$ such that if $G_{0} \in V^{+}$ satisfies
\begin{align*}
\left\|G_{0}\right\|_{V^{+}} \leq \varepsilon
\end{align*}
and if $G(t)$ solves (\ref{1.4}) with initial data $G_0$, then there exists $U \in C([0, \infty): V)$ a solution of (\ref{1.2}) such that 
\begin{align*}
\left\|\mathrm{e}^{-i t \mathcal{A}} U(t) - G(\pi \ln t)\right\|_{V} \rightarrow 0 \text { as } t \rightarrow \infty.
\end{align*}
\end{theorem}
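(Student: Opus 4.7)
The plan is to follow the Hani--Pausader--Tzvetkov--Visciglia framework as adapted by Xu: introduce the profile $F(t) := e^{-it\mathcal{A}} U(t)$, so that (\ref{1.2}) becomes
\begin{equation*}
i\, \partial_{t} F(t) \;=\; e^{-it\mathcal{A}}\bigl(\,|e^{it\mathcal{A}} F(t)|^{2}\, e^{it\mathcal{A}} F(t)\bigr),
\end{equation*}
and analyse this equation on the Fourier side, with $\widehat{F}(\xi, p)$ for $\xi \in \mathbb{R}$, $p \in \mathbb{Z}$. The trilinear right-hand side then carries an oscillating factor $e^{it\Phi}$ with phase $\Phi = (\xi^{2} - \xi_{1}^{2} + \xi_{2}^{2} - \xi_{3}^{2}) + (|p| - |p_{1}| + |p_{2}| - |p_{3}|)$ under the convolution constraints $\xi = \xi_{1} - \xi_{2} + \xi_{3}$ and $p = p_{1} - p_{2} + p_{3}$. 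The algebraic identity $\xi^{2} - \xi_{1}^{2} + \xi_{2}^{2} - \xi_{3}^{2} = 2(\xi_{2} - \xi_{1})(\xi_{2} - \xi_{3})$ localises the $\xi$-stationary set on $\{\xi_{1} = \xi_{2}\} \cup \{\xi_{2} = \xi_{3}\}$, while the discrete condition $|p_{1} - p_{2} + p_{3}| + |p_{2}| = |p_{1}| + |p_{3}|$ forces the $p_i$'s to share a common sign with $p_{2}$ sandwiched between $p_{1}$ and $p_{3}$; this is precisely the combinatorial content of the non-chiral Szeg\H o nonlinearity $\mathcal{R}$ with its $\Pi_{+}$ and $\Pi_{-}$ pieces.

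Applying stationary phase in the $\xi$-variables yields an asymptotic expansion of the shape
\begin{equation*}
i\, \partial_{t} F(t) \;=\; \frac{\pi}{t}\, \mathcal{R}[F(t), F(t), F(t)] \;+\; E(t), \qquad \|E(t)\|_{V} \;\lesssim\; t^{-1-\delta}\, \|F(t)\|_{V^{+}}^{3},
\end{equation*}
so that after the rescaling $\tau = \pi \ln t$ the leading-order equation is exactly (\ref{1.4}). The modified wave operator is then constructed backward from infinity: given $G$ with $\|G_{0}\|_{V^{+}} \leq \varepsilon$, set $F_{\mathrm{app}}(t) := G(\pi \ln t)$ and look for $F(t) = F_{\mathrm{app}}(t) + W(t)$ with $W(t) \to 0$ in $V$ as $t \to \infty$. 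Concretely, for $T$ large I would let $U_{T}$ solve (\ref{1.2}) on $[T_{0}, T]$ with terminal data $U_{T}(T) = e^{iT\mathcal{A}} G(\pi \ln T)$, close a bootstrap in $V^{+}$ on $[T_{0}, T]$ that is uniform in $T$, and pass to the limit $T \to \infty$ by compactness. The non-resonant remainder is handled by a normal form, integrating by parts in $t$ and exchanging the oscillation $1/\Phi$ against one extra time derivative distributed among the three copies of $F$; the symmetry assumption (\ref{1.71}) eliminates the zero $y$-mode and with it the trivial resonances at which $\Phi$ would vanish non-stationarily.

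The main obstacle is closing the weighted part $\|x F(t)\|_{L^{2}_{x,y}}$ of the $V$-norm. On the Fourier side, multiplication by $x$ acts as $i \partial_{\xi}$, and when this derivative falls onto $e^{it\Phi}$ it produces a factor $t\, \partial_{\xi} \Phi$ that a priori destroys the $1/t$ decay gained from the stationary phase. The remedy is to exploit that $\partial_{\xi} \Phi$ vanishes precisely on the resonant set: one isolates the stationary contributions (which are absorbed into $\mathcal{R}$), integrates by parts in $\xi$ in the remainder, and pays for the loss using the extra regularity and weights built into $V^{+}$. This is exactly why the hypothesis requires both $\|x G_{0}\|_{V}$ and $\|(1 - \partial_{x}^{2})^{4} G_{0}\|_{V}$ to be small, and why $N' \geq 13$ is imposed so that the tame product and commutator estimates in $H^{N'}$ close with room to spare. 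This weighted-energy management is the technical heart of the argument.
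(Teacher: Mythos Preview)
This theorem is not proved in the present paper at all; it is quoted verbatim from Xu~[\ref{[2]}] as background. The paper's own contribution is the analogous Theorem~\ref{Theorem 1.5} (proved as Theorem~\ref{Theorem 5.1}) on $\mathbb{R}_x\times\mathbb{R}_y$. So there is no ``paper's own proof'' to compare against here, but since you are clearly sketching the strategy that underlies both Xu's result and Theorem~\ref{Theorem 5.1}, let me comment against the argument actually carried out in Section~5.

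Two points deserve correction. First, the construction is \emph{not} done by solving terminal-value problems $U_T(T)=e^{iT\mathcal{A}}G(\pi\ln T)$ and extracting a limit by compactness. Both Xu and this paper set $K(t)=e^{-it\mathcal{A}}U(t)-G(\pi\ln t)$ and show directly that the map
\[
\Phi(K)(t)=i\int_t^\infty\Bigl\{\mathcal{N}^\sigma[K+\widetilde G,K+\widetilde G,K+\widetilde G]-\tfrac{\pi}{\sigma}\mathcal{R}[\widetilde G,\widetilde G,\widetilde G]\Bigr\}\,d\sigma
\]
is a contraction on a small ball in a weighted space-time norm. No compactness is used, and one does not propagate $V^+$ control on $U$; only $G$ carries $V^+$ bounds (via Proposition~\ref{Proposition 4.5}), while $K$ is controlled in $V$.

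Second, and more substantively, your identification of the ``main obstacle'' is off. After the change of variables $\mu=\xi-\xi_1$, $\kappa=\xi-\xi_3$, the oscillatory phase becomes $2t\mu\kappa+t\,\omega(p,p_1,p_2)$, which is \emph{independent of $\xi$}. Hence $x=i\partial_\xi$ never lands on $e^{it\Phi}$; it distributes via the Leibniz rule (\ref{6.3}) onto the three inputs, and the transfer Lemma~\ref{Lemma 6.2} converts $L^2$ bounds into $S'$ bounds for free. The weighted norm is therefore not where the difficulty lies. The actual work is in Proposition~\ref{Proposition 3.1}: the remainder $\mathcal{E}^t$ does \emph{not} satisfy a clean pointwise bound $\|\mathcal{E}^t\|_V\lesssim t^{-1-\delta}$ as you write. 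One must split $\mathcal{E}^t=\mathcal{E}_1^t+\mathcal{E}_2^t$, where $\mathcal{E}_1^t$ decays pointwise and $\mathcal{E}_2^t=\partial_t\mathcal{E}_3^t$ is handled only after integration by parts in time (the normal form you mention). The regularity threshold $N'\ge 13$ in Xu's work arises from the chain of estimates in this decomposition (high-frequency Strichartz, multiplier bounds, Besov control), not from any interaction of $x$ with the phase.
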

Also, H. Xu has obtained the following asymptotic completeness in [\ref{[2]}].
\begin{theorem}
[\ref{[2]}, Theorem 1.3]
\label{theorem 1.1}
Given $N' \geq 13$, there exists $\varepsilon=\varepsilon(N')>0$ such that if $U_{0} \in V^{+}$ satisfies
\begin{align*}
\left\|U_{0}\right\|_{V^{+}} \leq \varepsilon,
\end{align*}
and if $U(t)$ solves (\ref{1.2}) with initial data $U_0$, then $U \in C([0,+\infty): V)$ exists globally and exhibits modified scattering to its resonant dynamics (\ref{1.4}) in the following sense: there exists $G_{0} \in V$ such that if $G(t)$ is the solution of (\ref{1.4}) with initial data $G(0)=G_{0}$, then
\begin{align*}
\left\|\mathrm{e}^{-i t \mathcal{A}} U(t)-G(\pi \ln t)\right\|_{V} \rightarrow 0 \text { as } t \rightarrow \infty.
\end{align*}
\end{theorem}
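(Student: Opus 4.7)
The plan is to follow the framework developed by Hani, Pausader, Tzvetkov, and Visciglia for cubic NLS on $\mathbb{R}\times\mathbb{T}^d$, adapted to the dispersion $\mathcal{A}=\partial_x^2-|D_y|$. Introduce the profile $F(t):=e^{-it\mathcal{A}}U(t)$; developing the cubic nonlinearity in a $y$-Fourier series on $\mathbb{T}$ gives, for every $n\in\mathbb{Z}$,
\begin{equation*}
i\partial_t \widehat{F}_n(t,x)=\sum_{n_1-n_2+n_3=n} e^{it\omega(n_1,n_2,n_3,n)}\,\mathcal{N}_x\bigl(\widehat{F}_{n_1},\widehat{F}_{n_2},\widehat{F}_{n_3}\bigr)(x),
\end{equation*}
where the resonance function is $\omega(n_1,n_2,n_3,n)=|n|-|n_1|+|n_2|-|n_3|$ and $\mathcal{N}_x$ is the ordinary cubic Schrödinger nonlinearity with linear flows $e^{\pm it\partial_x^2}$ acting on each factor.

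Split the sum into the resonant set $\{\omega=0\}$ and its complement. Because the $y$-symbol is $|\cdot|$, resonance forces (up to trivial pairings) all four indices to have the same sign, and the zero mode is suppressed by the symmetry assumption (\ref{1.71}) which guarantees $\widehat{F}_0\equiv 0$. A stationary-phase expansion in the $x$-variables extracts from the resonant part a leading factor $\pi/t$ multiplying exactly the non-chiral Szegő nonlinearity $\mathcal{R}[F,F,F]$ of (\ref{1.4}); in the rescaled time $s=\pi\ln t$, the effective equation for $F$ is (\ref{1.4}). The non-resonant part ($\omega\neq 0$) is handled by integration by parts in $t$, i.e.\ a normal-form transformation, which produces an additional decay factor $1/t$ at the price of several $y$-derivatives, precisely what the weight $(1-\partial_x^2)^4$ in the $V^+$ norm is designed to absorb.

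The core of the argument is then a bootstrap on $[0,T]$ in the two norms: assume $\|F(t)\|_V\le 2\varepsilon$ and $\|F(t)\|_{V^+}\le 2\varepsilon\langle t\rangle^\delta$ for a small $\delta>0$, and close both estimates using the $1/t$ dispersive decay, the smallness of $\varepsilon$, and the normal-form gain on non-resonant frequencies. Global existence in $V$ follows, together with the identity $\partial_t F(t)=-i(\pi/t)\mathcal{R}[F,F,F]+R(t)$ with $\|R(t)\|_V=O(t^{-1-\kappa})$ for some $\kappa>0$. Denoting by $\Phi_s^{\mathcal{R}}$ the flow of (\ref{1.4}), this forces the map $t\mapsto \Phi_{-\pi\ln t}^{\mathcal{R}}\bigl(F(t)\bigr)$ to be Cauchy in $V$ as $t\to\infty$; its limit defines the asymptotic state $G_0\in V$, and the continuity of $\Phi^{\mathcal{R}}$ on $V$ then yields $\|e^{-it\mathcal{A}}U(t)-G(\pi\ln t)\|_V\to 0$.

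The principal obstacle is the weighted norm $\|xF\|_{L^2_{x,y}}$: the multiplier $x$ does not commute with the $x$-stationary phase and must be traded against a derivative via the identity $x=e^{it\partial_x^2}(x+2it\partial_x)e^{-it\partial_x^2}$, which injects a linearly growing factor $t$ that has to be cancelled exactly by the $1/t$ dispersive decay in the nonlinearity. The tightness of this balance, combined with the need to absorb enough normal-form derivative losses in the $V^+$ bootstrap, is what forces the regularity threshold $N'\geq 13$ and the restriction to initial data that are small in the stronger norm $V^+$; once these are in place, every other step is routine multilinear harmonic analysis on $\mathbb{R}\times\mathbb{T}$.
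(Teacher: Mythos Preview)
This theorem is not proved in the present paper: it is quoted from H.~Xu's article~[\ref{[2]}] as background (the paper states ``H.~Xu has also obtained the following asymptotic completeness in~[\ref{[2]}]'' and provides no proof). There is therefore no proof in this paper to compare your proposal against.

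That said, your sketch is a fair high-level summary of the Hani--Pausader--Tzvetkov--Visciglia scheme that Xu adapts: profile $F=e^{-it\mathcal{A}}U$, splitting into $\omega=0$ and $\omega\neq 0$, stationary phase in $x$ to extract the $\pi/t\,\mathcal{R}$ term, normal forms on the oscillatory piece, a bootstrap in $V$ and $V^+$, and finally showing $t\mapsto \Phi^{\mathcal{R}}_{-\pi\ln t}(F(t))$ is Cauchy in $V$. One small inaccuracy: the weight $(1-\partial_x^2)^4$ in $V^+$ is about absorbing \emph{$x$-frequency} losses (coming from Littlewood--Paley cutoffs in $x$ and from trading $x$ for $\partial_\xi$ in the stationary phase), not $y$-derivative losses from the normal form; the $y$-frequency denominators $1/\omega$ are handled instead by the discreteness of $\omega$ on $\mathbb{T}$ (so $|\omega|\ge 1$ off the resonant set) and by the $H^{N'}$ regularity. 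Also, the role of the odd-parity assumption~(\ref{1.71}) is exactly as you say, to kill the zero $y$-mode.
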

Theorem {\ref{theorem 1.2}} and {\ref{theorem 1.1}} make up the modified scattering theory of (\ref{1.2}). 
H. Xu has also combined Theorem \ref{theorem 1.2} with Proposition \ref{Proposition 1.39} by P. Gérard and S. Grellier, and she has obtained the following infinite cascade result.
\begin{theorem}
[\ref{[2]}, Corollary 5.1]
\label{theorem 1.3}
Given $N' \geq 13$, then for any $\varepsilon > 0$, there exists $U_{0} \in V^{+}$ with $\left\|U_{0}\right\|_{V^{+}} \leq \varepsilon$, such that the corresponding solution to (\ref{1.2}) satisfies
\begin{align*}
\limsup _{t \rightarrow \infty} \frac{\|U(t)\|_{L_{x}^{2} H_{y}^{s}}}{(1+\log |t|)^{M}}=\infty, \quad \forall s>1 / 2, \quad \forall M \geq 0.
\end{align*}
\end{theorem}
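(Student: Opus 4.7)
The strategy is to combine the modified wave operators from Theorem \ref{theorem 1.2} with the cascade result of Proposition \ref{Proposition 1.39} via a tensor-product construction. First, I would apply Proposition \ref{Proposition 1.39} to the closed Szegő-invariant subspace of $C^\infty_+(\mathbb{T})$ consisting of functions whose nonzero Fourier coefficients lie on odd positive integers, to select an initial datum $u_0$ whose Szegő evolution $v(t)$ satisfies $\|v(\bar t_n)\|_{H^s}/\bar t_n^{M_0} \to \infty$ along some $\bar t_n \to \infty$, for a prescribed large exponent $M_0$. This subspace is invariant because the interaction $u_1\bar u_2 u_3$ sends odd$\times$odd$\times$odd modes to odd modes, so the Baire category argument of [\ref{[4]}] adapts without change. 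Restriction to odd $y$-modes automatically enforces the anti-periodicity condition (\ref{1.71}).

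I would then set $G_0(x,y):=\delta\,\phi(x)u_0(y)$ with $\phi\in\mathcal{S}(\mathbb{R})$ chosen so that $\widehat\phi$ is a smooth nonnegative bump normalised by $\widehat\phi(0)=1$, and $\delta>0$ small enough that $\|G_0\|_{V^+}\leq\varepsilon$. The non-chiral Szegő equation (\ref{1.4}) decouples in the Fourier variable $\xi$; since $u_0$ has only positive $y$-modes, one has $\widehat G_-\equiv 0$, and the scaling covariance $u_0\mapsto \mu u_0\Rightarrow v\mapsto \mu\,v(|\mu|^2\cdot)$ of the cubic Szegő equation yields the explicit formula
\[
\widehat G(T,\xi,y)=\delta\,\widehat\phi(\xi)\,v\bigl(\delta^2|\widehat\phi(\xi)|^2 T,\,y\bigr).
\]
Applying Theorem \ref{theorem 1.2} with this $G_0$ produces $U\in C([0,\infty);V)$ solving (\ref{1.2}) with $\|e^{-it\mathcal{A}}U(t)-G(\pi\ln t)\|_V\to 0$. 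Because $e^{-it\mathcal{A}}$ is a Fourier multiplier of unit modulus (hence an isometry on $L^2_x H^s_y$) and $V$ embeds continuously into $L^2_x H^s_y$ for $s>1/2$ and $N'\geq 13$, the modified scattering relation reduces the theorem to the assertion $\|G(T)\|_{L^2_x H^s_y}/T^M\to\infty$ along a subsequence $T_n\to\infty$ (with $T=\pi\ln t$).

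To prove this growth I would take $T_n:=\bar t_n/\delta^2$ and apply Plancherel in $x$ to write
\[
\|G(T_n)\|_{L^2_x H^s_y}^2=\int_\mathbb{R}\delta^2|\widehat\phi(\xi)|^2\,\|v(\delta^2|\widehat\phi(\xi)|^2 T_n)\|_{H^s(\mathbb{T})}^2\,d\xi.
\]
For $\xi$ near $0$ the argument $\delta^2|\widehat\phi(\xi)|^2 T_n$ lies close to $\bar t_n$, where $\|v\|_{H^s}$ is super-polynomially large. The Szegő equation supplies the a priori bound $\|\partial_t v\|_{H^s}\lesssim\|v\|_{H^s}^3$ from the algebra property of $H^s(\mathbb{T})$ for $s>1/2$, so a Grönwall-type argument yields $\|v(\sigma)\|_{H^s}\gtrsim\|v(\bar t_n)\|_{H^s}$ on a time window of width $\sim\|v(\bar t_n)\|_{H^s}^{-2}$ around $\bar t_n$. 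A change of variables $\sigma=\delta^2|\widehat\phi(\xi)|^2 T_n$ converts this into the lower bound $\|G(T_n)\|_{L^2_x H^s_y}\gtrsim \bar t_n^{M_0/2-C}$ for a universal $C$, and choosing $M_0$ sufficiently large in Proposition \ref{Proposition 1.39} produces the required growth, which via $T=\pi\ln t$ translates into $\|U(t_n)\|_{L^2_x H^s_y}/(\log t_n)^M\to\infty$.

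The main obstacle is precisely this last step: the factored solution $G(T)$ is a continuous superposition of Szegő evolutions at different times $\sigma\in[0,\delta^2 T]$, and integrating in $\xi$ threatens to wash out the cascade, which is only guaranteed at the discrete peak times $\bar t_n$. The quantitative time-continuity of the $H^s$ norm must be exploited to widen each peak enough that the change-of-variables Jacobian does not annihilate the gain, and one has to balance the loss from the width of the window against the sharpness of the cascade rate. A secondary technical point is that Theorem \ref{theorem 1.2} returns $U$ only in $C([0,\infty);V)$, so one may need to take $U_0:=U(t_\ast)$ for a large $t_\ast$ and use time-translation invariance to ensure smallness in $V^+$; this should follow from the fixed-point construction underlying Theorem \ref{theorem 1.2}, which controls $U$ in $V^+$ at large times in terms of $\|G_0\|_{V^+}$.
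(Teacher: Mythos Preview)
Your approach is essentially correct, but you have made the argument considerably harder than necessary, and the ``main obstacle'' you identify is in fact an artefact of a suboptimal choice of bump function.

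The paper (sketched in Remark~\ref{remark 1.12}) takes the same tensor-product ansatz $\widehat{G}_0(\xi,y)=\rho\,\psi(\xi)u_0(y)$, but with the crucial additional requirement that $\psi\equiv 1$ on $|\xi|\le 1$. With this choice, for every $|\xi|\le 1$ one has $\widehat{G}(t,\xi,y)=\rho\,u(\rho^2 t,y)$, i.e.\ the Szeg\H{o} time argument is \emph{independent of $\xi$} on the plateau. The lower bound is then immediate:
\[
\|G(t)\|_{L^2_x H^s_y}^2 \ge \int_{|\xi|\le 1}\|\widehat{G}(t,\xi,y)\|_{H^s_y}^2\,d\xi = 2\rho^2\,\|u(\rho^2 t)\|_{H^s}^2,
\]
and Proposition~\ref{Proposition 1.39} (which already gives growth faster than any power for a \emph{single} $u_0$) finishes the proof in one line. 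No Gr\"onwall argument, no time-window analysis, and no balancing of a Jacobian against the cascade rate is needed. Your version, with $\widehat\phi(0)=1$ but not flat near $0$, forces you to integrate over a continuum of distinct Szeg\H{o} times and then recover the peak via the short-time stability estimate $\|\partial_t v\|_{H^s}\lesssim\|v\|_{H^s}^3$; this works, but it is work you could have avoided.

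A minor point: your phrasing ``for a prescribed large exponent $M_0$'' and ``choosing $M_0$ sufficiently large'' suggests you select $M_0$ in advance. In fact Proposition~\ref{Proposition 1.39} delivers a single $u_0$ whose evolution beats \emph{every} polynomial, so the same $U_0$ works simultaneously for all $M\ge 0$ and all $s>1/2$, as the statement requires. Your secondary technical concern about extracting $U_0\in V^+$ with small norm from a solution that Theorem~\ref{theorem 1.2} only places in $C([0,\infty);V)$ is legitimate; the paper does not address it in Remark~\ref{remark 1.12} either, and one indeed has to appeal to the finer control available in the fixed-point construction underlying the modified wave operator.
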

\begin{remark}
\label{remark 1.12}
Theorem \ref{theorem 1.3} shows that the limit superior at infinity of $\|U(t)\|_{L_{x}^{2} H_{y}^{s}}$ with $s>\frac{1}{2}$ is infinity. However, we cannot deduce $ \liminf _{t \rightarrow \infty} \|U(t)\|_{L_{x}^{2} H_{y}^{s}} < \infty$ for any $s>0$ directly from Proposition \ref{Proposition 1.39}. Here we show the sketch of the proof of Theorem \ref{theorem 1.3} and we show also why we cannot infer $\liminf _{t \rightarrow \infty} \|U(t)\|_{L_{x}^{2} H_{y}^{s}} < \infty$.  

We choose a cutoff function $\psi \in C_c^\infty(\mathbb{R})$ as follows:  $\psi(\xi) = 1 $ when $|\xi| \leq 1$, $0<\psi(\xi)<1 $ when $1< |\xi| < 2 $ and $\psi(\xi)=0$ when $|\xi| \geq 2$. Let $\rho> 0$ be a constant to be determined, and we construct $\widehat{G}_0(\xi,y) : = \rho\, \psi(\xi) u_0(y)$ with $u_0(y)$ in a dense $G_\delta$ subset of $C_{+}^{\infty}(\mathbb{T})$ as in Proposition \ref{Proposition 1.39}. We verify that $G_0(x,y) \in V^{+}$, and for any $\varepsilon > 0$, we choose $\rho_{\varepsilon}$ small enouth to have $\|G_0(x,y)\|_{V^{+}} \leq \varepsilon$, which satisfies the condition of Theorem {\ref{theorem 1.2}}. Let $u(t,y)$ be the corresponding solution to (\ref{1.505}) with the initial data $u_0(y)$, then $G(t,x,y) : = \mathcal{F}_{\xi\to x}^{-1}\left(\rho_{\varepsilon}\psi(\xi) u(\rho_{\varepsilon}^2\psi(\xi)^2 t, y)\right)$ is the solution to
\begin{align*}
\left\{\begin{array}{l}i \partial_{t} G=\mathcal{R}[G, G, G], \quad (x,y) \in \mathbb{R} \times \mathbb{T}, \\ G(0)=G_{0}.\end{array}\right.
\end{align*}
For $s>\frac{1}{2}$ and $M > 0$, by Proposition \ref{Proposition 1.39}, we verify that
\begin{equation}
\label{1.105}
\begin{aligned}
\limsup_{t\to\infty}\frac{\|G(t,x,y)\|_{L_{x}^{2} H_{y}^{s}}}{|t|^M} & \geq \limsup_{t\to\infty} \frac{\left(\int_{|\xi|\leq 1}\|\widehat{G}(t, \xi,y)\|_{H_{y}^{s}}^{2} d \xi\right)^{\frac{1}{2}}}{|t|^M}\\ & =\sqrt{2}\rho_{\varepsilon}^{2M+1} \limsup_{t\to\infty}\frac{\|u(\rho_{\varepsilon}^2t,y)\|_{H_{y}^{s}}}{\rho_{\varepsilon}^{2M} |t|^M} = \infty.
\end{aligned}
\end{equation}
Combining (\ref{1.105}) with Theoerem \ref{theorem 1.2}, we deduce Theorem \ref{theorem 1.3}. 

Yet, the above method is not sufficient to prove $\liminf _{t \rightarrow \infty} \|G(t,x,y)\|_{L_{x}^{2} H_{y}^{s}} < \infty$. From Proposition \ref{Proposition 1.39}, we know that there exists a sequence \{$\underline{t}^n\}_n$ with $\underline{t}^n \to \infty$ such that $\lim_{n\to\infty}\|u(\underline{t}^n, y)\|_{H_{y}^s} < \infty$ for any $s > 0$. Then we have
\begin{equation}
\label{1.106}
\begin{aligned}
\liminf_{n\to\infty}\|G(\frac{\underline{t}^n}{\rho_{\varepsilon}^2},\xi,y)\|_{L_x^2 H_{y}^{s}}& = \liminf_{n\to\infty} \left(\int_{|\xi|< 2} \|\widehat{G}(\frac{\underline{t}^n}{\rho_{\varepsilon}^2},\xi,y)\|_{H_{y}^{s}}^2 d\xi\right)^{\frac{1}{2}}  \\ & \leq  \rho_{\varepsilon}\liminf_{n\to\infty} \left(\int_{|\xi|< 2}\|u(\psi(\xi)^2\underline{t}^n,y)\|_{H_{y}^{s}}^{2} d \xi\right)^{\frac{1}{2}} \\ &  \leq \sqrt{2}\rho_{\varepsilon} \lim_{n\to\infty} \|u(\underline{t}^n,y)\|_{H_{y}^{s}} + \rho_{\varepsilon} \liminf_{n\to\infty}\left(\int_{1<|\xi|< 2}\|u(\psi(\xi)^2\underline{t}^n,y)\|_{H_{y}^{s}}^{2} d \xi\right)^{\frac{1}{2}}  .
\end{aligned}
\end{equation}
From (\ref{1.106}), we cannot deduce $\liminf_{n\to \infty} \|G(\frac{\underline{t}^n}{\rho_{\varepsilon}^2},x,y)\|_{L_{x}^{2} H_{y}^{s}} < \infty$ because we do not even know whether  $\liminf_{n\to\infty}\|u(\psi(\xi)^2\underline{t}^n,y)\|_{H_{y}^{s}}^{2} < \infty$ when $1<|\xi|<2$.
\end{remark}
\begin{remark}
Let $u(t,y)$ be the corresponding solution to (\ref{1.505}) with the initial data $u_0(y)$ in a dense $G_\delta$ subset of $C_{+}^{\infty}(\mathbb{T})$ as in Proposition \ref{Proposition 1.39}. In fact, from [\ref{[10]}], we have
\begin{equation}
\label{1.10611}
\limsup _{T \rightarrow+\infty} \frac{1}{T} \int_{0}^{T}\|u(t)\|_{H^{1}} d t=+\infty.
\end{equation}
Formula (\ref{1.10611}) indicates the relative length of the time intervals where Sobolev norms of $u$ are large is large enough. Let $\varrho>0$ be a constant to be determined, and we take $\chi(\xi):= \varrho\, \mathrm{e}^{-\xi^2/2}$. We construct $\widehat{G}_0(\xi,y) : = \chi(\xi) u_0(y)$, and we verify that $G_0(x,y) \in V^{+}$. For any $\varepsilon > 0$, we choose $\varrho_{\varepsilon}$ small enouth to satisfy $\|G_0(x,y)\|_{V^{+}} \leq \varepsilon$. Then we have $G(t,x,y) : = \mathcal{F}_{\xi\to x}^{-1}\left(\chi(\xi) u(\chi(\xi)^2 t, y)\right)$ is the solution to
\begin{align*}
\left\{\begin{array}{l}i \partial_{t} G=\mathcal{R}[G, G, G], \quad (x,y) \in \mathbb{R} \times \mathbb{T}, \\ G(0)=G_{0}.\end{array}\right.
\end{align*}
We have
\begin{align*}
& \frac{1}{T} \int_{0}^{T}\|G(t)\|_{L_x^2 H_y^{1}}^2 d t\\ = & \frac{1}{T} \int_{0}^{T} \int_{\mathbb{R}}\chi(\xi)^2 \|u(\chi(\xi)^2 t, y)\|_{H_y^1}^2 d\xi dt \\= & \frac{1}{T}  \int_{0}^{\varrho_{\varepsilon}^2 T} \|u(s, y)\|_{H_y^1}^2 \left(\int_{\chi(\xi)^2\geq\frac{s}{T}} d\xi\right)ds \\ = &  \frac{2}{T} \int_{0}^{\varrho_{\varepsilon}^2 T} \|u(s, y)\|_{H_y^1}^2 \left(\log \left(\frac{ \varrho_{\varepsilon}^2 T}{s}\right)\right)^{\frac{1}{2}}ds \\ \gtrsim & \frac{1}{T} \int_{0}^{\frac{\varrho_{\varepsilon}^2 T}{2}} \|u(s, y)\|_{H_y^1}^2ds \\ \gtrsim & \, \varrho_{\varepsilon}^2\left(\frac{2}{\varrho_{\varepsilon}^2 T}  \int_{0}^{\frac{\varrho_{\varepsilon}^2 T}{2}} \|u(s, y)\|_{H_y^1} ds\right)^2.
\end{align*}
By (\ref{1.10611}), we deduce that 
\begin{equation}
\label{1.10612}
\limsup _{T \rightarrow+\infty} \frac{1}{T} \int_{0}^{T}\|G(t)\|_{L_x^2 H_y^{1}}^2 d t = +\infty.
\end{equation}
Formula (\ref{1.10612}) indicates the relative length of the time intervals where Sobolev norms of $G$ are large is large enough. However, (\ref{1.10612}) is not sufficient to prove $\liminf _{t \rightarrow \infty} \|G(t)\|_{L_{x}^{2} H_{y}^{s}} = \infty$ for any $s > 0$, so we cannot deduce that the corresponding solution $U$ to (\ref{1.2}) satisfies $ \liminf _{t \rightarrow \infty} \|U(t)\|_{L_{x}^{2} H_{y}^{s}} = \infty$ for any $s>0$ either.
\end{remark}
\subsection{Our main results}
The aim of this paper is to prove the existence of modified wave operators and corresponding cascade result for the half wave Schrödinger equation (\ref{1.1}) with small decaying data. In our case, we set $N \geq 3$ is an arbitrary integer, and we define the following norms,
\begin{align*}
& \|F\|_{Z}:=\|\widehat{F}(\xi, y)\|_{L_{\xi}^\infty\mathcal{G}_y},\\ & \|F\|_{S}:=\|F\|_{H_{x, y}^{N}} + \|x F\|_{L_x^2 H_y^2}, \\ & \|F\|_{\mathcal{Y}}:= \|F\|_{S} + \| F\|_{Z}, \\ & \|F\|_{S^{+}}:=\|F\|_{S}+ \|xF\|_{L_x^2 H_y^3} +\|\left(1+|D_x|\right) F\|_{S}+\|x F\|_{S},
\\ & \|F\|_{\mathcal{Y}^{+}}:=\|F\|_{S^+}+\|F\|_Z + \|\left(1+|D_x|\right) F\|_{Z}+\| x F\|_{Z} +\| x^2 F\|_{Z},
\end{align*}
where $\mathcal{G}:= L^2(\mathbb{R}) \bigcap \dot{B}_{1,1}^{1}(\mathbb{R})$. \\\\
We introduce the following non chiral cubic Szeg\H o equation on $\mathbb{R} \times \mathbb{R}$, 
\begin{equation}
\label{1.7}
\begin{array}{l}i \partial_{t} G(t)=\mathcal{R}\left[G(t), G(t), G(t)\right], \\ \mathcal{F}_{x\rightarrow\xi} \mathcal R[G,G,G]:=
\Pi_+(|\widehat{G}_+|^2 \widehat{G}_+)+
\Pi_-(|\widehat {G}_-|^2\widehat{G}_-). \end{array}
\end{equation}
Here $\widehat{G}(\xi, \cdot)=\mathcal{F}_{\mathbb{R}_x} G(\xi, \cdot)$ with $\xi \in \mathbb{R}$, $\Pi_{+}$ is the Szeg\H o projector onto the non-negative Fourier modes in the variable $y$, $\Pi_{-}:=\mathrm{Id}-\Pi_{+}$, and $G_{\pm}:=\Pi_{\pm}(G)$.\\\\
Our first result provides the existence of modified wave operators.
\begin{theorem}
\label{Theorem 1.5}
Given $N \geq 3$, there exists $\varepsilon(N) > 0$ such that if $G_{0} \in \mathcal{Y}^{+}$ satisfies 
\begin{equation}
\left\|G_{0}\right\|_{\mathcal{Y}^{+}} \leq \varepsilon.
\end{equation}
If $G$ is the solution of (\ref{1.7}) with initial data $G_0$, then there exists a unique solution $U$ of (\ref{1.1}) such that $\mathrm{e}^{-i t \mathcal{A}} U(t) \in C([0, \infty): \mathcal{Y})$ and 
\begin{align*}
\left\|\mathrm{e}^{-i t \mathcal{A}} U(t)-G(\pi \ln t)\right\|_{\mathcal{Y}} \rightarrow 0 \text { as } t \rightarrow \infty.
\end{align*}
\end{theorem}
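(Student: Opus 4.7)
The plan is to follow the profile-decomposition and modified-scattering paradigm of Hani--Pausader--Tzvetkov--Visciglia as adapted by H. Xu to the cylinder in [2], but replacing the discrete Fourier variable in $y$ by a continuous one. This forces the use of the endpoint Besov space $\dot B^1_{1,1}(\mathbb R)$ inside the $Z$-norm, whereas on $\mathbb R\times\mathbb T$ the analogous weight is only $\ell^1(\mathbb Z_y)$, which is a more comfortable Banach algebra.

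First I would introduce the profile $F(t):=e^{-it\mathcal A}U(t)$. Equation (\ref{1.1}) becomes, on the Fourier side,
\begin{align*}
i\partial_t\widehat F(t,\xi,\eta)=\int e^{it\Phi}\,\widehat F(t,\xi_1,\eta_1)\,\overline{\widehat F(t,\xi_2,\eta_2)}\,\widehat F(t,\xi_3,\eta_3)\,d\mu,
\end{align*}
where $d\mu$ enforces $\xi_1-\xi_2+\xi_3=\xi$, $\eta_1-\eta_2+\eta_3=\eta$, and the phase is $\Phi=(\xi_1^2-\xi_2^2+\xi_3^2-\xi^2)+(|\eta_1|-|\eta_2|+|\eta_3|-|\eta|)$. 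The identity $\xi_1^2-\xi_2^2+\xi_3^2-\xi^2=-2(\xi_1-\xi)(\xi_3-\xi)$ locates the critical $\xi$-manifold $\{\xi_1=\xi\}\cup\{\xi_3=\xi\}$; stationary phase in $(\xi_1,\xi_3)$ produces a leading term of size $\pi/t$ supported there, plus an $O(t^{-2})$ remainder. Restricted to the critical manifold, the remaining $\eta$-condition $|\eta_1|-|\eta_2|+|\eta_3|=|\eta|$ with $\eta_1-\eta_2+\eta_3=\eta$ forces the four frequencies to share a common sign outside a measure-zero set, which produces exactly the non-chiral Szegő nonlinearity $\mathcal R$. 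Under the time change $\tau=\pi\ln t$ the leading equation becomes (\ref{1.7}).

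The construction of $U$ is then a backward fixed point on $[T,\infty)$ for $T$ large. Writing $F(t)=G(\pi\ln t)+R(t)$ with $R(t)\to 0$ in $\mathcal Y$ leads to the Duhamel identity
\begin{align*}
R(t)=-i\int_t^\infty\Bigl(e^{-is\mathcal A}\bigl(|e^{is\mathcal A}F|^2 e^{is\mathcal A}F\bigr)(s)-\tfrac{1}{s}\mathcal R[G,G,G](\pi\ln s)\Bigr)\,ds,
\end{align*}
which I would solve by contraction in a small ball of $C([T,\infty);\mathcal Y)$. Closing this contraction requires two ingredients: a uniform $\mathcal Y^+$-bound on $G(\tau)$ obtained from an energy estimate on (\ref{1.7}) exploiting the algebra structure of $\mathcal R$ and the $L^2$-boundedness of $\Pi_\pm$; and a decay estimate of the form $\lesssim\varepsilon^3 s^{-1-\delta}$ in $\mathcal Y$ for the integrand, which comes from combining the stationary-phase expansion of the first step with trilinear Sobolev and weighted estimates in $(\xi,\eta)$. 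The weights $xF$ and $x^2F$ included in $\mathcal Y^+$ provide the $\partial_\xi$ and $\partial_\xi^2$ regularity used in the second-order stationary-phase expansion, while the $H^N_{xy}$ pieces absorb the $\eta$-multilinear losses via fractional Leibniz bounds. Once $U$ is built on $[T,\infty)$, the local theory of Bahri--Ibrahim--Kikuchi extends it to $[0,\infty)$, and the Lipschitz estimate used to close the contraction gives uniqueness in the class $e^{-it\mathcal A}U\in C([0,\infty);\mathcal Y)$.

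The main obstacle will be the propagation of the $Z$-norm. On the cylinder the corresponding norm is built on $\ell^1(\mathbb Z_y)$, a Banach algebra that commutes transparently with the Szegő projectors. On the plane one is forced to work at the endpoint exponent $\dot B^1_{1,1}(\mathbb R)$: the boundedness of $\Pi_\pm$, the algebra-type trilinear estimate, and the compatibility with the stationary-phase remainder all sit at the critical scaling, leaving no slack for losses. In particular, the second-order weighted control $\|x^2F\|_Z$, which has no counterpart in H. Xu's norms, is included in $\mathcal Y^+$ precisely in order to pay for the $t^{-2}$ correction from the second-order stationary-phase expansion, and its compatibility with the Besov endpoint structure of $\mathcal G$ is the technical heart of the argument and will demand a careful paraproduct analysis.
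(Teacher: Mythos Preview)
Your high-level architecture is correct and matches the paper: write $F=e^{-it\mathcal A}U$, subtract the resonant profile $\widetilde G(t)=G(\pi\ln t)$, and close a backward Duhamel contraction for $K=F-\widetilde G$ on $[1,\infty)$. The identification of the $\xi$-critical manifold and of the $\eta$-resonance set leading to $\mathcal R$ is also right.

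The genuine gap is your claimed remainder rate. You write that stationary phase in $(\xi_1,\xi_3)$ gives the $\pi/t$ leading term ``plus an $O(t^{-2})$ remainder,'' and that $\|x^2F\|_Z$ is there to absorb this second-order correction. Neither is how the argument actually closes. The remainder $\mathcal E^t=\mathcal N^t-\tfrac{\pi}{t}\mathcal R$ is only shown to satisfy bounds of the type $t^{-1-c\delta}$ with $\delta<10^{-4}$ (Proposition~3.1), and even this requires a three-part decomposition that is \emph{not} a second-order stationary phase: a high-$\xi$-frequency piece handled by bilinear Strichartz (Lemma~3.3); a ``fast oscillation'' piece $\widetilde{\mathcal N}^t$ treated by a normal form $\mathcal E_2=\partial_t\mathcal E_3$ (Lemma~3.5); and the resonant-level piece $\mathcal N_0^t-\tfrac{\pi}{t}\mathcal R$ (Lemma~3.7). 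The reason you cannot get $t^{-2}$ is precisely the new difficulty you do not mention: on $\mathbb R_y$ the resonance function $\omega(\eta,\eta_1,\eta_2)$ ranges over a \emph{continuum} and can be arbitrarily small, so the $L^\infty_\omega$ gain that Xu uses on $\mathbb T$ is unavailable. The paper replaces it by an $L^2$ argument in the $\eta$-variable, paying a $t^{1/2}$ loss that is compensated by the $t^{-1/2}$ dispersive gain in $x$; the technical core is Lemma~3.8 (the $L^1_y$ bound on $\int e^{iy\eta}\tfrac{e^{-2it\eta}-1}{i\eta}\varpi(\eta/2^k)\,d\eta$), which has no analogue in the cylinder paper.

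Relatedly, the weight $\|x^2F\|_Z$ in $\mathcal Y^+$ is not used to improve the stationary-phase remainder. It enters only through Proposition~4.5, where the chain of Gronwall inequalities propagating $\|xG\|_{\mathcal Y}$ forces control of $\|x^2\widetilde G\|_Z$; this is what keeps $\|\widetilde G(t)\|_{\mathcal Y^+}\lesssim\varepsilon t^{\delta/100}$ and hence allows the $X_T^+$ hypothesis of Proposition~3.1 to be invoked on $\widetilde G$. Finally, the contraction is not run in a plain ball of $C([T,\infty);\mathcal Y)$ but in the weighted space $\mathfrak A$ with \emph{different} time exponents on $\|K\|_{S'}$, $\|xK\|_{L^2_xH^2_y}$, $\|K\|_Z$, and $\|\partial_tK\|_{\mathcal Y}$; the staggering of these exponents against the $t^{-1-c\delta}$ error bounds and the estimates (3.38), (3.385), (4.8)--(4.10) is exactly what makes (\ref{5.7})--(\ref{5.13}) close, and a uniform $\mathcal Y$-ball would not.
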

Combining Theorem {\ref{Theorem 1.5}} with the Proposition \ref{Proposition 1.4}, we deduce the following cascade result.
\begin{theorem}
\label{Theorem 1.6}
Given $N \geq 3$, then for any $\varepsilon > 0$, there exists $U_{0} \in \mathcal{Y}^{+}$ with $\left\|U_{0}\right\|_{\mathcal{Y}^{+}} \leq \varepsilon$, such that the corresponding solution to (\ref{1.1}) satisfies
\begin{equation}
0<\liminf _{t \rightarrow \infty} \frac{\|U(t)\|_{L_{x}^{2} H_{y}^{1}}}{(1+\log |t|)} \leq  \limsup _{t \rightarrow \infty} \frac{\|U(t)\|_{L_{x}^{2} H_{y}^{1}}}{(1+\log |t|)} < +\infty.
\end{equation}
\end{theorem}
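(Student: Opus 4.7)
The argument follows the blueprint of Remark \ref{remark 1.12} with two essential modifications. First, the Szeg\H o-cascade input is now Proposition \ref{Proposition 1.4}: a single \emph{rational} solution $u\in C(\mathbb R;H^{1}_{+}(\mathbb R))$ of the line Szeg\H o equation (\ref{1.51}) satisfying the two-sided bound $\|u(t)\|_{\dot H^{1}}\asymp|t|$ as $t\to\infty$. Second, the bridge to the half-wave Schr\"odinger dynamics is now Theorem \ref{Theorem 1.5} in place of Theorem \ref{theorem 1.2}. Because Proposition \ref{Proposition 1.4} supplies both a positive $\liminf$ and a finite $\limsup$, the obstruction noted in Remark \ref{remark 1.12} vanishes and one recovers genuine two-sided asymptotics rather than a mere upper envelope.

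\textbf{Construction of the initial datum.} Fix $\chi\in C_{c}^{\infty}(\mathbb R;\mathbb R)$ real and non-trivial, and let $\rho>0$ be a free small parameter. Pick a representative $u$ of Proposition \ref{Proposition 1.4} so that $u_{0}=u(0)$ lies in $H^{s}(\mathbb R)\cap\mathcal G$ for every $s$; this is the case for a proper rational $u_0$ with poles strictly in the lower half-plane, since all Sobolev norms and the contribution from $\dot B^{1}_{1,1}$ can be checked directly from the explicit Fourier formula (Paley--Wiener gives $\widehat{u_0}$ compactly supported away from zero after mollification, with exponential decay at infinity). Define
$$\widehat{G_0}(\xi,y):=\rho\,\chi(\xi)\,u_0(y),\qquad G(t,x,y):=\mathcal F_{\xi\to x}^{-1}\!\bigl[\rho\,\chi(\xi)\,u\bigl(\rho^{2}\chi(\xi)^{2}\,t,\,y\bigr)\bigr].$$
Since multiplication by $x$ corresponds on the partial Fourier side to $\partial_\xi$ and by $|D_x|$ to $|\xi|$, and $\chi$ is smooth and compactly supported, every semi-norm of $G_0$ entering $\|\cdot\|_{\mathcal Y^{+}}$ is bounded by $C(\chi)\,\|u_0\|_{H^{N}\cap\mathcal G}\,\rho$; hence $\|G_0\|_{\mathcal Y^{+}}\le\varepsilon$ once $\rho$ is small enough. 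Moreover $u_0\in H^{1}_{+}$ forces $\widehat G_{-}(t)\equiv 0$, so (\ref{1.7}) reduces fibrewise in $\xi$ to (\ref{1.51}) with time renormalised by $\rho^{2}\chi(\xi)^{2}$, and $G$ is indeed the solution of (\ref{1.7}) starting from $G_0$.

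\textbf{Cascade estimate for $G$.} By Plancherel,
$$\|G(t)\|_{L^{2}_{x}H^{1}_{y}}^{2}=\rho^{2}\int_{\mathbb R}\chi(\xi)^{2}\,\|u(\rho^{2}\chi(\xi)^{2}t,\cdot)\|_{H^{1}_{y}}^{2}\,d\xi.$$
Proposition \ref{Proposition 1.4} yields constants $0<c\le C<\infty$ and $T_0\ge 0$ such that $c|\tau|\le\|u(\tau)\|_{\dot H^{1}_{y}}\le C|\tau|$ for $|\tau|\ge T_0$, while mass conservation gives $\|u(\tau)\|_{L^{2}_y}\equiv\|u_0\|_{L^{2}_y}$, hence $\|u(\tau)\|_{H^{1}_y}\lesssim 1+|\tau|$ globally. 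Splitting the $\xi$-integral according to whether $\rho^{2}\chi(\xi)^{2}t\ge T_0$ and letting $t\to\infty$ by monotone (lower bound) and dominated (upper bound) convergence, one obtains
$$0<\liminf_{t\to\infty}\frac{\|G(t)\|_{L^{2}_{x}H^{1}_{y}}}{|t|}\le\limsup_{t\to\infty}\frac{\|G(t)\|_{L^{2}_{x}H^{1}_{y}}}{|t|}<+\infty.$$

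\textbf{Transfer to $U$.} Applying Theorem \ref{Theorem 1.5} to $G_0$ produces a solution $U$ of (\ref{1.1}) with $\|\mathrm e^{-it\mathcal A}U(t)-G(\pi\ln t)\|_{\mathcal Y}\to 0$. Since the $\mathcal Y$-norm dominates $\|\cdot\|_{L^{2}_{x}H^{1}_{y}}$ (for $N\ge 1$) and $\mathrm e^{-it\mathcal A}$ is a unimodular Fourier multiplier, hence an isometry of $L^{2}_{x}H^{1}_{y}$, we conclude
$$\bigl|\,\|U(t)\|_{L^{2}_{x}H^{1}_{y}}-\|G(\pi\ln t)\|_{L^{2}_{x}H^{1}_{y}}\,\bigr|\longrightarrow 0,$$
which combined with the previous display gives the desired two-sided bound by $1+\log t$. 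It remains to verify that $U_0:=U(0)$ actually belongs to $\mathcal Y^{+}$ with small norm, which follows from a close inspection of the contraction by which $U$ is constructed in the proof of Theorem \ref{Theorem 1.5} (shrinking $\rho$ one last time if necessary). The main technical point is the lower bound in the cascade estimate for $G$: the positive $\liminf$ of Proposition \ref{Proposition 1.4} must survive the integration in $\xi$, and this is precisely what forces the use of a concrete rational $u$ rather than a generic $G_\delta$ datum as in Remark \ref{remark 1.12}.
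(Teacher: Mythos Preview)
Your argument is essentially the same as the paper's: pick a rational Szeg\H o datum $u_0$ furnished by Proposition \ref{Proposition 1.4}, tensor with a compactly supported $\xi$-profile to build $G_0$, check $G_0\in\mathcal Y^+$ small, deduce the two-sided $|t|$-growth of $\|G(t)\|_{L^2_xH^1_y}$, and transfer via Theorem \ref{Theorem 1.5}. Two comments.

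\emph{The verification that $u_0\in\mathcal G\cap\bigcap_s H^s$.} Your parenthetical justification is confused: Paley--Wiener has nothing to do here, and the Fourier transform of a proper rational function in $H^1_+$ is \emph{not} compactly supported. The correct argument (used in the paper) is that such a $\widehat{u_0}$ is a finite linear combination of terms $\mathbbm{1}_{\eta\geq 0}\,\eta^k e^{-\alpha\eta}$ with $\mathrm{Re}\,\alpha>0$, which immediately gives $u_0\in H^s$ for every $s$; for the $\dot B^1_{1,1}$ part one invokes Peller's theorem identifying the trace norm of $\sqrt{H_{u_0}^2}$ (finite, since $H_{u_0}$ has finite rank for rational $u_0$) with $\|u_0\|_{\dot B^1_{1,1}}$. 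Replace your parenthetical by this.

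\emph{The cutoff and the lower bound.} The paper takes a \emph{plateau} $\psi$ with $\psi\equiv 1$ on $|\xi|\leq 1$, so that on that interval the rescaled time $\rho^2\psi(\xi)^2 t$ is independent of $\xi$ and the lower bound reads off directly. Your choice of a generic $\chi$ works too, via the monotone-convergence argument you sketch (the set $\{\rho^2\chi(\xi)^2 t\geq T_0\}$ increases to $\{\chi\neq 0\}$, so $\int_{\{\cdot\}}\chi^6\,d\xi\to\int\chi^6>0$), but you should make this explicit rather than just name-dropping the convergence. The paper's plateau trick avoids the limit entirely.

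Finally, you are right to flag that Theorem \ref{Theorem 1.5} a priori only yields $U_0\in\mathcal Y$; the paper does not discuss this either, and the statement of Theorem \ref{Theorem 1.6} as written asks for $U_0\in\mathcal Y^+$.
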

\begin{remark}
\label{remark 1.15}
Unlike Theorem \ref{theorem 1.3}, Theorem \ref{Theorem 1.6} shows that $\|U(t)\|_{L_{x}^{2} H_{y}^{1}}$ tends to infinity as $\log t$ when $t\to\infty$. This shows that the half wave Schrödinger equation on the plane is one of the very few dispersive equations admitting global solutions with small and smooth data such that the $H^s$ norms are going to infinity at infinity.
\end{remark}
\begin{remark}
In fact, $N \geq 3$ in Theorem \ref{Theorem 1.5} and Theorem \ref{Theorem 1.6} is the optimal restriction on $N$ in our proof.
\end{remark}The proof of Theorem \ref{Theorem 1.5} is not only an adaptation of methods in [\ref{[2]}], there are many essential differences between the proof of Theorem \ref{Theorem 1.5} and the proof of Theorem \ref{theorem 1.2}. In fact, H. Xu considered the $B_{1,1}^1(\mathbb{T})$ norm on the variable $y$ because the $B_{1,1}^1(\mathbb{T})$ norm is a conserved quantity for the resonant system on $\mathbb{T}$. Also, the property $\Pi_{\pm}(B_{1,1}^1(\mathbb{T})) \subset B_{1,1}^1(\mathbb{T})$ is used in the decomposition on the frequencies with the variable $y$ in [\ref{[2]}]. However, we cannot consider $B_{1,1}^1(\mathbb{R})$ on the variable $y$ because the $B_{1,1}^1(\mathbb{R})$ norm on the variable $y$ is not a conserved quantity for the resonant system on $\mathbb{R}$. By Peller's theorem in [\ref{[5]}], we know that the $\dot{B}_{1,1}^1(\mathbb{R})$ seminorm is a conserved quantity for the cubic Szeg\H o equation on $\mathbb{R}$. To have a conserved norm for the resonant system, we consider the norm $\mathcal{G} = L^2(\mathbb{R}) \bigcap \dot{B}_{1,1}^{1}(\mathbb{R}) $ on the variable $y$, which is a conserved quantity for the resonant system. This means we deal with a essentially different norm from the norm in [\ref{[2]}].  The space $\mathcal{G}$ satisfies $\Pi_{\pm}(\mathcal{G}) \subset \mathcal{G} \subset L^{\infty}(\mathbb{R})$, and we use this property in the decomposition on the frequencies in the variable $y$ in our proof. Also, the $B_{1,1}^1(\mathbb{T})$ norm can be estimated directly by the $H^s(\mathbb{T})$ norm with $s>1$ because $L^2(\mathbb{T}) \subset L^1(\mathbb{T})$, and this is the reason why H. Xu only needs to estimate the Sobolev norm and the $x$ weighted $L_{x,y}^2$ norm of the non-linearity. But we do not have $L^2(\mathbb{R}) \subset L^1(\mathbb{R})$, so we cannot estimate $\mathcal{G}$ directly by $H^s(\mathbb{R})$ for any $s>0$. In this situation, we should estimate the $Z$ norm of the non-linearity directly. To control the $Z$ norm of the non-linearity, we need to use the $x$ weighted $L_x^2 H_y^2$ norm, so we also need to estimate the $x$ weighted $L_x^2 H_y^2$ norm of the non-linearity. One can see the proof of Lemma \ref{Lemma 2.1}, Lemma \ref{Lemma 3.5} and Lemma \ref{Lemma 3.7} for the direct estimates on the $Z$ norm and on the $x$ weighted $L_x^2 H_y^2$ norm. Meanwhile, since we do not have $L^2(\mathbb{R}) \subset L^1(\mathbb{R})$, we cannot construct an intermediary norm to control the $Z$ norm as H. Xu did in [\ref{[2]}], and it may be the reason why we could not get the asymptotic completeness as Theorem \ref{theorem 1.1} for this time, which is another part of modified scattering. In Section 3, we decompose the non-linearity $\mathcal{N}^{t}$ into a combination of the resonance zero part $\mathcal{N}_0^{t}$ and resonance non-zero part $\widetilde{\mathcal{N}}^{t}$, just as H. Xu did in [\ref{[2]}]. When H. Xu estimated the part $\widetilde{\mathcal{N}}^{t}$, she used the $L^\infty$ norm estimate to get directly the boundedness because the resonance level changes discretely on $\mathbb{T}$. But in our case, we cannot use the same method because the resonance level changes continuously on $\mathbb{R}$, so we should use the $L^2$ norm estimate and we get the extra $t^{\frac{1}{2}}$ in the estimate. Thanks to the dispersive estimate on the $x$ direction, we get $t^{-\frac{1}{2}}$ decay in general which is sufficient for our estimate. One can see the proof of Lemma \ref{Lemma 3.5} for details, which is quite different from the proof in [\ref{[2]}]. Furthermore, we improve the restriction on the regularity of Sobolev norm $H_{x,y}^N$, which is $N \geq 3$ and which is better than the restriction on the regularity of $H_{x,y}^{N'}$($N' \geq 13$) in [\ref{[2]}]. Finally, as we mention in Remark \ref{remark 1.12} and Remark \ref{remark 1.15}, the cascade result we obtain shows that the half wave Schrödinger equation on the plane is one of the few dispersive equations admitting global solutions with small and smooth data such that the $H^s$ norm are going to infinity at infinity, but it is still unknown if there exists such a global solution to the half wave Schrödinger equation on the cylinder satisfying this property.
\subsection{Structure of the paper}
In Section 2, we introduce the notation in this paper. In Section 3, we establish the decomposition of the non-linearity $\mathcal{N}^{t}$, 
\begin{align*}
\mathcal{N}^{t}[F, G, H]=\frac{\pi}{t} \mathcal{R}[F, G, H]+\mathcal{E}^{t}[F, G, H],
\end{align*}
where $\mathcal{R}$ is the resonant part and $\mathcal{E}^{t}$ is the remainder. We give the decay estimate of $\mathcal{E}^{t}$, which is fundamental in the proof of Theorem \ref{Theorem 1.5}. In Section 4, we study the solution to the resonant system, which is equivalent to the non-chiral cubic Szeg\H o equation (\ref{1.7}). By the estimate of the resonant part $\mathcal{R}$, we give the estimate of the solution to the resonant system with respect to the initial data, which is also fundamental in the proof of Theorem \ref{Theorem 1.5}. In Section 5, we construct the modified wave operators and prove Theorem \ref{Theorem 1.5}. Later in this section, we prove the corresponding cascade result, Theorem \ref{Theorem 1.6}. In the Appendix, we introduce the transfer lemma which allows us to transfer $L_{x,y}^2$ estimates on operators into $S'$ estimates on operators. Furthermore, we introduce a lemma which allows us to transfer $L_{x}^2$ estimates on operators into $S$ estimates on operators.
\section{Preliminaries}
\subsection{Notation}
We define the Fourier trasform on $\mathbb{R}$ by
\begin{align*}
\widehat{g}(\xi):=\mathcal{F}_{x}(g)(\xi)=\frac{1}{2\pi}\int_{\mathbb{R}} \mathrm{e}^{-i x \xi} g(x) d x.
\end{align*}
Similarly, we also define the Fourier transform on $\mathbb{R}_y$ by
\begin{align*}
h_{\eta}:=\mathcal{F}_{y}(h)(\eta)=\frac{1}{2\pi}\int_{\mathbb{R}} e^{-i\eta y} h(y) d y.
\end{align*}
Then we define the full Fourier transform on $\mathbb{R}_x \times \mathbb{R}_y$ by
\begin{align*}
(\mathcal{F} U)(\xi, \eta)=\frac{1}{2\pi}\int_{\mathbb{R}} \widehat{U}(\xi, y) \mathrm{e}^{-i \eta y} d y=\widehat{U}_{\eta}(\xi).
\end{align*}
We also introduce Littlewood-Paley projections. We define Littlewood-Paley projections on the full frequencies by
\begin{align*}
\left(\mathcal{F} P_{\leq 2^k} U\right)(\xi, \eta)=\varphi\left(\frac{\xi}{2^k}\right) \varphi\left(\frac{\eta}{2^k}\right)(\mathcal{F} U)(\xi, \eta), 
\end{align*}
where $k \in \mathbb{Z}$, and $\varphi \in C_{c}^{\infty}(\mathbb{R})$ with $\varphi(x)=1$ when $|x| \leq 1$ and $\varphi(x)=0 \text { when }|x| \geq 2$. We also define
\begin{equation}
\phi(x)=\varphi(x)-\varphi(2 x)
\end{equation}
and
\begin{equation}
P_{2^k}=P_{\leq 2^k}-P_{\leq 2^{k-1}}, \quad P_{\geq 2^k}=1-P_{\leq 2^k}.
\end{equation}
Sometimes we only treat the frequency in $x$, so we define
\begin{align*}
(\mathcal{F} Q_ {\leq 2^k} U)(\xi, \eta)=\varphi\left(\frac{\xi}{2^k}\right)(\mathcal{F} U)(\xi, \eta),
\end{align*}
and we define $Q_{2^k}$ similarly. We also define Littlewood-Paley projections on the frequency in $y$
by
\begin{equation}
\left(\mathcal{F}_{y} \Delta_{2^k} h\right)(\eta)=\phi\left(\frac{\eta}{2^k}\right) h_{\eta}.
\end{equation}
\subsection{Duhamel formula}
We define 
\begin{align*}
F := \mathrm{e}^{-i t \mathcal{A}} U(x, y, t),
\end{align*}
where $U$ is a solution to (\ref{1.1}) and $\mathcal{A}=\partial_{x }^2-\left|D_{y}\right|$. We observe that $U$ solves (\ref{1.1}) if and only if $F$ solves 
\begin{equation}
\label{2.5}
i \partial_{t} F(t)=\mathrm{e}^{-i t \mathcal{A}}\left(\mathrm{e}^{i t \mathcal{A}} F(t) \cdot \mathrm{e}^{-i t \mathcal{A}} \overline{F(t)} \cdot \mathrm{e}^{i t \mathcal{A}} F(t)\right).
\end{equation}
We denote the non-linearity in (\ref{2.5}) by 
\begin{align*}
\mathcal{N}^{t}[F, G, H]:=\mathrm{e}^{-i t \mathcal{A}}\left(\mathrm{e}^{i t \mathcal{A}} F \cdot \mathrm{e}^{-i t \mathcal{A}} \overline{G} \cdot \mathrm{e}^{i t \mathcal{A}} H\right),
\end{align*}
where $\mathcal{N}^{t}$ is a trilinear operator.\\\\
Then we have the following Fourier transform expression
\begin{equation}
\label{2.6}
\mathcal{F} \mathcal{N}^{t}[F, G, H](\xi, \eta)=\int_{\mathbb{R}^2} \mathrm{e}^{i t(|\eta|-|\eta-\eta_1|+|\eta_2-\eta_1|-|\eta_2|)} \mathcal{F}_{x}\left(\mathcal{I}^{t}\left[F_{\eta-\eta_1}, G_{\eta_2-\eta_1}, H_{\eta_2}\right]\right)(\xi)d\eta_1 d\eta_2,
\end{equation}
where
\begin{equation}
\label{2.65}
\mathcal{I}^{t}[f, g, h]:=\mathcal{U}(-t) \left(\mathcal{U}(t) f \, \overline{\mathcal{U}(t) g}\, \mathcal{U}(t) h \right), \quad \mathcal{U}(t)=\mathrm{e}^{i t \partial_{x}^2}.
\end{equation}
We observe that
\begin{align*}
\mathcal{F}_{x}\left(\mathcal{I}^{t}[f, g, h]\right)(\xi)=\int_{\mathbb{R}^{2}} \mathrm{e}^{i t 2 \mu \kappa} \widehat{f}(\xi-\mu) \overline{\widehat{g}}(\xi-\mu-\kappa) \widehat{h}(\xi-\kappa) d \kappa d \mu,
\end{align*}
and $\mathcal{I}^{t}$ is also a trilinear operator.\\\\
We also have
\begin{equation}
\label{2.7}
\mathcal{N}^t[F,G,H] = \mathrm{e}^{i t |D_y|} \mathcal{I}^t [\mathrm{e}^{-i t |D_y|}F,\mathrm{e}^{-i t |D_y|}G,\mathrm{e}^{-i t |D_y|}H].
\end{equation}
\begin{remark}
We observe that all trilinear operators that we consider in this paper saitisfy (\ref{6.2}), so we can use Lemma \ref{Lemma 6.2} and Lemma \ref{Lemma 6.21} to estimate these trilinear operators.
\end{remark}
\subsection{Norms}
The homogeneous Besov space $\dot{B}^1(\mathbb{R}): = \dot{B}_{1,1}^{1}(\mathbb{R})$ is defined as the set of all $f \in \mathcal{S}^{\prime}\left(\mathbb{R}^{n}\right)$ such that $\|f\|_{\dot{B}_{1,1}^{1}(\mathbb{R})}$ is finite, where
\begin{align*}
\|f\|_{\dot{B}_{1,1}^{1}(\mathbb{R})}=\sum_{k\in\mathbb{Z}} 2^k \left\|\Delta_{2^k} f\right\|_{L^{1}(\mathbb{R})}.
\end{align*}
We notice that $\|\cdot\|_{\dot{B}_{1,1}^{1}(\mathbb{R})}$ is a norm on $\mathcal{S}^{\prime}\left(\mathbb{R}^{n}\right) / \mathcal{P}\left(\mathbb{R}^{n}\right)$, where $\mathcal{P}\left(\mathbb{R}^{n}\right)$ denotes the set of polynomials on $\mathbb{R}^n$. Then we define the following space
\begin{equation}
\mathcal{G}:= L^2(\mathbb{R}) \bigcap \dot{B}_{1,1}^{1}(\mathbb{R})
\end{equation}
with the norm
\begin{align*}
\|f\|_{\mathcal{G}} : = \|f\|_{L^2(\mathbb{R})} + \|f\|_{\dot{B}_{1,1}^{1}(\mathbb{R})}.
\end{align*}
We observe that $\mathcal{G}$ is a Banach space, and $\Pi_{\pm}(\mathcal{G}) \subset \mathcal{G} \subset L^{\infty}$.
For functions $F$ defined on $\mathbb{R} \times \mathbb{R}$, we will use mainly the following five norms:
\begin{equation}
\|F\|_{Z}:=\|\widehat{F}(\xi, y)\|_{L_{\xi}^\infty\mathcal{G}_y},
\end{equation}
\begin{equation}
\|F\|_{S'}:=\|F\|_{H_{x, y}^{N}} + \|x F\|_{L_{x,y}^2},
\end{equation}
\begin{equation}
\|F\|_{S}:=\|F\|_{H_{x, y}^{N}} + \|x F\|_{L_x^2 H_y^2},
\end{equation}
\begin{equation}
\|F\|_{\mathcal{Y}}:=\|F\|_{S} + \| F\|_{Z},
\end{equation}
\begin{equation}
\label{2.12}
\|F\|_{S^{+}}:=\|F\|_{S}+ \|xF\|_{L_x^2 H_y^3} +\|\left(1+|D_x|\right) F\|_{S}+\|x F\|_{S},
\end{equation}
\begin{equation}
\|F\|_{\mathcal{Y}^{+}}:=\|F\|_{S^+}+\|F\|_Z + \|\left(1+|D_x|\right) F\|_{Z}+\| x F\|_{Z} +\| x^2 F\|_{Z},
\end{equation}
with $N \geq 3$.

We also define the following space-time norm
\begin{equation}
\begin{aligned}\|F\|_{X_{T}} &:=\sup _{0 \leq t \leq T}\left\{\|F(t)\|_{Z}+(1+|t|)^{-\delta}\|F(t)\|_{\mathcal{Y}}+(1+|t|)^{1-3 \delta}\left\|\partial_{t} F(t)\right\|_{\mathcal{Y}}\right\}, \\\|F\|_{X_{T}^{+}} &:=\|F\|_{X_{T}}+\sup _{0 \leq t \leq T}(1+|t|)^{- \delta}\|F(t)\|_{\mathcal{Y}^{+}}, \end{aligned}
\end{equation}
with $\delta < 10^{-4}$.

Now we introduce a lemma which will be very useful in the latter parts.
\begin{lemma}
\label{Lemma 2.1}
For $|t|\geq \frac{1}{2}$, we have
\begin{equation}
\label{2.16}
\left\|\mathcal{N}^{t}[F, G, H]\right\|_{\mathcal{Y}} \lesssim \frac{1}{|t|}\|F\|_{S}\|G\|_{S}\|H\|_{S}.
\end{equation}
In particular,
\begin{equation}
\label{2.16001}
\begin{aligned}
\left\|x \, \mathcal{N}^{t}[F, G, H]\right\|_{L_x^2 H_y^2} \lesssim & \,\frac{1}{|t|} \|xF\|_{L_x^2 H_y^2} \|G\|_{L_x^2 H_y^2 }^{\frac{1}{2}}\|xG\|_{L_x^2 H_y^2 }^{\frac{1}{2}} \|H\|_{L_x^2 H_y^2 }^{\frac{1}{2}} \|xH\|_{ L_x^2 H_y^2 }^{\frac{1}{2}} \\ + & \,\frac{1}{|t|} \|xG\|_{L_x^2 H_y^2} \|F\|_{L_x^2 H_y^2 }^{\frac{1}{2}}\|xF\|_{L_x^2 H_y^2 }^{\frac{1}{2}} \|H\|_{L_x^2 H_y^2 }^{\frac{1}{2}} \|xH\|_{ L_x^2 H_y^2 }^{\frac{1}{2}}\\ + & \,\frac{1}{|t|} \|xH\|_{L_x^2 H_y^2} \|F\|_{L_x^2 H_y^2 }^{\frac{1}{2}}\|xF\|_{L_x^2 H_y^2 }^{\frac{1}{2}} \|G\|_{L_x^2 H_y^2 }^{\frac{1}{2}} \|xG\|_{ L_x^2 H_y^2 }^{\frac{1}{2}} \\ \lesssim & \, \frac{1}{|t|} \|F\|_S \|G\|_S \|H\|_S,
\end{aligned}
\end{equation}
\begin{equation}
\label{2.1601}
\begin{aligned}
& \left\|\mathcal{N}^{t}[F, G, H]\right\|_{Z}\\  \lesssim & \, \frac{1}{|t|} \|F\|_{L_x^2 H_y^2 }^{\frac{1}{4}}\|xF\|_{L_x^2 H_y^2 }^{\frac{1}{4}} \|G\|_{L_x^2 H_y^2 }^{\frac{1}{4}}\|xG\|_{L_x^2 H_y^2 }^{\frac{1}{4}} \|H\|_{L_x^2 H_y^2 }^{\frac{1}{4}} \|xH\|_{ L_x^2 H_y^2 }^{\frac{1}{4}} \|F\|_S^{\frac{1}{2}}\|G\|_S^{\frac{1}{2}}\|H\|_S^{\frac{1}{2}}\\ \lesssim & \, \frac{1}{|t|} \|F\|_S \|G\|_S \|H\|_S.
\end{aligned}
\end{equation}
\end{lemma}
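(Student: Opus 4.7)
The plan is to reduce everything to one-dimensional dispersive estimates for the Schr\"odinger propagator $\mathcal{U}(t) = e^{it\partial_x^2}$ via the factorization (\ref{2.7}). Since $e^{\pm it|D_y|}$ acts in $y$ only, commutes with multiplication by $x$ and with $\partial_x$, is an isometry on every $H^s_y$, and is bounded on $\mathcal{G}_y$ uniformly in $t$ (it acts as a translation on the positive and negative $y$-frequency parts, and $\mathcal{G}$ is translation invariant), all the $S$- and $\mathcal{Y}$-type norms of $\mathcal{N}^t[F,G,H]$ equal those of $\mathcal{I}^t[\tilde{F},\tilde{G},\tilde{H}]$ with $\tilde{F} := e^{-it|D_y|}F$, and similarly for $G, H$. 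It therefore suffices to estimate the latter, treating $y$ as a parameter when convenient.

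The one-dimensional core estimates combine the dispersive bound $\|\mathcal{U}(t)f\|_{L^\infty_x} \lesssim |t|^{-1/2}\|f\|_{L^1_x}$ with the interpolation $\|f\|_{L^1_x} \lesssim \|f\|_{L^2_x}^{1/2}\|xf\|_{L^2_x}^{1/2}$. Applied to two of three factors in the H\"older decomposition $\|\mathcal{I}^t[f,g,h]\|_{L^2_x} \leq \|\mathcal{U}(t)f\|_{L^\infty_x}\|\mathcal{U}(t)g\|_{L^\infty_x}\|\mathcal{U}(t)h\|_{L^2_x}$, this yields the sharp $|t|^{-1}$ decay in $L^2_x$ in three symmetric forms. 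The $H^N_{x,y}$ part of (\ref{2.16}) then follows by Leibniz-distributing $x$- and $y$-derivatives through $\mathcal{I}^t$ (they commute with $\mathcal{U}(t)$) and integrating in $y$ via the Sobolev embedding $H^1_y \hookrightarrow L^\infty_y$, which is compatible with $N \geq 3$. For (\ref{2.16001}), I use the identity $x\mathcal{I}^t[f,g,h] = \mathcal{I}^t[xf,g,h] - \mathcal{I}^t[f,xg,h] + \mathcal{I}^t[f,g,xh]$, derived from $\mathcal{F}_x(x\cdot) = i\partial_\xi$ inside the Fourier representation of $\mathcal{I}^t$ given after (\ref{2.65}); each resulting term is handled by the 1D $L^2_x$ estimate with the $x$-weighted input placed in the $L^2_x$ slot.

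The main obstacle is (\ref{2.1601}), the $Z$-norm bound. A direct H\"older plus dispersive approach gives only $|t|^{-1/2}$, because two $L^\infty_x$ factors on $\mathcal{U}(t)$-images would force the third factor into an uncontrolled $L^1_x$. Instead, I prove the sharp pointwise Fourier estimate $\sup_\xi |\widehat{\mathcal{I}^t[f,g,h]}(\xi)| \lesssim |t|^{-1}\|f\|_{L^1_x}\|g\|_{L^1_x}\|h\|_{L^1_x}$ by rewriting $\widehat{\mathcal{I}^t}(\xi)$ as the 2D oscillatory integral in $(\mu,\kappa)$ with phase $2t\mu\kappa$, expanding the $\hat{f},\hat{g},\hat{h}$ factors via the inverse Fourier representations of $f,g,h$, and evaluating the Fresnel identity $\int\!\!\int e^{i(2t\mu\kappa + a\mu + b\kappa)}\,d\mu\,d\kappa = \pi|t|^{-1}e^{-iab/(2t)}$. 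For the $L^2_y$ part of $\mathcal{G}_y$, I integrate this pointwise bound in $y$, placing four of the six $\|\cdot\|_{L^1_x}$-factors in $L^\infty_y$ via Sobolev and two in $L^2_y$ via Cauchy--Schwarz. For the $\dot{B}^1_{1,1}$ part, I use the Littlewood--Paley interpolation $\|\phi\|_{\dot{B}^1_{1,1}(\mathbb{R})} \lesssim \|\phi\|_{L^1_y}^{1/2}\|\partial_y^2 \phi\|_{L^1_y}^{1/2}$, obtained by balancing the dyadic sum $\sum_k 2^k \|\Delta_{2^k}\phi\|_{L^1_y}$ at the optimal scale, reducing to $L^1_y$ bounds of $\widehat{\mathcal{I}^t}(\xi,\cdot)$ and of its second $y$-derivative (distributed via Leibniz through the trilinear form). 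The $(1/4)$-power structure on the $L^2_x H^2_y$ norms in (\ref{2.1601}) emerges by interpolating the $L^2_y$ and $\dot{B}^1_{1,1}$ contributions, with the $H^2_y$ regularity precisely absorbing the two $y$-derivatives coming from the Besov estimate.
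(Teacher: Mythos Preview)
Your proposal is correct and follows the same overall reduction to $\mathcal{I}^t$ via (\ref{2.7}) that the paper uses; the treatment of the $H^N_{x,y}$ and $L^2_xH^2_y$ parts is essentially identical to the paper's (which packages the Leibniz and Sobolev arguments into the transfer Lemma~\ref{Lemma 6.21}).

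The $Z$-norm argument is where you genuinely diverge. The paper never computes the Fresnel integral; instead it bounds $\|\widehat{\mathcal{N}^t}\|_{L^\infty_\xi L^2_y}$ by $\|\mathcal{N}^t\|_{L^2_{x,y}}^{1/2}\|x\mathcal{N}^t\|_{L^2_{x,y}}^{1/2}$ (Remark~\ref{remark 6.4}) and handles the $\dot B^1_y$ piece by pulling the multiplier $e^{it|\eta|}$ through the dyadic decomposition and estimating the kernels $\int e^{iy\eta}e^{it|\eta|}\phi(\eta/2^k)\,d\eta$ in $L^1_y$ directly (Remark~\ref{remark 6.6}), always feeding in the one-dimensional $L^2_x$ bound (\ref{2.201}). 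Your route---the pointwise Fourier bound $|\widehat{\mathcal{I}^t}(\xi)|\lesssim |t|^{-1}\|f\|_{L^1_x}\|g\|_{L^1_x}\|h\|_{L^1_x}$ together with the Besov interpolation $\|\phi\|_{\dot B^1_{1,1}}\lesssim\|\phi\|_{L^1}^{1/2}\|\partial_y^2\phi\|_{L^1}^{1/2}$ and the observation that $e^{it|D_y|}$ is uniformly bounded on $\mathcal{G}$---is more self-contained and avoids the Appendix machinery entirely. The trade-off is that the paper's framework is reused verbatim in Lemmas~\ref{Lemma 3.5} and~\ref{Lemma 3.7}, where operators carrying nontrivial multipliers in $(\mu,\kappa)$ replace $\mathcal{I}^t$ and your Fresnel identity no longer applies; the paper's $L^2_x$-based scheme transfers without change. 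One small caveat: your argument yields a slightly different-looking intermediate bound for $\|\cdot\|_Z$ (products of $\|F^j\|_{L^1_x}$ placed in various $L^p_y$) rather than the exact $(1/4)$-power structure displayed in (\ref{2.1601}); both are dominated by $|t|^{-1}\|F\|_S\|G\|_S\|H\|_S$, so the final claim is unaffected.
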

\begin{proof}
Let
\begin{align*}
F = F^1, G = F^2, H = F^3.
\end{align*}
Then we recall (\ref{2.7}),
\begin{align*}
\mathcal{N}^t[F^1,F^2,F^3] = \mathrm{e}^{i t |D_y|} \mathcal{I}^t [\mathrm{e}^{-i t |D_y|}F^1,\mathrm{e}^{-i t |D_y|}F^2,\mathrm{e}^{-i t |D_y|}F^3].
\end{align*}
In fact, we have
\begin{equation}
\label{2.21}
\begin{aligned}
& \left\|\mathcal{I}^{t}\left[F^{1}, F^{2}, F^{3}\right]\right\|_{L_{x}^{2}} \\ = &  \left\| \mathrm{e}^{-i t \partial_x^2}\left( \mathrm{e}^{i t \partial_x^2} F^1 \cdot \mathrm{e}^{-i t \partial_x^2} \overline{F^2} \cdot \mathrm{e}^{i t \partial_x^2} F^3\right)\right\|_{L_{x}^2} \\ \lesssim & \left\|\mathrm{e}^{i t \partial_{x}^2} F^1 \cdot \mathrm{e}^{-i t \partial_{x }^2}\overline{F^2} \cdot \mathrm{e}^{i t \partial_{x}^2} F^3 \right\|_{L_{x}^2} \\ \lesssim & \min _{\{j, k, \ell\}=\{1,2,3\}} \left\| F^j \right\|_{L_{x}^2} \left\|\mathrm{e}^{i t \partial_{x }^2}F^k\right\|_{L_{x}^\infty} \left\| \mathrm{e}^{i t \partial_{x}^2} F^\ell \right\|_{L_{x}^\infty}.
\end{aligned}
\end{equation}
For $|t|\geq \frac{1}{2}$, we use the dispersive estimate and we have
\begin{equation}
\label{2.20}
\left\|\mathrm{e}^{i t \partial_{x}^2} f\right\|_{L_{x}^{\infty}} \lesssim|t|^{-\frac{1}{2}}\|f\|_{L_{x}^{1}} \lesssim|t|^{-\frac{1}{2}}\|f\|_{L_{x}^{2}}^{\frac{1}{2}}\|x f\|_{L_{x}^{2}}^{\frac{1}{2}}.
\end{equation}
So we have 
\begin{equation}
\label{2.201}
\left\|\mathcal{I}^{t}\left[F^{1}, F^{2}, F^{3}\right]\right\|_{L_{x}^{2}} \lesssim \frac{1}{|t|}\min _{\{j, k, \ell\}=\{1, 2, 3\}}\left\|F^{j}\right\|_{L_{x}^{2}}\left\|F^{k}\right\|_{L_x^2}^{\frac{1}{2}}\left\|x F^{k}\right\|_{L_x^2}^{\frac{1}{2}} \left\|F^{\ell}\right\|_{L_x^2}^{\frac{1}{2}}\left\|x F^{\ell}\right\|_{L_x^2}^{\frac{1}{2}}.
\end{equation}
Then by Lemma \ref{Lemma 6.21}, we have 
\begin{equation}
\label{2.1701}
\begin{aligned}
& \left\|\mathcal{N}^{t}\left[F^{1}, F^{2}, F^{3}\right]\right\|_{L_{x, y}^{2}} \\ \lesssim & \, \left\|\mathcal{I}^t [\mathrm{e}^{-i t |D_y|}F^1,\mathrm{e}^{-i t |D_y|}F^2,\mathrm{e}^{-i t |D_y|}F^3]\right\|_{L_{x,y}^2}\\ \lesssim & \, \frac{1}{|t|} \min _{\{j, k, \ell\}=\{1, 2, 3\}}\left\|F^{j}\right\|_{L_{x,y}^{2}}\left\|F^{k}\right\|_{L_x^2H_y^1}^{\frac{1}{2}}\left\|x F^{k}\right\|_{L_x^2H_y^1}^{\frac{1}{2}} \left\|F^{\ell}\right\|_{L_x^2H_y^1}^{\frac{1}{2}}\left\|x F^{\ell}\right\|_{L_x^2H_y^1}^{\frac{1}{2}} \\ \lesssim & \, \frac{1}{|t|}\min _{\{j, k, \ell\}=\{1,2,3\}} \left\| F^j \right\|_{L_{x,y}^2} \left\|F^k\right\|_S\left\|F^\ell\right\|_S
\end{aligned}
\end{equation}
and
\begin{equation}
\label{2.1702}
\left\|\mathcal{N}^{t}\left[F^{1}, F^{2}, F^{3}\right]\right\|_{S} \lesssim \frac{1}{|t|} \left\| F^1 \right\|_{S} \left\|F^2\right\|_S\left\|F^3\right\|_S.
\end{equation}
In particular, by Lemma \ref{Lemma 6.21}, we also have
\begin{align*}
& \left\|x \, \mathcal{N}^{t}\left[F, G, H\right]\right\|_{L_x^2 H_y^2} \\ \lesssim & \left\|x\,\mathcal{I}^t [\mathrm{e}^{-i t |D_y|}F,\mathrm{e}^{-i t |D_y|}G,\mathrm{e}^{-i t |D_y|}H]\right\|_{L_x^2 H_y^2} \\ \lesssim &  \,\frac{1}{|t|} \|xF\|_{L_x^2 H_y^2} \|G\|_{L_x^2 H_y^2 }^{\frac{1}{2}}\|xG\|_{L_x^2 H_y^2 }^{\frac{1}{2}} \|H\|_{L_x^2 H_y^2 }^{\frac{1}{2}} \|xH\|_{ L_x^2 H_y^2 }^{\frac{1}{2}} \\ + & \,\frac{1}{|t|} \|xG\|_{L_x^2 H_y^2} \|F\|_{L_x^2 H_y^2 }^{\frac{1}{2}}\|xF\|_{L_x^2 H_y^2 }^{\frac{1}{2}} \|H\|_{L_x^2 H_y^2 }^{\frac{1}{2}} \|xH\|_{ L_x^2 H_y^2 }^{\frac{1}{2}}\\ + & \,\frac{1}{|t|} \|xH\|_{L_x^2 H_y^2} \|F\|_{L_x^2 H_y^2 }^{\frac{1}{2}}\|xF\|_{L_x^2 H_y^2 }^{\frac{1}{2}} \|G\|_{L_x^2 H_y^2 }^{\frac{1}{2}} \|xG\|_{ L_x^2 H_y^2 }^{\frac{1}{2}}    \\ \lesssim & \, \frac{1}{|t|} \|F\|_S \|G\|_S \|H\|_S,
\end{align*}
which implies (\ref{2.16001}).

Then we estimate $\left\|\mathcal{N}^{t}[F, G, H]\right\|_{Z}$. We firstly estimate $\left\|\mathcal{F}_{x\rightarrow\xi}\mathcal{N}^{t}[F, G, H]\right\|_{L_{\xi}^\infty L_y^2}$. By (\ref{2.201}) and Remark \ref{remark 6.4}, we have
\begin{equation}
\label{2.241}
\begin{aligned}
& \left\|\mathcal{F}_{x\rightarrow\xi}\mathcal{N}^{t}[F, G, H]\right\|_{L_{\xi}^\infty L_y^2}  \\ \lesssim & \,\frac{1}{|t|}
\|F\|_{L_x^2 H_y^2 }^{\frac{1}{4}}\|xF\|_{L_x^2 H_y^2 }^{\frac{1}{4}} \|G\|_{L_x^2 H_y^2 }^{\frac{1}{4}}\|xG\|_{L_x^2 H_y^2 }^{\frac{1}{4}} \|H\|_{L_x^2 H_y^2 }^{\frac{1}{4}} \|xH\|_{ L_x^2 H_y^2 }^{\frac{1}{4}} \|F\|_S^{\frac{1}{2}}\|G\|_S^{\frac{1}{2}}\|H\|_S^{\frac{1}{2}} \\ \lesssim & \, \frac{1}{|t|} \|F\|_S \|G\|_S \|H\|_S.
\end{aligned}
\end{equation}
Then we estimate the term $\|\mathcal{F}_{x\rightarrow\xi}\mathcal{N}^{t}[F, G, H]\|_{L_{\xi}^\infty \dot{B}_y^1}$. By (\ref{2.7}), (\ref{2.201}) and (\ref{6.21}) in Remark \ref{remark 6.6}, we have
\begin{equation}
\label{2.251}
\begin{aligned}
 &  \|\mathcal{F}_{x\rightarrow\xi}{\mathcal{N}^{t}}[F, G, H](\xi,y)\|_{L_{\xi}^\infty \dot{B}_y^1 } \\ \lesssim & \, \frac{1}{|t|} \left(\sum_{k \leq 0} 2^k \left\|\int_{\mathbb{R}}\mathrm{e}^{i y \eta} \mathrm{e}^{it |\eta|} \phi(\frac{\eta}{2^k}) d\eta \right\|_{L_y^1} + \sum_{k > 0} 2^{-k} \left\|\int_{\mathbb{R}}\mathrm{e}^{i y \eta} \mathrm{e}^{it |\eta|} \phi(\frac{\eta}{2^k}) d\eta \right\|_{L_y^1}\right) \times \\ & \|F\|_{L_x^2 H_y^2 }^{\frac{1}{4}}\|xF\|_{L_x^2 H_y^2 }^{\frac{1}{4}} \|G\|_{L_x^2 H_y^2 }^{\frac{1}{4}}\|xG\|_{L_x^2 H_y^2 }^{\frac{1}{4}} \|H\|_{L_x^2 H_y^2 }^{\frac{1}{4}} \|xH\|_{ L_x^2 H_y^2 }^{\frac{1}{4}} \|F\|_S^{\frac{1}{2}}\|G\|_S^{\frac{1}{2}}\|H\|_S^{\frac{1}{2}} \\ \lesssim & \, \frac{1}{|t|} \|F\|_{L_x^2 H_y^2 }^{\frac{1}{4}}\|xF\|_{L_x^2 H_y^2 }^{\frac{1}{4}} \|G\|_{L_x^2 H_y^2 }^{\frac{1}{4}}\|xG\|_{L_x^2 H_y^2 }^{\frac{1}{4}} \|H\|_{L_x^2 H_y^2 }^{\frac{1}{4}} \|xH\|_{ L_x^2 H_y^2 }^{\frac{1}{4}} \|F\|_S^{\frac{1}{2}}\|G\|_S^{\frac{1}{2}}\|H\|_S^{\frac{1}{2}}.
\end{aligned}
\end{equation}
Then by (\ref{2.241}) and (\ref{2.251}), we have
\begin{align*}
& \left\|\mathcal{N}^{t}[F, G, H]\right\|_{Z} \\ \lesssim & \, \frac{1}{|t|} \|F\|_{L_x^2 H_y^2 }^{\frac{1}{4}}\|xF\|_{L_x^2 H_y^2 }^{\frac{1}{4}} \|G\|_{L_x^2 H_y^2 }^{\frac{1}{4}}\|xG\|_{L_x^2 H_y^2 }^{\frac{1}{4}} \|H\|_{L_x^2 H_y^2 }^{\frac{1}{4}} \|xH\|_{ L_x^2 H_y^2 }^{\frac{1}{4}} \|F\|_S^{\frac{1}{2}}\|G\|_S^{\frac{1}{2}}\|H\|_S^{\frac{1}{2}}\\ \lesssim & \, \frac{1}{|t|} \, \|F\|_{S} \|G\|_S \|H\|_S,
\end{align*}
which imples (\ref{2.1601}). Finally, by  (\ref{2.1601}) and (\ref{2.1702}), we obtain (\ref{2.16}). The proof is complete. 
\end{proof}
\section{Structure of the nonlinearity}
In this section, we are to obtain the decomposition for the full non-linearity in (\ref{2.5}), which can be written as
\begin{equation}
\mathcal{N}^{t}[F, G, H]=\frac{\pi}{t} \mathcal{R}[F, G, H]+\mathcal{E}^{t}[F, G, H],
\end{equation}
where $\mathcal{R}$ is the resonant part,
\begin{equation}
\mathcal{F} \mathcal{R}[F, G, H](\xi, \eta)=\int_{\omega(\eta, \eta_1, \eta_2) = 0 }  \widehat{{F}}_{\eta-\eta_1}(\xi)\overline{\widehat{G}}_{\eta_2-\eta_1}(\xi)\widehat{{H}}_{\eta_2}(\xi)d\eta_1 d\eta_2,
\end{equation}
and
\begin{align*}
\omega(\eta, \eta_1, \eta_2): = |\eta|-|\eta-\eta_1|+|\eta_2-\eta_1|-|\eta_2|.
\end{align*}
We call $\mathcal{E}^t$ the remainder term, which will be estimated in Proposition \ref{Proposition 3.1}.
\begin{remark}
\label{remark 3.1}
We have that $\omega\left(\eta, \eta_{1}, \eta_{2}\right)=0$ in the following cases (see the proof of Proposition \ref{Proposition 4.2} for details):
\begin{center}
$\begin{array}{l}\text { If } \eta>0 \text { and }(\eta_1,\eta_2) \in\left\{ \eta \geq \eta_1, \eta_2 \geq \eta_1, \eta_2 \geq 0 \right\} \cup\{\eta_1 = 0\}\cup \{\eta = \eta_2\}, \\ \text { If } \eta<0 \text { and }(\eta_1, \eta_2) \in\left\{ \eta \leq \eta_1, \eta_2 \leq \eta_1, \eta_2 \leq 0 \right\} \cup\{\eta_1 = 0\}\cup \{\eta = \eta_2\}. \end{array}$
\end{center}
Here for any $\eta \in \mathbb{R}$, the sets $\{(\eta_1,\eta_2) \in \mathbb{R}^2 | \eta_1 = 0 \} $ and $\{(\eta_1,\eta_2) \in \mathbb{R}^2 | \eta = \eta_2\}$ are of measure zero in $\mathbb{R}^2$, they do not interfere in the integration in equation, so we can neglect them.
\end{remark}
Then we introduce our main result in this section.
\begin{proposition}
\label{Proposition 3.1}
$\text { We assume for } T \geq 1, F, G, H: \mathbb{R} \rightarrow \mathcal{Y} \text { satisfy }$
\begin{equation}
\label{3.3}
\|F\|_{X_{T}}+\|G\|_{X_{T}}+\|H\|_{X_{T}} \lesssim 1.
\end{equation}
Then for $t \in [\frac{T}{2}, T]$ we write
\begin{align*}
\mathcal{E}^{t}[F(t), G(t), H(t)] & =\mathcal{E}_{1}^{t}[F(t), G(t), H(t)]+\mathcal{E}_{2}^{t}[F(t), G(t), H(t)], \\
\mathcal{E}_{2}^{t}[F(t), G(t), H(t)] & = \partial_{t} \mathcal{E}_{3}^{t}[F(t), G(t), H(t)].
\end{align*}
$\text {We note } \mathcal{E}_{j}(t):=\mathcal{E}_{j}^{t}[F(t), G(t), H(t)]$ for $j=1,2,3$, then we have the following estimates which hold uniformly in $T \geq 1$,
\begin{align*}
& \quad T^{-3\delta}\left\|\int_{\frac{T}{2}}^{T} \mathcal{E}_{j}(t) d t\right\|_{\mathcal{Y}} \lesssim 1, \quad j=1,2, \\ & \sup _{\frac{T}{2} \leq t \leq T}(1+|t|)^{1+20\delta}\left\|\mathcal{E}_{1}(t)\right\|_{Z} \lesssim 1, \\ & \sup _{\frac{T}{2} \leq t \leq T}(1+|t|)^{\frac{1}{3}}\left\|\mathcal{E}_{3}(t)\right\|_{\mathcal{Y}} \lesssim 1.
\end{align*}
Moreover, with the assumption
\begin{equation}
\label{3.4}
\|F\|_{X_{T}^{+}}+\|G\|_{X_{T}^{+}}+\|H\|_{X_{T}^{+}} \leq 1,
\end{equation}
we also have the following estimate which holds uniformly in $T \geq 1$,
\begin{align*}
T^{20 \delta}\left\|\int_{\frac{T}{2}}^{T} \mathcal{E}_{j}(t) d t\right\|_{\mathcal{Y}} \lesssim 1, \quad j=1,2.
\end{align*}
\end{proposition}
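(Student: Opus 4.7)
The plan is to start from the Fourier representation \eqref{2.6},
\[
\mathcal{F}\mathcal{N}^{t}[F,G,H](\xi,\eta)=\int_{\mathbb{R}^{2}} e^{i t\omega(\eta,\eta_{1},\eta_{2})}\,\mathcal{F}_{x}\bigl(\mathcal{I}^{t}[F_{\eta-\eta_{1}},G_{\eta_{2}-\eta_{1}},H_{\eta_{2}}]\bigr)(\xi)\,d\eta_{1}d\eta_{2},
\]
and split the $(\eta_{1},\eta_{2})$--domain into the resonant region $\{\omega=0\}$ (as enumerated in Remark~\ref{remark 3.1}) and its complement. Using sign cut-offs together with a dyadic partition $\sum_{m\in\mathbb{Z}}\phi(2^{-m}\omega)$ of the phase, I would decompose
\[
\mathcal{N}^{t}=\mathcal{N}^{t}_{\mathrm{res}}+\sum_{m}\mathcal{N}^{t}_{m},
\]
where $\mathcal{N}^{t}_{\mathrm{res}}$ is the integration over an $O(1/t)$--neighbourhood of $\{\omega=0\}$ and $\mathcal{N}^{t}_{m}$ corresponds to $|\omega|\simeq 2^{m}$. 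Performing on $\mathcal{N}^{t}_{\mathrm{res}}$ the normal-form change of variables that straightens $\omega$ along one of the hyperplanes listed in Remark~\ref{remark 3.1} (for instance $\eta_{1}=0$ or $\eta_{2}=\eta$), the inner oscillatory integral reduces to $\int e^{it u}\,\phi_{\leq 1}(u)\,du / u$--type expressions, which produce precisely the factor $\pi/t$ in the limit and account for the resonant coefficient $(\pi/t)\mathcal{R}[F,G,H]$.

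For the non-resonant dyadic pieces $\mathcal{N}^{t}_{m}$, the strategy is twofold, along the lines of Xu's treatment on the cylinder but adapted to the continuous resonance geometry. When $2^{m}\gtrsim t^{-1+c\delta}$, I use the identity $e^{it\omega}=(i\omega)^{-1}\partial_{t}e^{it\omega}$ and integrate by parts in time: the boundary term goes into $\mathcal{E}_{3}^{t}$ and the derivative falling on $F,G,H$ or on $\mathcal{I}^{t}$ goes into $\mathcal{E}_{2}^{t}=\partial_{t}\mathcal{E}_{3}^{t}$. The factor $1/\omega\simeq 2^{-m}$ gives a $t^{1-c\delta}$ decay that is used to close the $\mathcal{E}_{3}$ bound, while the $\partial_{t}F$, $\partial_{t}G$, $\partial_{t}H$ contributions are controlled by the $(1+|t|)^{1-3\delta}\|\partial_{t}F\|_{\mathcal{Y}}$ part of the $X_{T}$ norm. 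The remaining complementary region $2^{m}\lesssim t^{-1+c\delta}$ together with the residual near-resonant errors beyond the leading $(\pi/t)\mathcal{R}$ term are put into $\mathcal{E}_{1}^{t}$, and are estimated pointwise in $t$ via Lemma~\ref{Lemma 2.1} and the transfer lemmas.

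To obtain the $\mathcal{Y}$-norm bounds for $\int_{T/2}^{T}\mathcal{E}_{j}$, the $\mathcal{E}_{2}$ contribution collapses by the fundamental theorem of calculus to $\mathcal{E}_{3}(T)-\mathcal{E}_{3}(T/2)$, which is controlled by the pointwise $(1+|t|)^{-1/3}$ bound on $\mathcal{E}_{3}$; the $\mathcal{E}_{1}$ contribution uses the $Z\times S\times S$ and $S\times Z\times S$ trilinear estimates combined with the gain from the dyadic summation to beat the $T^{-3\delta}$ (respectively $T^{20\delta}$ in the $X_{T}^{+}$ case) threshold. The $S^{+}$ information is used exactly where the derivative from the time-integration by parts or from the pseudo-normal-form lands on one of the three arguments, gaining one extra $x$-weight or one extra $|D_{x}|$.

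The main obstacle is the $Z$-norm control
$(1+|t|)^{1+20\delta}\|\mathcal{E}_{1}(t)\|_{Z}\lesssim 1$. Since the $y$-variable lives on $\mathbb{R}$, one does not have $L^{2}\subset L^{1}$ and the $\dot{B}^{1}_{1,1}(\mathbb{R})$ component of $\mathcal{G}$ cannot be bootstrapped from Sobolev norms; the estimate must be performed frequency-by-frequency in the Littlewood-Paley decomposition in $y$, using the kernel bound of Remark~6.6 (as already exploited in the proof of Lemma~\ref{Lemma 2.1}) to convert the oscillating phase $e^{it|\eta|}$ into an $L^{1}_{y}$ bound. This forces the appearance of the $x$-weighted $L_{x}^{2}H_{y}^{2}$ norm and is the reason why the $\mathcal{Y}^{+}$ norm needs the $\|xF\|_{L_{x}^{2}H_{y}^{3}}$ and $\|x^{2}F\|_{Z}$ components. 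Once these estimates are set up, summation over the dyadic parameters $(j,k,\ell,m)$ closes provided $N\geq 3$, which is the regularity restriction appearing in the statement.
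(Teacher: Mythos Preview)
Your plan shares the general philosophy (separate resonant from non-resonant contributions, use a time normal form on the latter) but the implementation you describe is not the one in the paper, and it omits an essential ingredient.

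The paper does \emph{not} decompose dyadically in the resonance function $\omega$. The architecture is: (i) split off high $x$-frequencies $\max(A,B,C)\geq T^{1/6}$ (Lemma~\ref{Lemma 3.3}); (ii) on the low-$x$-frequency piece, split the one-dimensional operator $\mathcal{I}^{t}$ according to whether $|\mu\kappa|$ lies above or below $t^{-3/4}$ (Lemma~\ref{Lemma 3.5}); (iii) on the exact set $\{\omega=0\}$, compare $\mathcal{N}_0^{t}$ to $(\pi/t)\mathcal{R}$ by a real-space stationary-phase computation on inputs localized to $|x|\lesssim t^{1/4}$ (Lemma~\ref{Lemma 3.7}). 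The factor $\pi/t$ arises from the $x$-integral $\int e^{2it\mu\kappa}\widehat F\,\overline{\widehat G}\,\widehat H\,d\mu\,d\kappa$, not from straightening $\omega$. The normal form in $t$ uses $\tfrac{e^{it\omega}-1}{i\omega}$ (bounded near $\omega=0$) rather than $e^{it\omega}/(i\omega)$, so no lower cutoff on $|\omega|$ is needed; the paper then evaluates $\omega$ explicitly in each of the 14 sign regions and exploits $\bigl\|\tfrac{e^{-2it\eta}-1}{i\eta}\bigr\|_{L^{2}_{\eta}}\lesssim t^{1/2}$, which is precisely where the continuous (as opposed to discrete) resonance level costs a half power and yields only $\|\mathcal{E}_{3}\|_{\mathcal{Y}}\lesssim t^{-1/3}$.

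The genuine gap in your plan is step (i). To obtain the $S$-norm (hence $\mathcal{Y}$-norm) bound on $\int_{T/2}^{T}\mathcal{E}_{1}\,dt$, pointwise-in-$t$ trilinear estimates are insufficient: Lemma~\ref{Lemma 2.1} gives only $\|\mathcal{N}^{t}\|_{\mathcal{Y}}\lesssim t^{-1}\|F\|_{S}\|G\|_{S}\|H\|_{S}\lesssim t^{-1+3\delta}$, which after integration produces a logarithm and cannot meet either the $T^{-3\delta}$ or the $T^{20\delta}$ threshold. The paper closes this via the bilinear Strichartz refinement (Lemma~\ref{lemma 3.2}) applied after discretizing $[T/2,T]$ into $\sim T^{1/10}$ subintervals of length $T^{9/10}$ and freezing the profiles on each subinterval (the $E_{1}+E_{2}$ splitting in the proof of Lemma~\ref{Lemma 3.3}). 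Your proposal makes no mention of this mechanism, and the ``gain from the dyadic summation'' in $\omega$ you invoke cannot substitute for it, since the high-$x$-frequency contribution sits entirely inside the set $\{\omega=0\}$ when all inputs are supported on a single sign of $\eta$.
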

\subsection{The high frequency estimates for $\mathcal{Y}$}
In this subsection, we are to obtain the decay estimate for the nonlinearity in the regime with at least one high frequency. We adapt the energy estimate when two inputs have high frequencies. On the other hand, we use the bilinear refinements of the Strichartz estimate on $\mathbb{R}$. Firstly we introduce the following lemma which will be used in the proof of Lemma \ref{Lemma 3.3}.
\begin{lemma}
[\ref{[3]}]
\label{lemma 3.2}
 Assume that $\lambda / 10 \geq \mu \geq 1$ and that $u(t)=\mathrm{e}^{i t \partial_{x x}} u_{0}, v(t)=\mathrm{e}^{i t \partial_{x x}} v_{0}$. Then we have the bound
 \begin{equation}
\left\|Q_{\lambda} u \overline{Q_{\mu} v}\right\|_{L_{x, t}^{2}(\mathbb{R} \times \mathbb{R})} \lesssim \lambda^{-\frac{1}{2}}\left\|u_{0}\right\|_{L_{x}^{2}(\mathbb{R})}\left\|v_{0}\right\|_{L_{x}^{2}(\mathbb{R})}.
 \end{equation}
\end{lemma}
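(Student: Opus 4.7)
The plan is to pass to the space-time Fourier transform and exploit the fact that the frequency support of the product $Q_\lambda u \cdot \overline{Q_\mu v}$ is constrained by the two free Schr\"odinger dispersion relations. Writing $u(t,x) = \int \mathrm{e}^{ix\xi - it\xi^2}\,\widehat{u}_0(\xi)\,d\xi$ and similarly for $v$, the product reads
$$Q_\lambda u \cdot \overline{Q_\mu v}(t,x) = \int\!\!\int \mathrm{e}^{ix(\xi-\eta) - it(\xi^2-\eta^2)}\, \phi(\xi/\lambda)\,\overline{\phi(\eta/\mu)}\, \widehat{u}_0(\xi)\,\overline{\widehat{v}_0(\eta)}\, d\xi\, d\eta,$$
and taking its full Fourier transform in $(t,x)$ inserts two delta functions $\delta\bigl(\zeta-(\xi-\eta)\bigr)$ and $\delta\bigl(\tau+(\xi^2-\eta^2)\bigr)$. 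I would integrate these out by changing variables from $(\xi,\eta)$ to $(\zeta,\tau)$.

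The Jacobian of the map $(\xi,\eta)\mapsto(\zeta,\tau)=(\xi-\eta,\eta^2-\xi^2)$ is
$$J = \det\begin{pmatrix} 1 & -1 \\ -2\xi & 2\eta \end{pmatrix} = -2(\xi-\eta) = -2\zeta.$$
The hypothesis $\mu \leq \lambda/10$ forces $|\xi|\sim \lambda$ and $|\eta|\sim \mu \leq \lambda/10$ on the supports of the cutoffs, so $|\zeta|=|\xi-\eta|\sim \lambda$, the map is a diffeomorphism onto its image, and $|J|\sim \lambda$. After integrating the delta functions, this yields the pointwise bound
$$\bigl|\mathcal{F}_{t,x}(Q_\lambda u\cdot\overline{Q_\mu v})(\tau,\zeta)\bigr| \lesssim \frac{1}{\lambda}\,\phi(\xi(\zeta,\tau)/\lambda)\,\phi(\eta(\zeta,\tau)/\mu)\,|\widehat{u}_0(\xi)|\,|\widehat{v}_0(\eta)|.$$

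Squaring, applying Plancherel in $(t,x)$, and changing variables back from $(\tau,\zeta)$ to $(\xi,\eta)$ (which contributes a factor $|J|\sim\lambda$) I would arrive at
$$\|Q_\lambda u \cdot \overline{Q_\mu v}\|_{L^2_{x,t}}^2 \lesssim \frac{1}{\lambda}\int\!\!\int |\phi(\xi/\lambda)|^2\,|\phi(\eta/\mu)|^2\,|\widehat{u}_0(\xi)|^2\,|\widehat{v}_0(\eta)|^2\,d\xi\,d\eta \lesssim \frac{1}{\lambda}\|u_0\|_{L^2}^2\|v_0\|_{L^2}^2,$$
and taking square roots gives the claim.

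The argument is a direct Fourier-analytic computation with no serious obstacle. The only step that genuinely uses the hypothesis $\mu \leq \lambda/10$ is the verification that $|\zeta|\sim\lambda$ on the support, which is exactly what ensures both that the map $(\xi,\eta)\mapsto(\zeta,\tau)$ is a diffeomorphism onto its image and that the Jacobian is bounded below by a multiple of $\lambda$; it is from this nondegeneracy that the refined factor $\lambda^{-1/2}$ comes. If $\mu$ were allowed to be comparable to $\lambda$, the Jacobian could vanish on the support and the refinement would disappear.
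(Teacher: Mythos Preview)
Your argument is correct and is exactly the standard Fourier-analytic proof of this bilinear refinement. The paper does not supply its own proof of this lemma; it simply writes ``We refer to [\ref{[3]}] for the proof,'' citing Colliander--Keel--Staffilani--Takaoka--Tao, where precisely the computation you describe (space-time Fourier transform, nondegeneracy of the Jacobian $|\xi-\eta|\sim\lambda$ under the frequency separation hypothesis) is carried out.
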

We refer to [\ref{[3]}] for the proof.\\

Then we prove the following lemma which gives a decay estimate on $\mathcal{N}^{t}\left[Q_{A} F(t),Q_B G(t), Q_C H(t)\right]$ in the regime $\max (A, B, C) \geq T^{\frac{1}{6}}$ with $T \geq 1$.
\begin{lemma}
\label{Lemma 3.3}
The following estimates hold for $T \geq 1$,
\begin{equation}
\label{3.6}
\left\|\sum_{\max (A, B, C) \geq T^{\frac{1}{6}}} \mathcal{N}^{t}\left[Q_{A} F, Q_{B} G, Q_{C} H\right]\right\|_{Z} \lesssim T^{-\frac{25}{24}}\|F\|_{S}\|G\|_{S}\|H\|_{S}, \quad \forall\, t\geq\frac{T}{2},
\end{equation}
\begin{equation}
\left\|\sum_{\max (A, B, C) \geq T^{\frac{1}{6}}} \int_{\frac{T}{2}}^T \mathcal{N}^{t}\left[Q_{A} F, Q_{B} G, Q_{C} H\right]\right\|_{S} \lesssim T^{-\frac{1}{20}}\|F\|_{X_T}\|G\|_{X_T}\|H\|_{X_T}.
\end{equation}
\end{lemma}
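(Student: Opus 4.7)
My plan is to extract extra decay from the hypothesis $\max(A,B,C) \geq T^{1/6}$ by two different mechanisms, corresponding to the two conclusions of the lemma. For the pointwise $Z$-bound (\ref{3.6}) the decay will come from a Bernstein-type gain in $x$ on the highest-frequency factor, using the $H^{N}_{x,y}$ regularity with $N \geq 3$. For the time-integrated $S$-bound the decay will come from the bilinear refinement Lemma \ref{lemma 3.2}. In both cases I will split the dyadic sum according to which of $A, B, C$ is maximal and assume (by relabeling) that $A = \max(A,B,C) \geq T^{1/6}$.

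For the pointwise $Z$-estimate I would start from the refined inequality (\ref{2.1601}) of Lemma \ref{Lemma 2.1}, whose right-hand side involves the factor $\|F\|_{L^{2}_{x}H^{2}_{y}}^{1/4}\|xF\|_{L^{2}_{x}H^{2}_{y}}^{1/4}$ (and its cyclic permutations) together with $\|F\|_{S}^{1/2}$. Since $L^{2}_{x}H^{2}_{y}$ uses only two $y$-derivatives, Bernstein in the $x$-variable gives $\|Q_{A} F\|_{L^{2}_{x}H^{2}_{y}} \lesssim A^{-(N-2)} \|F\|_{H^{N}_{x,y}}$ (with a parallel estimate for $\|x Q_{A} F\|_{L^{2}_{x}H^{2}_{y}}$ after a routine commutator bound using $[x,Q_{A}] = O(A^{-1})$). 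Choosing the high-frequency factor as the one from which the Bernstein gain is extracted produces an extra factor $A^{-(N-2)/4} \leq A^{-1/4} \leq T^{-1/24}$, which combined with the $t^{-1} \leq 2T^{-1}$ of Lemma \ref{Lemma 2.1} delivers the required $T^{-25/24}$. Summability in the two remaining dyadic indices $B$ and $C$ is obtained by Cauchy--Schwarz against the square-summable sequence $(\|Q_{B}G\|_{H^{N}})_{B}$, which is justified by the $H^{N}$-regularity.

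For the integrated $S$-bound I would use duality: testing against $V$ with $\|V\|_{S^{*}} \leq 1$, it suffices to bound $\int_{T/2}^{T} \langle \mathcal{N}^{t}[Q_{A}F(t),Q_{B}G(t),Q_{C}H(t)], V\rangle\, dt$. Using (\ref{2.7}) to pull out the $y$-semigroup and a Fourier decomposition on the $y$-variable via (\ref{2.6}), one reduces (by Minkowski in the $\eta$-parameters) to a one-dimensional-in-$x$ quadrilinear form $\int_{T/2}^{T}\int_{\mathbb{R}} \mathcal{U}(t)(Q_{A}\tilde F)\cdot\overline{\mathcal{U}(t)(Q_{B}\tilde G)}\cdot\mathcal{U}(t)(Q_{C}\tilde H)\cdot\overline{\mathcal{U}(t) V}\, dx\, dt$. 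To apply Lemma \ref{lemma 3.2} I freeze the slow-varying factors at time $T/2$, writing $F(t) = F(T/2) + \int_{T/2}^{t} \partial_{s}F(s)\, ds$, and control the error using the $X_{T}$-bound $\|\partial_{t}F(t)\|_{\mathcal{Y}} \lesssim t^{-(1-3\delta)}\|F\|_{X_{T}}$. Pairing the highest-frequency factor $Q_{A}F(T/2)$ in $L^{2}_{t,x}$ with whichever of the remaining three pieces ($Q_{B}G$, $Q_{C}H$, or a Littlewood--Paley piece $Q_{D}V$) has strictly smaller $x$-frequency yields the bilinear gain $A^{-1/2} \leq T^{-1/12}$; the other two pieces are paired by the classical one-dimensional $L^{6}_{t,x}$ Strichartz estimate. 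Converting $X_{T}$-norms back to $S$-norms via $\|F(t)\|_{S} \leq t^{\delta}\|F\|_{X_{T}}$ loses at most $T^{3\delta}$, which is absorbed by $T^{-1/12}$ (since $3\delta < 10^{-3} < 1/12 - 1/20$) to produce the claimed $T^{-1/20}$.

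The main obstacle is fitting the one-dimensional-in-$x$ bilinear Strichartz estimate Lemma \ref{lemma 3.2} into the two-dimensional problem while simultaneously accommodating the time-dependence of $F(t)$. This requires both the Minkowski/Fourier-in-$y$ step, which must preserve summability over $\eta$-dyadic blocks, and the freezing argument, which must exhibit enough decay of $\partial_{t}F$ to make the error terms negligible. A secondary difficulty is the comparable-frequencies sub-case in which no pair among $A,B,C$ differs by a factor of $10$, so that Lemma \ref{lemma 3.2} does not directly apply; there one falls back on a trilinear $L^{6}_{t,x}$ Strichartz estimate, which still delivers enough decay thanks to the uniform lower bound $A \geq T^{1/6}$ on the dominant frequency.
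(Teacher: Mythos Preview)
Your argument for the pointwise $Z$-bound (\ref{3.6}) is essentially the paper's: Bernstein in $x$ on the high-frequency factor combined with the refined estimate (\ref{2.1601}) produces the extra factor $T^{-1/24}$ on top of the $t^{-1}$ from Lemma~\ref{Lemma 2.1}.

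For the integrated $S$-bound, however, your freezing step contains a genuine gap. You propose to write $F(t)=F(T/2)+\int_{T/2}^{t}\partial_{s}F(s)\,ds$ once over the whole interval $[T/2,T]$. But this interval has length $\sim T$, so $\|F(t)-F(T/2)\|_{S}\lesssim T\cdot T^{-(1-3\delta)}=T^{3\delta}$, and feeding the error term into the crude bound (\ref{2.16}) yields a contribution of size $\int_{T/2}^{T}t^{-1}\,T^{3\delta}\,T^{2\delta}\,dt\sim T^{5\delta}$, which grows rather than decays; you cannot rescue this by applying bilinear Strichartz to the error either, since $G(t)$ and $H(t)$ are still $t$-dependent. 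The paper's remedy is to subdivide $[T/2,T]$ into $\sim T^{1/10}$ subintervals of length $T^{9/10}$: on each subinterval the freezing error is now $O(T^{-1/10+3\delta})$, and the cost of summing over the $T^{1/10}$ subintervals is absorbed by the bilinear gain. That gain, moreover, is $A^{-1}\le T^{-1/6}$ and not $A^{-1/2}$: the paper applies Lemma~\ref{lemma 3.2} to \emph{both} high--low pairs via the Cauchy--Schwarz splitting $\|u_1\bar u_2\|_{L^{2}_{t,x}}\|u_3\bar u_4\|_{L^{2}_{t,x}}$, noting that the dual variable $V$ is automatically localized at $x$-frequency $D\sim A$. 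Your proposed H\"older splitting with one bilinear pair in $L^{2}_{t,x}$ and the other two factors in $L^{6}_{t,x}$ has $\tfrac12+\tfrac16+\tfrac16\neq 1$ and does not close. Finally, the ``comparable frequencies'' sub-case (at least two of $A,B,C$ exceeding $T^{1/6}/16$) is handled in the paper pointwise in $t$ by Bernstein alone, exactly as in your argument for (\ref{3.6}); no Strichartz is needed there.
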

\begin{proof}
Let $t\geq\frac{T}{2}$. For (\ref{3.6}), according to (\ref{2.1601}) and Bernstein's inequality, we have
\begin{align*}
& \left\| \sum_{\max (A, B, C) \geq T^{\frac{1}{6}}} \mathcal{N}^{t}[F, G, H]\right\|_{Z} \\ \lesssim & \quad T^{-1} \sum_{\max (A, B, C) \geq T^{\frac{1}{6}}} \|Q_A F\|_{L_x^2 H_y^2 }^{\frac{1}{4}} \|Q_B G\|_{L_x^2 H_y^2 }^{\frac{1}{4}} \|Q_C H\|_{L_x^2 H_y^2 }^{\frac{1}{4}} \| F\|_{S}^{\frac{3}{4}}\| G\|_{S}^{\frac{3}{4}} \| H\|_{S}^{\frac{3}{4}}   \\ \lesssim & \quad T^{-1} \sum_{\max (A, B, C) \geq T^{\frac{1}{6}}} (ABC)^{-\frac{1}{4}} \|F\|_{S} \|G\|_S \|H\|_S \\ \lesssim & \quad T^{-\frac{25}{24}} \|F\|_S \|G\|_S \|H\|_S.
\end{align*}
For another estimate, firstly we split the set $\left\{(A, B, C): \max (A, B, C) \geq T^{\frac{1}{6}}\right\}$ into two parts $\Lambda$ and $\Lambda^c$. Here $\Lambda:=\left\{(A, B, C): \operatorname{med}(A, B, C) \leq T^{\frac{1}{6}} / 16, \max (A, B, C) \geq T^{\frac{1}{6}}\right\}$, with $\operatorname{med}(A, B, C)$ denotes the second largest number among $(A, B, C)$.

We start with the case $(A, B, C) \in \Lambda^{c}$, we are to prove
\begin{equation}
\label{3.9}
\sum_{(A, B, C) \in \Lambda^{c}} \left\|\mathcal{N}^{t}\left[Q_{A} F, Q_{B} G, Q_{C} H\right]\right\|_{S} \lesssim T^{-\frac{13}{12}}\left\| F\right\|_{S}\left\| G\right\|_{S}\left\| H \right\|_{S},\quad \forall t\geq\frac{T}{2}.
\end{equation}
Firstly, we give the estimate for $\left\|\sum_{(A, B, C) \in \Lambda^{c}} \mathcal{N}^{t}\left[Q_{A} F, Q_{B} G, Q_{C} H\right]\right\|_{L_{x,y}^{2}}$. For $t\geq\frac{T}{2}$, by (\ref{2.1701}), we have
\begin{align*}
& \left\|\sum_{(A, B, C) \in \Lambda^{c}} \mathcal{N}^{t}\left[Q_{A} F, Q_{B} G, Q_{C} H\right]\right\|_{L_{x,y}^{2}} \\ \lesssim & \quad T^{-1} \sum_{(A, B, C) \in \Lambda^{c}} \|F\|_{L_{x,y}^2} \|Q_B G\|_{L_x^2 H_y^1}^{\frac{1}{2}} \|x Q_B G\|_{L_x^2 H_y^1}^{\frac{1}{2}} \|Q_C H\|_{L_x^2 H_y^1}^{\frac{1}{2}} \|x Q_C H\|_{L_x^2 H_y^1}^{\frac{1}{2}} \\ \lesssim & \quad T^{-1} \sum_{(A, B, C) \in \Lambda^{c}}(B C)^{-1}\left\|F\right\|_{L_{x, y}^{2}}\left\| G\right\|_{S}\left\|H\right\|_{S} \\ \lesssim & \quad T^{-1} \left(\sum_{(A, B, C) \in \Lambda^{c}}(\operatorname{med}(A, B, C))^{-1}\right)\|F\|_{L_{x, y}^{2}}\|G\|_{S}\|H\|_{S} \\ \lesssim & \quad T^{-\frac{7}{6}}\|F\|_{L_{x, y}^{2}}\|G\|_{S}\|H\|_{S}.
\end{align*}
The inequality above holds by replacing $F$ with $G, H$. According to Lemma {\ref{Lemma 6.2}}, we have
\begin{align*}
\left\|\sum_{(A, B, C) \in \Lambda^{c}} \mathcal{N}^{t}\left[Q_{A} F, Q_{B} G, Q_{C} H\right]\right\|_{S'} \lesssim T^{-\frac{7}{6}} \|F\|_{S} \|G\|_S \|H\|_S.
\end{align*}
Then we estimate $\left\|\sum_{(A, B, C) \in \Lambda^{c}} x \, \mathcal{N}^{t}\left[Q_{A} F, Q_{B} G, Q_{C} H\right]\right\|_{L_{x}^{2} H_y^2 }$. According to (\ref{2.16001}), we have
\begin{align*}
& \left\|\sum_{(A, B, C) \in \Lambda^{c}} x \, \mathcal{N}^{t}\left[Q_{A} F, Q_{B} G, Q_{C} H\right]\right\|_{L_{x}^{2} H_y^2 }\\ \lesssim & \quad T^{-1} \sum_{(A, B, C)\in \Lambda^{c}} \|xQ_A F\|_{L_x^2 H_y^2} \|Q_B G\|_{L_x^2 H_y^2 }^{\frac{1}{2}}\|xQ_B G\|_{L_x^2 H_y^2 }^{\frac{1}{2}} \|Q_C H\|_{L_x^2 H_y^2 }^{\frac{1}{2}} \|xQ_C H\|_{ L_x^2 H_y^2 }^{\frac{1}{2}} \\ + & \quad  T^{-1} \sum_{(A, B, C)\in \Lambda^{c}} \|xQ_B G\|_{L_x^2 H_y^2} \|Q_A F\|_{L_x^2 H_y^2 }^{\frac{1}{2}}\|xQ_A F\|_{L_x^2 H_y^2 }^{\frac{1}{2}} \|Q_C H\|_{L_x^2 H_y^2 }^{\frac{1}{2}} \|xQ_C H\|_{ L_x^2 H_y^2 }^{\frac{1}{2}}\\ + & \quad  T^{-1} \sum_{(A, B, C)\in \Lambda^{c}} \|xQ_C H\|_{L_x^2 H_y^2} \|Q_A F\|_{L_x^2 H_y^2 }^{\frac{1}{2}}\|xQ_A F\|_{L_x^2 H_y^2 }^{\frac{1}{2}} \|Q_B G\|_{L_x^2 H_y^2 }^{\frac{1}{2}} \|xQ_B G\|_{ L_x^2 H_y^2 }^{\frac{1}{2}} \\ \lesssim & \quad T^{-1} \left(\sum_{(A, B, C) \in \Lambda^{c}}(\operatorname{med}(A, B, C))^{-\frac{1}{2}}\right) \|F\|_S \|G\|_S \|H\|_S \\ \lesssim & \quad T^{-\frac{13}{12}} \|F\|_S \|G\|_S \|H\|_S. 
\end{align*}
So we have proved (\ref{3.9}).\\\\
Then we consider the case $(A, B, C) \in \Lambda $, we are to prove
\begin{equation}
\label{3.11}
\begin{aligned}
& \left\| \sum_{\substack{A, B, C \\(A, B, C) \in \Lambda}} \int_{\frac{T}{2}}^{T} \mathcal{N}^{t}\left[Q_{A} F(t), Q_{B} G(t), Q_{C} H(t)\right] d t \right\|_{S} \\ & \lesssim  T^{-\frac{1}{20}}\|F\|_{X_{T}}\|G\|_{X_{T}}\|H\|_{X_{T}}.
\end{aligned}
\end{equation}
We consider a decomposition
\begin{equation}
[T / 4,2 T]=\bigcup_{j \in J} I_{j}, \quad I_{j}=\left[j T^{\frac{9}{10}},(j+1) T^{\frac{9}{10}}\right]=\left[t_{j}, t_{j+1}\right], \quad \# J \lesssim T^{\frac{1}{10}}.
\end{equation}
We also consider $\chi \in C_{c}^{\infty}(\mathbb{R}), \chi \geq 0$ such that $\chi(s)=0 \text { if }|s| \geq 2$ and 
\begin{align*}
\sum_{k \in \mathbb{Z}} \chi(s-k) \equiv 1.
\end{align*}
We can estimate the left hand side of (\ref{3.11}) by $C(E_1+E_2)$, where
\begin{align*} E_{1}=\bigg\| \sum_{j \in J} \sum_{{(A, B, C)} \in \Lambda} \int_{\frac{T}{2}}^{T} \chi\left(\frac{t}{T^{\frac{9}{10}}}- j\right) & \bigg(\mathcal{N}^{t}\left[Q_{A} F(t), Q_{B} G(t), Q_{C} H(t)\right]\\ &   -\mathcal{N}^{t}\left[Q_{A} F\left(t_{j}\right), Q_{B} G\left(t_{j}\right), Q_{C} H\left(t_{j}\right)\right] \bigg) d t \bigg\|_{S} \end{align*}
and 
\begin{align*}
E_{2}=\left\|\sum_{j \in J} \sum_{(A, B, C) \in \Lambda} \int_{\frac{T}{2}}^{T} \chi\left(\frac{t}{T^{\frac{9}{10}}}-j\right) \mathcal{N}^{t}\left[Q_{A} F\left(t_{j}\right), Q_{B} G\left(t_{j}\right), Q_{C} H\left(t_{j}\right)\right] d t\right\|_{S}.
\end{align*}
We start by estimating $E_1$, we have
\begin{equation}
E_{1} \leq \sum_{j \in J} \int_{\frac{T}{2}}^{T} \chi\left(\frac{t}{T^{\frac{9}{10}}}-j\right) E_{1, j}(t) d t,
\end{equation}
with 
\begin{align*}
E_{1, j}(t):=  \left\|\sum_{(A, B, C) \in \Lambda}\left(\mathcal{N}^{t}\left[Q_{A} F(t), Q_{B} G(t), Q_{C} H(t)\right]-\mathcal{N}^{t}\left[Q_{A} F\left(t_{j}\right), Q_{B} G\left(t_{j}\right), Q_{C} H\left(t_{j}\right)\right]\right)\right\|_{S}.
\end{align*}
We note $Q_{+}:=Q_{\geq T^{\frac{1}{6}}} \text { and } Q_{-}:=Q_{\leq T^{\frac{1}{6}} / 16}$, then we have
\begin{align*}
\sum_{(A, B, C) \in \Lambda} \mathcal{N}^{t}\left[Q_{A} F, Q_{B} G, Q_{C} H\right]& =\mathcal{N}^{t}\left[Q_{+} F, Q_{-} G, Q_{-} H\right] +\mathcal{N}^{t}\left[Q_{-} F, Q_{+} G, Q_{-} H\right]\\  & +  \mathcal{N}^{t}\left[Q_{-} F, Q_{-} G, Q_{+} H\right]. 
\end{align*}
We rerrange the terms in $E_{1,j}$ two by two and we rewrite the first pair as follows,
\begin{align*}
\mathcal{N}^{t} & {\left[Q_{+} F(t), Q_{-} G(t), Q_{-} H(t)\right]-\mathcal{N}^{t}\left[Q_{+} F\left(t_{j}\right), Q_{-} G\left(t_{j}\right), Q_{-} H\left(t_{j}\right)\right] } \\ & =\mathcal{N}^{t}\left[Q_{+}\left(F(t)-F\left(t_{j}\right)\right), Q_{-} G(t), Q_{-} H(t)\right]+\mathcal{N}^{t}\left[Q_{+} F\left(t_{j}\right), Q_{-}\left(G(t)-G \left(t_{j}\right)\right), Q_{-} H(t)\right] \\ &+\mathcal{N}^{t}\left[Q_{+} F\left(t_{j}\right), Q_{-} G\left(t_{j}\right), Q_{-}\left(H(t)-H\left(t_{j}\right)\right)\right]. 
\end{align*}
Then by Lemma \ref{Lemma 2.1}, we have
\begin{align*}
\left\|\mathcal{N}^{t}\left[Q_{+}\left(F(t)-F\left(t_{j}\right)\right), Q_{-} G(t), Q_{-} H(t)\right]\right\|_{S} \lesssim(1+|t|)^{-1}\left\|F(t)-F\left(t_{j}\right)\right\|_{S}\|G(t)\|_{S}\|H(t)\|_{S}.
\end{align*}
Other terms can be estimated similarly, so we have
\begin{equation}
\begin{aligned} E_{1, j}(t) \lesssim &(1+|t|)^{-1}\left[\left\|F(t)-F\left(t_{j}\right)\right\|_{S}\|G(t)\|_{S}\|H(t)\|_{S}\right.\\ &+\left\|F\left(t_{j}\right)\right\|_{S}\left\|G(t)-G\left(t_{j}\right)\right\|_{S}\|H(t)\|_{S} \\ &\left.+\left\|F\left(t_{j}\right)\right\|_{S}\left\|G\left(t_{j}\right)\right\|_{S}\left\|H(t)-H\left(t_{j}\right)\right\|_{S}\right]. \end{aligned}
\end{equation}
Since $\left|t-t_{j}\right| \leq T^{\frac{9}{10}}$, we have
\begin{align*}
\left\|F(t)-F\left(t_{j}\right)\right\|_{S} \leq \int_{t_{j}}^{t}\left\|\partial_{t} F(\theta)\right\|_{S} d \theta \leq T^{\frac{9}{10}} \sup _{t}\left\|\partial_{t} F(t)\right\|_{S}.
\end{align*}
Then by the definition of $X_T$, we have
\begin{align*}
\left\|F(t)-F\left(t_{j}\right)\right\|_{S} \leq T^{-\frac{1}{10}+3 \delta}\|F\|_{X_{T}},
\end{align*}
and
\begin{align*}
\|F(t)\|_{S} \leq T^{\delta}\|F\|_{X_{T}}.
\end{align*}
Thus,
\begin{align*}
E_{1, j} \lesssim T^{-\frac{11}{10}+5 \delta}\|F\|_{X_{T}}\|G\|_{X_{T}}\|H\|_{X_{T}}.
\end{align*}
So we have
\begin{align*}
E_{1} \lesssim \int_{T / 2}^{T} \sum_{j \in J} \chi\left(\frac{t}{T \frac{9}{10}}-j\right) E_{1, j(t)} d t \lesssim T^{-\frac{1}{10}+5 \delta}\|F\|_{X_{T}}\|G\|_{X_{T}}\|H\|_{X_{T}}.
\end{align*}
Then we estimate $E_2$. We denote
\begin{align*}
E_{2, j}^{A, B, C}=\left\|\int_{\frac{T}{2}}^{T} \chi\left(\frac{t}{T^{\frac{9}{10}}}-j\right) \mathcal{N}^{t}\left[Q_{A} F\left(t_{j}\right), Q_{B} G\left(t_{j}\right), Q_{C} H\left(t_{j}\right)\right] d t\right\|_{S},
\end{align*}
we have
\begin{align*}
E_{2} \leq \sum_{j \in J} \sum_{(A, B, C) \in \Lambda} E_{2, j}^{A, B, C}.
\end{align*}
Let
\begin{align*}
F = F^a,  G = F^b, H = F^c,
\end{align*}
then we are to prove 
\begin{equation}
\label{3.15}
\begin{aligned}
& \left\|\int_{\frac{T}{2}}^{T} \chi\left(\frac{t}{T^{\frac{9}{10}}}-j\right) \mathcal{I}^{t}\left[Q_{A} F^{a}(t_j), Q_{B} F^{b}(t_j), Q_{C} F^{c}(t_j)\right] d t\right\|_{L_{x}^{2}}  \\ \lesssim & \, (\max (A, B, C))^{-1} \left\|F^{a}\right\|_{L_{x}^{2}}\left\|F^{b}\right\|_{L_{x}^{2}}\left\|F^{c}\right\|_{L_{x}^{2}}.\end{aligned}
\end{equation}
To prove (\ref{3.15}), we take $K \in L_{x}^2$ and $F^a, F^b, F^c$ independent on $t$, then we have
\begin{align*}
I_K & = \left\langle K, \int_{\frac{T}{2}}^{T} \chi\left(\frac{t}{T^{\frac{9}{10}}}-j\right) \mathcal{I}^{t}\left[Q_{A} F^{a}, Q_{B} F^{b}, Q_{C} F^{c}\right] d t\right\rangle_{L_{x}^{2}} \\ & =    \int_{\frac{T}{2}}^{T}\int_{\mathbb{R}} \chi\left(\frac{t}{T^{\frac{9}{10}}}-j\right) \mathrm{e}^{i t \partial_{x}^2}(Q_{A} F^{a})  \overline{\mathrm{e}^{i t \partial_{x}^2}(Q_{B} F^{b}}) \mathrm{e}^{i t \partial_{x}^2}(Q_{C} F^{c})\overline{\mathrm{e}^{i t \partial_{x}^2} K} dxdt.
\end{align*}
Without loss of generality, we assume that $K=Q_{D} K, D \simeq \max (A, B, C)$ and $A=\max (A, B, C)$. By Cauchy-Schwarz inequality, we have
\begin{align*} & \left|\int_{\frac{T}{2}}^{T} \int_{\mathbb{R}} \chi\left(\frac{t}{T^{\frac{9}{10}}}-j\right) \mathrm{e}^{i t \partial_{x}^2}\left(Q_{A} F^{a}\right) \overline{\mathrm{e}^{i t \partial_{x}^2}\left(Q_{B} F^{b}\right)} \mathrm{e}^{i t \partial_{x}^2}\left(Q_{C} F^{c}\right) \overline{\mathrm{e}^{i t \partial_{x}^2} Q_{D} K} d x d t\right| \\ \leq &\left\|\mathrm{e}^{i t \partial_{x}^2}\left(Q_{A} F^{a}\right) \overline{\mathrm{e}^{i t \partial_{x}^2}\left(Q_{B} F^{b}\right)}\right\|_{L_{x, t}^{2}}\left\|\mathrm{e}^{i t \partial_{x}^2}\left(Q_{C} F^{c}\right) \overline{\mathrm{e}^{i t \partial_{x}^2} Q_{D} K}\right\|_{L_{x, t}^{2}}.\end{align*}
Since $A \geq 16 B, D \geq 16 C$, by Lemma \ref{lemma 3.2}, we have
\begin{align*}
\left\|\mathrm{e}^{i t \partial_{x}^2}\left(Q_{A} F^{a}\right) \overline{\mathrm{e}^{i t \partial_{x}^2}\left(Q_{B} F^{b}\right)}\right\|_{L_{x, t}^{2}} & \lesssim A^{-1 / 2}\left\|F^{a}\right\|_{L_{x}^{2}}\left\|F^{b}\right\|_{L_{x}^{2}},\\
\left\|\mathrm{e}^{i t \partial_{x}^2}\left(Q_{C} F^{c}\right) \overline{\mathrm{e}^{i t \partial_{x}^2} Q_{D} K}\right\|_{L_{x, t}^{2}} & \lesssim D^{-1 / 2}\left\|F^{c}\right\|_{L_{x}^{2}}\left\|K\right\|_{L_{x}^{2}}.
\end{align*}
By duality, we deduce (\ref{3.15}). Then by Remark \ref{remark 6.41}, we have
\begin{align*}
E_{2, j}^{A, B, C} \lesssim(\max (A, B, C))^{-1}\|F\|_{S}\|G\|_{S}\|H\|_{S}.
\end{align*}
Then we deduce
\begin{align*}
E_{2} \leq \sum_{j \in J} \sum_{(A, B, C) \in \Lambda} E_{2, j}^{A, B, C} \lesssim \# J \sum_{(A, B, C) \in \Lambda}(\max (A, B, C))^{-1}\|F\|_{S}\|G\|_{S}\|H\|_{S}.
\end{align*}
We recall that $A=\max (A, B, C)$, so we have
\begin{align*}
\sum_{(A, B, C) \in \Lambda}(\max (A, B, C))^{-1}=\left(\sum_{A \geq T^{1 / 6}} A^{-1}\right)\left(\#\left\{B: B \leq T^{1 / 6} / 16\right\}\right)^{2} \lesssim T^{-1 / 6+\delta}.
\end{align*}
By the definition (\ref{2.12}), we have
\begin{align*}
\left\|F\left(t_{j}\right)\right\|_{S}\left\|G\left(t_{j}\right)\right\|_{S}\left\|H\left(t_{j}\right)\right\|_{S} \leq T^{3 \delta}\|F\|_{X_{T}}\|G\|_{X_{T}}\|H\|_{X_{T}},
\end{align*}
thus
\begin{equation}
E_{2} \lesssim T^{-1 / 15+4\delta}\|F\|_{X_{T}}\|G\|_{X_{T}}\|H\|_{X_{T}}.
\end{equation}
Thus we complete the proof of (\ref{3.11}). We have finished the proof of Lemma \ref{Lemma 3.3}.
\end{proof}
Thus afterwards we can suppose that the $x$ frequencies of $F,G,H$ are $\leq T^{\frac{1}{6}}$. To go to the next subsection, we introduce the following decomposition
\begin{equation}
\mathcal{N}^{t}[F, G, H]=\mathcal{N}_{0}^{t}[F, G, H]+\widetilde{\mathcal{N}}^{t}[F, G, H],
\end{equation}
where
\begin{equation}
\label{3.18}
\mathcal{F} \mathcal{N}_{0}^{t}(\xi, \eta):=  \int_{\omega(\eta,\eta_1,\eta_2) = 0} \mathcal{F}_{x}\left(\mathcal{I}^{t}\left[F_{\eta-\eta_{1}}, G_{\eta_2 -\eta_1}, H_{\eta_2}\right]\right)(\xi) d \eta_{1} d \eta_{2}.
\end{equation}
\subsection{The fast oscillations}
In this subsection, we treat the contribution of $\widetilde{\mathcal{N}}^t[F,G,H]$.
We present firstly two elementary estimates here.
\begin{lemma}
\label{Lemma 3.4}
$\text { Let } \frac{1}{p}=\frac{1}{q}+\frac{1}{r}+\frac{1}{s} \text { with } 1 \leq p, q, r, s \leq \infty, \text { then it holds }$
\begin{equation}
\label{3.181}
\begin{aligned}
&\left\|\int_{\mathbb{R}^{3}} \mathrm{e}^{i x \xi} m(\mu, \kappa) \widehat{f}(\xi-\mu) \overline{\widehat{g}}(\xi-\mu-\kappa) \widehat{h}(\xi-\kappa) d \mu d \kappa d \xi\right\|_{L_{x}^{p}}\\ \lesssim & \int_{\mathbb{R}^{2}}\left|\mathcal{F}_{\mu,\kappa}^{-1} m(y, z)\right|\|f(x-z) \overline{g}(x-y-z) h(x-y)\|_{L_{x}^{p}} d y d z \\ \lesssim & \left\|\mathcal{F}_{\mu,\kappa}^{-1} m\right\|_{L^{1}\left(\mathbb{R}^{2}\right)}\|f\|_{L^{q}}\|g\|_{L^{r}}\|h\|_{L^{s}}.
\end{aligned}
\end{equation}
\end{lemma}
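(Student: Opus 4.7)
The strategy is to write the multiplier $m(\mu,\kappa)$ via its inverse Fourier transform $M := \mathcal{F}^{-1}_{\mu,\kappa} m$, insert this representation into the triple integral on the left-hand side of (\ref{3.181}), and then use Fubini to move the $(y,z)$ integration outside. With the convention of the paper, this turns $m(\mu,\kappa)$ into (up to a factor $(2\pi)^{-2}$) an integral of $e^{-i(y\mu+z\kappa)} M(y,z)$, and it remains to evaluate the resulting inner $(\xi,\mu,\kappa)$ integral against $\widehat{f}(\xi-\mu)\overline{\widehat{g}}(\xi-\mu-\kappa)\widehat{h}(\xi-\kappa)$.

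The key observation is that the linear change of variables
\begin{equation*}
(\xi,\mu,\kappa)\longmapsto(\xi_1,\xi_2,\xi_3),\qquad \xi_1:=\xi-\mu,\ \xi_2:=\xi-\mu-\kappa,\ \xi_3:=\xi-\kappa,
\end{equation*}
has Jacobian of absolute value one, with inverse $\xi=\xi_1-\xi_2+\xi_3$, $\mu=\xi_3-\xi_2$, $\kappa=\xi_1-\xi_2$. Substituting, the combined phase $e^{ix\xi}e^{-i(y\mu+z\kappa)}$ decouples as
\begin{equation*}
e^{i\xi_1(x-z)}\cdot e^{-i\xi_2(x-y-z)}\cdot e^{i\xi_3(x-y)},
\end{equation*}
so the inner triple integral factorizes into three one-dimensional Fourier inversions. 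Applying the Fourier inversion formula to each factor recovers, up to an absolute constant, the product $f(x-z)\,\overline{g(x-y-z)}\,h(x-y)$.

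With this pointwise identification in hand, the first inequality in (\ref{3.181}) follows from Minkowski's integral inequality in $L^p_x$ applied to the outer $(y,z)$ integration: the $L^p_x$ norm of the integral is bounded by the integral of the $L^p_x$ norms. The second inequality is then an immediate consequence of Hölder's inequality in the factorization $L^p = L^q\cdot L^r\cdot L^s$ combined with translation invariance of the Lebesgue norms on $\mathbb{R}$, which gives
\begin{equation*}
\|f(\cdot-z)\,\overline{g(\cdot-y-z)}\,h(\cdot-y)\|_{L^p_x}\leq \|f\|_{L^q}\|g\|_{L^r}\|h\|_{L^s},
\end{equation*}
uniformly in $(y,z)$, so that pulling this bound under the $(y,z)$ integral produces the factor $\|M\|_{L^1(\mathbb{R}^2)}$.

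The only mild technical point is bookkeeping: the particular normalization of the Fourier transform adopted in the paper generates factors of $2\pi$ which are harmlessly absorbed in the $\lesssim$, and the use of Fubini has to be justified either by a density argument on Schwartz data or under the implicit assumption that $\mathcal{F}^{-1}_{\mu,\kappa}m\in L^1(\mathbb{R}^2)$, which is exactly the regime in which the right-hand side is finite. No oscillatory-integral or stationary-phase analysis is needed; the lemma is purely structural.
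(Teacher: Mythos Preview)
Your proposal is correct and follows essentially the same approach as the paper: both insert the inverse Fourier representation of $m$, use the phase decoupling (which you make explicit via the change of variables $(\xi,\mu,\kappa)\mapsto(\xi-\mu,\xi-\mu-\kappa,\xi-\kappa)$ with unit Jacobian, whereas the paper simply regroups the exponential factors), and then conclude by Minkowski and H\"older. If anything, your write-up is slightly more careful in naming Minkowski's integral inequality for the first step and in spelling out the change of variables that the paper leaves implicit.
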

\begin{proof}
We have
\begin{align*}  I =& \int_{\mathbb{R}^{3}} \mathrm{e}^{i x \xi} m(\mu, \kappa) \widehat{f}(\xi-\mu) \overline{\widehat{g}}(\xi-\mu-\kappa) \widehat{h}(\xi-\kappa) d \mu d \kappa d \xi \\=& \frac{1}{4\pi^2}\int_{\mathbb{R}^2} \mathcal{F}_{\mu,\kappa}^{-1} m(y,z) \int_{\mathbb{R}^3} \mathrm{e}^{ix\xi}\mathrm{e}^{-iz(\xi-\mu)}\widehat{f}(\xi-\mu) \mathrm{e}^{i(z+y)(\xi-\mu-\kappa)}\overline{\widehat{g}}(\xi-\mu-\kappa)\mathrm{e}^{-iy(\xi-\kappa)}\widehat{h}(\xi-\kappa)d\mu d\kappa d\xi dy dz \\  = & \frac{1}{4\pi^2}\int_{\mathbb{R}^{2}} \mathcal{F}_{\mu,\kappa}^{-1} m(y, z) f(x-z) \overline{g}(x-y-z) h(x-y) d y d z,
\end{align*}
then by Hölder’s inequality, we have
\begin{align*}\|I\|_{L_{x}^{p}} & \lesssim \int_{\mathbb{R}^{2}}\left|\mathcal{F}_{\mu,\kappa}^{-1} m(y, z)\right|\|f(x-z) \overline{g}(x-y-z) h(x-y)\|_{L_{x}^{p}} d y d z \\ &\lesssim \left(\int_{\mathbb{R}^{2}}\left|\mathcal{F}_{\mu,\kappa}^{-1} m(y, z)\right| d y d z \right) \|f\|_{L^{q}}\|g\|_{L^{r}}\|h\|_{L^{s}}  \\& = \left\|\mathcal{F}_{\mu,\kappa}^{-1} m\right\|_{L^{1}\left(\mathbb{R}^{2}\right)}\|f\|_{L^{q}}\|g\|_{L^{r}}\|h\|_{L^{s}}. 
\end{align*}
\end{proof}
To simplify the notation afterwards, we define the following shift operator,
\begin{equation}
T_y f(x) : = f(x-y).
\end{equation}
\begin{remark}
In fact, the inequality we use in the proof of Lemma \ref{Lemma 3.5} is the inequality (\ref{3.181}) with $p=2$. In this case we have 
\begin{equation}
\label{3.191}
\begin{aligned}
&\left\|\int_{\mathbb{R}^2} m(\mu, \kappa) \widehat{f}(\xi-\mu) \overline{\widehat{g}}(\xi-\mu-\kappa) \widehat{h}(\xi-\kappa)d\mu d\kappa\right\|_{L_{\xi}^2} \\ \lesssim &\left\|\int_{\mathbb{R}^{3}} \mathrm{e}^{i x \xi} m(\mu, \kappa) \widehat{f}(\xi-\mu) \overline{\widehat{g}}(\xi-\mu-\kappa) \widehat{h}(\xi-\kappa) d \mu d \kappa d \xi\right\|_{L_{x}^{2}} \\ \lesssim & \int_{\mathbb{R}^{2}}\left|\mathcal{F}_{\mu,\kappa}^{-1} m(y, z)\right|\|f(x-z) \overline{g}(x-y-z) h(x-y)\|_{L_{x}^{2}} d y d z \\ \lesssim & \left\|\mathcal{F}_{\mu,\kappa}^{-1} m\right\|_{L^{1}\left(\mathbb{R}^{2}\right)}\|f\|_{L^{q}}\|g\|_{L^{r}}\|h\|_{L^{s}}.
\end{aligned}
\end{equation}
\end{remark}
\begin{remark}
\label{remark 3.7}
Let 
\begin{equation}
\mathcal{M}^t[F,G,H](\xi) : = \int_{\mathbb{R}^{2}} \mathrm{e}^{2it \mu \kappa} m(\mu, \kappa)  \widehat{F}(\xi-\mu) \overline{\widehat{G}}(\xi-\mu-\kappa) \widehat{H}(\xi-\kappa) d \mu d \kappa.
\end{equation}
We observe that 
\begin{align*}
\mathcal{M}^t[F,G,H] (\xi) = \mathrm{e}^{it|\xi|^2}\int_{\mathbb{R}^2}m(\mu,\kappa) \widehat{\mathrm{e}^{it\partial_x^2}F}(\xi-\mu) \overline{\widehat{\mathrm{e}^{it\partial_x^2} G}}(\xi-\mu-\kappa) \widehat{\mathrm{e}^{it\partial_x^2} H}(\xi-\kappa) d \mu d \kappa.
\end{align*}
Then by (\ref{2.21}) and (\ref{3.191}), we have
\begin{equation}
\begin{aligned}
& \left\|\mathcal{M}^t[f^1,f^2,f^3]\right\|_{L_\xi^2} \\  \lesssim & \left\|\int_{\mathbb{R}^2}m(\mu,\kappa) \widehat{\mathrm{e}^{it\partial_x^2}f^1}(\xi-\mu) \overline{\widehat{\mathrm{e}^{it\partial_x^2} f^2}}(\xi-\mu-\kappa) \widehat{\mathrm{e}^{it\partial_x^2} f^3}(\xi-\kappa) d \mu d \kappa\right\|_{L_\xi^2} \\  \lesssim & \int_{\mathbb{R}^{2}}\left|\mathcal{F}_{\mu,\kappa}^{-1} m(y, z)\right|\left\|I^t[T_z f^1, T_{y+z} f^2 , T_y f^3]\right\|_{L_{x}^{2}} d y d z   \\ \lesssim & \left\|\mathcal{F}_{\mu,\kappa}^{-1} m\right\|_{L^{1}\left(\mathbb{R}^{2}\right)}\min _{\{j, k, \ell\}=\{1,2,3\}} \left\|f^j\right\|_{L_x^2} \left\|\mathrm{e}^{it\partial_x^2} f^k\right\|_{L_x^\infty}  \left\|\mathrm{e}^{it\partial_x^2} f^\ell\right\|_{L_x^\infty} \\  \lesssim &  \frac{\left\|\mathcal{F}_{\mu,\kappa}^{-1} m\right\|_{L^{1}\left(\mathbb{R}^{2}\right)}}{|t|} \min _{\{j, k, \ell\}=\{1,2,3\}}\left\|f^{j}\right\|_{L_{x}^{2}}\left\|f^{k}\right\|_{L_{x}^{2}}^{\frac{1}{2}}\left\|x f^{k}\right\|_{L_{x}^{2}}^{\frac{1}{2}}\left\|f^{\ell}\right\|_{L_{x}^{2}}^{\frac{1}{2}}\left\|x f^{\ell}\right\|_{L_{x}^{2}}^{\frac{1}{2}}.
\end{aligned}
\end{equation}
\end{remark}
Then we give an estimate to an auxiliary function which will be used in the proof of Lemma \ref{Lemma 3.5}.
\begin{lemma}
\label{Lemma 3.6}
$\text { For } T \geq 1, \varphi \in C_{c}^{\infty}(\mathbb{R}), \varphi(x)=1 \text { when }|x| \leq 1$ when $|x| \leq 1$ and $\varphi(x)=0$ when $|x| \geq 2$. We define for $T / 2 \leq t \leq T$, 
\begin{align*}
\widetilde{m}(\mu, \kappa):=\varphi\left(t^{\frac{3}{4}} \mu \kappa\right) \varphi\left(\frac{T^{-\frac{1}{6}}}{4} \mu\right) \varphi\left(\frac{T^{-\frac{1}{6}}}{4} \kappa\right).
\end{align*}
Then $\left\|\mathcal{F}_{\mu , \kappa}^{-1} \widetilde{m}\right\|_{L^{1}\left(\mathbb{R}^{2}\right)} \lesssim t^{\frac{\delta}{100}}$.
\end{lemma}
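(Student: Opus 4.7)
\textbf{Proof proposal for Lemma \ref{Lemma 3.6}.} The plan is a dyadic decomposition of $\widetilde m$ followed by a counting argument. Using the Littlewood-Paley bump $\phi$ with $\sum_{j\in\mathbb Z}\phi(\cdot/2^{j}) \equiv 1$ on $\mathbb R\setminus\{0\}$, I would write
\begin{equation*}
\widetilde m(\mu,\kappa) = \sum_{j,k\in\mathbb Z} \widetilde m_{j,k}(\mu,\kappa),\qquad
\widetilde m_{j,k}(\mu,\kappa) := \widetilde m(\mu,\kappa)\,\phi\!\left(\tfrac{\mu}{2^{j}}\right)\phi\!\left(\tfrac{\kappa}{2^{k}}\right),
\end{equation*}
so that $\widetilde m_{j,k}$ is supported in $|\mu|\sim 2^{j}$, $|\kappa|\sim 2^{k}$. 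The crucial observation is that for $\widetilde m_{j,k}\not\equiv 0$, the three cutoffs force the dyadic parameters into the triangle
\begin{equation*}
j\le A,\qquad k\le A,\qquad j+k\le B,\qquad A:=\tfrac{1}{6}\log_{2}T+O(1),\qquad B:=-\tfrac{3}{4}\log_{2}t+O(1).
\end{equation*}
Combined with $j\ge B-A$ and $k\ge B-A$, the number of admissible pairs is bounded by $(2A-B)^{2}\lesssim (\log t)^{2}$ since $t\asymp T$ on $[T/2,T]$.

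The key estimate is that \emph{each} piece satisfies
\begin{equation*}
\|\mathcal F_{\mu,\kappa}^{-1}\widetilde m_{j,k}\|_{L^{1}(\mathbb R^{2})}\lesssim 1
\end{equation*}
uniformly in $(j,k)$. By the standard rescaling argument (the $L^{1}$ norm of the inverse Fourier transform is scale-invariant), this reduces to verifying the symbol-type bound $|\partial_{\mu}^{a}\partial_{\kappa}^{b}\widetilde m_{j,k}|\lesssim 2^{-ja-kb}$, which in turn only requires a finite number of derivatives. The two cutoffs $\varphi(T^{-1/6}\mu/4)$ and $\varphi(T^{-1/6}\kappa/4)$ obviously satisfy this, since $T^{-1/6}\lesssim 2^{-j}$ and $T^{-1/6}\lesssim 2^{-k}$. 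The factor $\phi(\mu/2^{j})\phi(\kappa/2^{k})$ is the standard Littlewood-Paley bump. The main point is the hyperbolic factor $\varphi(t^{3/4}\mu\kappa)$: its derivatives are
\begin{equation*}
|\partial_{\mu}^{a}\partial_{\kappa}^{b}\varphi(t^{3/4}\mu\kappa)| \lesssim \max_{a'\le a,\,b'\le b}(t^{3/4})^{a'+b'-\max(a',b')}|\kappa|^{a'}|\mu|^{b'},
\end{equation*}
and on the support of $\widetilde m_{j,k}$ the hyperbolic cutoff enforces $t^{3/4}2^{j+k}\lesssim 1$, whence $t^{3/4}|\kappa|\lesssim 2^{-j}$, $t^{3/4}|\mu|\lesssim 2^{-k}$, and $t^{3/4}\lesssim 2^{-(j+k)}$. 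Plugging these in gives the required symbol bound.

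Putting the two ingredients together by the triangle inequality,
\begin{equation*}
\|\mathcal F_{\mu,\kappa}^{-1}\widetilde m\|_{L^{1}(\mathbb R^{2})}\le \sum_{(j,k)\text{ admissible}}\|\mathcal F_{\mu,\kappa}^{-1}\widetilde m_{j,k}\|_{L^{1}(\mathbb R^{2})}\lesssim (\log t)^{2},
\end{equation*}
and $(\log t)^{2}\lesssim t^{\delta/100}$ for all $t\ge 1$, which yields the claim. The main obstacle is the bookkeeping for the hyperbolic cutoff: one must be careful that the constraint $2^{j+k}\lesssim t^{-3/4}$ is exactly what is needed to absorb all the factors of $t^{3/4}$ arising from differentiating $\varphi(t^{3/4}\mu\kappa)$, and to simultaneously rule out the low-frequency tails that would otherwise produce a divergent sum.
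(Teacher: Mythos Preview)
Your approach via dyadic decomposition and symbol estimates is a genuinely different route from the paper's, which instead uses direct integration by parts to prove the pointwise bounds $(1+|x_1|)(1+|x_2|)|I(x_1,x_2)| \lesssim \log(1+T)$ and $(1+|x_1|^2)(1+|x_2|^2)|I(x_1,x_2)| \lesssim T^{13/3}$, and then interpolates to obtain an integrable envelope. Both methods arrive at a polylogarithmic bound, which suffices.

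However, your counting step has a gap. You assert $j \ge B-A$ and $k \ge B-A$, but these lower bounds are not consequences of the constraints you list: the region $\{j \le A,\ k \le A,\ j+k \le B\}$ is unbounded toward $(-\infty,-\infty)$. For very negative $j,k$, all three cutoffs in $\widetilde m$ are identically $1$ on the support of $\phi(\mu/2^j)\phi(\kappa/2^k)$, so $\widetilde m_{j,k} = \phi(\mu/2^j)\phi(\kappa/2^k)$ and $\|\mathcal F^{-1}\widetilde m_{j,k}\|_{L^1} = \|\check\phi\|_{L^1}^2$ is a fixed constant. The triangle-inequality sum over such $(j,k)$ therefore diverges, exactly the ``low-frequency tails'' you flag at the end but do not handle.

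The fix is to peel off the trivial tensor part first: write
\[
\widetilde m(\mu,\kappa) = \varphi\bigl(\tfrac{T^{-1/6}}{4}\mu\bigr)\varphi\bigl(\tfrac{T^{-1/6}}{4}\kappa\bigr) + \bigl[\varphi(t^{3/4}\mu\kappa)-1\bigr]\,\varphi\bigl(\tfrac{T^{-1/6}}{4}\mu\bigr)\varphi\bigl(\tfrac{T^{-1/6}}{4}\kappa\bigr).
\]
The first piece is a pure tensor product with $\|\mathcal F^{-1}[\cdot]\|_{L^1} = \|\check\varphi\|_{L^1}^2 \lesssim 1$. The second piece carries the factor $\varphi(t^{3/4}\mu\kappa)-1$, which is supported in $|t^{3/4}\mu\kappa| \ge 1$ and therefore forces $j+k \ge B - O(1)$ after the dyadic decomposition. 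Combined with $j,k \le A$, this now genuinely gives $j \ge B-A-O(1)$ and $k \ge B-A-O(1)$, and your count $(2A-B)^2 \lesssim (\log t)^2$ goes through. The symbol estimates are unchanged since $\varphi - 1$ is bounded and its derivatives coincide with those of $\varphi$.
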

\begin{proof}
We have
\begin{align*}
\left\|\mathcal{F}_{\mu, \kappa}^{-1} \widetilde{m}\right\|_{L^{1}\left(\mathbb{R}^{2}\right)}=\left\|I\left(x_{1}, x_{2}\right)\right\|_{L_{x_{1}, x_{2}}^{1}},
\end{align*}
where
\begin{align*}
I\left(x_{1}, x_{2}\right)=\int_{\mathbb{R}^{2}} \mathrm{e}^{i x_{1} \mu} \mathrm{e}^{i x_{2} \kappa} \varphi(S \mu \kappa) \varphi(\mu) \varphi(\kappa) d \mu d \kappa, \quad S \approx T^{\frac{13}{12}}.
\end{align*}
Then we are to show
\begin{align*}
\left|I\left(x_{1}, x_{2}\right)\right|+\left|x_{1} I\left(x_{1}, x_{2}\right)\right|+\left|x_{2} I\left(x_{1}, x_{2}\right)\right| \lesssim 1, \quad\left|x_{1} x_{2} I\left(x_{1}, x_{2}\right)\right| \lesssim \log (1+T).
\end{align*}
For the first inequality above, we estimate $\left|x_{1} I\left(x_{1}, x_{2}\right)\right|$ for example, and other two terms can be estimated in the same way. We have
\begin{align*}\left|x_{1} I\left(x_{1}, x_{2}\right)\right| &=\left|\int_{\mathbb{R}^{2}} \frac{1}{i} \partial_{\mu}\left(\mathrm{e}^{i x_{1} \mu}\right) \mathrm{e}^{i x_{2} \kappa} \varphi(S \mu \kappa) \varphi(\mu) \varphi(\kappa) d \mu d \kappa\right| \\ &=\left|\int_{\mathbb{R}^{2}} \mathrm{e}^{i x_{1} \mu} \mathrm{e}^{i x_{2} \kappa}\left[S \kappa \varphi^{\prime}(S \mu \kappa) \varphi(\mu) \varphi(\kappa)+\varphi(S \mu \kappa) \varphi^{\prime}(\mu) \varphi(\kappa)\right] d \mu d \kappa\right| \\ & \lesssim 1+\left|\int_{\mathbb{R}^{2}} \mathrm{e}^{i x_{1} \mu} \mathrm{e}^{i x_{2} \kappa}\left(S \kappa \varphi^{\prime}(S \mu \kappa) \varphi(\mu) \varphi(\kappa)\right) d \mu d \kappa\right|. 
\end{align*}
We obeserve that $|S \mu \kappa| \leq 2$, then the second term turns out to be
\begin{align*}
\left|\int_{\mathbb{R}^{2}} \mathrm{e}^{i x_{1} \mu} \mathrm{e}^{i x_{2} \kappa}\left(S \kappa \varphi^{\prime}(S \mu \kappa) \varphi(\mu) \varphi(\kappa)\right) d \mu d \kappa\right| \lesssim \int_{D:=\{|S \mu \kappa|,|\mu|,|\kappa| \leq 2\}}|S \kappa| d \mu d \kappa \lesssim 1.
\end{align*}
So we get the first inequality, and we use a similar method to prove the second one. We have
\begin{align*} \left|x_{1} x_{2} I\left(x_{1}, x_{2}\right)\right| & \lesssim \int_{\mathbb{R}^{2}}\left|\partial_{\mu} \partial_{\kappa}(\varphi(S \mu \kappa) \varphi(\mu) \varphi(\kappa))\right| d \mu d \kappa \\ & \lesssim \int_{D}\left|S \kappa \varphi^{\prime}(S \mu \kappa) \varphi(\mu) \varphi^{\prime}(\kappa)\right|+\left|S \kappa S \mu \varphi^{\prime \prime}(S \mu \kappa) \varphi(\mu) \varphi(\kappa)\right| \\ & +\left|S \mu \varphi^{\prime}(S \mu \kappa) \varphi^{\prime}(\mu) \varphi(\kappa)\right|+\left|\varphi(S \mu \kappa) \varphi^{\prime}(\mu) \varphi^{\prime}(\kappa)\right|+\left|S \varphi^{\prime}(S \mu \kappa) \varphi(\mu) \varphi(\kappa)\right| d \mu d \kappa \\ & \lesssim\left(\int_{0}^{T^{-13 / 12}} \int_{0}^{2}+\int_{T^{-13 / 12}}^{2} \int_{0}^{\frac{2 T^{-13 / 12}}{\kappa}}\right)[1+|S \kappa|+|S \mu|+|S \kappa S \mu|+S] d \mu d \kappa \\ & \lesssim \log (1+T).\end{align*}
Then we have
\begin{align*}
\left(1+\left|x_{1}\right|\right)\left(1+\left|x_{2}\right|\right)\left|I\left(x_{1}, x_{2}\right)\right| \lesssim \log (1+T).
\end{align*}
We also have a polynomial in $T$ bound
\begin{align*}
\left(1+\left|x_{1}\right|^{2}\right)\left(1+\left|x_{2}\right|^{2}\right)\left|I\left(x_{1}, x_{2}\right)\right| \lesssim T^{13 / 3}.
\end{align*}
Therefore by interpolation we obtain that for every $0 < \varepsilon < 13/3$, there exists $ k > 1/2$ such that
such that
\begin{align*}
\left|I\left(x_{1}, x_{2}\right)\right| \lesssim(1+T)^{\varepsilon}\left(1+\left|x_{1}\right|^{2}\right)^{-k}\left(1+\left|x_{2}\right|^{2}\right)^{-k}.
\end{align*}
Thus we have 
\begin{align*}
\left\|\mathcal{F}_{\eta, \kappa}^{-1} \widetilde{m}\right\|_{L^{1}\left(\mathbb{R}^{2}\right)} \lesssim t^{\frac{\delta}{100}}.
\end{align*}
\end{proof}
\begin{lemma}
\label{lemma 3.81}
Let $\varpi$ be a function such that $\varpi^{-}(\eta) : = \mathbbm{1}_{\eta \leq 0} \, \varpi(\eta)$ is a Schwartz function. Then for $k \leq 0$, we have
\begin{equation}
\left\|\int_{-\infty}^{0} \mathrm{e}^{i y \eta} \frac{\mathrm{e}^{-2 i t \eta}-1}{i \eta} \varpi\left(\frac{\eta}{2^{k}}\right) d \eta\right\|_{L_{y}^{1}} \lesssim 2^{-\frac{k}{2}} t^{\frac{1}{2}}.
\end{equation}
\end{lemma}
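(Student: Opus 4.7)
The plan is to rewrite the oscillatory factor as a time integral, then use Fubini and rescaling to reduce the problem to controlling a difference of translates of an antiderivative. First, I would use
\begin{equation*}
\frac{\mathrm{e}^{-2it\eta}-1}{i\eta} = -\int_{0}^{2t}\mathrm{e}^{-i\tau\eta}\,d\tau,
\end{equation*}
which, after applying Fubini and noting that on $\eta \leq 0$ one has $\varpi(\eta/2^k)=\varpi^{-}(\eta/2^k)$, turns the integral into
\begin{equation*}
I(y) := \int_{-\infty}^{0}\mathrm{e}^{iy\eta}\frac{\mathrm{e}^{-2it\eta}-1}{i\eta}\varpi(\eta/2^k)\,d\eta = -\int_{0}^{2t}\!\!d\tau\int_{\mathbb{R}}\mathrm{e}^{i(y-\tau)\eta}\varpi^{-}(\eta/2^k)\,d\eta.
\end{equation*}
Setting $\Phi(s) := \int_{\mathbb{R}}\mathrm{e}^{is\zeta}\varpi^{-}(\zeta)\,d\zeta$, which is the inverse Fourier transform (up to a constant) of a Schwartz function and hence Schwartz, the inner integral becomes $2^{k}\Phi(2^{k}(y-\tau))$. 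The change of variable $\sigma = 2^{k}(y-\tau)$ then gives the clean formula
\begin{equation*}
I(y) = \mathcal{F}(2^{k}y) - \mathcal{F}(2^{k}y - 2^{k+1}t), \qquad \mathcal{F}(u) := \int_{-\infty}^{u}\Phi(\sigma)\,d\sigma.
\end{equation*}

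Next, I would exploit the crucial fact that $\varpi^{-}$ is Schwartz and vanishes for $\eta > 0$, so by continuity $\varpi^{-}(0) = 0$, which forces $\int_{\mathbb{R}}\Phi(\sigma)\,d\sigma = 2\pi\varpi^{-}(0) = 0$. Combined with the Schwartz decay of $\Phi$, this gives $\mathcal{F}(\pm\infty) = 0$ and in fact $\mathcal{F}$ is itself Schwartz (the tails can be written as $-\int_{u}^{\infty}\Phi$ or $\int_{-\infty}^{u}\Phi$, both decaying faster than any polynomial). Rescaling $u = 2^{k}y$ and setting $a := 2^{k+1}t$, the target estimate reduces to
\begin{equation*}
\|I\|_{L^{1}_{y}} = 2^{-k}\|\mathcal{F}(\cdot) - \mathcal{F}(\cdot - a)\|_{L^{1}_{u}} \lesssim 2^{-k/2}t^{1/2},
\end{equation*}
which is equivalent to proving $\|\mathcal{F}(\cdot) - \mathcal{F}(\cdot - a)\|_{L^{1}} \lesssim \sqrt{a}$.

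Finally, I would obtain the $\sqrt{a}$ bound by interpolating between two complementary estimates. On one hand, by the fundamental theorem of calculus and Fubini,
\begin{equation*}
\|\mathcal{F}(\cdot) - \mathcal{F}(\cdot - a)\|_{L^{1}} \leq \int_{\mathbb{R}}\int_{u-a}^{u}|\Phi(\sigma)|\,d\sigma\,du = a\|\Phi\|_{L^{1}},
\end{equation*}
while on the other hand $\|\mathcal{F}(\cdot) - \mathcal{F}(\cdot - a)\|_{L^{1}} \leq 2\|\mathcal{F}\|_{L^{1}}$. Using the elementary inequality $\min(p,q) \leq \sqrt{pq}$ yields $\lesssim \sqrt{a}$, and substituting $a = 2^{k+1}t$ produces the claimed $2^{-k/2}t^{1/2}$ bound on $\|I\|_{L^{1}_{y}}$.

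The only substantive point I anticipate is the vanishing condition $\int\Phi = 0$: without it $\mathcal{F}$ would fail to lie in $L^{1}$, and the second (trivial) bound above would collapse. This is precisely what the Schwartz hypothesis on $\varpi^{-}$ delivers at the origin, and the rest of the proof is routine changes of variable plus the $\min$-$\sqrt{\phantom{.}}$ interpolation.
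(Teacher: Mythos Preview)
Your proof is correct and in fact more streamlined than the paper's. Both arguments begin identically: rewrite the oscillatory factor as a time integral, rescale, and reduce to estimating the $L^{1}$ norm of the difference $\mathcal{F}(\cdot)-\mathcal{F}(\cdot-a)$, where $\mathcal{F}$ is the antiderivative of the Schwartz function $\Phi=\mathcal{F}^{-1}\varpi^{-}$ and $a=2^{k+1}t$. Both also identify the crucial ingredient $\int\Phi=0$, which forces $\mathcal{F}\in L^{1}$. From there, however, the paper proceeds by a hands-on decomposition of the $y$-axis into five regions ($(-\infty,-1]$, $[-1,1]$, $[1,2t-1]$, $[2t-1,2t+1]$, $[2t+1,\infty)$), invoking only the crude pointwise bound $|\Phi(\tau)|\lesssim|\tau|^{-3/2}$ and integrating explicitly in each region, whereas you recognize the structure directly and interpolate between the Lipschitz bound $a\|\Phi\|_{L^{1}}$ and the trivial bound $2\|\mathcal{F}\|_{L^{1}}$ via $\min(p,q)\le\sqrt{pq}$. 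Your route avoids the case analysis entirely, handles small and large $t$ uniformly without the paper's ``without loss of generality $t\ge1$'' reduction, and makes transparent why the exponent is exactly $1/2$. The paper's computation, while longer, has the minor advantage of using only an explicit polynomial decay rate rather than the full Schwartz property of $\mathcal{F}$, but since that property is already available, your argument is the cleaner one.
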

\begin{proof}
We have
\begin{align*}
& \left\|\int_{-\infty}^{0} \mathrm{e}^{i y \eta} \frac{\mathrm{e}^{-2 i t \eta}-1}{i \eta} \varpi\left(\frac{\eta}{2^{k}}\right) d \eta\right\|_{L_{y}^{1}} \\ = & \left\|\int_{-\infty}^{0} \int_{y}^{y-2t} \mathrm{e}^{i\tau\eta} d\tau \, \varpi\left(\frac{\eta}{2^{k}}\right) d \eta\right\|_{L_{y}^{1}} \\ \lesssim & \left\|\int_{2^k y}^{2^k(y-2t)} \mathcal{F}_{\eta\to\tau}^{-1}\varpi^{-}(\tau) d\tau \, \right\|_{L_{y}^{1}},
\end{align*}
where $\varpi^{-}(\eta) = \mathbbm{1}_{\eta \leq 0} \, \varpi(\eta)$.

Since $\varpi^{-}$ is a Schwartz function and $\varpi^{-}(0) = 0$, we have $\mathcal{F}_{\eta\to\tau}^{-1}\varpi^{-}$ is a Schwartz function and $\int_{-\infty}^\infty \mathcal{F}_{\eta\to\tau}^{-1}\varpi^{-}(\tau) d\tau = 0$. So we have
\begin{align*}
\left|\mathcal{F}_{\eta\to\tau}^{-1}\varpi^{-}(\tau)\right| \lesssim \frac{1}{|\tau|^{\frac{3}{2}}}.
\end{align*}
Without loss of generality, we assume here $t\geq 1$. To estimate the $L_y^1$ norm of $\int_{2^k y}^{2^k(y-2t)} \mathcal{F}_{\eta\to\tau}^{-1}\varpi^{-}(\tau) d\tau$, we decompose the integral interval into five parts. We have
\begin{align*}\left\|\int_{2^k y}^{2^k(y-2t)} \mathcal{F}_{\eta\to\tau}^{-1}\varpi^{-}(\tau) d\tau \, \right\|_{L_{y}^{1}} = & \int_{-\infty}^{-1} \left|\int_{2^k y}^{2^k(y-2t)} \mathcal{F}_{\eta\to\tau}^{-1}\varpi^{-}(\tau) d\tau \, \right|dy \\ + & \int_{-1}^{1} \left|\int_{2^k y}^{2^k(y-2t)} \mathcal{F}_{\eta\to\tau}^{-1}\varpi^{-}(\tau) d\tau \, \right|dy \\ + & \int_{1}^{2t-1} \left|\int_{2^k y}^{2^k(y-2t)} \mathcal{F}_{\eta\to\tau}^{-1}\varpi^{-}(\tau) d\tau \, \right|dy \\ + & \int_{2t-1}^{2t+1} \left|\int_{2^k y}^{2^k(y-2t)} \mathcal{F}_{\eta\to\tau}^{-1}\varpi^{-}(\tau) d\tau \, \right|dy \\ + 
& \int_{2t+1}^{\infty} \left|\int_{2^k y}^{2^k(y-2t)} \mathcal{F}_{\eta\to\tau}^{-1}\varpi^{-}(\tau) d\tau \, \right|dy.
\end{align*}
For the first term above, we have
\begin{align*}
\int_{-\infty}^{-1} \left|\int_{2^k y}^{2^k(y-2t)} \mathcal{F}_{\eta\to\tau}^{-1}\varpi^{-}(\tau) d\tau\right| dy \leq & \int_{-\infty}^{-1} \int_{2^k (y-2t)}^{2^k y} \left|\mathcal{F}_{\eta\to\tau}^{-1}\varpi^{-}(\tau)\right| d\tau dy \\ \lesssim & \int_{-\infty}^{-1} \int_{2^k (y-2t)}^{2^k y} \frac{1}{(-\tau)^{\frac{3}{2}}} d\tau dy \\ \lesssim & \, 2^{-\frac{k}{2}}\int_{-\infty}^{-1} \left(\frac{1}{(-y)^{\frac{1}{2}}}-\frac{1}{(2t-y)^{\frac{1}{2}}}\right) dy \\ \lesssim & \, 2^{-\frac{k}{2}} \int_{-2t-1}^{-1}\frac{1}{(-y)^{\frac{1}{2}}}dy \\ \lesssim & \,  2^{-\frac{k}{2}} t^{\frac{1}{2}}.
\end{align*}
For the second term above, since $\mathcal{F}_{\eta \rightarrow \tau}^{-1} \varpi^{-}$ is a Schwartz function and $\int_{-\infty}^{\infty} \mathcal{F}_{\eta \rightarrow \tau}^{-1} \varpi^{-}(\tau) d \tau=0$, we deduce that $\int_{-\infty}^y \mathcal{F}_{\eta \rightarrow \tau}^{-1} \varpi^{-}(\tau) d \tau$ is also a Schwartz function, thus $\|\int_{-\infty}^y \mathcal{F}_{\eta \rightarrow \tau}^{-1} \varpi^{-}(\tau) d \tau\|_{L_y^\infty} \lesssim 1$. So we have
\begin{align*}
\int_{-1}^{1} \left|\int_{2^k y}^{2^k(y-2t)} \mathcal{F}_{\eta\to\tau}^{-1}\varpi^{-}(\tau) d\tau \, \right|dy \lesssim & \int_{-1}^{1} \left|\int_{-\infty}^{2^k (y-2t)} \mathcal{F}_{\eta\to\tau}^{-1}\varpi^{-}(\tau) d\tau -\int_{-\infty}^{2^k y} \mathcal{F}_{\eta\to\tau}^{-1}\varpi^{-}(\tau) d\tau \right|dy\\ \lesssim & \left\|\int_{-\infty}^{2^k (y-2t)} \mathcal{F}_{\eta\to\tau}^{-1}\varpi^{-}(\tau) d\tau\right\|_{L_y^\infty} + \left\|\int_{-\infty}^{2^k y} \mathcal{F}_{\eta\to\tau}^{-1}\varpi^{-}(\tau) d\tau\right\|_{L_y^\infty}  \\ \lesssim & \, 1.
\end{align*}
For the third term above, since 
\begin{align*}
\int_{-\infty}^{2^k y} \mathcal{F}_{\eta\to\tau}^{-1}\varpi^{-}(\tau)d\tau + \int_{2^k y}^{\infty} \mathcal{F}_{\eta\to\tau}^{-1}\varpi^{-}(\tau)d\tau = \int_{-\infty}^{\infty} \mathcal{F}_{\eta\to\tau}^{-1}\varpi^{-}(\tau)d\tau = 0,
\end{align*}
we have
\begin{align*}
\int_{1}^{2t-1} \left|\int_{2^k y}^{2^k(y-2t)} \mathcal{F}_{\eta\to\tau}^{-1}\varpi^{-}(\tau) d\tau \, \right|dy = & \int_{1}^{2t-1} \left|-\int_{-\infty}^{2^k y} \mathcal{F}_{\eta\to\tau}^{-1}\varpi^{-}(\tau)d\tau + \int_{-\infty}^{2^k (y-2t)} \mathcal{F}_{\eta\to\tau}^{-1}\varpi^{-}(\tau)d\tau \right|dy  \\= & \int_{1}^{2t-1} \left|\int_{2^k y}^{\infty} \mathcal{F}_{\eta\to\tau}^{-1}\varpi^{-}(\tau)d\tau + \int_{-\infty}^{2^k (y-2t)} \mathcal{F}_{\eta\to\tau}^{-1}\varpi^{-}(\tau)d\tau \right|dy \\ \lesssim & \int_{1}^{2t-1} \left(\int_{2^k y}^{\infty} \left|\mathcal{F}_{\eta\to\tau}^{-1}\varpi^{-}(\tau)\right|d\tau + \int_{-\infty}^{2^k (y-2t)} \left|\mathcal{F}_{\eta\to\tau}^{-1}\varpi^{-}(\tau)\right|d\tau \right)dy  \\  \lesssim & \int_{1}^{2t-1} \left(\int_{2^k y}^{\infty} \frac{1}{\tau^{\frac{3}{2}}}d\tau+ \int_{-\infty}^{2^k (y-2t)}\frac{1}{(-\tau)^{\frac{3}{2}}}d\tau \right)dy\\ \lesssim & \, 2^{-\frac{k}{2}} \left(\int_{1}^{2t-1}\frac{1}{y^{\frac{1}{2}}}dy + \int_{1}^{2t-1} \frac{1}{(2t-y)^{\frac{1}{2}}}dy\right)\\ \lesssim & \, 2^{-\frac{k}{2}} t^{\frac{1}{2}} .
\end{align*}
For the fourth term above, as we did to estimate the second term above, we have
\begin{align*}
\int_{2t-1}^{2t+1} \left|\int_{2^k y}^{2^k(y-2t)} \mathcal{F}_{\eta\to\tau}^{-1}\varpi^{-}(\tau) d\tau \, \right|dy \lesssim & \int_{2t-1}^{2t+1} \left|\int_{-\infty}^{2^k (y-2t)} \mathcal{F}_{\eta\to\tau}^{-1}\varpi^{-}(\tau) d\tau -\int_{-\infty}^{2^k y} \mathcal{F}_{\eta\to\tau}^{-1}\varpi^{-}(\tau) d\tau \right|dy\\ \lesssim & \left\|\int_{-\infty}^{2^k (y-2t)} \mathcal{F}_{\eta\to\tau}^{-1}\varpi^{-}(\tau) d\tau\right\|_{L_y^\infty} + \left\|\int_{-\infty}^{2^k y} \mathcal{F}_{\eta\to\tau}^{-1}\varpi^{-}(\tau) d\tau\right\|_{L_y^\infty}  \\ \lesssim & \, 1.
\end{align*}
For the fifth term above, as we did to estimate the first term above, we have
\begin{align*}
\int_{2t+1}^{\infty} \left|\int_{2^k y}^{2^k(y-2t)} \mathcal{F}_{\eta\to\tau}^{-1}\varpi^{-}(\tau) d\tau\right| dy \leq & \int_{2t+1}^{\infty} \int_{2^k (y-2t)}^{2^k y} \left|\mathcal{F}_{\eta\to\tau}^{-1}\varpi^{-}(\tau)\right| d\tau dy \\ \lesssim &  \int_{2t+1}^{\infty} \int_{2^k (y-2t)}^{2^k y} \frac{1}{\tau^{\frac{3}{2}}} d\tau dy \\ \lesssim & \, 2^{-\frac{k}{2}}\int_{2t+1}^{\infty} \left(\frac{1}{(y-2t)^{\frac{1}{2}}}-\frac{1}{y^{\frac{1}{2}}}\right) dy \\ \lesssim & \, 2^{-\frac{k}{2}} \int_{1}^{2t+1}\frac{1}{y^{\frac{1}{2}}}dy \\ \lesssim & \, 2^{-\frac{k}{2}} t^{\frac{1}{2}}.
\end{align*}
So we have
\begin{align*}
\left\|\int_{-\infty}^{0} \mathrm{e}^{i y \eta} \frac{\mathrm{e}^{-2 i t \eta}-1}{i \eta} \left(\frac{\eta}{2^{k}}\right) d \eta\right\|_{L_{y}^{1}} \lesssim 2^{-\frac{k}{2}} t^{\frac{1}{2}}.
\end{align*}
\end{proof}

The main result in this subsection is to estimate $\widetilde{\mathcal{N}}^{t}$ with low frequencies in $x$, here we use quite a different method other than in [\ref{[2]}].
\begin{lemma}
\label{Lemma 3.5}
For $T \geq 1$ ,assume that $F, G, H: \mathbb{R} \rightarrow S$ satisfy (\ref{3.3}) and 
\begin{equation}
\label{3.19}
F=Q_{\leq T^{\frac{1}{6}}} F, \quad G=Q_{\leq T^{\frac{1}{6}}} G, \quad H=Q_{\leq T^{\frac{1}{6}}} H.
\end{equation}
Then for $t \in [\frac{T}{2},T]$ we can write
\begin{align*}
\widetilde{\mathcal{N}}^{t}[F(t), G(t), H(t)]  =\widetilde{\mathcal{E}}_{1}^{t}[F(t), G(t), H(t)]+\mathcal{E}_{2}^{t}[F(t), G(t), H(t)].
\end{align*}
We set $\widetilde{\mathcal{E}}_{1}(t):=\widetilde{\mathcal{E}}_{1}^{t}[F(t), G(t), H(t)]$ and $\mathcal{E}_{2}(t):=\mathcal{E}_{2}^{t}[F(t), G(t), H(t)]$, it holds unifomrly in $T \geq 1$ that
\begin{equation}
\label{3.225}
T^{1+20 \delta} \sup _{\frac{T}{2} \leq t \leq T}\left\|\widetilde{\mathcal{E}}_{1}(t)\right\|_{\mathcal{Y}} \lesssim 1, \quad  T^{\frac{1}{3}}\sup _{\frac{T}{2} \leq t \leq T} \left\|\mathcal{E}_{3}(t)\right\|_{\mathcal{Y}} \lesssim 1,
\end{equation}
where $\mathcal{E}_{2}(t)=\partial_{t} \mathcal{E}_{3}(t)$.
\end{lemma}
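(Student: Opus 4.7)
The plan is to write the Fourier representation of $\widetilde{\mathcal{N}}^t$ from (\ref{2.6}) as an integral over $(\eta_1,\eta_2)$ restricted to $\omega \neq 0$ carrying the oscillatory factor $\mathrm{e}^{it\omega(\eta,\eta_1,\eta_2)}$, and to split this integral into a piece where $|\omega|$ is large enough to permit a time integration by parts, and a piece concentrated near the resonant set where a measure gain compensates the absence of oscillation. Concretely, I would pick a threshold $\lambda = T^{-\alpha}$ for some $\alpha \in (0,1)$ (the balance $\alpha = 2/3$ appears to yield the stated $t^{-1/3}$ decay for $\mathcal{E}_3$) and introduce a smooth cutoff $\chi(\omega/\lambda)$, combined with the $x$-frequency cutoff from Lemma \ref{Lemma 3.6} and a dyadic decomposition $\Delta_{2^k}$ in the $y$-variable to handle the $\dot{B}^1_y$ part of the $Z$ norm.

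For the far-from-resonant region $|\omega|\gtrsim \lambda$, I would integrate by parts in time via $\mathrm{e}^{it\omega}=\frac{1}{i\omega}\partial_t(\mathrm{e}^{it\omega})$. The antiderivative term, carrying the factor $(i\omega)^{-1}$, I take as $\mathcal{E}_3^t$; the $1/\omega$ weight balanced against Lemma \ref{Lemma 2.1} and Remark \ref{remark 3.7} gives the required $t^{-1/3}$ bound in $\mathcal{Y}$. The correction term, in which $\partial_t$ lands on $F$, $G$, or $H$, goes into $\widetilde{\mathcal{E}}_1^t$; using the $X_T$ control $\|\partial_t F\|_{\mathcal{Y}} \lesssim (1+|t|)^{-1+3\delta}$ together with Lemma \ref{Lemma 2.1} yields the $t^{-1-20\delta}$ bound once $\delta$ is taken small enough.

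For the near-resonant region $|\omega|\lesssim \lambda$, I would estimate directly, exploiting the small measure $O(\lambda)$ of this set in $(\eta_1,\eta_2)$ for fixed $\eta$ on top of the usual dispersive bound. Lemma \ref{Lemma 2.1} combined with Lemma \ref{Lemma 3.6} handles the $S$-part of the $\mathcal{Y}$ norm. For the $Z$ part, I would perform a dyadic decomposition in $y$-frequency and invoke Lemma \ref{lemma 3.81} to control the $L^1_y$ norm of the $\eta$-multipliers that arise when one integrates out the resonance phase. The combination of dyadic $y$-frequency bounds and the $\ell^1_k$ summation intrinsic to $\dot{B}^1$ then closes the $\dot{B}^1_y$ estimate.

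The main obstacle is precisely the $Z$-norm bound, especially the $\dot{B}^1(\mathbb{R})$ seminorm in $y$. On the cylinder, H. Xu could afford an $L^\infty$-in-$\eta$ estimate because the resonance level on $\mathbb{T}$ is discrete; here the continuous nature of the resonance set on $\mathbb{R}$ forces an $L^2$-in-$\eta$ estimate, costing a factor $t^{1/2}$ that must be exactly absorbed by the dispersive gain $t^{-1/2}$ from (\ref{2.20}). This balancing is tight and is precisely why Lemma \ref{lemma 3.81}, with its $2^{-k/2} t^{1/2}$ bound, is the right tool for handling the dyadic $y$-multipliers that appear after the time integration by parts and, in the near-resonant regime, after integrating out the phase in $\eta_1,\eta_2$.
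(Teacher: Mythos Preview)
Your decomposition is not the one the paper uses, and the argument as you describe it has a genuine gap.

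The paper does \emph{not} split according to the size of $\omega$. Instead it splits $\mathcal{I}^t$ in the $x$-frequency variables via the cutoff $\varphi(t^{3/4}\mu\kappa)$: the piece $\mathcal{O}_1^t$ (where $|\mu\kappa|\gtrsim t^{-3/4}$) is placed entirely into $\widetilde{\mathcal{E}}_1$ and handled by a \emph{spatial} integration by parts in $\kappa$, using $e^{2it\mu\kappa}=(2it\mu)^{-1}\partial_\kappa e^{2it\mu\kappa}$ and the bound $|t\mu|\gtrsim t^{1/4}|\kappa|^{-1}\gtrsim T^{1/12}$ coming from (\ref{3.19}). On the complementary piece $\mathcal{O}_2^t$ the time integration by parts is performed over the \emph{entire} region $\omega\neq 0$, with no $\omega$-threshold, via
\[
e^{it\omega}\,\mathcal{O}_2^t=\partial_t\Bigl(\tfrac{e^{it\omega}-1}{i\omega}\,\mathcal{O}_2^t\Bigr)+(e^{it\omega}-1)\,\mathcal{L}^t.
\]
The factor $\tfrac{e^{it\omega}-1}{i\omega}$ is \emph{regular at} $\omega=0$, so no near-resonant cutoff is needed. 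The gain then comes not from a pointwise $1/|\omega|$ bound but from a case-by-case analysis (14 explicit linear forms of $\omega$) and the $L^2$ estimate $\bigl\|\tfrac{e^{-2it\eta}-1}{i\eta}\bigr\|_{L^2_\eta}\sim t^{1/2}$, which costs $t^{1/2}$ but pairs with the dispersive $t^{-1}$ to give $t^{-1/2}$ for $\mathcal{E}_3$ --- sharper than the $t^{-1/3}$ you were aiming for. Lemma~\ref{lemma 3.81} is invoked precisely here, for the $\dot B^1_y$ estimate of $\mathcal{E}_3$ in the case $\omega=\pm 2\eta$, not in any near-resonant piece.

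Your near-resonant argument breaks exactly in that case $\omega=\pm 2\eta$. There $\omega$ depends only on the \emph{output} frequency $\eta$ and is constant in $(\eta_1,\eta_2)$, so the set $\{|\omega|\lesssim\lambda\}$ has full $(\eta_1,\eta_2)$-measure for fixed $\eta$; your ``small measure $O(\lambda)$ in $(\eta_1,\eta_2)$'' claim is simply false. One might try to recover a gain by restricting $\eta$ instead, but then the cutoff $\chi(\omega/\lambda)$ becomes a rough $y$-multiplier that must be tracked through the $\dot B^1_y$ norm, and your far piece carries a bare $1/(i\omega)$ which is singular at the edge of the localization. The paper's subtraction of $1$ from $e^{it\omega}$ sidesteps both difficulties at once.
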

\begin{proof}
Let $\frac{T}{2} \leq t \leq T$. From (\ref{3.19}), we set
\begin{align*}
F=F^{a}=Q_{\leq T^{\frac{1}{6}}} F^{a},\quad  G = F^{b}=Q_{\leq T^{\frac{1}{6}}} F^{b}, \quad H = F^{c}=Q_{\leq T^{\frac{1}{6}}} F^{c}.
\end{align*}
Let 
\begin{equation}
\begin{aligned}
& \mathcal{F}\widetilde{\mathcal{N}}^{t}\left[F^{a}, F^{b}, F^{c}\right](\xi,\eta) \\ = & \int_{\omega\left(\eta, \eta_{1} \eta_{2}\right) \neq 0} e^{i t \omega} \left( \mathcal{O}_{1}^{t}\left[F_{\eta-\eta_{1}}^{a}, F_{\eta-\eta_{1}-\eta_{2}}^{b}, F_{\eta-\eta_{2}}^{c}\right] + \mathcal{O}_{2}^{t}\left[F_{\eta-\eta_{1}}^{a}, F_{\eta-\eta_{1}-\eta_{2}}^{b}, F_{\eta-\eta_{2}}^{c}\right] \right) d\eta_1 d\eta_2,
\end{aligned}
\end{equation}
with
\begin{equation}
\mathcal{O}_{1}^{t}\left[f^{a}, f^{b}, f^{c}\right](\xi):=\int_{\mathbb{R}^{2}} \mathrm{e}^{2 i t \mu \kappa}\left(1-\varphi\left(t^{\frac{3}{4}} \mu \kappa\right)\right) \widehat{f^{a}}(\xi-\mu) \overline{\widehat{f^{b}}}(\xi-\mu-\kappa) \widehat{f^{c}}(\xi-\kappa) d \mu d \kappa,
\end{equation}
\begin{equation}
\label{3.22}
\mathcal{O}_{2}^{t}\left[f^{a}, f^{b}, f^{c}\right](\xi):=\int_{\mathbb{R}^{2}} \mathrm{e}^{2 i t \mu \kappa} \varphi\left(t^{\frac{3}{4}} \mu \kappa\right) \widehat{f^{a}}(\xi-\mu) \overline{\widehat{f^{b}}}(\xi-\mu-\kappa) \widehat{f^{c}}(\xi-\kappa) d \mu d \kappa.
\end{equation}
Then for $ \omega(\eta,\eta_1,\eta_2) \neq 0$, we have
\begin{equation}
\label{3.23}
\begin{array}{l}\mathrm{e}^{i t \omega} \mathcal{O}_{2}^{t}\left[f^{a}, f^{b}, f^{c}\right]=\partial_{t}\left(\frac{\mathrm{e}^{i t \omega}-1}{i \omega} \mathcal{O}_{2}^{t}\left[f^{a}, f^{b}, f^{c}\right]\right)-\frac{\mathrm{e}^{i t \omega}-1}{i \omega}\left(\partial_{t} \mathcal{O}_{2}^{t}\right)\left[f^{a}, f^{b}, f^{c}\right] \\ -\frac{\mathrm{e}^{i t \omega}-1}{i \omega} \mathcal{O}_{2}^{t}\left[\partial_{t} f^{a}, f^{b}, f^{c}\right]-\frac{\mathrm{e}^{i t \omega}-1}{i \omega} \mathcal{O}_{2}^{t}\left[f^{a}, \partial_{t} f^{b}, f^{c}\right]-\frac{\mathrm{e}^{i t \omega}-1}{i \omega} \mathcal{O}_{2}^{t}\left[f^{a}, f^{b}, \partial_{t} f^{c}\right] \\ :=\partial_{t}\left(\frac{\mathrm{e}^{i t \omega}-1}{i \omega} \mathcal{O}_{2}^{t}\left[f^{a}, f^{b}, f^{c}\right]\right)+(\mathrm{e}^{i t \omega}-1) \mathcal{L}^{t}\left[f^{a}, f^{b}, f^{c}\right],\end{array}
\end{equation}
where
\begin{align*}
\left(\partial_{t} \mathcal{O}_{2}^{t}\right)\left[f^{a}, f^{b}, f^{c}\right]:=\int_{\mathbb{R}^{2}} \partial_{t}\left(\mathrm{e}^{2 i t \mu \kappa} \varphi\left(t^{\frac{3}{4}} \mu \kappa\right)\right) \widehat{f^{a}}(\xi-\mu) \overline{\widehat{f^{b}}}(\xi-\mu-\kappa) \widehat{f^{c}}(\xi-\kappa) d \mu d \kappa.
\end{align*}
Then we define 
\begin{align*}
\mathcal{E}_{2}^{t}\left[F^{a}, F^{b}, F^{c}\right]=\partial_{t} \mathcal{E}_{3}^{t}\left[F^{a}, F^{b}, F^{c}\right],
\end{align*}
where 
\begin{equation}
\mathcal{F} \mathcal{E}_{3}^{t}\left[F^{a}, F^{b}, F^{c}\right](\xi, \eta):= \int_{\omega(\eta,\eta_1\eta_2) \neq 0} \frac{\mathrm{e}^{i t \omega(\eta,\eta_1\eta_2)}-1}{i \omega(\eta,\eta_1\eta_2)} \mathcal{O}_{2}^{t}\left[F_{\eta-\eta_1}^{a}, F_{\eta_2-\eta_1}^{b}, F_{\eta_2}^{c}\right] d\eta_1 d\eta_2.
\end{equation}
We also define 
\begin{equation}
\label{3.25}
\begin{aligned}
\mathcal{F} \widetilde{\mathcal{E}}_{1}^{t}(\xi, \eta):=  \int_{\omega(\eta,\eta_1\eta_2) \neq 0} \bigg(\mathrm{e}^{i t \omega(\eta,\eta_1\eta_2)} & \mathcal{O}_{1}^{t}\left[F_{\eta-\eta_1}^{a}, F_{\eta_2-\eta_1}^{b}, F_{\eta_2}^{c}\right] \\ & + (\mathrm{e}^{i t \omega(\eta,\eta_1\eta_2)}-1)\mathcal{L}^{t}\left[F_{\eta-\eta_1}^{a}, F_{\eta_2-\eta_1}^{b}, F_{\eta_2}^{c}\right] \bigg)d\eta_1 d\eta_2.
\end{aligned}
\end{equation}
$\textbf { 1. Estimation of } \mathcal{E}_{3}(t) $. We define the multiplier 
\begin{align*}
m(\mu, \kappa):=\varphi\left(t^{\frac{3}{4}} \mu \kappa\right) \varphi\left(\frac{T^{-\frac{1}{6}}}{4} \mu\right) \varphi\left(\frac{T^{-\frac{1}{6}}}{4} \kappa\right).
\end{align*}
From (\ref{3.19}), this multiplier and $\varphi\left(t^{\frac{3}{4}} \mu \kappa\right)$ are equivalent in (\ref{3.22}).\\\\
From Lemma \ref{Lemma 3.6}, we have $\left\|\mathcal{F}_{\mu \kappa} \widetilde{m}\right\|_{L^{1}\left(\mathbb{R}^{2}\right)} \lesssim t^\frac{\delta}{100}$. Then by Remark \ref{remark 3.7}, we have
\begin{equation}
\label{3.35}
\begin{aligned}
& \left\|\mathcal{O}_{2}^{t}\left[f^{a}, f^{b}, f^{c}\right]\right\|_{L_{\xi}^{2}} \\  \lesssim & \, (1+|t|)^{-1 + \frac{\delta}{100}} \min _{\{j, k, \ell\}=\{1,2,3\}}\left\|f^{j}\right\|_{L_{x}^{2}}\left\|f^{k}\right\|_{L_{x}^{2}}^{\frac{1}{2}}\left\|x f^{k}\right\|_{L_{x}^{2}}^{\frac{1}{2}}\left\|f^{\ell}\right\|_{L_{x}^{2}}^{\frac{1}{2}}\left\|x f^{\ell}\right\|_{L_{x}^{2}}^{\frac{1}{2}}.
\end{aligned}
\end{equation}
Since $\omega(\eta,\eta_1,\eta_2) \neq 0$, from Lemma \ref{Lemma 4.1} in Section 4, we deduce that\\
(1). If $\eta \geq 0, \eta-\eta_1 \leq 0, \eta_2-\eta_1 \leq 0, \eta_2 \leq 0$, then $\omega(\eta,\eta_1,
\eta_2) = 2 \eta$.\\
(2). If $\eta \leq 0, \eta-\eta_1 \geq 0, \eta_2-\eta_1 \geq 0, \eta_2 \geq 0$, then $\omega(\eta,\eta_1,\eta_2) = -2 \eta$.\\
(3). If $\eta \geq 0, \eta-\eta_1 \leq 0, \eta_2-\eta_1 \geq 0, \eta_2 \geq 0$, then $\omega(\eta,\eta_1,
\eta_2) = 2 \eta - 2\eta_1$.\\
(4). If $\eta \leq 0, \eta-\eta_1 \geq 0, \eta_2-\eta_1 \leq 0, \eta_2 \leq 0$, then $\omega(\eta,\eta_1,\eta_2) = 2\eta_1-2 \eta$.\\
(5). If $\eta \geq 0, \eta-\eta_1 \geq 0, \eta_2-\eta_1 \leq 0, \eta_2 \geq 0$, then $\omega(\eta,\eta_1,\eta_2) = 2\eta_1-2\eta_2$.\\
(6). If $\eta \leq 0, \eta-\eta_1 \leq 0, \eta_2-\eta_1 \geq 0, \eta_2 \leq 0$, then $\omega(\eta,\eta_1,\eta_2) = 2\eta_2-2\eta_1$.\\
(7). If $\eta \geq 0, \eta-\eta_1 \geq 0, \eta_2-\eta_1 \geq 0, \eta_2 \leq 0$, then $\omega(\eta,\eta_1,\eta_2) = 2\eta_2$.\\
(8). If $\eta \leq 0, \eta-\eta_1 \leq 0, \eta_2-\eta_1 \leq 0, \eta_2 \geq 0$, then $\omega(\eta,\eta_1,\eta_2) = -2\eta_2$.\\
(9). If $\eta \geq 0, \eta-\eta_1 \geq 0, \eta_2-\eta_1 \leq 0, \eta_2 \leq 0$, then $\omega(\eta,\eta_1,\eta_2) = 2\eta_1$.\\
(10). If $\eta \leq 0, \eta-\eta_1 \leq 0, \eta_2-\eta_1 \geq 0, \eta_2 \geq 0$, then $\omega(\eta,\eta_1,\eta_2) = -2\eta_1$.\\
(11). If $\eta \geq 0, \eta-\eta_1 \leq 0, \eta_2-\eta_1 \geq 0, \eta_2 \leq 0$, then $\omega(\eta,\eta_1,\eta_2) = 2\eta-2\eta_1+2\eta_2$.\\
(12). If $\eta \leq 0, \eta-\eta_1 \geq 0, \eta_2-\eta_1 \leq 0, \eta_2 \geq 0$, then $\omega(\eta,\eta_1,\eta_2) = -2\eta+2\eta_1-2\eta_2$.\\
(13). If $\eta \geq 0, \eta-\eta_1 \leq 0, \eta_2-\eta_1 \leq 0, \eta_2 \geq 0$, then $\omega(\eta,\eta_1,\eta_2) = 2\eta-2\eta_2$.\\
(14). If $\eta \leq 0, \eta-\eta_1 \geq 0, \eta_2-\eta_1 \geq 0, \eta_2 \leq 0$, then $\omega(\eta,\eta_1,\eta_2) = 2\eta_2-2\eta$.\\\\
In fact, the above cases include the case $\left\{(\eta_1,\eta_2) \in \mathbb{R}^2|\eta_{1} = 0\right\} \cup\left\{(\eta_1,\eta_2) \in \mathbb{R}^2 | \eta= \eta_2\right\}$, which is the case with $\omega(\eta,\eta_1,\eta_2) = 0$ due to Lemma \ref{Lemma 4.1}. But we can neglect them because they are of measure zero in $\mathbb{R}^2$ and they do not interfere in the integration.\\\\
Without loss of generality, we classify these 14 cases of $\omega\left(\eta, \eta_{1}, \eta_{2}\right) \neq 0$ into the following three cases.
\\\textbf{Case 1: }$\omega\left(\eta, \eta_{1}, \eta_{2}\right)=\pm 2 \eta$. 
\\ \textbf{Case 2: }$\omega\left(\eta, \eta_{1}, \eta_{2}\right)=\pm 2 \eta_{1} \text{ or }\pm (2\eta-2 \eta_{2}) \text{ or } \pm\left(2 \eta-2 \eta_{1}+2 \eta_{2}\right)$. 
\\\textbf{Case 3: }$\omega\left(\eta, \eta_{1}, \eta_{2}\right)=\pm\left(2 \eta-2 \eta_{1}\right) \text{ or } \pm 2 \eta_{2} \text{ or } \pm\left(2 \eta_{1}-2 \eta_{2}\right)$.\\\\ 
Now we analyze each case individually.\\
$\textbf{Case 1}$. For $\omega(\eta,\eta_1,\eta_2) = \pm 2\eta$. We only have to deal with the case $\omega(\eta,\eta_1,\eta_2) = -2 \eta$, and the other case can be treated in the same way. We firstly estimate $\left\|\mathcal{E}_{3}(t)\right\|_{L_{x, y}^{2}}$. By (\ref{3.35}) and Lemma \ref{Lemma 6.21}, we have
\begin{equation}
\label{3.3201}
\begin{aligned}
& \left\|\mathcal{E}_{3}(t)\right\|_{L_{x, y}^{2}}  \\ \lesssim & \left\|\int_{\omega(\eta,\eta_1,\eta_2)= -2\eta} \frac{\mathrm{e}^{-2i t \eta}-1}{i \eta} \mathcal{O}_{2}^{t}\left[F_{\eta-\eta_1}^{a} , F_{\eta_2-\eta_1}^{b} , F_{\eta_2}^{c}\right] d\eta_1 d\eta_2 \right\|_{L_{\xi,\eta}^{2}}  \\  \lesssim & \left\|\int_{\mathbb{R}^2} \frac{\mathrm{e}^{-2i t \eta}-1}{i \eta} \mathcal{O}_{2}^{t}\left[F_{\eta-\eta_1}^{a} \mathbbm{1}_{\eta-\eta_1 \geq 0}, F_{\eta_2-\eta_1}^{b} \mathbbm{1}_{\eta_2-\eta_1 \geq 0}, F_{\eta_2}^{c}\mathbbm{1}_{\eta_2 \geq 0}\right] d\eta_1 d\eta_2 \right\|_{L_{\xi,\eta}^{2}} \\  \lesssim & \left\|\int_{\mathbb{R}^2} \mathcal{O}_{2}^{t}\left[F_{\eta-\eta_1}^{a}\mathbbm{1}_{\eta-\eta_1 \geq 0}, F_{\eta_2-\eta_1}^{b}\mathbbm{1}_{\eta_2-\eta_1 \geq 0}, F_{\eta_2}^{c}\mathbbm{1}_{\eta_2 \geq 0}\right]d\eta_1 d\eta_2\right\|_{L_{\xi}^2 L_\eta^\infty} \left(\int_{-\infty}^{0} \frac{\sin ^{2}(t \eta)}{\eta^{2}}d \eta \right)^{\frac{1}{2}} \\ \lesssim & (1+|t|)^{\frac{1}{2}} \left\|\mathcal{F}_{\xi\to x}^{-1}\mathcal{O}_{2}^{t}\left[F_{+}^{a}, F_{+}^{b}, F_{+}^{c}\right]\right\|_{L_y^1 L_x^2} \\ \lesssim & (1+|t|)^{-\frac{1}{2}+\frac{\delta}{100}} \min _{\{\alpha, \beta, \gamma\}=\{a, b, c\}}\left\|F^{\alpha}\right\|_{L_{x, y}^{2}} \| F^{\beta}\|_S  \|F^{\gamma}\|_{S},
\end{aligned}
\end{equation}
where $\mathcal{F}_{y}(F_{+})(\eta) = \left\{\begin{array}{l}F_{\eta}, \text { if } \eta \geq 0, \\ 0, \text { if } \eta<0, \end{array}\right. $ and $\mathcal{F}_{y}(F_{-})(\eta) =\left\{\begin{array}{l} F_{\eta}, \text { if } \eta \leq 0, \\ 0, \text { if } \eta>0. \end{array}\right.$\\\\
So we have
\begin{equation}
\label{3.29}
\left\|\mathcal{E}_{3}(t)\right\|_{L_{x, y}^{2}} \lesssim (1+|t|)^{-\frac{1}{2}+\frac{\delta}{100}} \min _{\{\alpha, \beta, \gamma\}=\{a, b, c\}}\left\|F^{\alpha}\right\|_{L_{x, y}^{2}} \| F^{\beta}\|_S  \|F^{\gamma}\|_{S}.
\end{equation}
Then by Lemma \ref{Lemma 6.2}, we have
\begin{equation}
\label{3.2501}
\left\|\mathcal{E}_{3}(t)\right\|_{S'} \lesssim (1+|t|)^{-\frac{1}{2}+\frac{\delta}{100}} \max _{\{\alpha, \beta, \gamma\}=\{a, b, c\}}\left\|F^{\alpha}\right\|_{S'} \| F^{\beta}\|_S  \|F^{\gamma}\|_{S}
\end{equation}
and 
\begin{equation}
\label{3.250}
\left\|\mathcal{F}_{x \rightarrow \xi} \mathcal{E}_{3}^{t}[F^a, F^b, F^c]\right\|_{L_{\xi}^{\infty} L_y^2} \lesssim  (1+|t|)^{-\frac{1}{2}+\frac{\delta}{100}} \max _{\{\alpha, \beta, \gamma\}=\{a, b, c\}} \|F^\alpha\|_{S'} \|F^\beta\|_S  \|F^\gamma\|_S. 
\end{equation}
Then we estimate $\left\| x\, \mathcal{E}_{3}(t)\right\|_{L_x^2 H_y^2}$. Similarly, by (\ref{3.3201}) and Lemma \ref{Lemma 6.21}, we can deduce that
\begin{equation}
\label{3.2502}
\begin{aligned}
& \left\|x \, \mathcal{E}_{3}(t)\right\|_{L_x^2 H_y^2} \\  \lesssim & \left\|x \, \mathcal{F}_{\xi,\eta}^{-1}\int_{\omega(\eta,\eta_1,\eta_2)= -2\eta} \frac{\mathrm{e}^{-2i t \eta}-1}{i \eta}  \mathcal{O}_{2}^{t}\left[F_{\eta-\eta_1}^{a} , F_{\eta_2-\eta_1}^{b}, F_{\eta_2}^{c}\right] d\eta_1 d\eta_2 \right\|_{L_{x}^2 H_y^2} \\ \lesssim &   \left\|x \, \mathcal{F}_{\xi,\eta}^{-1}\int_{\omega(\eta,\eta_1,\eta_2)= -2\eta} \frac{\mathrm{e}^{-2i t \eta}-1}{i \eta}  \mathcal{O}_{2}^{t}\left[F_{\eta-\eta_1}^{a} , F_{\eta_2-\eta_1}^{b}, F_{\eta_2}^{c}\right] d\eta_1 d\eta_2 \right\|_{L_{x,y}^2} \\ + & \left\|x \partial_y^2 \mathcal{F}_{\xi,\eta}^{-1}\int_{\omega(\eta,\eta_1,\eta_2)= -2\eta} \frac{\mathrm{e}^{-2i t \eta}-1}{i \eta}  \mathcal{O}_{2}^{t}\left[F_{\eta-\eta_1}^{a} , F_{\eta_2-\eta_1}^{b}, F_{\eta_2}^{c}\right] d\eta_1 d\eta_2 \right\|_{L_{x,y}^2} \\ \lesssim & (1+|t|)^{\frac{1}{2}} \left\|x\,\mathcal{F}_{\xi}^{-1}\mathcal{O}_{2}^{t}\left[F_{+}^{a}, F_{+}^{b}, F_{+}^{c}\right]\right\|_{L_y^1 L_x^2} \\ + & (1+|t|)^{\frac{1}{2}} \left\|x\partial_y^2\mathcal{F}_{\xi}^{-1}\mathcal{O}_{2}^{t}\left[F_{+}^{a}, F_{+}^{b}, F_{+}^{c}\right]\right\|_{L_y^1 L_x^2} \\  \lesssim & (1+|t|)^{-\frac{1}{2} + \frac{\delta}{100}}\left\|F^a\right\|_{S} \| F^b\|_S  \|F^c\|_{S}.
\end{aligned}
\end{equation}
To estimate $\left\|\mathcal{E}_{3}(t)\right\|_{Z}$, since we have (\ref{3.250}),
we only need to estimate $\left\|\mathcal{F}_{x \rightarrow \xi} \mathcal{E}_{3}^{t}[F^a, F^b, F^c]\right\|_{L_{\xi}^{\infty} \dot{B}_{y}^{1}}$. By (\ref{3.35}) and (\ref{6.21}) in Remark \ref{remark 6.6}, we have
\begin{align*} 
&  \|\mathcal{F}_{x\rightarrow\xi}{\mathcal{E}_3^{t}}[F^a, F^b, F^c](\xi,y)\|_{L_{\xi}^\infty \dot{B}_y^1 }\\ \lesssim &  (1+|t|)^{-1+\frac{\delta}{100}} \sum_{k \leq 0} 2^k \left\|\int_{-\infty}^0 \mathrm{e}^{i y \eta} \frac{\mathrm{e}^{-2 i t \eta}-1}{i \eta} \phi(\frac{\eta}{2^k}) d\eta \right\|_{L_y^1} \|F^a\|_S \|F^b\|_S \|F^c\|_S \\ + & (1+|t|)^{-1+\frac{\delta}{100}} \sum_{k > 0} 2^{-k} \left\|\int_{-\infty}^0\mathrm{e}^{i y \eta} \frac{\mathrm{e}^{-2 i t \eta}-1}{i \eta} \phi(\frac{\eta}{2^k}) d\eta \right\|_{L_y^1} \|F^a\|_S \|F^b\|_S \|F^c\|_S.
\end{align*}
In fact, the estimate of the term $\sum_{k > 0} 2^{-k} \left\|\int_{-\infty}^0\mathrm{e}^{i y \eta} \frac{\mathrm{e}^{-2 i t \eta}-1}{i \eta} \phi(\frac{\eta}{2^k}) d\eta \right\|_{L_y^1}$ is very easy. We note that $\phi^{-}(\eta):=\mathbbm{1}_{\eta \leq 0} \phi(\eta)$. We observe that $\int_{-\infty}^y \mathcal{F}_{\eta \rightarrow \tau}^{-1} \phi^{-}(\tau) d \tau$ is a Schwartz function, then for $k>0$, we have
\begin{align*}
& \left\|\int_{-\infty}^{0} \mathrm{e}^{i y \eta} \frac{\mathrm{e}^{-2 i t \eta}-1}{i \eta} \phi\left(\frac{\eta}{2^{k}}\right) d \eta\right\|_{L_{y}^{1}} \\ = & \left\|\int_{-\infty}^{0} \int_{y}^{y-2 t} \mathrm{e}^{i \tau \eta} d \tau \phi\left(\frac{\eta}{2^{k}}\right) d \eta\right\|_{L_{y}^{1}} \\ \lesssim & \left\|\int_{2^{k} y}^{2^{k}(y-2 t)} \mathcal{F}_{\eta \rightarrow \tau}^{-1} \phi^{-}(\tau) d \tau\right\|_{L_{y}^{1}} \\ \lesssim & \left\|\int_{-\infty}^{2^{k}y} \mathcal{F}_{\eta \rightarrow \tau}^{-1} \phi^{-}(\tau) d \tau - \int_{-\infty}^{2^{k}(y-2 t)} \mathcal{F}_{\eta \rightarrow \tau}^{-1} \phi^{-}(\tau) d \tau\right\|_{L_y^1}\\ \lesssim & \left\|\int_{-\infty}^{2^{k}y} \mathcal{F}_{\eta \rightarrow \tau}^{-1} \phi^{-}(\tau) d \tau\right\|_{L_y^1} \\ \lesssim & \, 2^{-k}.
\end{align*}
So we have 
\begin{equation}
\label{3.401}
\sum_{k > 0} 2^{-k} \left\|\int_{-\infty}^0\mathrm{e}^{i y \eta} \frac{\mathrm{e}^{-2 i t \eta}-1}{i \eta} \phi(\frac{\eta}{2^k}) d\eta \right\|_{L_y^1} \lesssim \sum_{k > 0} 2^{-2k} \lesssim 1.
\end{equation}
We observe that $\phi^{-}$ is a Schwartz function, then by Lemma \ref{lemma 3.81}, we have
\begin{align*}
\left\|\int_{-\infty}^{0} \mathrm{e}^{i y \eta} \frac{\mathrm{e}^{-2 i t \eta}-1}{i \eta} \phi\left(\frac{\eta}{2^{k}}\right) d \eta\right\|_{L_{y}^{1}} \lesssim 2^{-\frac{k}{2}} t^{\frac{1}{2}} \quad \text{    for  } k \leq 0.
\end{align*}
So we have
\begin{equation}
\label{3.402}
\sum_{k \leq 0} 2^k \left\|\int_{-\infty}^0 \mathrm{e}^{i y \eta} \frac{\mathrm{e}^{-2 i t \eta}-1}{i \eta} \phi(\frac{\eta}{2^k}) d\eta \right\|_{L_y^1} \lesssim t^{\frac{1}{2}} \sum_{k \leq 0} 2^{\frac{k}{2}} \lesssim t^{\frac{1}{2}}.
\end{equation}
Combining (\ref{3.401}) and (\ref{3.402}), we deduce that
\begin{align*}
\left\|\mathcal{F}_{x \rightarrow \xi} \mathcal{E}_3^{t}\left[F^{a}, F^{b}, F^{c}\right](\xi, y)\right\|_{L_{\xi}^{\infty} \dot{B}_{y}^{1}} \lesssim (1+|t|)^{-\frac{1}{2}+\frac{\delta}{100}} \left\|F^a\right\|_{S} \left\|F^b\right\|_{S}\left\|F^c\right\|_{S}.
\end{align*}
So we get the estimate for $\left\|\mathcal{E}_{3}(t)\right\|_{Z}$,
\begin{equation}
\label{3.2503}
\left\|\mathcal{E}_{3}(t)\right\|_{Z} \lesssim (1+|t|)^{-\frac{1}{2}+\frac{\delta}{100}} \left\|F^a\right\|_{S} \left\|F^b\right\|_{S}\left\|F^c\right\|_{S}.
\end{equation}
According to (\ref{3.2501}), (\ref{3.2502}) and (\ref{3.2503}), we have
\begin{equation}
\label{3.2504}
\left\|\mathcal{E}_{3}(t)\right\|_{\mathcal{Y}} \lesssim (1+|t|)^{-\frac{1}{2}+\frac{\delta}{100}} \left\|F^a\right\|_{S} \left\|F^b\right\|_{S}\left\|F^c\right\|_{S}.
\end{equation}
$\textbf{Case 2}$. For $\omega\left(\eta, \eta_{1}, \eta_{2}\right)=\pm 2 \eta_{1} \text{ or }\pm (2\eta-2 \eta_{2}) \text{ or } \pm\left(2 \eta-2 \eta_{1}+2 \eta_{2}\right)$. We only have to deal with the case $\omega((\eta,\eta_1,\eta_2) = - 2 \eta_1$, and other cases can be treated in the same way. Firstly we estimate $\left\|\mathcal{E}_{3}(t)\right\|_{L_{x, y}^{2}}$. By (\ref{3.191}), we have
\begin{align*}
& \left\|\mathcal{E}_{3}(t)\right\|_{L_{x, y}^{2}} \\  \lesssim & \left\|\int_{\omega(\eta,\eta_1\eta_2) = - 2\eta_1} \frac{\mathrm{e}^{-2i t \eta_1}-1}{i \eta_1} \mathcal{O}_{2}^{t}\left[F_{\eta-\eta_1}^{a}, F_{\eta_2-\eta_1}^{b}, F_{\eta_2}^{c}\right] d\eta_1 d\eta_2 \right\|_{ L_{\xi,\eta}^{2}} \\ \lesssim & \int_{\mathbb{R}^{2}}\left|\mathcal{F}_{\mu,\kappa}^{-1} m(z_1, z_2)\right| \left\|\left\|\mathrm{e}^{it\partial_x^2} T_{z_2}F_{-}^{a}\right\|_{L_x^2} \left\|\frac{\mathrm{e}^{-2itD_y}-1}{iD_y} \left(\overline{\mathrm{e}^{it\partial_x^2}T_{z_1+z_2} F_{+}^b} \mathrm{e}^{it\partial_x^2} T_{z_1} F_{+}^c \right) \right\|_{L_x^\infty}  \right\|_{L_y^2} dz_1 dz_2 \\ \lesssim & \int_{\mathbb{R}^{2}}\left|\mathcal{F}_{\mu,\kappa}^{-1} m(z_1, z_2)\right| \left\|F^a\right\|_{L_{x,y}^2} \left\|\frac{\mathrm{e}^{-2it D_y}-1}{i D_y} \left(\overline{\mathrm{e}^{it\partial_x^2} T_{z_1+z_2}F_{+}^b} \mathrm{e}^{it\partial_x^2} T_{z_1}F_{+}^c \right) \right\|_{L_{x,y}^\infty} dz_1 dz_2 \\ \lesssim & \int_{\mathbb{R}^{2}}\left|\mathcal{F}_{\mu,\kappa}^{-1} m(z_1, z_2)\right| \|F^a\|_{L_{x,y}^2}\left\|\frac{\mathrm{e}^{-2it \eta}-1}{i \eta} \right\|_{ L_\eta^2} \left\|\left(\overline{\mathrm{e}^{it\partial_x^2} T_{z_1+z_2}F_{+}^b} \mathrm{e}^{it\partial_x^2} T_{z_1}F_{+}^c \right) \right\|_{L_y^2L_x^\infty}dz_1 dz_2 \\ \lesssim & (1+|t|)^{-\frac{1}{2}+ \frac{\delta}{100}} \|F^a\|_{L_{x,y}^2} \|F^b\|_S \|F^c\|_S,
\end{align*}
and we also have
\begin{align*}
& \left\|\mathcal{E}_{3}(t)\right\|_{L_{x, y}^{2}} \\  \lesssim & \left\|\int_{\omega(\eta,\eta_1\eta_2) = - 2\eta_1} \frac{\mathrm{e}^{-2i t \eta_1}-1}{i \eta_1} \mathcal{O}_{2}^{t}\left[F_{\eta-\eta_1}^{a}, F_{\eta_2-\eta_1}^{b}, F_{\eta_2}^{c}\right] d\eta_1 d\eta_2 \right\|_{L_{\xi,\eta}^{2}} \\ \lesssim & \int_{\mathbb{R}^{2}}\left|\mathcal{F}_{\mu,\kappa}^{-1} m(z_1, z_2)\right| \left\|\left\|\mathrm{e}^{it\partial_x^2} T_{z_2}F_{-}^{a}\right\|_{L_x^\infty} \left\|\frac{\mathrm{e}^{-2it D_y}-1}{i D_y} \left(\overline{\mathrm{e}^{it\partial_x^2} T_{z_1+z_2}F_{+}^b} \mathrm{e}^{-it\partial_x^2} T_{z_1} F_{+}^c \right) \right\|_{L_x^2}  \right\|_{L_y^2} dz_1 dz_2 \\ \lesssim & \int_{\mathbb{R}^{2}}\left|\mathcal{F}_{\mu,\kappa}^{-1} m(z_1, z_2)\right| \left\|\mathrm{e}^{it\partial_x^2} T_{z_2} F_{-}^a\right\|_{L_{x,y}^\infty} \left\|\frac{\mathrm{e}^{-2it D_y}-1}{i D_y} \left(\overline{\mathrm{e}^{it\partial_x^2} T_{z_1+z_2}F_{+}^b} \mathrm{e}^{it\partial_x^2} T_{z_1}F_{+}^c \right) \right\|_{L_{x,y}^2} dz_1 dz_2 \\ \lesssim & \int_{\mathbb{R}^{2}}\left|\mathcal{F}_{\mu,\kappa}^{-1} m(z_1, z_2)\right| \left\|\mathrm{e}^{it\partial_x^2} T_{z_2} F_{-}^a\right\|_{L_{x,y}^\infty} \left\|\frac{\mathrm{e}^{-2it \eta}-1}{i \eta} \right\|_{L_\eta^2} \left\|\left(\overline{\mathrm{e}^{it\partial_x^2} T_{z_1+z_2}F_{+}^b} \mathrm{e}^{it\partial_x^2} T_{z_1}F_{+}^c \right) \right\|_{L_x^2 L_y^1}dz_1 dz_2 \\ \lesssim &(1+|t|)^{\frac{1}{2}} \int_{\mathbb{R}^{2}}\left|\mathcal{F}_{\mu,\kappa}^{-1} m(z_1, z_2)\right| \|\mathrm{e}^{it\partial_x^2} T_{z_2}F_{-}^a\|_{L_{x,y}^\infty} \left\|\left(\overline{\mathrm{e}^{it\partial_x^2} T_{z_1+z_2}F_{+}^b} \mathrm{e}^{it\partial_x^2} T_{z_1}F_{+}^c \right) \right\|_{L_y^1 L_x^2} dz_1 dz_2\\ \lesssim & (1+|t|)^{-\frac{1}{2}+ \frac{\delta}{100}} \|F^b\|_{L_{x,y}^2} \|F^a\|_S \|F^c\|_S.
\end{align*}
The inequality above holds by replacing $F^b$ with $F^c$. So we have
\begin{equation}
\label{3.34}
\|\mathcal{E}_3(t) \|_{L_{x,y}^2} \lesssim (1+|t|)^{-\frac{1}{2}+\frac{\delta}{100}}  \min_{\{\alpha,\beta,\gamma\} = \{a, b, c\} } \|F^\alpha\|_{L_{x,y}^2} \|F^\beta\|_S \|F^\gamma\|_S.
\end{equation}
According to Lemma {\ref{Lemma 6.2}}, we have
\begin{equation}
\label{3.2505}
\left\|\mathcal{E}_{3}(t)\right\|_{S'} \lesssim (1+|t|)^{-\frac{1}{2}+\frac{\delta}{100}} \max _{\{\alpha, \beta, \gamma\}=\{a, b, c\}}\left\|F^{\alpha}\right\|_{S'} \| F^{\beta}\|_S  \|F^{\gamma}\|_{S}
\end{equation}
and
\begin{equation}
\label{3.26}
\left\|\mathcal{F}_{x \rightarrow \xi} \mathcal{E}_{3}^{t}[F^a, F^b, F^c]\right\|_{L_{\xi}^{\infty} L_{y}^{2}} \lesssim   (1+|t|)^{-\frac{1}{2}+ \frac{\delta}{100}} \max _{\{\alpha, \beta, \gamma\}=\{a, b, c\}} \|F^\alpha\|_{S'} \|F^\beta\|_S \|F^\gamma\|_S.
\end{equation}
Then we estimate $\left\|x \, \mathcal{E}_{3}(t)\right\|_{L_x^2 H_y^2}$. Similarly as we did in the estimate of (\ref{3.34}), we have
\begin{equation}
\label{3.2506}
\begin{aligned}
& \left\|x\, \mathcal{E}_{3}(t)\right\|_{L_{x}^{2} H_y^2} \\  \lesssim & \left\| x \, \mathcal{F}_{\xi, \eta}^{-1}\int_{\omega(\eta,\eta_1\eta_2) = - 2\eta_1} \frac{\mathrm{e}^{-2i t \eta_1}-1}{i \eta_1} \mathcal{O}_{2}^{t}\left[F_{\eta-\eta_1}^{a}, F_{\eta_2-\eta_1}^{b}, F_{\eta_2}^{c}\right] d\eta_1 d\eta_2 \right\|_{L_x^2 H_y^2} \\ \lesssim & (1+|t|)^{-\frac{1}{2}+ \frac{\delta}{100}} \|F^a\|_{S} \|F^b\|_S \|F^c\|_S.
\end{aligned}
\end{equation}
To estimate $\left\|\mathcal{E}_{3}(t)\right\|_{Z}$, since we have (\ref{3.26}), we only need to estimate $\left\|\mathcal{F}_{x \rightarrow \xi} \mathcal{E}_{3}^{t}[F^a, F^b, F^c]\right\|_{L_{\xi}^{\infty} \dot{B}_{y}^{1}}$. By (\ref{6.14}), we have
\begin{align*} 
&  \|\mathcal{F}_{x\rightarrow\xi}{\mathcal{E}_{3}^{t}}[F^a, F^b, F^c](\xi,y)\|_{L_{\xi}^\infty \dot{B}_y^1 } \\ \lesssim & \left\|\mathcal{F}_{\xi,\eta}^{-1} \int_{\mathbb{R}^2}\frac{\mathrm{e}^{-2 i t \eta_1}-1}{i \eta_1} \mathcal{O}_{2}^{t}\left[F_{\eta-\eta_1}^{a}\mathbbm{1}_{\eta-\eta_1 \leq 0}, F_{\eta_2-\eta_1}^{b}\mathbbm{1}_{\eta_2-\eta_1 \geq 0}, F_{\eta_2}^{c}\mathbbm{1}_{\eta_2\geq 0}\right] d\eta_1 d\eta_2  \right\|_{L_y^1 L_x^2}^{\frac{1}{2}} \times \\ & \left\|x\,\mathcal{F}_{\xi,\eta}^{-1} \int_{\mathbb{R}^2}\frac{\mathrm{e}^{-2 i t \eta_1}-1}{i \eta_1} \mathcal{O}_{2}^{t}\left[F_{\eta-\eta_1}^{a}\mathbbm{1}_{\eta-\eta_1 \leq 0}, F_{\eta_2-\eta_1}^{b}\mathbbm{1}_{\eta_2-\eta_1 \geq 0}, F_{\eta_2}^{c}\mathbbm{1}_{\eta_2\geq 0}\right] d\eta_1 d\eta_2 \right\|_{L_y^1 L_x^2}^{\frac{1}{2}} \\ + & \left\|\partial_y^2\mathcal{F}_{\xi,\eta}^{-1} \int_{\mathbb{R}^2} \frac{\mathrm{e}^{-2i t \eta_1}-1}{i \eta_1} \mathcal{O}_{2}^{t}\left[F_{\eta-\eta_1}^{a}\mathbbm{1}_{\eta-\eta_1 \leq 0}, F_{\eta_2-\eta_1}^{b}\mathbbm{1}_{\eta_2-\eta_1 \geq 0}, F_{\eta_2}^{c}\mathbbm{1}_{\eta_2\geq 0}\right] d\eta_1 d\eta_2 \right\|_{ L_y^1 L_x^2 }^\frac{1}{2} \times \\ & \left\|x\,\partial_y^2 \mathcal{F}_{\xi,\eta}^{-1} \int_{\mathbb{R}^2}  \frac{\mathrm{e}^{-2i t \eta_1}-1}{i \eta_1} \mathcal{O}_{2}^{t}\left[F_{\eta-\eta_1}^{a}\mathbbm{1}_{\eta-\eta_1 \leq 0}, F_{\eta_2-\eta_1}^{b}\mathbbm{1}_{\eta_2-\eta_1 \geq 0}, F_{\eta_2}^{c}\mathbbm{1}_{\eta_2\geq 0}\right] d\eta_1 d\eta_2 \right\|_{L_y^1 L_x^2}^\frac{1}{2}\\ \lesssim & (1+|t|)^{-\frac{1}{2}+ \frac{\delta}{100}}\|F^a\|_S \|F^b\|_S \|F^c\|_S.
\end{align*}
The last inequality can be deduced as we did in the estimate of (\ref{3.34}).
So we have
\begin{equation}
\label{3.2507}
\left\|\mathcal{E}_{3}(t)\right\|_{Z} \lesssim (1+|t|)^{-\frac{1}{2}+\frac{\delta}{100}} \left\|F^a\right\|_{S} \left\|F^b\right\|_{S}\left\|F^c\right\|_{S}.
\end{equation}
According to (\ref{3.2505}), (\ref{3.2506}) and (\ref{3.2507}), we have 
\begin{equation}
\label{3.2508}
\left\|\mathcal{E}_{3}(t)\right\|_{\mathcal{Y}} \lesssim (1+|t|)^{-\frac{1}{2}+\frac{\delta}{100}} \left\|F^a\right\|_{S} \left\|F^b\right\|_{S}\left\|F^c\right\|_{S}.
\end{equation}
$\textbf{Case 3}$. For $\omega\left(\eta, \eta_{1}, \eta_{2}\right)=\pm\left(2 \eta-2 \eta_{1}\right) \text{ or } \pm 2 \eta_{2} \text{ or } \pm\left(2 \eta_{1}-2 \eta_{2}\right)$. We only have to deal with the case $\omega((\eta,\eta_1,\eta_2) =  -2(\eta- \eta_1)$, and other cases can be treated in the same way. Firstly we estimate $\left\|\mathcal{E}_{3}(t)\right\|_{L_{x, y}^{2}}$. By (\ref{3.191}), we have
\begin{align*}
& \left\|\mathcal{E}_{3}(t)\right\|_{L_{x, y}^{2}} \\  \lesssim & \left\|\int_{\omega(\eta,\eta_1\eta_2) = - 2(\eta-\eta_1)} \frac{\mathrm{e}^{-2i t (\eta-\eta_1})-1}{i (\eta-\eta_1)} \mathcal{O}_{2}^{t}\left[F_{\eta-\eta_1}^{a}, F_{\eta_2-\eta_1}^{b}, F_{\eta_2}^{c}\right] d\eta_1 d\eta_2 \right\|_{ L_{\xi,\eta}^{2}} \\ \lesssim & \int_{\mathbb{R}^{2}}\left|\mathcal{F}_{\mu,\kappa}^{-1} m(z_1, z_2)\right| \left\|\left\|\frac{\mathrm{e}^{-2it D_y}-1}{i D_y} \mathrm{e}^{it\partial_x^2}  T_{z_2}F_{+}^{a}\right\|_{L_x^2} \left\| \overline{\mathrm{e}^{it\partial_x^2} T_{z_1+z_2}F_{-}^b} \mathrm{e}^{it\partial_x^2} T_{z_1}F_{-}^c \right\|_{L_x^\infty}  \right\|_{L_y^2} dz_1 dz_2 \\ \lesssim & \int_{\mathbb{R}^{2}}\left|\mathcal{F}_{\mu,\kappa}^{-1} m(z_1, z_2)\right|\left\|\frac{\mathrm{e}^{-2it D_y}-1}{i D_y}\mathrm{e}^{it\partial_x^2} T_{z_2}F_{+}^a\right\|_{L_{x}^2 L_y^\infty} \left\| \overline{\mathrm{e}^{it\partial_x^2} T_{z_1+z_2}F_{-}^b} \mathrm{e}^{it\partial_x^2} T_{z_1}F_{-}^c \right\|_{L_y^2 L_{x}^\infty } dz_1 dz_2\\ \lesssim & \int_{\mathbb{R}^{2}}\left|\mathcal{F}_{\mu,\kappa}^{-1} m(z_1, z_2)\right| \left\|\frac{\mathrm{e}^{-2it \eta}-1}{i \eta}\right\|_{L_\eta^2} \left\| F^a\right\|_{L_{x,y}^2} \left\| \overline{\mathrm{e}^{it\partial_x^2} T_{z_1+z_2}F_{-}^b} \mathrm{e}^{it\partial_x^2} T_{z_1}F_{-}^c \right\|_{L_y^2 L_{x}^\infty }dz_1 dz_2 \\ \lesssim & (1+|t|)^{-\frac{1}{2}+ \frac{\delta}{100}} \|F^a\|_{L_{x,y}^2} \|F^b\|_S \|F^c\|_S,
\end{align*}
and we also have
\begin{align*}
& \left\|\mathcal{E}_{3}(t)\right\|_{L_{x, y}^{2}} \\  \lesssim & \left\|\int_{\omega(\eta,\eta_1\eta_2) = - 2(\eta-\eta_1)} \frac{\mathrm{e}^{-2i t (\eta-\eta_1})-1}{i (\eta-\eta_1)} \mathcal{O}_{2}^{t}\left[F_{\eta-\eta_1}^{a}, F_{\eta_2-\eta_1}^{b}, F_{\eta_2}^{c}\right] d\eta_1 d\eta_2 \right\|_{ L_{\xi,\eta}^{2}} \\ \lesssim & \int_{\mathbb{R}^{2}}\left|\mathcal{F}_{\mu,\kappa}^{-1} m(z_1, z_2)\right| \left\|\left\| \frac{\mathrm{e}^{-2it D_y}-1}{i D_y} \mathrm{e}^{it\partial_x^2} T_{z_2}F_{+}^{a}\right\|_{L_x^\infty} \left\| \overline{\mathrm{e}^{it\partial_x^2} T_{z_1+z_2}F_{-}^b} \mathrm{e}^{it\partial_x^2} T_{z_1}F_{-}^c \right\|_{L_x^2}  \right\|_{L_y^2} dz_1 dz_2\\ \lesssim & \int_{\mathbb{R}^{2}}\left|\mathcal{F}_{\mu,\kappa}^{-1} m(z_1, z_2)\right| \left\|\frac{\mathrm{e}^{-2it D_y}-1}{i D_y}\mathrm{e}^{it\partial_x^2} T_{z_2}F_{+}^a\right\|_{L_{x,y}^\infty} \left\| \overline{\mathrm{e}^{it\partial_x^2} T_{z_1+z_2}F_{-}^b} \mathrm{e}^{it\partial_x^2} T_{z_1}F_{-}^c \right\|_{L_{x,y}^2 } dz_1 dz_2 \\ \lesssim & \int_{\mathbb{R}^{2}}\left|\mathcal{F}_{\mu,\kappa}^{-1} m(z_1, z_2)\right| \left\|\frac{\mathrm{e}^{-2it \eta}-1}{i \eta}\right\|_{L_\eta^2} \left\| \mathrm{e}^{it\partial_x^2}T_{z_2}F_{+}^a\right\|_{L_y^2 L_{x}^\infty} \left\| \overline{\mathrm{e}^{it\partial_x^2} T_{z_1+z_2}F_{-}^b} \mathrm{e}^{it\partial_x^2} T_{z_1} F_{-}^c \right\|_{L_{x,y}^2}dz_1dz_2 \\ \lesssim & (1+|t|)^{-\frac{1}{2}+ \frac{\delta}{100}} \|F^b\|_{L_{x,y}^2} \|F^a\|_S \|F^c\|_S.
\end{align*}
The inequality above holds by replacing $F^b$ with $F^c$. So we have
\begin{equation}
\label{3.39}
\|\mathcal{E}_3(t) \|_{L_{x,y}^2} \lesssim (1+|t|)^{-\frac{1}{2}+\frac{\delta}{100}}  \min_{\{\alpha,\beta,\gamma\} = \{a, b, c\} } \|F^\alpha\|_{L_{x,y}^2} \|F^\beta\|_S \|F^\gamma\|_S.
\end{equation}
According to Lemma \ref{Lemma 6.2}, we have
\begin{equation}
\label{3.2509}
\left\|\mathcal{E}_{3}(t)\right\|_{S'} \lesssim (1+|t|)^{-\frac{1}{2}+\frac{\delta}{100}} \max _{\{\alpha, \beta, \gamma\}=\{a, b, c\}}\left\|F^{\alpha}\right\|_{S'} \| F^{\beta}\|_S  \|F^{\gamma}\|_{S}
\end{equation}
and
\begin{equation}
\label{3.260}
\left\|\mathcal{F}_{x \rightarrow \xi} \mathcal{E}_{3}^{t}[F^a, F^b, F^c]\right\|_{L_{\xi}^{\infty} L_{y}^{2}} \lesssim   (1+|t|)^{-\frac{1}{2}+ \frac{\delta}{100}} \max _{\{\alpha, \beta, \gamma\}=\{a, b, c\}} \|F^\alpha\|_{S'} \|F^\beta\|_S \|F^\gamma\|_S.
\end{equation}
Then we estimate $\left\|x \, \mathcal{E}_{3}(t)\right\|_{L_x^2 H_y^2}$. Similarly as we did in the estimate of (\ref{3.39}), we have
\begin{equation}
\label{3.2510}
\begin{aligned}
& \left\|x\, \mathcal{E}_{3}(t)\right\|_{L_{x}^{2} H_y^2} \\  \lesssim & \left\|x\, \mathcal{F}_{\xi,\eta}^{-1} \int_{\omega(\eta,\eta_1\eta_2) = - 2(\eta-\eta_1)} \frac{\mathrm{e}^{-2i t (\eta-\eta_1)}-1}{i (\eta-\eta_1)} \mathcal{O}_{2}^{t}\left[F_{\eta-\eta_1}^{a}, F_{\eta_2-\eta_1}^{b}, F_{\eta_2}^{c}\right] d\eta_1 d\eta_2 \right\|_{L_x^2 H_y^2} \\ \lesssim & (1+|t|)^{-\frac{1}{2}+ \frac{\delta}{100}} \|F^a\|_{S} \|F^b\|_S \|F^c\|_S.
\end{aligned}
\end{equation}
To estimate $\left\|\mathcal{E}_{3}(t)\right\|_{Z}$, since we have (\ref{3.260}),
we only need to estimate $\left\|\mathcal{F}_{x \rightarrow \xi} \mathcal{E}_{3}^{t}[F, G, H]\right\|_{L_{\xi}^{\infty} \dot{B}_{y}^{1}}$. By (\ref{6.14}), we have
\begin{align*} 
&  \|\mathcal{F}_{x\rightarrow\xi}{\mathcal{E}^{t}}[F^a, F^b, F^c](\xi,y)\|_{L_{\xi}^\infty \dot{B}_y^1 } \\ \lesssim & \left\|\mathcal{F}_{\xi,\eta}^{-1} \int_{\mathbb{R}^2}\frac{\mathrm{e}^{-2 i t (\eta-\eta_1)}-1}{i (\eta-\eta_1)} \mathcal{O}_{2}^{t}\left[F_{\eta-\eta_1}^{a}\mathbbm{1}_{\eta-\eta_1 \geq 0}, F_{\eta_2-\eta_1}^{b}\mathbbm{1}_{\eta_2-\eta_1 \leq 0}, F_{\eta_2}^{c}\mathbbm{1}_{\eta_2\leq 0}\right] d\eta_1 d\eta_2  \right\|_{L_{y}^1 L_x^2}^{\frac{1}{2}} \times \\ & \left\|x\,\mathcal{F}_{\xi,\eta}^{-1} \int_{\mathbb{R}^2}\frac{\mathrm{e}^{-2 i t (\eta-\eta_1)}-1}{i (\eta-\eta_1)} \mathcal{O}_{2}^{t}\left[F_{\eta-\eta_1}^{a}\mathbbm{1}_{\eta-\eta_1 \geq 0}, F_{\eta_2-\eta_1}^{b}\mathbbm{1}_{\eta_2-\eta_1 \leq 0}, F_{\eta_2}^{c}\mathbbm{1}_{\eta_2\leq 0}\right] d\eta_1 d\eta_2  \right\|_{L_{y}^1 L_x^2}^{\frac{1}{2}} \\ + & \left\|\partial_y^2 \mathcal{F}_{\xi,\eta}^{-1} \int_{\mathbb{R}^2}  \frac{\mathrm{e}^{-2i t (\eta-\eta_1)}-1}{i (\eta-\eta_1)} \mathcal{O}_{2}^{t}\left[F_{\eta-\eta_1}^{a}\mathbbm{1}_{\eta-\eta_1 \geq 0}, F_{\eta_2-\eta_1}^{b}\mathbbm{1}_{\eta_2-\eta_1 \leq 0}, F_{\eta_2}^{c}\mathbbm{1}_{\eta_2\leq 0}\right] d\eta_1 d\eta_2 \right\|_{ L_{y}^1 L_x^2}^\frac{1}{2} \times \\ & \left\|x\,\partial_y^2 \mathcal{F}_{\xi,\eta}^{-1} \int_{\mathbb{R}^2} \frac{\mathrm{e}^{-2i t (\eta-\eta_1)}-1}{i (\eta-\eta_1)} \mathcal{O}_{2}^{t}\left[F_{\eta-\eta_1}^{a}\mathbbm{1}_{\eta-\eta_1 \geq 0}, F_{\eta_2-\eta_1}^{b}\mathbbm{1}_{\eta_2-\eta_1 \leq 0}, F_{\eta_2}^{c}\mathbbm{1}_{\eta_2\leq 0}\right] d\eta_1 d\eta_2 \right\|_{ L_{y}^1 L_x^2}^\frac{1}{2}\\ \lesssim & (1+|t|)^{-\frac{1}{2}+ \frac{\delta}{100}}\|F^a\|_S \|F^b\|_S \|F^c\|_S.
\end{align*}
The last inequality can be deduced as we did in the estimate of (\ref{3.39}).
So we have
\begin{equation}
\label{3.2511}
\left\|\mathcal{E}_{3}(t)\right\|_{Z} \lesssim (1+|t|)^{-\frac{1}{2}+\frac{\delta}{100}} \left\|F^a\right\|_{S} \left\|F^b\right\|_{S}\left\|F^c\right\|_{S}.
\end{equation}
According to (\ref{3.2509}), (\ref{3.2510}) and (\ref{3.2511}), we have
\begin{equation}
\label{3.2512}
\left\|\mathcal{E}_{3}(t)\right\|_{\mathcal{Y}} \lesssim (1+|t|)^{-\frac{1}{2}+\frac{\delta}{100}} \left\|F^a\right\|_{S} \left\|F^b\right\|_{S}\left\|F^c\right\|_{S}.
\end{equation}
In general, after the analysis of $\textbf{Case 1}$, $\textbf{Case 2}$ and $\textbf{Case 3}$, according to (\ref{3.2504}), (\ref{3.2508}) and (\ref{3.2512}), with the assumption (\ref{3.3}) and $\frac{T}{2}\leq t \leq T$, we have
\begin{equation}
\label{3.2513}
\left\|\mathcal{E}_{3}(t)\right\|_{\mathcal{Y}}  \lesssim |T|^{-\frac{1}{3}}.
\end{equation}
$\textbf { 2. Estimation of } \widetilde{\mathcal{E}}_{1}(t) $. We need to control the $\mathcal{Y}$ norm. $\widetilde{\mathcal{E}}_{1}(t)$ is divided in two parts, one is from $\mathcal{O}_{1}^{t}$, and another one $\mathcal{L}^{t}$ is from the last four terms in (\ref{3.23}). We recall (\ref{3.25}) with 
\begin{align*} i \omega \mathcal{L}^{t}\left[f^{a}, f^{b}, f^{c}\right]:=&-\left(\partial_{t} \mathcal{O}_{2}^{t}\right)\left[f^{a}, f^{b}, f^{c}\right]-\mathcal{O}_{2}^{t}\left[\partial_{t} f^{a}, f^{b}, f^{c}\right]-\mathcal{O}_{2}^{t}\left[f^{a}, \partial_{t} f^{b}, f^{c}\right] \\ &-\mathcal{O}_{2}^{t}\left[f^{a}, f^{b}, \partial_{t} f^{c}\right]. \end{align*}
The term $\int_{\omega(\eta,\eta_1,\eta_2) \neq 0}(\mathrm{e}^{i t \omega(\eta,\eta_1,\eta_2)}-1)\mathcal{L}^{t}\left[F_{\eta-\eta_1}^{a}, F_{\eta_2-\eta_1}^{b}, F_{\eta_2}^{c}\right]d\eta_1 d\eta_2 $ can be estimated similarly as $\left\|\mathcal{E}_{3}(t)\right\|_{\mathcal{Y}}$. In fact, we can gain a better estimate here. For the first term, as $\frac{T}{2}\leq t \leq T$, we get an extra $T^{-3 / 4}$  which comes from the $t$ derivative of the multiplier, while for the other three terms, by the definition of $X_T$ norm, we have $\left\|\partial_{t} F\right\|_{\mathcal{Y}} \leq T^{-1+3 \delta}\|F\|_{X_{T}}$.\\\\
We then estimate $\int_{\omega(\eta,\eta_1,\eta_2) \neq 0}\mathrm{e}^{i t \omega(\eta,\eta_1,\eta_2)}\mathcal{O}_{1}^{t}\left[F_{\eta-\eta_1}^{a}, F_{\eta_2-\eta_1}^{b}, F_{\eta_2}^{c}\right]d\eta_1 d\eta_2$.\\\\
Let $\frac{T}{2}\leq t\leq T$. For $\mathcal{O}_{1}^{t}$, we still have
\begin{equation}
\label{3.56}
\left\|\mathcal{O}_{1}^{t}\left[f^{a}, f^{b}, f^{c}\right]\right\|_{L_\xi^{2}} \lesssim(1+|t|)^{\frac{\delta}{100}} \min _{\{\alpha, \beta, \gamma\}=\{a, b, c\}}\left\|f^{\alpha}\right\|_{L_{x}^{2}}\left\|\mathrm{e}^{i t \partial_{x}^2} f^{\beta}\right\|_{L_{x}^{\infty}}\left\|\mathrm{e}^{i t \partial_{x}^2} f^{\gamma}\right\|_{L_{x}^{\infty}}.
\end{equation}
Here $\mathcal{O}_{1}^{t}$ enjoys a similar estimate as $\mathcal{O}_{2}^{t}$ because $\mathcal{O}_{1}^{t}+\mathcal{O}_{2}^{t}$ enjoys a bound better than (\ref{3.56}).

We then estimate $\left\|\mathrm{e}^{i t \partial_{x}^2} f\right\|_{L_{x}^{\infty}}$. We notice that for all $\frac{1}{2}<\alpha \leq 1$,
\begin{equation}
\label{3.30}
\left\|\mathrm{e}^{i t \partial_{x}^2} f\right\|_{L_x^{\infty}(\mathbb{R})} \lesssim\langle t\rangle^{-\frac{1}{2}}\|f\|_{L_x^{1}(\mathbb{R})} \lesssim\langle t\rangle^{-\frac{1}{2}}\left\|\langle x\rangle^{-\alpha}\langle x\rangle^{\alpha} f\right\|_{L_x^{1}(\mathbb{R})} \lesssim\langle t\rangle^{-\frac{1}{2}}\left\|\langle x\rangle^{\alpha} f\right\|_{L_x^{2}(\mathbb{R})}.
\end{equation}
By taking $\alpha = \frac{7}{9}$, for $f$ supported on $|x| \geq R$, we have
\begin{equation}
\label{3.31}
\left\|\mathrm{e}^{i t \partial_{x}^2} f\right\|_{L_x^{\infty}} \lesssim\langle t\rangle^{-\frac{1}{2}} R^{-\frac{1}{9}}\left\|\langle x\rangle^{\frac{8}{9}} f\right\|_{L_x^{2}}.
\end{equation}
Then we decompose $f=f_{c}+f_{e} \text { with } f_{c}(x):=\varphi\left(\frac{x}{T^{\frac{1}{24}}}\right) f(x)$, we have
\begin{align*}
\mathcal{O}_{1}^{t}\left[f^{a}, f^{b}, f^{c}\right]=\mathcal{O}_{1}^{t}\left[f_{c}^{a}+f_{e}^{a}, f_{c}^{b}+f_{e}^{b}, f_{c}^{c}+f_{e}^{c}\right].
\end{align*}
If one of $f^{a}, f^{b}, f^{c} \text { is supported on }|x| \geq 2 T^{\frac{1}{24}}$, in this case we may assume $f^{b}=f_{e}^{b}$. By (\ref{3.31}), we have
\begin{equation}
\label{3.571}
\begin{aligned}\left\|\mathcal{O}_{1}^{t}\left[f^{a}, f_{e}^{b}, f^{c}\right]\right\|_{L_{x}^{2}} & \lesssim(1+|t|)^{\frac{\delta}{100}}\left\|f^{a}\right\|_{L_x^{2}}\left\|\mathrm{e}^{i t \partial_{x}^2} f_{e}^{b}\right\|_{L_x^{\infty}}\left\|\mathrm{e}^{i t \partial_{x}^2} f^{c}\right\|_{L_x^{\infty}} \\ & \lesssim(1+|t|)^{\frac{\delta}{100}}\left\|f^{a}\right\|_{L_x^{2}}\left\|\mathrm{e}^{i t \partial_{x}^2} f_{e}^{b}\right\|_{L_x^{\infty}}\left\|\mathrm{e}^{i t \partial_{x}^2} f^{c}\right\|_{L_x^{\infty}} \\ & \lesssim T^{-1-\frac{1}{216}+\frac{\delta}{100}}\left\|f^{a}\right\|_{L_x^{2}}\left\|\langle x\rangle^{\frac{8}{9}} f_{e}^{b}\right\|_{L_x^{2}}\left\|\langle x\rangle^{\frac{7}{9}} f^{c}\right\|_{L_x^{2}}.  
\end{aligned}
\end{equation}
Now we only have to deal with the case that two of $f^a, f^b, f^c$ is supported on $|x| \leq 2 T^{\frac{1}{24}}$. We can consider the case $f^b = f_c^b, f^c = f_c^c$. By replacing $\mathrm{e}^{2 i t \eta \kappa}$ by $(2 i t \mu)^{-1} \partial_{\kappa}\left(\mathrm{e}^{2 i t \mu \kappa}\right)$, we rewrite $\mathcal{O}_{1}^{t}$ as 
\begin{equation}
\begin{aligned} \mathcal{O}_{1}^{t} & {\left[f^{a}, f^{b}, f^{c}\right](\xi)=\int_{\mathbb{R}^{2}} \mathrm{e}^{2 i t \mu \kappa}\left(1-\varphi\left(t^{\frac{3}{4}} \mu \kappa\right)\right) \widehat{f^{a}}(\xi-\mu) \overline{\widehat{f^{b}}}(\xi-\mu-\kappa) \widehat{f^{c}}(\xi-\kappa) d \mu d \kappa } \\ &=\int_{\mathbb{R}^{2}}(2 i t \mu)^{-1} \partial_{\kappa}\left(\mathrm{e}^{2 i t \mu \kappa}\right)\left(1-\varphi\left(t^{\frac{3}{4}} \mu \kappa\right)\right) \widehat{f^{a}}(\xi-\mu) \overline{\widehat{f^{b}}}(\xi-\mu-\kappa) \widehat{f^{c}}(\xi-\kappa) d \mu d \kappa \\ &=-\int_{\mathbb{R}^{2}}(2 i t \mu)^{-1} \mathrm{e}^{2 i t \mu \kappa} \partial_{\kappa}\left(\left(1-\varphi\left(t^{\frac{3}{4}} \mu \kappa\right)\right) \widehat{f^{a}}(\xi-\mu) \overline{\widehat{f^{b}}}(\xi-\mu-\kappa) \widehat{f^{c}}(\xi-\kappa)\right) d \mu d \kappa. \end{aligned}
\end{equation}
In the case when the $\kappa$ derivative falls on $1-\varphi$, which is 
\begin{align*}
(2 i)^{-1} t^{-\frac{1}{4}} \int_{\mathbb{R}^{2}} \mathrm{e}^{2 i t \mu \kappa} \varphi^{\prime}\left(t^{\frac{3}{4}} \mu \kappa\right) \widehat{f^{a}}(\xi-\mu)\overline{\widehat{f^{b}}}(\xi-\mu-\kappa) \widehat{f^{c}}(\xi-\kappa) d \mu d \kappa,
\end{align*}
since $\varphi^{\prime}$ admits similar properties as $\varphi$, we have the similar estimate to the estimate we did for $\mathcal{O}_{2}^{t}$. 

For other cases, we deal with the case when $\kappa$ derivative falls on $f^b$ for example, which is denoted by $\mathcal{O}_{1, b}$,
\begin{equation}
\begin{aligned} \mathcal{O}_{1, b} &\left.:=-\int_{\mathbb{R}^{2}}(2 i t \eta)^{-1} \mathrm{e}^{2 i t \eta \kappa}\left(1-\varphi\left(t^{\frac{3}{4}} \mu \kappa\right)\right) \widehat{f^{a}}(\xi-\mu) \partial_{\kappa} \overline{\widehat{f^{b}}}(\xi-\mu-\kappa)\right) \widehat{f^{c}}(\xi-\kappa) d \mu d \kappa \\ &=-\int_{\mathbb{R}^{2}}(2 t \mu)^{-1} \mathrm{e}^{2 i t \mu \kappa}\left(1-\varphi\left(t^{\frac{3}{4}} \mu  \kappa\right)\right) \widehat{f^{a}}(\xi-\mu) \overline{\widehat{x f^{b}}}(\xi-\mu-\kappa) \widehat{f^{c}}(\xi-\kappa) d \mu d \kappa. \end{aligned}
\end{equation}
We observe that $|t||\mu| \gtrsim|t|^{\frac{1}{4}}|\kappa|^{-1} \gtrsim T^{\frac{1}{12}}$ on the support of the integration, so we still have an $L^2$ estimate
\begin{equation}
\label{3.359}
\left\|\mathcal{O}_{1, b}\right\|_{L_{\xi}^{2}} \lesssim T^{-\frac{1}{12}+\frac{\delta}{100}}\left\|f^{a}\right\|_{L_x^{2}} \cdot\left\|\mathrm{e}^{i t \partial_{x}^2}\left(x f^{b}\right)\right\|_{L_{x}^{\infty}} \cdot\left\|\mathrm{e}^{i t \partial_{x}^2} f^{c}\right\|_{L_{x}^{\infty}}.
\end{equation}
By (\ref{3.30}), if $f$ supported on $|x| \lesssim T^{\frac{1}{24}}$, we have
\begin{equation}
\label{3.36}
\left\|e^{i t \partial_{x}^2} x f\right\|_{L^{\infty}} \lesssim\langle t\rangle^{-\frac{1}{2}} T^{\frac{1}{24}}\left\|\langle x\rangle^{\frac{7}{9}} f\right\|_{L_{x}^{2}}.
\end{equation}
By (\ref{3.30}), (\ref{3.359}) and (\ref{3.36}), we have
\begin{equation}
\label{3.62}
\begin{aligned}
\left\|\mathcal{O}_{1, b}\right\|_{L_{\xi}^{2}} & \lesssim T^{-\frac{1}{12}+\frac{\delta}{100}}\left\|f^{a}\right\|_{L^{2}} \cdot\left\|\mathrm{e}^{i t \partial_{x}^2}\left(x f_{c}^{b}\right)\right\|_{L_{x}^{\infty}} \cdot\left\|\mathrm{e}^{i t \partial_{x}^2} f^{c}\right\|_{L_{x}^{\infty}} \\ & \lesssim T^{-\frac{25}{24}+\frac{\delta}{100}}\left\|f^{a}\right\|_{L_{x}^{2}}\left\|\langle x\rangle^{\frac{7}{9}} f^{b}\right\|_{L_{x}^{2}}\left\|\langle x\rangle^{\frac{7}{9}} f^{c}\right\|_{L_{x}^{2}}. 
\end{aligned}
\end{equation}
Combining (\ref{3.571}) and (\ref{3.62}), by replacement and Remark \ref{remark 6.41}, we have
\begin{equation}
\label{3.2601}
\begin{aligned}
 & \left\|\mathcal{F}_{\xi,\eta}^{-1}\int_{\omega(\eta,\eta_1,\eta_2) \neq 0}  \mathrm{e}^{i t \omega(\eta,\eta_1,\eta_2)} \mathcal{O}_{1}^{t}\left[F_{\eta-\eta_1}^{a}, F_{\eta_2-\eta_1}^{b}, F_{\eta_2}^{c}\right]d\eta_1 d\eta_2\right\|_{S} \\ \lesssim & \sum_{\{i,j,k\} = \{\pm,\pm,\pm\}}\left\|\mathcal{F}_{\xi \rightarrow x}^{-1}\mathcal{O}_{1}^{t}\left[\mathrm{e}^{-i t\left|D_{y}\right|}F_i^{a}, \mathrm{e}^{-i t\left|D_{y}\right|}F_j^{b}, \mathrm{e}^{-i t\left|D_{y}\right|}F_k^{c}\right]\right\|_{S} \\  \lesssim & \quad T^{-1-23\delta} \left\|F^{a}\right\|_{S}\left\|F^{b}\right\|_{S}\left\|F^{c}\right\|_{S}
\end{aligned}
\end{equation}
and
\begin{equation}
\label{3.2600}
\begin{aligned}
& \left\|\mathcal{F}_{\eta \to y}^{-1}\int_{\omega(\eta,\eta_1,\eta_2) \neq 0}  \mathrm{e}^{i t \omega(\eta,\eta_1,\eta_2)} \mathcal{O}_{1}^{t}\left[F_{\eta-\eta_1}^{a}, F_{\eta_2-\eta_1}^{b}, F_{\eta_2}^{c}\right]d\eta_1 d\eta_2\right\|_{L_\xi^\infty L_y^2} \\ \lesssim & \quad T^{-1-23\delta} \left\|F^{a}\right\|_{S}\left\|F^{b}\right\|_{S}\left\|F^{c}\right\|_{S}.
\end{aligned}
\end{equation}
To estimate $\left\|\mathcal{F}_{\xi, \eta}^{-1} \int_{\omega\left(\eta, \eta_{1}, \eta_{2}\right) \neq 0} \mathrm{e}^{i t \omega\left(\eta, \eta_{1}, \eta_{2}\right)} \mathcal{O}_{1}^{t}\left[F_{\eta-\eta_{1}}^{a}, F_{\eta-\eta_{1}-\eta_{2}}^{b}, F_{\eta-\eta_{2}}^{c}\right] d \eta_{1} d \eta_{2}\right\|_{Z}$, since we have (\ref{3.2600}), we only have to estimate 
\begin{align*}
\left\|\mathcal{F}_{\eta \to y}^{-1} \int_{\omega\left(\eta, \eta_{1}, \eta_{2}\right) \neq 0} \mathrm{e}^{i t \omega\left(\eta, \eta_{1}, \eta_{2}\right)} \mathcal{O}_{1}^{t}\left[F_{\eta-\eta_{1}}^{a}, F_{\eta-\eta_{1}-\eta_{2}}^{b}, F_{\eta-\eta_{2}}^{c}\right] d \eta_{1} d \eta_{2}\right\|_{L_\xi^\infty \dot{B}_y^1} .
\end{align*}
By (\ref{3.571}), (\ref{3.62}), Remark \ref{remark 6.41} and  (\ref{6.14}) in Remark \ref{remark 6.6}, we have
\begin{align*}
& \left\|\mathcal{F}_{\eta \to y}^{-1} \int_{\omega\left(\eta, \eta_{1}, \eta_{2}\right) \neq 0} \mathrm{e}^{i t \omega\left(\eta, \eta_{1}, \eta_{2}\right)} \mathcal{O}_{1}^{t}\left[F_{\eta-\eta_{1}}^{a}, F_{\eta-\eta_{1}-\eta_{2}}^{b}, F_{\eta-\eta_{2}}^{c}\right] d \eta_{1} d \eta_{2}\right\|_{L_\xi^\infty \dot{B}_y^1}
\\ \lesssim & \sum_{\{i, j, k\}=\{\pm, \pm, \pm\}}\left\|\mathcal{F}_{\xi \rightarrow x}^{-1}\mathcal{O}_{1}^{t}\left[\mathrm{e}^{-i t\left|D_{y}\right|}F_i^{a}, \mathrm{e}^{-i t\left|D_{y}\right|}F_j^{b}, \mathrm{e}^{-i t\left|D_{y}\right|}F_k^{c}\right]\right\|_{L_{y}^{1} L_{x}^{2}}^{\frac{1}{2}} \times \\ & \qquad \qquad \qquad \quad\left\|x \mathcal{F}_{\xi \rightarrow x}^{-1}\mathcal{O}_{1}^{t}\left[\mathrm{e}^{-i t\left|D_{y}\right|}F_i^{a}, \mathrm{e}^{-i t\left|D_{y}\right|}F_j^{b}, \mathrm{e}^{-i t\left|D_{y}\right|}F_k^{c}\right]\right\|_{L_{y}^{1} L_{x}^{2}}^{\frac{1}{2}} \\ + & \sum_{\{i, j, k\}=\{\pm, \pm, \pm\}} \left\|\mathcal{F}_{\xi \rightarrow x}^{-1}\partial_y^2 \mathcal{O}_{1}^{t}\left[\mathrm{e}^{-i t\left|D_{y}\right|}F_i^{a}, \mathrm{e}^{-i t\left|D_{y}\right|}F_j^{b}, \mathrm{e}^{-i t\left|D_{y}\right|}F_k^{c}\right]\right\|_{L_{y}^{1} L_{x}^{2}}^{\frac{1}{2}} \times \\ & \qquad \qquad\qquad \quad\left\| x\mathcal{F}_{\xi \rightarrow x}^{-1}\partial_y^2\mathcal{O}_{1}^{t}\left[x\mathrm{e}^{-i t\left|D_{y}\right|}F_i^{a}, \mathrm{e}^{-i t\left|D_{y}\right|}F_j^{b}, \mathrm{e}^{-i t\left|D_{y}\right|}F_k^{c}\right]\right\|_{L_{y}^{1} L_{x}^{2}}^{\frac{1}{2}} \\ \lesssim  & \quad T^{-1-23 \delta} \left\|F^{a}\right\|_{S}\left\|F^{b}\right\|_{S}\left\|F^{c}\right\|_{S}.
\end{align*}
So we have
\begin{equation}
\label{3.2603}
\begin{aligned}
& \left\|\mathcal{F}_{\xi, \eta}^{-1} \int_{\omega\left(\eta, \eta_{1}, \eta_{2}\right) \neq 0} \mathrm{e}^{i t \omega\left(\eta, \eta_{1}, \eta_{2}\right)} \mathcal{O}_{1}^{t}\left[F_{\eta-\eta_{1}}^{a}, F_{\eta-\eta_{1}-\eta_{2}}^{b}, F_{\eta-\eta_{2}}^{c}\right] d \eta_{1} d \eta_{2}\right\|_{Z} \\ \lesssim & \quad T^{-1-23 \delta}\left\|F^{a}\right\|_{S}\left\|F^{b}\right\|_{S}\left\|F^{c}\right\|_{S}.
\end{aligned}
\end{equation}
Then according to (\ref{3.2601}) and (\ref{3.2603}), with the assumption (\ref{3.3}) and $\frac{T}{2} \leq t \leq T$, we have
\begin{equation}
\label{3.2514}
\begin{aligned}
\left\|\int_{\omega\left(\eta, \eta_{1}, \eta_{2}\right) \neq 0} \mathrm{e}^{i t \omega\left(\eta, \eta_{1}, \eta_{2}\right)} \mathcal{O}_{1}^{t}\left[F_{\eta-\eta_{1}}^{a}, F_{\eta-\eta_{1}-\eta_{2}}^{b}, F_{\eta-\eta_{2}}^{c}\right] d \eta_{1} d \eta_{2}\right\|_{\mathcal{Y}} \lesssim & \, T^{-1-23 \delta} \left\|F^{a}\right\|_{S}\left\|F^{b}\right\|_{S}\left\|F^{c}\right\|_{S}\\ \lesssim & \, T^{-1-20\delta}.
\end{aligned}
\end{equation}
Thus, with the assumption (\ref{3.3}) and $\frac{T}{2} \leq t \leq T$, we have 
\begin{equation}
\label{3.2515}
\left\|\widetilde{\mathcal{E}_{1}}(t)\right\|_{\mathcal{Y}} \lesssim|T|^{-1-20\delta}.
\end{equation}
In summary, by the estimation of $\widetilde{\mathcal{E}}_{1}(t)$ and $\mathcal{E}_3(t)$, with the assumption (\ref{3.3}), we have (\ref{3.2513}) and (\ref{3.2515}), then we deduce (\ref{3.225}). We have completed the proof of Lemma {\ref{Lemma 3.5}}.
\end{proof}

\subsection{The resonant level sets}
We now consider the resonant part (\ref{3.18}),
\begin{align*}
\mathcal{F} \mathcal{N}_{0}^{t}(\xi, \eta):=  \int_{\omega(\eta,\eta_1,\eta_2) = 0} \mathcal{F}_{x}\left(\mathcal{I}^{t}\left[F_{\eta-\eta_{1}}, G_{\eta-\eta_1-\eta_{2}}, H_{\eta-\eta_{2}}\right]\right)(\xi) d \eta_{1} d \eta_{2}.
\end{align*}
We refer to Remark \ref{remark 3.1} for the description of the set $\omega(\eta,\eta_1,\eta_2) = 0$. This part yields the main contribution in Proposition \ref{Proposition 3.1} and in particular is responsible for the slowest $\frac{1}{t}$ decay. We show that it gives rise to a contribution which grows slowly in $S$ and that it can be well approximated by the resonant system in the $Z$ norm. \\\\
We also define a norm, which is very useful in the following lemma,
\begin{equation}
\|F\|_{\widetilde{Z}_{t}}:=\|F\|_{Z}+(1+|t|)^{-2\delta}\|F\|_{S}.
\end{equation}
We observe that $F(t)$ remains uniformly bounded in $\widetilde{Z}_{t}$ under the assumption of Proposition \ref{Proposition 3.1}.
\begin{lemma}
\label{Lemma 3.7}
Let $t \geq 1$, then we have
\begin{equation}
\label{3.38}
\left\|\mathcal{N}_{0}^{t}\left[F^{a}, F^{b}, F^{c}\right]\right\|_{S'} \lesssim(1+|t|)^{-1} \sum_{\{\alpha, \beta, \gamma\}=\{a, b, c\}}\left\|F^{\alpha}\right\|_{S'} \cdot\left\|F^{\beta}\right\|_{\widetilde{Z}_{t}} \cdot\left\|F^{\gamma}\right\|_{\widetilde{Z}_{t}},
\end{equation}
\begin{equation}
\label{3.385}
\begin{aligned}
\left\|x\mathcal{N}_{0}^{t}\left[F^{a}, F^{b}, F^{c}\right]\right\|_{L_x^2 H_y^2}  & \lesssim (1+|t|)^{-1} \left\| x F^a \right\|_{L_{x}^2 H_y^2} \left\|F^b \right\|_{\widetilde{Z}_{t}} \left\| F^c \right\|_{\widetilde{Z}_{t}} \\ & + (1+|t|)^{-1} \|F^a\|_{S'} \|F^b\|_{S^+} \|F^c\|_{\widetilde{Z}_{t}} \\ & + (1+|t|)^{-1} \|F^a\|_{S'}\|F^b\|_{\widetilde{Z}_{t}} \|F^c\|_{S^+}  \\ & + (1+|t|)^{-1} \|F^a\|_{\widetilde{Z}_{t}} \|F^b\|_{S} \|F^c\|_{S},
\end{aligned}
\end{equation}
here $a,b,c$ can be replaced by each other. We also have
\begin{equation}
\label{3.41}
\left\|\mathcal{N}_{0}^{t}[F, G, H]-\frac{\pi}{t} \mathcal{R}[F, G, H]\right\|_{Z} \lesssim(1+|t|)^{-1-23 \delta}\|F\|_{S}\|G\|_{S}\|H\|_{S},
\end{equation}
\begin{equation}
\label{3.42}
\left\|\mathcal{N}_{0}^{t}[F, G, H]-\frac{\pi}{t} \mathcal{R}[F, G, H]\right\|_{\mathcal{Y}} \lesssim(1+|t|)^{-1-23 \delta}\|F\|_{S^{+}}\|G\|_{S^{+}}\|H\|_{S^{+}}.
\end{equation}
\end{lemma}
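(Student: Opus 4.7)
The plan is to prove the four estimates in two groups. Estimates \eqref{3.38} and \eqref{3.385}, which bound $\mathcal{N}_0^t$ itself, are obtained by reducing to the Lemma~\ref{Lemma 2.1} type dispersive estimate on $\mathcal{I}^t$; estimates \eqref{3.41} and \eqref{3.42}, which measure the deviation from $\tfrac{\pi}{t}\mathcal{R}$, come from stationary phase applied in the $(\mu,\kappa)$ variables of $\mathcal{I}^t$.

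\textbf{Proof of \eqref{3.38} and \eqref{3.385}.} I would first use Remark~\ref{remark 3.1} to decompose the integration domain $\{\omega(\eta,\eta_1,\eta_2)=0\}$ into at most eight sign-cases (neglecting the measure-zero slices $\{\eta_1=0\}\cup\{\eta=\eta_2\}$), corresponding to the possible signs of $\eta-\eta_1$, $\eta_2-\eta_1$, $\eta_2$ together with $\eta>0$ or $\eta<0$. On each piece, the sign restrictions are exactly Szegő projections in the $y$-variable, so the piece takes the form $\Pi_{\epsilon}\,\mathcal{I}^t[\Pi_{\epsilon_1}F,\Pi_{\epsilon_2}G,\Pi_{\epsilon_3}H]$. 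Since $\Pi_{\pm}$ preserves $\mathcal{G}$ and $\mathcal{G}\subset L^\infty_y$, I can apply \eqref{2.21} with the $L^2_x$ slot reserved for the $S'$-factor and the two $L^\infty_x$ slots absorbed by the dispersive decay $|t|^{-1}$, bringing in the remaining factors through their $\widetilde Z_t$ (hence $L^\infty_y$ compatible) norms. The transfer lemma (Lemma~\ref{Lemma 6.2}) upgrades the resulting $L^2_{x,y}$ bound to the $S'$ bound \eqref{3.38}. For \eqref{3.385} I would use Lemma~\ref{Lemma 6.21} together with \eqref{2.16001} to distribute the $x$-weight onto whichever input is cheapest, and absorb the $H_y^3$ regularity needed on the factor carrying the weight into the $S^+$ norm; the term with no $x$-weight on any of $F,G,H$ accounts for the last line of \eqref{3.385}.

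\textbf{Proof of \eqref{3.41} and \eqref{3.42}.} The guiding identity is the stationary-phase asymptotic
\begin{equation*}
\mathcal{F}_x\mathcal{I}^t[f,g,h](\xi)=\frac{\pi}{t}\widehat f(\xi)\overline{\widehat g}(\xi)\widehat h(\xi)+R^t(\xi),
\end{equation*}
which reflects the unique critical point of $2t\mu\kappa$ at the origin (Hessian of determinant $-4t^2$, hence the prefactor $\pi/t$). To make this rigorous I would split the $(\mu,\kappa)$-plane with a cutoff $\varphi(t^{1-\sigma}\mu\kappa)$ as in \eqref{3.22}--\eqref{3.23} of the previous subsection. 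Off the critical point, integration by parts in the non-stationary direction gains arbitrary powers of $t^{-1}$ at the cost of $x$-derivatives on $F,G,H$ (controlled by the $\|(1+|D_x|)F\|_S$ term of $S^+$). Near the critical point, a first-order Taylor expansion $\widehat f(\xi-\mu)=\widehat f(\xi)-\int_0^1\mu(\partial\widehat f)(\xi-s\mu)\,ds$ (and similarly for $\widehat g,\widehat h$) isolates the constant-in-$(\mu,\kappa)$ contribution $\widehat f(\xi)\overline{\widehat g}(\xi)\widehat h(\xi)\int e^{2it\mu\kappa}\varphi=\tfrac{\pi}{t}+O(t^{-1-\epsilon})$, while each Taylor remainder carries an extra factor of $\mu$ or $\kappa$, which after integration against $e^{2it\mu\kappa}$ is converted into an $x$-weight in physical space via $\|xF\|_S$. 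Assembling the eight resonance pieces produces precisely the non-chiral Szegő structure $\Pi_+(|\widehat G_+|^2\widehat G_+)+\Pi_-(|\widehat G_-|^2\widehat G_-)$, i.e.\ exactly $\tfrac{\pi}{t}\mathcal{R}[F,G,H]$.

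\textbf{The main obstacle and the $Z$-norm estimate.} The hardest part is controlling the $\dot B^1_{1,1}(\mathbb{R}_y)$-seminorm of the remainder, since on $\mathbb{R}$ (unlike on $\mathbb{T}$) it cannot be dominated by a Sobolev norm. I would follow the scheme already deployed in Lemma~\ref{Lemma 3.5}, writing the residual multiplier as $\|\mathcal{F}^{-1}_{\mu,\kappa}m\|_{L^1}$ times a trilinear dispersive estimate on $\mathcal{I}^t$, and bounding the $L^1$ norm of the inverse Fourier transform of $m$ by interpolation between a uniform bound and a polynomial-in-$(x_1,x_2)$ weighted bound, exactly as in Lemma~\ref{Lemma 3.6}; this produces the decay $(1+|t|)^{-1-23\delta}$ after choosing $\sigma$ appropriately. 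The $L^\infty_\xi L^2_y$ part of the $Z$-norm is handled as in Lemma~\ref{Lemma 2.1} via \eqref{2.21}. Finally, \eqref{3.42} follows from \eqref{3.41} together with the corresponding $S$-norm bound, for which the $x$-weighted part demands commuting $x$ through the free Schrödinger group via $x\,e^{it\partial_x^2}=e^{it\partial_x^2}(x+2it\partial_x)$, which is what dictates the appearance of $\|(1+|D_x|)F\|_S$ and $\|xF\|_S$ in the $S^+$ norm on the right-hand side.
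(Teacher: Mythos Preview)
Your sketch for \eqref{3.38} has a real gap: citing \eqref{2.21} and then passing to the dispersive decay $|t|^{-1}$ only yields $\|e^{it\partial_x^2}f\|_{L^\infty_x}\lesssim |t|^{-1/2}\|f\|_{L^2_x}^{1/2}\|xf\|_{L^2_x}^{1/2}$, which after integrating in $y$ gives $S$-norms on the right, exactly as in Lemma~\ref{Lemma 2.1}. That is \emph{not} \eqref{3.38}: you need $\widetilde Z_t$, i.e.\ the $Z$-norm as leading term. The missing ingredient is the refined Schr\"odinger asymptotic
\[
\bigl|e^{it\partial_x^2}f(x)-c\,t^{-1/2}e^{-ix^2/4t}\widehat f(-x/2t)\bigr|\lesssim |t|^{-3/4}\|xf\|_{L^2_x},
\]
which gives $\|e^{it\partial_x^2}F_\pm\|_{L^\infty_{x,y}}\lesssim |t|^{-1/2}\|\widehat F\|_{L^\infty_\xi\mathcal G_y}+|t|^{-3/4}\|xF\|_{L^2_xH^1_y}\lesssim |t|^{-1/2}\|F\|_{\widetilde Z_t}$, using $\Pi_\pm(\mathcal G)\subset\mathcal G\subset L^\infty$. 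This is the whole point of the $\widetilde Z_t$-norm here, and your sketch does not invoke it. (Also: by Proposition~\ref{Proposition 4.2} the resonant set collapses to only \emph{two} sign-configurations, $(+{+}{+})$ and $(-{-}{-})$, not eight.) For \eqref{3.385} the same refined asymptotic is needed, and the four-term structure of the right-hand side arises from distributing $x$ and the $H^2_y$ derivatives over the three inputs in specific combinations; invoking \eqref{2.16001} alone gives only $S$-norms and misses the $\widetilde Z_t$ and $S^+$ factors.

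For \eqref{3.41}--\eqref{3.42} you propose a frequency-space stationary phase with the cutoff $\varphi(t^{1-\sigma}\mu\kappa)$ of Lemma~\ref{Lemma 3.5}, while the paper takes a different and more direct route: a \emph{spatial} truncation $F=F_c+F_e$ with $F_c=\varphi(x/t^{1/4})F$. The $F_e$-contributions are handled by the crude bounds of Lemma~\ref{Lemma 2.1} and \eqref{4.701} (gaining $t^{-1/4}$ from $\|F_e\|$), and on the compactly supported pieces one writes the physical-space identity
\[
\mathcal F_x\mathcal I^t[f,g,h](\xi)=\tfrac{1}{8\pi^2 t}\!\int f(x_1)\overline g(x_2)h(x_3)e^{-i\xi(x_1-x_2+x_3)}e^{-i(x_1-x_2)(x_3-x_2)/2t}\,dx,
\]
so that the difference with $\tfrac{\pi}{t}\widehat f\overline{\widehat g}\widehat h$ carries the factor $e^{-i(x_1-x_2)(x_3-x_2)/2t}-1=O(t^{-1/2})$ on the support $|x_j|\lesssim t^{1/4}$. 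This buys the extra decay with no integration by parts and no multiplier analysis; in particular the $\dot B^1_{1,1}$-part of the $Z$-norm needs no special treatment here, contrary to your ``main obstacle'' paragraph. Your approach is not obviously wrong, but the cutoff $\varphi(t^{1-\sigma}\mu\kappa)$ localizes only the \emph{product} $\mu\kappa$, so the Taylor remainder $\mu\,\partial\widehat f(\xi-s\mu)$ still has $\mu$ potentially large, and converting it via $\mu=(2it)^{-1}\partial_\kappa e^{2it\mu\kappa}$ hits the cutoff and regenerates a $\mu$-factor; closing this loop requires additional work you have not sketched. Finally, the commutator $x\,e^{it\partial_x^2}=e^{it\partial_x^2}(x+2it\partial_x)$ you invoke introduces a factor $t$ that would \emph{destroy}, not supply, the decay; the paper avoids this entirely by working with $\mathcal N_0^t$, where $e^{it|D_y|}$ is the only surviving propagator and $x$ commutes through it.
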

\begin{proof}
By Lemma \ref{Lemma 4.1} and Proposition \ref{Proposition 4.2} in Section 4, we have
\begin{align*}
\left\|\mathcal{N}_{0}^{t}\left[F^{a}, F^{b}, F^{c}\right]\right\|_{L_{x, y}^{2}} & \lesssim \left\|\int_{\omega\left(\eta, \eta_{1}, \eta_{2}\right)=0} \mathcal{I}^{t}\left[F_{\eta-\eta_{1}}^a, F_{\eta-\eta_{1}-\eta_{2}}^b, F_{\eta-\eta_{2}}^c\right] d \eta_{1} d \eta_{2}\right\|_{L_{x,\eta}^2}\\ & \lesssim \left\|\mathcal{I}^{t}\left[F_{+}^a, F_{+}^b, F_{+}^c\right] \right\|_{L_{x,y}^2} + \left\|\mathcal{I}^{t}\left[F_{-}^a, F_{-}^b, F_{-}^c \right] \right\|_{L_{x,y}^2}\\ & \lesssim \min _{\{\alpha, \beta, \gamma\}=\{a,b,c\}} \left\| F^\alpha \right\|_{L_{x,y}^2} \left\|\mathrm{e}^{i t \partial_{x}^2}F_{+}^\beta\right\|_{L_{x,y}^\infty} \left\| \mathrm{e}^{i t \partial_{x}^2} F_{+}^\gamma \right\|_{L_{x,y}^\infty} \\ & + \min _{\{\alpha, \beta, \gamma\}=\{a,b,c\}} \left\| F^\alpha \right\|_{L_{x,y}^2} \left\|\mathrm{e}^{i t \partial_{x}^2}F_{-}^\beta\right\|_{L_{x,y}^\infty} \left\| \mathrm{e}^{i t \partial_{x}^2} F_{-}^\gamma \right\|_{L_{x,y}^\infty}. 
\end{align*}
For $t\geq1$, we have
\begin{equation}
\label{3.425}
\left|\mathrm{e}^{i t \partial_{x}^2} f(x)-c \frac{\mathrm{e}^{-i x^{2} /(4 t)}}{\sqrt{t}} \widehat{f}\left(-\frac{x}{2 t}\right)\right| \lesssim|t|^{-3 / 4}\|x f\|_{L_{x}^{2}}, \quad c \text { is a constant. }
\end{equation}
We refer to [\ref{[7]}, Lemma 7.3] for the proof of (\ref{3.425}). Then 
\begin{equation}
\label{3.426}
\left|\mathrm{e}^{i t \partial_{x}^2} f(x)\right| \lesssim|t|^{-1 / 2} \|\widehat{f}(\xi)\|_{L_\xi^\infty}+|t|^{-3 / 4}\|x f\|_{L_{x}^{2}}.
\end{equation}
Thus
\begin{align*}
\left\|\mathrm{e}^{i t \partial_{x}^2} F_{\pm}\right\|_{L_{x,y}^\infty}\lesssim & \, |t|^{-1 / 2} \|\widehat{F}_{\pm}(\xi, \cdot)\|_{L_\xi^\infty L_{y}^\infty} + |t|^{-3 / 4}\|x F_{\pm}\|_{L_y^\infty L_x^{2}} \\ \lesssim & \, |t|^{-1 / 2} \|\widehat{F}(\xi, y)\|_{L_\xi^\infty\mathcal{G}_y} + |t|^{-3 / 4}\|x F\|_{L_{x}^2 H_y^1}  \\ \lesssim &\,  |t|^{-1 / 2}\|F\|_{Z}+|t|^{-3 / 4}\|F\|_{S}, \end{align*}
where we use $\Pi_{\pm}(\mathcal{G}) \subset \mathcal{G}  \subset L^\infty$. 
Then we have
\begin{equation}
\left\|\mathcal{N}_{0}^{t}\left[F^{a}, F^{b}, F^{c}\right]\right\|_{L_{x, y}^{2}} \lesssim(1+|t|)^{-1} \min _{\{\alpha, \beta, \gamma\}=\{a, b, c\}}\left\|F^{\alpha}\right\|_{L_{x, y}^{2}}\left\|F^{\beta}\right\|_{\widetilde{Z}_{t}}\left\|F^{\gamma}\right\|_{\widetilde{Z}_{t}}.
\end{equation}
By Lemma \ref{Lemma 6.2}, we obtain (\ref{3.38}). \\\\
Then we estimate $\left\|x \mathcal{N}_{0}^{t}\left[F^{a}, F^{b}, F^{c}\right]\right\|_{L_x^2 H_y^2}$. We have
\begin{align*}
\left\|x \mathcal{N}_{0}^{t}\left[F^{a}, F^{b}, F^{c}\right]\right\|_{L_{x}^{2} H_y^2} & \lesssim \left\|(1+|\eta|^2)\int_{\omega\left(\eta, \eta_{1}, \eta_{2}\right)=0} x \mathcal{I}^{t}\left[F_{\eta-\eta_{1}}^a, F_{\eta-\eta_{1}-\eta_{2}}^b, F_{\eta-\eta_{2}}^c\right] d \eta_{1} d \eta_{2}\right\|_{L_{x,\eta}^2}\\ & \lesssim \left\|\mathcal{I}^{t}\left[x F_{+}^{a}, F_{+}^{b}, F_{+}^{c}\right]\right\|_{L_{x}^{2}H_y^2} + \left\|\mathcal{I}^{t}\left[x F_{-}^{a}, F_{-}^{b}, F_{-}^{c}\right]\right\|_{L_{x}^{2} H_y^2}\\ & + \left\|\mathcal{I}^{t}\left[F_{+}^{a}, x F_{+}^{b}, F_{+}^{c}\right]\right\|_{L_{x}^{2}H_y^2} + \left\|\mathcal{I}^{t}\left[ F_{-}^{a}, x F_{-}^{b}, F_{-}^{c}\right]\right\|_{L_{x}^{2} H_y^2}\\ & + \left\|\mathcal{I}^{t}\left[F_{+}^{a}, F_{+}^{b}, x F_{+}^{c}\right]\right\|_{L_{x}^{2}H_y^2} + \left\|\mathcal{I}^{t}\left[ F_{-}^{a}, F_{-}^{b}, x F_{-}^{c}\right]\right\|_{L_{x}^{2} H_y^2}.
\end{align*}
For the first two terms above, we estimate $\left\|\mathcal{I}^{t}\left[x F_{+}^{a}, F_{+}^{b}, F_{+}^{c}\right](\xi)\right\|_{L_{x}^{2}H_y^2}$ for example.  Again by (\ref{3.426}), we have
\begin{align*}
\left\|\mathcal{I}^{t}\left[x F_{+}^{a}, F_{+}^{b}, F_{+}^{c}\right]\right\|_{L_{x}^{2}H_y^2} \lesssim & \left\|\mathcal{I}^{t}\left[ (1-\partial_y^2)x F_{+}^{a}, F_{+}^{b}, F_{+}^{c}\right]\right\|_{L_{x,y}^2} \\ + & \left\|\mathcal{I}^{t}\left[x F_{+}^{a}, (1-\partial_y^2)F_{+}^{b}, F_{+}^{c}\right]\right\|_{L_{x,y}^2} \\ + & \left\|\mathcal{I}^{t}\left[x F_{+}^{a}, F_{+}^{b}, (1-\partial_y^2) F_{+}^{c}\right]\right\|_{L_{x,y}^2}  \\ \lesssim &  \|xF^a\|_{L_{x}^{2} H_y^2} \left\|\mathrm{e}^{i t \partial_{x}^2}F_{+}^b\right\|_{L_{x,y}^\infty}\left\|\mathrm{e}^{i t \partial_{x}^2}F_{+}^c \right\|_{L_{x,y}^\infty} \\ + & \|xF^a\|_{L_{x,y}^2} \left\|(1-\partial_y^2) \mathrm{e}^{i t \partial_{x}^2}F_{+}^b\right\|_{L_{x,y}^\infty}\left\|\mathrm{e}^{i t \partial_{x}^2}F_{+}^c \right\|_{L_{x,y}^\infty} \\ + & \|xF^a\|_{L_{x,y}^2} \left\| \mathrm{e}^{i t \partial_{x}^2}F_{+}^b\right\|_{L_{x,y}^\infty}\left\|(1-\partial_y^2)\mathrm{e}^{i t \partial_{x}^2}F_{+}^c \right\|_{L_{x,y}^\infty} \\ \lesssim & \frac{1}{|t|} \|xF^a\|_{L_{x}^{2} H_y^2} \left\|F^b\right\|_{\widetilde{Z}_{t}}\left\|F^c \right\|_{\widetilde{Z}_{t}} \\ + & \frac{1}{|t|} \|F^a\|_{S'} \left\|F^b\right\|_{S^+}\left\|F^c \right\|_{\widetilde{Z}_{t}} \\ + & \frac{1}{|t|} \|F^a\|_{S'} \left\|F^b\right\|_{\widetilde{Z}_{t}}\left\|F^c \right\|_{S^+} .
\end{align*}
For the term $\left\|\mathcal{I}^{t}\left[x F_{-}^{a}, F_{-}^{b}, F_{-}^{c}\right](\xi)\right\|_{L_{x}^{2} H_{y}^{2}}$, we can use the same method to get the same estimate. 

For the last four terms, we estimate $\left\|\mathcal{I}^{t}\left[F_{+}^{a}, x F_{+}^{b}, F_{+}^{c}\right](\xi)\right\|_{L_{x}^{2} H_{y}^{2}}$ for example. We have
\begin{align*}
\left\|\mathcal{I}^{t}\left[F_{+}^{a}, x F_{+}^{b}, F_{+}^{c}\right](\xi)\right\|_{L_{x}^{2} H_{y}^{2}} \lesssim & \left\|\mathcal{I}^{t}\left[ (1-\partial_y^2) F_{+}^{a}, x F_{+}^{b}, F_{+}^{c}\right](\xi)\right\|_{L_{x,y}^2} \\ + & \left\|\mathcal{I}^{t}\left[ F_{+}^{a}, (1-\partial_y^2)x F_{+}^{b}, F_{+}^{c}\right](\xi)\right\|_{L_{x,y}^2} \\ + & \left\|\mathcal{I}^{t}\left[ F_{+}^{a},x F_{+}^{b}, (1-\partial_y^2) F_{+}^{c}\right](\xi)\right\|_{L_{x,y}^2} \\ \lesssim & \|F^a\|_{L_x^2 H_y^2} \|\mathrm{e}^{i t \partial_{x}^2}(xF_{+}^{b})\|_{L_{x,y}^\infty} \|\mathrm{e}^{i t \partial_{x}^2} F_{+}^c\|_{L_{x,y}^\infty} \\ + &  \|\mathrm{e}^{i t \partial_{x}^2} F_{+}^a\|_{L_{x,y}^\infty} \|xF^{b}\|_{L_{x}^2 H_y^2} \|\mathrm{e}^{i t \partial_{x}^2} F_{+}^c\|_{L_{x,y}^\infty} \\ + & \|\mathrm{e}^{i t \partial_{x}^2} F_{+}^a\|_{L_{x,y}^\infty} \|xF^{b}\|_{L_y^\infty L_{x}^2 } \|(1-\partial_y^2)\mathrm{e}^{i t \partial_{x}^2} F_{+}^c\|_{L_y^2 L_{x}^\infty} \\ \lesssim & \frac{1}{|t|} \|F^a\|_{S'}  \|F^b\|_{S^+} \|F^c\|_{\widetilde{Z}_{t}} \\ + & \frac{1}{|t|} \|F^a\|_{\widetilde{Z}_{t}} \|F^b\|_{S}\|F^c\|_{S}.
\end{align*}
 Other three terms can be estimated in the same way and we get the same estimate as the estimate of $\left\|\mathcal{I}^{t}\left[F_{+}^{a}, x F_{+}^{b}, F_{+}^{c}\right](\xi)\right\|_{L_{x}^{2} H_{y}^{2}}$. Thus we get (\ref{3.385}), and we observe that $a,b,c$ can be replaced by each other so that the inequality still holds.\\\\
To prove (\ref{3.41}) and (\ref{3.42}), we decompose the functions as follows
\begin{align*}
F=F_{c}+F_{e}, \text { with } F_{c} \text { compactly supported as } F_{c}=\varphi\left(\frac{x}{t^{\frac{1}{4}}}\right) F.
\end{align*}
We start by estimating the $Z$ norm and the $\mathcal{Y}$ norm of 
\begin{align*}
\mathcal{N}_{0}^{t}[F, G, H]-\mathcal{N}_{0}^{t}\left[F_{c}, G_{c}, H_{c}\right] \quad \text { and } \quad \mathcal{R}[F, G, H]-\mathcal{R}\left[F_{c}, G_{c}, H_{c}\right].
\end{align*}
In this case, we only have to estimate $\mathcal{N}_{0}^{t}\left[F_{e}, G, H\right]$ and $\frac{1}{t} \mathcal{R}\left[F_{e}, G, H\right]$. According to (\ref{2.16}), (\ref{2.1601}), (\ref{4.701}) and (\ref{4.702}), for the norm $Z$ and the norm $\mathcal{Y}$, we have
\begin{equation}
\begin{aligned}
\left\|\mathcal{N}_{0}^{t}\left[F_{e}, G, H\right]\right\|_{Z}+\frac{1}{t}\left\|\mathcal{R}\left[F_{e}, G, H\right]\right\|_{Z} & \lesssim \left\|\mathcal{N}^{t}\left[F_{(e,+)}, G_{+}, H_{+}\right]\right\|_{Z} + \left\|\mathcal{N}^{t}\left[F_{(e,-)}, G_{-}, H_{-}\right]\right\|_{Z} \\ & + \frac{1}{t}\left\|\mathcal{R}\left[F_{e}, G, H\right]\right\|_{Z}
\\ & \lesssim(1+|t|)^{-1}\left\|F_{e}\right\|_{L_x^2 H_{y}^2}^{\frac{1}{4}}\|F\|_S^{\frac{3}{4}}\|G\|_{S}\|H\|_{S} \\ & \lesssim(1+|t|)^{-\frac{17}{16}}\|F\|_{S}\|G\|_{S}\|H\|_{S}, \end{aligned}
\end{equation}
\begin{equation}
\begin{aligned}\left\|\mathcal{N}_{0}^{t}\left[F_{e}, G, H\right]\right\|_{\mathcal{Y}}+\frac{1}{t}\left\|\mathcal{R}\left[F_{e}, G, H\right]\right\|_{\mathcal{Y}} & \lesssim(1+|t|)^{-1}\left\|F_{e}\right\|_{S}\|G\|_{S}\|H\|_{S} \\ & \lesssim(1+|t|)^{-\frac{5}{4}}\|F\|_{S^{+}}\|G\|_{S^{+}}\|H\|_{S^{+}}. \end{aligned}
\end{equation}
Then we need to prove the inequalities below to complete our proof of this lemma,
\begin{equation}
\label{3.52}
\left\|\mathcal{N}_{0}^{t}\left[F_{c}, G_{c}, H_{c}\right]-\frac{\pi}{t} \mathcal{R}\left[F_{c}, G_{c}, H_{c}\right]\right\|_{Z} \lesssim(1+|t|)^{-1-23 \delta}\|F\|_{S}\|G\|_{S}\|H\|_{S},
\end{equation}
\begin{equation}
\label{3.53}
\left\|\mathcal{N}_{0}^{t}\left[F_{c}, G_{c}, H_{c}\right]-\frac{\pi}{t} \mathcal{R}\left[F_{c}, G_{c}, H_{c}\right]\right\|_{\mathcal{Y}} \lesssim(1+|t|)^{-1-23 \delta}\|F\|_{S^{+}}\|G\|_{S^{+}}\|H\|_{S^{+}}.
\end{equation}
With the assumption $F=F_{c}, G=G_{c}, H=H_{c}$, we have
\begin{equation}
\begin{aligned}& \mathcal{F} \left(\mathcal{N}_{0}^{t}[F, G, H]-\frac{\pi}{t} \mathcal{R}[F, G, H]\right)(\xi, \eta) \\ = & \int_{\omega(\eta,\eta_1,\eta_2) = 0} \bigg(\int_{\mathbb{R}^{2}} e^{i t 2 \mu \kappa} \widehat{F}_{\eta-\eta_1}(\xi-\mu) \widehat{G}_{\eta - \eta_1-\eta_2}(\xi-\mu-\kappa) \widehat{H}_{\eta_2}(\xi-\kappa) d \kappa d \mu \\- & \frac{\pi}{t} \widehat{F}_{\eta-\eta_1}(\xi) \widehat{G}_{\eta_2-\eta_1}(\xi) \widehat{H}_{\eta_2}(\xi)\bigg)d\eta_1 d\eta_2. \end{aligned}
\end{equation}
By rewriting the integration part, we have
\begin{align*}
&\int_{\mathbb{R}^{2}} \mathrm{e}^{i t 2 \mu \kappa} \widehat{F}_{\eta-\eta_1}(\xi-\mu) \overline{\widehat{G}}_{\eta_2-\eta_1}(\xi-\mu-\kappa) \widehat{H}_{\eta_2}(\xi-\kappa) d \kappa d \mu \\ = &\frac{1}{8\pi^3} \int_{\mathbb{R}^{3}} F_{\eta-\eta_1}\left(x_{1}\right) \overline{G}_{\eta_2-\eta_1}\left(x_{2}\right) H_{\eta_2}\left(x_{3}\right) \int_{\mathbb{R}^{2}} \mathrm{e}^{i t 2 \mu \kappa} \mathrm{e}^{-i x_{1}(\xi-\mu)+i x_{2}(\xi-\mu-\kappa)-i x_{3}(\xi-\kappa)} d \kappa d \mu d x_{1} d x_{2} d x_{3} \\ = & \frac{1}{16\pi^3 t} \int_{\mathbb{R}^{3}} F_{\eta-\eta_1}\left(x_{1}\right) \overline{G}_{\eta_2-\eta_1}\left(x_{2}\right) H_{\eta_2}\left(x_{3}\right) \mathrm{e}^{-i \xi\left(x_{1}-x_{2}+x_{3}\right)} \mathrm{e}^{-i \frac{x_{1}-x_{2}}{\sqrt{2 t}} \frac{x_{3}-x_{2}}{\sqrt{2 t}}} \\ & \times\left\{\int_{\mathbb{R}^{2}} \mathrm{e}^{i\left[\mu+\frac{x_{3}-x_{2}}{\sqrt{2 t}}\right]\left[\kappa+\frac{x_{1}-x_{2}}{\sqrt{2 t}}\right]} d \mu d \kappa\right\} d x_{1} d x_{2} d x_{3} \\ = & \frac{1}{8\pi^2t}\int_{\mathbb{R}^{3}} F_{\eta-\eta_1}\left(x_{1}\right) \overline{G}_{\eta_2-\eta_1}\left(x_{2}\right) H_{\eta_2}\left(x_{3}\right) \mathrm{e}^{-i \xi\left(x_{1}-x_{2}+x_{3}\right)} \mathrm{e}^{-i \frac{x_{1}-x_{2}}{\sqrt{2 t}} \frac{x_{3}-x_{2}}{\sqrt{2 t}}} d x_{1} d x_{2} d x_{3},
\end{align*}
then 
\begin{align*} &\left|\frac{1}{8\pi^2t}\int_{\mathbb{R}^{2}} \mathrm{e}^{i t 2 \mu \kappa} \widehat{F}_{\eta-\eta_1}(\xi-\mu) \overline{\widehat{G}}_{\eta_2-\eta_1}(\xi-\mu-\kappa) \widehat{H}_{\eta_2}(\xi-\kappa) d \kappa d \mu-\frac{\pi}{t} \widehat{F}_{\eta-\eta_1}(\xi) \widehat{G}_{\eta_2-\eta_1}(\xi) \widehat{H}_{\eta_2}(\xi)\right| \\ & =\frac{1}{8\pi^2|t|}\left|\int_{\mathbb{R}^{3}} F_{\eta-\eta_1}\left(x_{1}\right) \overline{G}_{\eta_2-\eta_1}\left(x_{2}\right) H_{\eta_2-\eta_1}\left(x_{3}\right) \mathrm{e}^{-i \xi\left(x_{1}-x_{2}+x_{3}\right)}\left(\mathrm{e}^{-i \frac{x_{1}-x_{2}}{\sqrt{2 t}} \frac{x_{3}-x_{2}}{\sqrt{2 t}}}-1\right) d x_{1} d x_{2} d x_{3}\right|\\ & \lesssim |t|^{-1} \left\|F_{\eta-\eta_1}\right\|_{L_{x}^{1}} \left\|G_{\eta_2-\eta_1}\right\|_{L_{x}^{1}}\left\|H_{\eta_2}\right\|_{L_{x}^{1}} \left\|\varphi\left(\frac{x_1}{t^{\frac{1}{4}}}\right) \varphi\left(\frac{x_2}{t^{\frac{1}{4}}}\right) \varphi\left(\frac{x_3}{t^{\frac{1}{4}}}\right)\left(\mathrm{e}^{-i \frac{x_{1}-x_{2}}{\sqrt{2 t}} \frac{x_{3}-x_{2}}{\sqrt{2 t}}}-1\right)\right\|_{L_{x_1,x_2,x_3}^\infty} \\ & \lesssim |t|^{-\frac{5}{8}} \left\|F_{\eta-\eta_1}\right\|_{L_{x}^{2}}\left\|G_{\eta_2-\eta_1}\right\|_{L_{x}^{2}}\left\|H_{\eta_2}\right\|_{L_{x}^{2}} \left\|\varphi\left(\frac{x_1}{t^{\frac{1}{4}}}\right) \varphi\left(\frac{x_2}{t^{\frac{1}{4}}}\right) \varphi\left(\frac{x_3}{t^{\frac{1}{4}}}\right)\left(\mathrm{e}^{-i \frac{x_{1}-x_{2}}{\sqrt{2 t}} \frac{x_{3}-x_{2}}{\sqrt{2 t}}}-1\right)\right\|_{L_{x_1,x_2,x_3}^\infty}
\\ & \lesssim|t|^{-\frac{9}{8}}\left\|F_{\eta-\eta_1}\right\|_{L_{x}^{2}}\left\|G_{\eta_2-\eta_1}\right\|_{L_{x}^{2}}\left\|H_{\eta_2}\right\|_{L_{x}^{2}}. \end{align*}
In fact, by using the previous method, we can obtain for any integer $m$,
\begin{equation}
\label{3.55}
\begin{aligned} & |\xi|^m\left|\frac{1}{8\pi^2 t}\int_{\mathbb{R}^{2}} \mathrm{e}^{i t 2 \mu \kappa} \widehat{F}_{\eta-\eta_1}(\xi-\mu) \overline{\widehat{G}}_{\eta_2-\eta_1}(\xi-\mu-\kappa) \widehat{H}_{\eta_2}(\xi-\kappa) d \kappa d \mu-\frac{\pi}{t} \widehat{F}_{\eta-\eta_1}(\xi) \widehat{G}_{\eta_2-\eta_1}(\xi) \widehat{H}_{\eta_2}(\xi)\right|\\ & = \frac{1}{8\pi^2|t|}\left|\int_{\mathbb{R}^{3}} F_{\eta-\eta_1}\left(x_{1}\right) \overline{G}_{\eta_2-\eta_1}\left(x_{2}\right) H_{\eta_2-\eta_1}\left(x_{3}\right) \xi^m \mathrm{e}^{-i \xi\left(x_{1}-x_{2}+x_{3}\right)}\left(\mathrm{e}^{-i \frac{x_{1}-x_{2}}{\sqrt{2 t}} \frac{x_{3}-x_{2}}{\sqrt{2 t}}}-1\right) d x_{1} d x_{2} d x_{3}\right| \\ & = \frac{1}{8\pi^2|t|}\left|\int_{\mathbb{R}^{3}} F_{\eta-\eta_1}\left(x_{1}\right) \overline{G}_{\eta_2-\eta_1}\left(x_{2}\right) H_{\eta_2-\eta_1}\left(x_{3}\right) \frac{d^m( \mathrm{e}^{-i \xi\left(x_{1}-x_{2}+x_{3}\right)})}{dx_1^m}\left(\mathrm{e}^{-i \frac{x_{1}-x_{2}}{\sqrt{2 t}} \frac{x_{3}-x_{2}}{\sqrt{2 t}}}-1\right) d x_{1} d x_{2} d x_{3}\right| \\ & \lesssim |t|^{-1} \sum_{k=0}^m \left|\int_{\mathbb{R}^{3}} \frac{d^{k}F_{\eta-\eta_1}\left(x_{1}\right)}{dx_1^k} \overline{G}_{\eta_2-\eta_1}\left(x_{2}\right) H_{\eta_2-\eta_1}\left(x_{3}\right)\mathrm{e}^{-i \xi\left(x_{1}-x_{2}+x_{3}\right)}\frac{d^{m-k}(\mathrm{e}^{-i \frac{x_{1}-x_{2}}{\sqrt{2 t}} \frac{x_{3}-x_{2}}{\sqrt{2 t}}})}{dx_1^{m-k}} d x_{1} d x_{2} d x_{3}\right|   \\ & \lesssim|t|^{-\frac{9}{8}}\left\|F_{\eta-\eta_1}\right\|_{H_{x}^{m}}\left\|G_{\eta_2-\eta_1}\right\|_{L_{x}^{2}}\left\|H_{\eta_2}\right\|_{L_{x}^{2}}. 
\end{aligned}
\end{equation}
Here $F$, $G$ and $H$ can be replaced by each other. By the definition of $Z$ norm and $S$ norm, for (\ref{3.52}) and (\ref{3.53}), the terms 
\begin{align*}
\left\|\mathcal{N}_{0}^{t}\left[F_{c}, G_{c}, H_{c}\right]-\frac{\pi}{t} \mathcal{R}\left[F_{c}, G_{c}, H_{c}\right]\right\|_{S} \quad \text { and } \quad\left\|\mathcal{N}_{0}^{t}\left[F_{c}, G_{c}, H_{c}\right]-\frac{\pi}{t} \mathcal{R}\left[F_{c}, G_{c}, H_{c}\right]\right\|_{Z}
\end{align*}
are easy to estimate by using (\ref{3.55}). In fact, when $x$ derivative falls on $\varphi\left(\frac{x}{t^{\frac{1}{4}}}\right)$, since $\varphi'$ holds the similar properties as $\varphi$, (\ref{3.55}) still works, and we get the estimate (\ref{3.52}) and the estimate (\ref{3.53}). The proof is complete.
\end{proof}
\subsection{Proof of Proposition \ref{Proposition 3.1}}
Here we give the proof of Proposition \ref{Proposition 3.1}.
\begin{proof}
We have
\begin{align*} \mathcal{N}^{t}[F, G, H]=& \sum_{ \max (A, B, C) \geq T^{\frac{1}{6}} } \mathcal{N}^{t}\left[Q_{A} F(t), Q_{B} G(t), Q_{C} H(t)\right] \\ &+\widetilde{\mathcal{N}}^{t}\left[Q_{\leq T^{\frac{1}{6}}} F(t), Q_{\leq T^{\frac{1}{6}}} G(t), Q_{\leq T^{\frac{1}{6}}} H(t)\right] \\ &+\mathcal{N}_{0}^{t}\left[Q_{\leq T^{\frac{1}{6}}} F(t), Q_{\leq T^{\frac{1}{6}}} G(t), Q_{\leq T^{\frac{1}{6}}} H(t)\right]. \end{align*}
We rewrite the last term as
\begin{align*} \mathcal{N}_{0}^{t} & {\left[Q_{\leq T^{\frac{1}{6}}} F(t), Q_{\leq T^{\frac{1}{6}}} G(t), Q_{\leq T^{\frac{1}{6}}} H(t)\right]=\frac{\pi}{t} \mathcal{R}[F(t), G(t), H(t)] } \\ &+\left(\mathcal{N}_{0}^{t}\left[ F(t),  G(t), H(t)\right] -\frac{\pi}{t} \mathcal{R}\left[ F(t), G(t), H(t)\right]\right)\\ & -\sum_{\max (A, B, C) \geq T^{\frac{1}{6}}} \mathcal{N}_0^{t}\left[Q_{A} F(t), Q_{B} G(t), Q_{C} H(t)\right]. \end{align*}
So we have the formula for the remainder 
\begin{align*} \mathcal{E}^{t}[F, G, H]=& \sum_{\max (A, B, C) \geq T^{\frac{1}{6}}} \mathcal{N}^{t}\left[Q_{A} F(t), Q_{B} G(t), Q_{C} H(t)\right] \\ &+\widetilde{\mathcal{N}}^{t}\left[Q_{\leq T^{\frac{1}{6}}} F(t), Q_{\leq T^{\frac{1}{6}}} G(t), Q_{\leq T^{\frac{1}{6}}} H(t)\right] \\ &+\left(\mathcal{N}_{0}^{t}\left[ F(t),  G(t), H(t)\right] -\frac{\pi}{t} \mathcal{R}\left[ F(t), G(t), H(t)\right]\right)\\ & -\sum_{\max (A, B, C) \geq T^{\frac{1}{6}}} \mathcal{N}_0^{t}\left[Q_{A} F(t), Q_{B} G(t), Q_{C} H(t)\right]. \end{align*}
We observe that the first term contributes to $\mathcal{E}_{1}$ by Lemma \ref{Lemma 3.3}, and the second term which contains $\mathcal{E}_{2}$ can be weitten by Lemma \ref{Lemma 3.5} as $\widetilde{\mathcal{E}}_{1}+\mathcal{E}_{2}$ with $\widetilde{\mathcal{E}}_{1}$ contributing to $\mathcal{E}_{1}$. The third term contributes to $\mathcal{E}_{1}$ by Lemma \ref{Lemma 3.7}. For the last term, we observe that
\begin{align*}
\mathcal{N}_0^{t}\left[ F(t), G(t), H(t)\right] = \Pi_{+}\left(\mathcal{N}^{t}\left[ F_{+}(t), G_{+}(t), H_{+}(t)\right]\right) + \Pi_{-}\left(\mathcal{N}^{t}\left[ F_{-}(t), G_{-}(t), H_{-}(t)\right]\right),
\end{align*}
so we can easily deduce that it enjoys the same estimate as in Lemma {\ref{Lemma 3.3}}, which contributes to $\mathcal{E}_{1}$. The proof is complete.
\end{proof}
\section{The resonant system}
In this section, we study the following equation, which contains the resonant part of the nonlinearity,
\begin{equation}
\label{4.1}
i \partial_{t} G=\mathcal{R}[G, G, G],
\end{equation}
where 
\begin{align*}
\mathcal{F} \mathcal{R}[F, G, H](\xi, \eta)=\int_{\omega(\eta, \eta_1, \eta_2) = 0 }  \widehat{{F}}_{\eta-\eta_1}(\xi)\overline{\widehat{G}}_{\eta_2-\eta_1}(\xi)\widehat{{H}}_{\eta_2}(\xi)d\eta_1 d\eta_2.
\end{align*}
Firstly, we recall a useful result on the structure of the resonances. 
\begin{lemma}
[\ref{[6]}, Lemma 2.1]
\label{Lemma 4.1}
The set of $\left(\eta_{1}, \eta_{2}, \eta_{3}, \eta_{4}\right) \in \mathbb{R}^{4}$ such that $\eta_{1}-\eta_{2}+\eta_{3}-\eta_{4}=0$ and $|\eta_{1}|-|\eta_{2}|+|\eta_{3}|-|\eta_{4}|=0$, is 

\leftline{$\begin{aligned} &\text { (1) } \forall j, \eta_{j} \geq 0, \\ & \text { (2) } \forall j, \eta_{j} \leq 0, \\ & \text { (3) } \eta_{1}=\eta_{2}, \eta_{3}=\eta_{4}, \\ & \text { (4) } \eta_{1}=\eta_{4}, \eta_{3}=\eta_{2}. \end{aligned}$}

\end{lemma}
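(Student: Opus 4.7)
The plan is to carry out a direct case analysis on signs after decomposing each $\eta_j$ into its positive and negative parts. The implication ``the listed four cases satisfy both equations'' is immediate: in cases (1) and (2) the two equations coincide, while in cases (3) and (4) the terms cancel pairwise. So the content is the converse.

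For the converse, set $a_j := \max(\eta_j,0)$ and $b_j := \max(-\eta_j,0)$, so that $\eta_j = a_j - b_j$, $|\eta_j| = a_j + b_j$, and $a_j b_j = 0$. Taking the sum and difference of the two constraints gives the decoupled pair
\begin{equation*}
a_1 + a_3 = a_2 + a_4, \qquad b_1 + b_3 = b_2 + b_4,
\end{equation*}
with all $a_j, b_j \geq 0$ and $a_j b_j = 0$. If all $\eta_j \geq 0$ or all $\eta_j \leq 0$ we are in case (1) or (2), so assume some $\eta_j$ is strictly negative; by relabelling via the symmetries $(\eta_1 \leftrightarrow \eta_3)$ and $(\eta_2 \leftrightarrow \eta_4)$ that preserve both constraints, assume $b_1 > 0$, hence $a_1 = 0$ and $a_3 = a_2 + a_4$.

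I would then split according to the sign pattern of $a_3$. If $a_3 = 0$, then $a_2 = a_4 = 0$, so all $\eta_j \leq 0$, which is case (2). If $a_3 > 0$, then $b_3 = 0$ and $b_1 = b_2 + b_4$. Of $a_2, a_4$ at least one is nonzero; if $a_2 > 0$ then $b_2 = 0$, forcing $b_4 = b_1 > 0$, hence $a_4 = 0$ and $a_3 = a_2$; this yields $\eta_2 = \eta_3$ and $\eta_1 = \eta_4$, namely case (4). Symmetrically, if $a_4 > 0$ one obtains $\eta_1 = \eta_2$ and $\eta_3 = \eta_4$, namely case (3).

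The argument is elementary and I do not anticipate any real obstacle; the only care required is to verify that the relabellings used to reduce to the case $b_1 > 0$ genuinely preserve the structure of both equations (they do, since swapping $\eta_1$ with $\eta_3$ leaves $\eta_1 - \eta_2 + \eta_3 - \eta_4$ and $|\eta_1| - |\eta_2| + |\eta_3| - |\eta_4|$ invariant, and similarly for $\eta_2 \leftrightarrow \eta_4$), and to handle the boundary cases $\eta_j = 0$ consistently, which is automatic since we admit equality in the sign cases.
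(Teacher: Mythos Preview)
The paper does not prove this lemma; it is quoted verbatim from Pocovnicu's paper (reference~[\ref{[6]}], Lemma~2.1) and no argument is supplied here. So there is nothing to compare against on the paper's side.

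Your proof is correct and elementary. The one place that is slightly underargued is the reduction to $b_1>0$: the symmetries $(\eta_1\leftrightarrow\eta_3)$ and $(\eta_2\leftrightarrow\eta_4)$ cannot move a strictly negative entry from position $2$ or $4$ into position $1$, so ``some $\eta_j<0$'' does not by itself justify $b_1>0$. The missing observation is that if $\eta_1,\eta_3\ge 0$ then $b_1=b_3=0$, whence $b_2+b_4=0$ and all $\eta_j\ge 0$, which is case~(1); so excluding case~(1) already forces one of $\eta_1,\eta_3$ to be strictly negative, and then a single swap gives $b_1>0$. With this one-line addition your case analysis is complete, including the boundary situations and the implicit exclusion of $a_2>0,\,a_4>0$ (which would force $b_1=0$).
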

Then we introduce the following proposition, which allows us to transform (\ref{4.1}) to the non-chiral Szeg\H o equation.
\begin{proposition}
\label{Proposition 4.2}
Given $G_{0} \in L_{x}^{2} H_{y}^{s}, s>1,\left\|G_{0}\right\|_{L_{x}^{2} H_{y}^{s}}=\varepsilon, \varepsilon>0$. Denote the corresponding solution to the resonant system (\ref{4.1}) by $G(t)$. Then $G_\pm(t)$ saitisfy the following cubic Szeg\H o equation
\begin{equation}
\label{4.2}
i \partial_{t} G_{\pm}=\mathcal{R}_{\pm}\left[G_{\pm}, G_{\pm}, G_{\pm}\right],
\end{equation}
where
\begin{equation}
\mathcal{F}_{x} \mathcal{R}_{\pm}\left[G_{\pm}, G_{\pm}, G_{\pm}\right](\xi, y)=\Pi_{\pm}\left(|\widehat{G}_\pm|^{2} \widehat{G}_\pm \right)(\xi, y).
\end{equation}
with 
\begin{align*}
\mathcal{F}_{y}(G_{+})(\eta) = \mathcal{F}_{y}(\Pi_{+} G)(\eta)=\left\{\begin{array}{l}G_{\eta}, \text { if } \eta \geq 0, \\ 0, \text { if } \eta<0.\end{array}\right.\text{ and    } \quad \mathcal{F}_{y}(G_{-})(\eta) = \mathcal{F}_{y} (\Pi_{-} G)(\eta)=\left\{\begin{array}{l}G_{\eta}, \text { if } \eta \leq 0, \\ 0, \text { if } \eta > 0. \end{array}\right.
\end{align*}
\end{proposition}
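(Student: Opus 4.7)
The plan is to apply the Szegő projectors $\Pi_\pm$ (acting on $y$) directly to the resonant equation (\ref{4.1}) and exploit the resonance relation from Lemma \ref{Lemma 4.1} to reduce $\Pi_\pm\mathcal{R}[G,G,G]$ to a Szegő-type nonlinearity in $\widehat{G}_\pm$. Since $\Pi_\pm$ commutes with $\partial_t$ (and with $\mathcal{F}_x$), applying $\Pi_\pm$ yields
\[
i\partial_t G_\pm = \Pi_\pm \mathcal{R}[G,G,G],
\]
and the proposition reduces to proving the identity $\Pi_\pm\mathcal{R}[G,G,G]=\mathcal{R}_\pm[G_\pm,G_\pm,G_\pm]$.

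To do this, I will write $G=G_++G_-$ and use trilinearity to split $\mathcal{R}[G,G,G]$ into eight pieces of the form $\mathcal{R}[G_{\epsilon_1},G_{\epsilon_2},G_{\epsilon_3}]$ with $\epsilon_i\in\{+,-\}$. On the Fourier side the integrand of $\mathcal{F}\mathcal{R}[G_{\epsilon_1},G_{\epsilon_2},G_{\epsilon_3}](\xi,\eta)$ vanishes unless $\mathrm{sign}(\eta-\eta_1)=\epsilon_1$, $\mathrm{sign}(\eta_2-\eta_1)=\epsilon_2$, $\mathrm{sign}(\eta_2)=\epsilon_3$. Combining this with Lemma \ref{Lemma 4.1}, which asserts that on $\{\omega(\eta,\eta_1,\eta_2)=0\}$, modulo the measure-zero sets $\{\eta_1=0\}$ and $\{\eta_2=\eta\}$, the four quantities $\eta,\eta-\eta_1,\eta_2-\eta_1,\eta_2$ have a common sign, every mixed-sign piece $\mathcal{R}[G_{\epsilon_1},G_{\epsilon_2},G_{\epsilon_3}]$ with not-all-equal $\epsilon_i$ is supported on a measure-zero subset of $\mathbb{R}^2_{\eta_1,\eta_2}$ (as already noted in Remark \ref{remark 3.1}) and hence vanishes. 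This leaves
\[
\mathcal{R}[G,G,G]=\mathcal{R}[G_+,G_+,G_+]+\mathcal{R}[G_-,G_-,G_-],
\]
and the same sign analysis shows that $\mathcal{R}[G_+,G_+,G_+]$ is supported in $\eta\ge 0$ while $\mathcal{R}[G_-,G_-,G_-]$ is supported in $\eta\le 0$, so $\Pi_+\mathcal{R}[G,G,G]=\mathcal{R}[G_+,G_+,G_+]$ and similarly for $\Pi_-$.

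Finally I will identify $\mathcal{R}[G_+,G_+,G_+]$ with $\mathcal{R}_+[G_+,G_+,G_+]$. For $\eta\ge 0$ the defining integral becomes
\[
\mathcal{F}\mathcal{R}[G_+,G_+,G_+](\xi,\eta)=\int_{\substack{\eta-\eta_1\ge 0\\\eta_2-\eta_1\ge 0\\ \eta_2\ge 0}} \widehat{G}_{\eta-\eta_1}(\xi)\,\overline{\widehat{G}_{\eta_2-\eta_1}(\xi)}\,\widehat{G}_{\eta_2}(\xi)\,d\eta_1\,d\eta_2,
\]
and after the change of variables $(\alpha,\beta,\gamma)=(\eta-\eta_1,\eta_2-\eta_1,\eta_2)$ (which parametrises the affine plane $\alpha-\beta+\gamma=\eta$ in $\mathbb{R}^3_{\ge 0}$) this is, up to the standard Fourier-convention constant, exactly $\mathcal{F}_y\bigl[|\widehat{G_+}(\xi,\cdot)|^2\widehat{G_+}(\xi,\cdot)\bigr](\eta)\,\mathbbm{1}_{\eta\ge 0}$, i.e.\ $\mathcal{F}\mathcal{R}_+[G_+,G_+,G_+](\xi,\eta)$. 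Combined with the analogous computation for $\Pi_-$, this gives (\ref{4.2}).

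The main (and essentially only) obstacle is the bookkeeping of the resonance set: carefully checking, case by case from Lemma \ref{Lemma 4.1}, that each of the six mixed pieces reduces to a lower-dimensional stratum in $\mathbb{R}^2_{\eta_1,\eta_2}$; everything else is a direct change of variables. Existence and regularity of $G$ itself can be obtained as a by-product of this identification, since the two chiral problems (\ref{4.2}) are uncoupled standard cubic Szegő equations on $\mathbb{R}_y$ parametrised by $\xi$, so Pocovnicu's global well-posedness in $H^s_y$ for $s\ge\frac12$ (applied fibrewise and integrated in $\xi$) constructs $G_\pm$, and then $G:=G_++G_-$ solves (\ref{4.1}) by the decomposition above.
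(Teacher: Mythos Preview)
Your proposal is correct and follows essentially the same approach as the paper: both use Lemma \ref{Lemma 4.1} to show that on the resonance set $\{\omega=0\}$, modulo the measure-zero strata $\{\eta_1=0\}\cup\{\eta=\eta_2\}$, the four frequencies $\eta,\eta-\eta_1,\eta_2-\eta_1,\eta_2$ share a common sign, which immediately gives the decoupling and the identification with the Szeg\H o nonlinearity. The only cosmetic difference is that you decompose the inputs as $G=G_++G_-$ and kill the six mixed trilinear pieces, whereas the paper fixes the sign of the output frequency $\eta$ and reads off the constraint on the integration domain directly; your added remark on well-posedness via Pocovnicu's result is a reasonable complement that the paper leaves implicit.
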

\begin{proof}
By Lemma \ref{Lemma 4.1}, we have that $\omega(\eta,\eta_1,\eta_2) = 0$ in the following cases:
\begin{align*}
\begin{array}{l}\text { If } \eta>0 \text { and }(\eta_1,\eta_2) \in\left\{ \eta \geq \eta_1, \eta \geq \eta_1+\eta_2, \eta \geq \eta_1 \right\} \cup\{\eta=\eta-\eta_1\} \cup\{\eta=\eta_2\}, \\ \text { If } \eta<0 \text { and }(\eta_1, \eta_2) \in\left\{ \eta \leq \eta_1, \eta \leq \eta_1+\eta_2, \eta \leq \eta_2 \right\} \cup\{\eta=\eta-\eta_1\} \cup\{\eta=\eta_2\}. \end{array}
\end{align*}
Since, for $\eta \in \mathbb{R}$, the sets $\{(\eta_1,\eta_2) \in \mathbb{R}^2 | \eta=\eta-\eta_1\} $ and $\{(\eta_1,\eta_2) \in \mathbb{R}^2 | \eta=\eta_2\}$ are of measure zero in $\mathbb{R}^2$, they do not interfere in the integration in equation (\ref{4.1}), and so we can neglect them. We are therefore left with the following two terms:\\
1. The case $\eta > 0, \eta-\eta_1\geq0, \eta_2-\eta_1\geq0, \eta_2\geq0$:
\begin{align*}
\mathcal{F}_{x,y} \mathcal{R}[G, G, G](\xi, \eta)=  \mathcal{F}_{y}\left( \Pi_{+}\left(|\widehat{G}_{+}|^{2} \widehat{G}_{+}\right)\right)(\eta).
\end{align*}
2.The case $\eta < 0, \eta-\eta_1\leq0, \eta_2-\eta_1\leq0, \eta_2\leq0$:
\begin{align*}
\mathcal{F}_{x,y} \mathcal{R}[G, G, G](\xi, \eta)= \mathcal{F}_{y}\left( \Pi_{-}\left(|\widehat{G}_{-}|^{2} \widehat{G}_{-}\right)\right)(\eta).
\end{align*}
Thus we have the decoupling.
\end{proof}
\subsection{Lax pairs for the cubic Szeg\H o equation}
We introduce the following cubic Szeg\H o equation on the line
\begin{equation}
\label{4.4}
i \partial_{t} v(t,x)=\Pi_{+}\left(|v(t,x)|^{2} v(t,x)\right), \qquad t,x \in \mathbb{R}.
\end{equation}
We recall the Lax pair structure and its conserved quantities for the cubic Szeg\H o equation (\ref{4.4}). To define the Lax pairs, we introduce the Hankel $H_v$ operator and the Toeplitz operator $T_b$ with $v \in H_{+}^{\frac{1}{2}}(\mathbb{R}), b \in L^{\infty}(\mathbb{R})$,
\begin{equation}
\label{4.401}
H_{v} h:=\Pi_{+}(v \overline{h}), T_{b} h:=\Pi_{+}(b h), h \in L^{\infty}.
\end{equation}
We observe that $H_v$ is $\mathbb{C}$-antilinear, and is a Hilbert-Schmidt operator. Now we are able to introduce the Lax pair structure of the cubic Szeg\H o equation (\ref{4.4}).
\begin{theorem}
$\text { Let } v \in C\left(\mathbb{R}, H_{+}^{s}(\mathbb{R})\right) \text { for some } s>\frac{1}{2} . \text { The cubic Szeg\H o }$equation (\ref{4.4}) has a Lax pair $(H_v, B_v)$, if $v$ solves (\ref{4.4}), then
\begin{equation}
\frac{d H_{v}}{d t}=\left[B_{v}, H_{v}\right],
\end{equation}
where $B_{v}=\frac{i}{2} H_{v}^{2}-i T_{|v|^{2}}$.
\end{theorem}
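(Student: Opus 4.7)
The plan is to verify $\tfrac{d}{dt}H_v = [B_v, H_v]$ pointwise on $L^2_+(\mathbb{R})$. A first observation is that $v \mapsto H_v$ is $\mathbb{C}$-linear, since $H_v h = \Pi_+(v\bar h)$ depends linearly on $v$ and antilinearly on $h$; hence $\tfrac{d}{dt}H_v = H_{\partial_t v}$, and substituting the Szegő equation $\partial_t v = -i\Pi_+(|v|^2 v)$ reduces the theorem to the operator identity
\begin{equation*}
-i\,\Pi_+\!\bigl(\Pi_+(|v|^2 v)\,\bar h\bigr) = [B_v, H_v]\,h, \qquad h \in L^2_+.
\end{equation*}

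Next I would expand the commutator, keeping careful track of the different $\mathbb{C}$-linearity types: $B_v = \tfrac{i}{2}H_v^2 - iT_{|v|^2}$ is $\mathbb{C}$-linear (both $H_v^2$ and $T_{|v|^2}$ are), while $H_v$ itself is $\mathbb{C}$-antilinear, so pushing $H_v$ across a factor of $i$ introduces a sign, $H_v(iX) = -iH_v X$. A short computation then gives $[B_v, H_v]h = i\bigl(H_v^3 h - T_{|v|^2}H_v h - H_v T_{|v|^2} h\bigr)$, and the theorem reduces to the algebraic identity
\begin{equation*}
\Pi_+\!\bigl(\Pi_+(|v|^2 v)\,\bar h\bigr) = -H_v^3 h + T_{|v|^2} H_v h + H_v T_{|v|^2} h.
\end{equation*}

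To establish this identity I would repeatedly substitute $\Pi_+ = \mathrm{Id} - \Pi_-$ and exploit the Hardy-space fact that the product of two $L^2_+$ functions lies in $L^2_+$ (the Fourier transform is a convolution of functions supported on $[0,\infty)$). A direct expansion yields $T_{|v|^2}H_v h = \Pi_+(|v|^2 v\bar h) - \Pi_+(|v|^2 \Pi_-(v\bar h))$ and $H_v T_{|v|^2} h = \Pi_+(|v|^2 v\bar h) - \Pi_+(v\,\overline{\Pi_-(|v|^2 h)})$. A two-step unpacking of $H_v^3 h$, simplified by the Hardy-space fact in the form $\Pi_+(v\,\overline{\Pi_-(v\bar h)}) = v\,\overline{\Pi_-(v\bar h)}$, produces the matching correction
\begin{equation*}
H_v^3 h = \Pi_+(|v|^2 v\bar h) - \Pi_+\!\bigl(|v|^2 \Pi_-(v\bar h)\bigr) - \Pi_+\!\bigl(v\,\overline{\Pi_-(|v|^2 h)}\bigr).
\end{equation*}
Combining the three formulas collapses the right-hand side of the algebraic identity to a single term $\Pi_+(|v|^2 v\bar h)$, which equals the left-hand side because $\Pi_-(|v|^2 v)\,\bar h \in L^2_-$ (product of two $L^2_-$ elements), so $\Pi_+\!\bigl(\Pi_-(|v|^2 v)\,\bar h\bigr) = 0$.

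The main obstacle is purely combinatorial bookkeeping of the $\Pi_\pm$ projections; no deep estimate is required. The regularity hypothesis $v \in C(\mathbb{R}; H^s_+(\mathbb{R}))$ with $s > 1/2$ is used to ensure $|v|^2 v \in H^s \hookrightarrow L^\infty$ so that all triple products and Toeplitz symbols lie in spaces on which $\Pi_\pm$ acts continuously, and to guarantee that $t \mapsto H_{v(t)}$ is differentiable (in the Hilbert--Schmidt topology), which is precisely what is needed to legitimize the formal manipulations above.
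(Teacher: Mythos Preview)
The paper does not supply its own proof of this theorem; it is quoted as a known structural fact from the literature (Gérard--Grellier for $\mathbb{T}$, Pocovnicu for $\mathbb{R}$). Your argument is correct and is exactly the standard computation: linearize $t\mapsto H_{v(t)}$ to get $H_{\partial_t v}$, expand the commutator using the antilinearity of $H_v$, and reduce everything to the projector identity $\Pi_+(|v|^2 v\,\bar h) = -H_v^3 h + T_{|v|^2}H_v h + H_v T_{|v|^2} h$ together with $\Pi_+\bigl(\Pi_-(|v|^2 v)\,\bar h\bigr)=0$, both of which follow from the Hardy-space multiplication property. There is nothing to compare against in the paper itself, and your bookkeeping of the $\Pi_\pm$ terms and of the regularity needed to justify differentiability and boundedness of the Toeplitz symbol is accurate.
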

An important property of this Lax pair structure is that the spectrum of the trace class operator $\sqrt{H_{v}^{2}}$, is conserved by the evolution, in particular, the trace norm of $\sqrt{H_{v}^{2}}$ is conserved by the flow. A theorem by Peller [\ref{[5]}] says that the trace norm of $\sqrt{H_{v}^{2}}$ is equivalent to the homogeneous Besov norm $\dot{B}_{1,1}^1(\mathbb{R})$ of $v$, and we can deduce that the norm $\dot{B}_{1,1}^1(\mathbb{R})\cap L^2(\mathbb{R})$ is conserved by the flow.
\subsection{Estimation of solutions to the resonant system}
Just as we did in Lemma \ref{Lemma 2.1}, we can deduce a similar estimate for $\mathcal{R}$.
\begin{lemma}
\begin{equation}
\label{4.701}
\left\|\mathcal{R}[G^1, G^2, G^3]\right\|_{\mathcal{Y}} \lesssim \|G^1\|_{S}\|G^2\|_{S}\|G^3\|_{S}.
\end{equation}
In particuler,
\begin{equation}
\label{4.702}
\begin{aligned}
& \left\|\mathcal{R}[G^1, G^2, G^3]\right\|_{Z} \\ \lesssim & \, \|G^1\|_{L_{x}^{2} H_{y}^{2}}^{\frac{1}{4}}\|x G^1\|_{L_{x}^{2} H_{y}^{2}}^{\frac{1}{4}}\|G^2\|_{L_{x}^{2} H_{y}^{2}}^{\frac{1}{4}}\|x G^2\|_{L_{x}^{2} H_{y}^{2}}^{\frac{1}{4}}\|G^3\|_{L_{x}^{2} H_{y}^{2}}^{\frac{1}{4}}\|x G^3\|_{L_{x}^{2} H_{y}^{2}}^{\frac{1}{4}} \|G^1\|_S^{\frac{1}{2}}\|G^2\|_S^{\frac{1}{2}}\|G^3\|_S^{\frac{1}{2}} \\ \lesssim & \,\|G^1\|_{S}\|G^2\|_{S}\|G^3\|_{S}.
\end{aligned}
\end{equation}
\end{lemma}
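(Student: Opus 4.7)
The plan is to follow the structure of the proof of Lemma \ref{Lemma 2.1}, replacing the role of the dispersive decay of $\mathrm{e}^{it\partial_x^2}$ (which provided the $|t|^{-1}$ gain there) by the chiral decomposition of $\mathcal{R}$ afforded by Proposition \ref{Proposition 4.2}. Indeed, the same case analysis based on Lemma \ref{Lemma 4.1}, applied to the trilinear operator, gives
\begin{equation*}
\mathcal{F}_{x}\mathcal{R}[G^1,G^2,G^3](\xi,\cdot)=\Pi_{+}\!\bigl(\widehat{G^1}_{+}\overline{\widehat{G^2}_{+}}\widehat{G^3}_{+}\bigr)(\xi,\cdot)+\Pi_{-}\!\bigl(\widehat{G^1}_{-}\overline{\widehat{G^2}_{-}}\widehat{G^3}_{-}\bigr)(\xi,\cdot),
\end{equation*}
where $\Pi_\pm$ act in $y$ only and $\widehat{G^j}_\pm(\xi,\cdot):=\Pi_\pm\widehat{G^j}(\xi,\cdot)$. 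Thus, at each $\xi$, the operator $\mathcal{R}$ reduces to a pointwise triple product in $y$ followed by a Szegő projection, and no phases remain to be controlled.

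For the $S$ (hence $S'$) part of the $\mathcal{Y}$ estimate, the $L^2_y$--boundedness of $\Pi_\pm$ together with Hölder's inequality yields, pointwise in $\xi$,
\begin{equation*}
\|\mathcal{F}_{x}\mathcal{R}(\xi,\cdot)\|_{L^2_y}\lesssim\min_{\{\alpha,\beta,\gamma\}=\{1,2,3\}}\|\widehat{G^\alpha}(\xi,\cdot)\|_{L^2_y}\|\widehat{G^\beta}(\xi,\cdot)\|_{L^\infty_y}\|\widehat{G^\gamma}(\xi,\cdot)\|_{L^\infty_y}.
\end{equation*}
Cauchy--Schwarz in $x$ on the Fourier inversion together with Sobolev embedding in $y$ produces $\|\widehat{G}(\xi,\cdot)\|_{L^\infty_\xi L^\infty_y}\lesssim\|G\|_{L^2_xH^1_y}+\|xG\|_{L^2_xH^1_y}\lesssim\|G\|_S$ (here $N\geq 3$ is used). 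Integrating in $\xi$ gives an $L^2_{x,y}$ estimate trilinear in $G^1,G^2,G^3$, which the transfer lemma (Lemma \ref{Lemma 6.2}) upgrades to the $S'$ norm. For the weighted piece $\|x\mathcal{R}\|_{L^2_xH^2_y}$, I would distribute the factor $x$ onto each of the three inputs in turn (observing that $\Pi_\pm$ acts in $y$ and so commutes with multiplication by $x$) and apply the same Hölder-type estimate to each of the three resulting trilinear contributions; this yields the bound by $\|G^1\|_S\|G^2\|_S\|G^3\|_S$.

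The $Z$ part uses $\Pi_\pm(\mathcal{G})\subset\mathcal{G}$, so it suffices to control $\|\widehat{G^1}_\pm\overline{\widehat{G^2}_\pm}\widehat{G^3}_\pm\|_{L^\infty_\xi\mathcal{G}_y}$. The $L^\infty_\xi L^2_y$ contribution is handled as above. For the $L^\infty_\xi\dot{B}^1_{1,1}(\mathbb{R}_y)$ contribution, I would invoke the standard paraproduct estimate
\begin{equation*}
\|f_1f_2f_3\|_{\dot{B}^1_{1,1}}\lesssim\sum_{\{\alpha,\beta,\gamma\}=\{1,2,3\}}\|f_\alpha\|_{\dot{B}^1_{1,1}}\|f_\beta\|_{L^\infty}\|f_\gamma\|_{L^\infty},
\end{equation*}
which is valid thanks to the endpoint embedding $\dot{B}^1_{1,1}(\mathbb{R})\hookrightarrow L^\infty$. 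The sharp exponents $\|\cdot\|_{L^2_xH^2_y}^{1/4}\|x\cdot\|_{L^2_xH^2_y}^{1/4}\|\cdot\|_S^{1/2}$ in (\ref{4.702}) are then recovered by interpolating the auxiliary bound $\|\widehat{G}(\xi,\cdot)\|_{L^\infty_\xi L^p_y}\lesssim\|G\|_{L^2_xH^2_y}^{1/2}\|xG\|_{L^2_xH^2_y}^{1/2}$ ($p\in\{2,\infty\}$) against the $H^2_y$ bound already built into $\|G\|_S$, exactly as in the derivation of (\ref{2.1601}). The main technical difficulty lies in the bookkeeping of the $\dot{B}^1_{1,1}$ norm under the triple product and in the interpolation needed to match the precise mixed exponents; once the chiral factorization has dispensed with all oscillatory structure, no dispersive input is required and the whole argument reduces to harmonic analysis in $y$ combined with weighted $L^2$ estimates in $x$.
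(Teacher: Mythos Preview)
Your treatment of the $S$ part is essentially correct and close to the paper's: the paper verifies the one-dimensional $L^2_\xi$ bound
\[
\bigl\|\widehat{G^1}\,\overline{\widehat{G^2}}\,\widehat{G^3}\bigr\|_{L^2_\xi}\lesssim\min_{\{j,k,\ell\}}\|G^j\|_{L^2_x}\|G^k\|_{L^2_x}^{1/2}\|xG^k\|_{L^2_x}^{1/2}\|G^\ell\|_{L^2_x}^{1/2}\|xG^\ell\|_{L^2_x}^{1/2}
\]
(hypothesis \eqref{6.8} of Lemma~\ref{Lemma 6.21}) and then invokes Lemma~\ref{Lemma 6.21} directly to obtain \eqref{6.181}, rather than passing through Lemma~\ref{Lemma 6.2}.

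The $Z$ part, however, has a genuine gap. Your paraproduct estimate $\|f_1f_2f_3\|_{\dot B^1_{1,1}}\lesssim\sum\|f_\alpha\|_{\dot B^1_{1,1}}\|f_\beta\|_{L^\infty}\|f_\gamma\|_{L^\infty}$ is valid, but applying it at each $\xi$ and taking the supremum forces a factor $\sup_\xi\|\widehat{G^\alpha}(\xi,\cdot)\|_{\dot B^1_{1,1}}$, which is (the Besov part of) $\|G^\alpha\|_Z$. As emphasised in the introduction, $\|G\|_Z$ is \emph{not} dominated by $\|G\|_S$: since $L^2(\mathbb{R})\not\subset L^1(\mathbb{R})$, no $H^s_y$ norm controls $\|\cdot\|_{\dot B^1_{1,1}(\mathbb{R}_y)}$. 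Your interpolation bound handles the two $L^\infty_y$ factors but cannot touch the $\dot B^1_{1,1}$ factor, so \eqref{4.702} does not follow this way. The paper circumvents this by never placing $\dot B^1_{1,1}$ on a single input: it bounds $\|\mathcal{F}_{x\to\xi}\mathcal{R}\|_{L^\infty_\xi\dot B^1_y}$ through Remarks~\ref{remark 6.4} and~\ref{remark 6.6}, which (via \eqref{6.14}--\eqref{6.21}) reduce matters to $L^1_yL^2_x$ and $x$-weighted $L^1_yL^2_x$ norms of the \emph{triple product}. The crucial point is that $L^1_y$ of a product of three functions is controlled by H\"older (e.g.\ $L^2_y\times L^4_y\times L^4_y$) using only Sobolev regularity of the factors---this is \eqref{6.10}---and the $x$-weight then supplies $L^\infty_\xi$ via $\|h\|_{L^1_x}\le\|h\|_{L^2_x}^{1/2}\|xh\|_{L^2_x}^{1/2}$, yielding the exact exponents in \eqref{4.702}.
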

\begin{proof}
In view of Proposition \ref{Proposition 4.2}, we have
\begin{equation}
\label{4.91}
\mathcal{F}_{x\rightarrow\xi}\mathcal R[G^1,G^2,G^3] = 
\Pi_+(\widehat{G}_{+}^{1} \overline{\widehat{G}_{+}^{2}} \widehat{G}_{+}^{3})+
\Pi_-(\widehat{G}_{-}^{1} \overline{\widehat{G}_{-}^{2}} \widehat{G}_{-}^{3}).
\end{equation}
In fact, we have
\begin{equation}
\label{4.101}
\begin{aligned}
\left\|\widehat{G}^{1} \overline{\widehat{G}^{2}} \widehat{G}^{3}\right\|_{L_\xi^2}  & \lesssim \min_{\{j,k,\ell\} = \{1,2,3\}}  \|\widehat{G}^j\|_{L_\xi^2} \|\widehat{G}^k\|_{L_\xi^\infty}\|\widehat{G}^\ell\|_{L_\xi^\infty} \\ &  \lesssim \min_{\{j,k,\ell\} = \{1,2,3\}}  \|G^j\|_{L_x^2} \|G^k\|_{L_x^1}\|G^\ell\|_{L_x^1}  \\ & \lesssim \min_{\{j,k,\ell\} = \{1,2,3\}}  \|G^j\|_{L_x^2} \|G^k\|_{L_x^2}^{\frac{1}{2}}\|xG^k\|_{L_x^2}^{\frac{1}{2}}\|G^\ell\|_{L_x^2}^{\frac{1}{2}}\|xG^\ell\|_{L_x^2}^{\frac{1}{2}}.
\end{aligned}
\end{equation}
Then by (\ref{4.91}) and Lemma \ref{Lemma 6.21}, we have
\begin{equation}
\label{4.111}
\left\|\mathcal{R}\left[G^{1}, G^{2}, G^{3}\right]\right\|_{S} \lesssim \|G^1\|_{S} \|G^2\|_{S} \|G^3\|_{S}. 
\end{equation}
Since we have (\ref{4.101}), by (\ref{4.91}), Remark \ref{remark 6.4} and Remark \ref{remark 6.6}, we can deduce (\ref{4.702}). Combining (\ref{4.702}) and (\ref{4.111}), we have (\ref{4.701}).
\end{proof}
We also introduce results as follows which concern the long time behavior and stability of the equation (\ref{4.1}).
\begin{lemma}
\label{Lemma 4.4}
For every $G^{1}, G^{2}, G^{3}$, the following estimates hold true
\begin{equation}
\label{4.8}
\left\|\mathcal{R}\left[G^{1}, G^{2}, G^{3}\right]\right\|_{Z} \lesssim\left\|G^{1}\right\|_{Z}\left\|G^{2}\right\|_{Z}\left\|G^{3}\right\|_{Z},
\end{equation}
\begin{equation}
\left\|\mathcal{R}\left[G^{1}, G^{2}, G^{3}\right]\right\|_{L_{x, y}^{2}} \lesssim \min _{\{j, k, \ell\}=\{1,2,3\}}\left\|G^{j}\right\|_{L_{x, y}^{2}}\left\|G^{k}\right\|_{Z}\left\|G^{\ell}\right\|_{Z},
\end{equation}
\begin{equation}
\label{4.81}
\left\|\mathcal{R}\left[G^{1}, G^{2}, G^{3}\right]\right\|_{S'} \lesssim \max _{\{j, k, \ell\}=\{1,2,3\}}\left\|G^{j}\right\|_{S'}\left\|G^{k}\right\|_{Z}\left\|G^{\ell}\right\|_{Z},
\end{equation}
\begin{equation}
\label{4.85}
\begin{aligned}
\left\|x \mathcal{R}\left[G^{1}, G^{2}, G^{3}\right]\right\|_{L_x^2 H_y^2} & \lesssim \max _{\{j, k, \ell\}=\{1,2,3\}}\|x G^{j}\|_{L_{x}^2 H_y^2}\|G^{k}\|_{Z}\|G^{\ell}\|_{Z} \\ & + \max _{\{j, k, \ell\}=\{1,2,3\}}\| G^{j}\|_{L_{x}^2 H_y^2}\|x G^{k}\|_{Z}\|G^{\ell}\|_{Z}.
\end{aligned}
\end{equation}
\begin{equation}
\label{4.10}
\begin{aligned}
\left\|\mathcal{R}\left[G^{1}, G^{2}, G^{3}\right]\right\|_{\mathcal{Y}} & \lesssim \max _{\{j, k, \ell\}=\{1,2,3\}}\left\|G^{j}\right\|_{\mathcal{Y}}\left\|G^{k}\right\|_{Z}\left\|G^{\ell}\right\|_{Z} \\ & + \max _{\{j, k, \ell\}=\{1,2,3\}}\left\|G^{j}\right\|_{L_x^2 H_y^2}\left\|x G^{k}\right\|_{Z}\left\|G^{\ell}\right\|_{Z}.
\end{aligned}
\end{equation}
\end{lemma}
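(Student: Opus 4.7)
The key structural fact is Proposition~\ref{Proposition 4.2}, which gives
\begin{equation*}
\mathcal{F}_{x\to\xi}\mathcal{R}[G^1,G^2,G^3](\xi,\cdot)=\Pi_+\bigl(\widehat{G}_+^1\overline{\widehat{G}_+^2}\widehat{G}_+^3\bigr)(\xi,\cdot)+\Pi_-\bigl(\widehat{G}_-^1\overline{\widehat{G}_-^2}\widehat{G}_-^3\bigr)(\xi,\cdot),
\end{equation*}
so that, at every fixed $\xi$, $\mathcal{F}_x\mathcal{R}$ is just a pointwise triple product in $y$ followed by the Szeg\H o projector $\Pi_\pm$. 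All five estimates will therefore be derived by combining a Hölder-type bound in $y$ at fixed $\xi$ with either $L^\infty_\xi$ or $L^2_\xi$ integration. The two standing tools I plan to use are: (i) $\mathcal{G}$ is a Banach algebra and is invariant under $\Pi_\pm$, and hence $\|\Pi_\pm(fgh)\|_{\mathcal{G}}\lesssim\|f\|_\mathcal{G}\|g\|_\mathcal{G}\|h\|_\mathcal{G}$, which follows from Bony's paraproduct decomposition in $\dot{B}^1_{1,1}$ together with the embedding $\mathcal{G}\subset L^\infty$ noted already in Section~2 and the boundedness of the Hilbert transform on $L^2$ and on $\dot{B}^1_{1,1}$; and (ii) the cruder pointwise bound $\|fgh\|_{L^2_y}\lesssim \|f\|_{L^2_y}\|g\|_{\mathcal{G}_y}\|h\|_{\mathcal{G}_y}$, which uses only $\mathcal{G}\subset L^\infty$.

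The $Z$-estimate (\ref{4.8}) is then immediate from tool (i) applied at each $\xi$ and then taking $L^\infty_\xi$. The $L^2_{x,y}$-bound follows from tool (ii) applied at each $\xi$ — squaring, integrating in $\xi$, and using Plancherel in $x$ converts $\int\|\widehat{G}^j(\xi,\cdot)\|_{L^2_y}^2\,d\xi$ into $\|G^j\|_{L^2_{x,y}}^2$ — the choice of $j$ being arbitrary, which produces the minimum. Estimate (\ref{4.81}) is then obtained from this $L^2_{x,y}$-bound by invoking the transfer Lemma~\ref{Lemma 6.2}, which is precisely designed to lift trilinear $L^2_{x,y}$-bounds to the full $S'$-norm.

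For the weighted bound (\ref{4.85}) I plan to use the identity $\mathcal{F}_x(xF)=i\partial_\xi\widehat{F}$ to rewrite $\|x\mathcal{R}\|_{L^2_x H^2_y}^2 = \int(1+|\eta|^2)^2|\partial_\xi\mathcal{F}\mathcal{R}(\xi,\eta)|^2\,d\xi\,d\eta$, then apply Leibniz so that the $\xi$-derivative lands on exactly one of the three factors, producing $\partial_\xi\widehat{G}^k=-i\,\widehat{xG^k}$. The $H^2_y$-derivatives are distributed by a second Leibniz expansion; the diagonal terms (all $y$-derivatives on one factor) are controlled by Hölder with $\mathcal{G}\subset L^\infty$, while the cross-derivative terms $(\partial_y f)(\partial_y g)h$ are treated by the one-dimensional Gagliardo–Nirenberg interpolation $\|\partial_y f\|_{L^4_y}\lesssim\|f\|_{L^\infty_y}^{1/2}\|f\|_{H^2_y}^{1/2}$ combined with $\mathcal{G}\subset L^\infty$. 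Taking $L^2_\xi$ then yields a factor $\|xG^j\|_{L^2_x H^2_y}$ when $\partial_\xi$ and the $H^2_y$-weight hit the same factor, and a product $\|G^j\|_{L^2_x H^2_y}\|xG^k\|_Z$ when they hit different factors, which is exactly the two terms in (\ref{4.85}). Finally, (\ref{4.10}) is obtained by superposing (\ref{4.81}) (applied to the $H^N_{x,y}$-piece of $S'$, noting that $\partial_x^N$ commutes with everything and $\partial_y^N$ is distributed as above) with (\ref{4.85}), using $\|F\|_\mathcal{Y}=\|F\|_{H^N_{x,y}}+\|xF\|_{L^2_x H^2_y}+\|F\|_Z$.

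The main technical obstacle will be the careful bookkeeping in (\ref{4.85}): one needs to make sure that the combined Leibniz expansion in $\xi$ and in $y$ never produces a cross-term requiring control of $\partial_y\widehat{G}^k(\xi,\cdot)$ in $\mathcal{G}_y$ (which is \emph{not} controlled by our hypotheses), and that every cross term either factors neatly under tool (ii) or falls into the Gagliardo–Nirenberg scheme outlined above. The Besov-space algebra property and the $\Pi_\pm$-invariance of $\mathcal{G}$ are standard but need to be invoked explicitly; everything else reduces to Hölder's inequality, Plancherel in $x$, and the transfer lemma.
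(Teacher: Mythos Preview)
Your proposal is correct and follows essentially the same route as the paper: the algebra property of $\mathcal{G}$ for (\ref{4.8}), H\"older with the embedding $\Pi_\pm(\mathcal{G})\subset\mathcal{G}\subset L^\infty$ for the $L^2_{x,y}$-bound, the transfer Lemma~\ref{Lemma 6.2} for (\ref{4.81}), and Leibniz in $\xi$ and $y$ for (\ref{4.85}). The paper in fact dispatches (\ref{4.85}) with a single word (``Similarly''), so your explicit treatment of the cross-derivative terms via the Gagliardo--Nirenberg inequality $\|\partial_y f\|_{L^4_y}\lesssim\|f\|_{L^\infty_y}^{1/2}\|\partial_y^2 f\|_{L^2_y}^{1/2}$ actually supplies more detail than the original; those cross terms do indeed require some care, since $\mathcal{G}$ does not control $\|\partial_y g\|_{L^p}$ for any $p>1$, and your interpolation scheme correctly routes all the $H^2_y$-weight onto factors that are eventually placed in $L^2_\xi$.
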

\begin{proof}
(\ref{4.8}) comes from the fact that $\mathcal{G}$ is an alegbra.\\\\
Since $\Pi_{\pm}(\mathcal{G})  \subset \mathcal{G} \subset L^\infty $, we have
\begin{align*}
\left\|\mathcal{R}\left[G^{1}, G^{2}, G^{3}\right]\right\|_{L_{x, y}^{2}} \lesssim & \left\|\int_{\omega(\eta,\eta_1,\eta_2) = 0} \widehat{G}_{\eta-\eta_1}^{1} \overline{\widehat{G}}_{\eta_2-\eta_1}^{2} \widehat{G}_{\eta_2}^{3} d\eta_1 d\eta_2 \right \|_{L_{\xi,\eta}^{2} } \\ \lesssim & \left\| \widehat{G}_{+}^{1} \overline{\widehat{G}_{+}^{2}} \widehat{G}_{+}^{3} \right \|_{L_{\xi,y}^{2} } + \left\| \widehat{G}_{-}^{1} \overline{\widehat{G}_{-}^{2}} \widehat{G}_{-}^{3} \right \|_{L_{\xi,y}^{2} } & \\ \lesssim & \min _{\{j, k, \ell\}=\{1,2,3\}}\left(\|G^{j}\|_{L_{x, y}^{2}}\|\widehat{G}_{+}^{k}\|_{L_{\xi,y}^\infty}\|\widehat{G}_{+}^{\ell}\|_{L_{\xi,y}^\infty}+\|G^{j}\|_{L_{x, y}^{2}}\|\widehat{G}_{-}^{k}\|_{L_{\xi,y}^\infty}\|\widehat{G}_{-}^{\ell}\|_{L_{\xi,y}^\infty}\right) \\ \lesssim & \min _{\{j, k, \ell\}=\{1,2,3\}}\left\|G^{j}\right\|_{L_{x, y}^{2}}\|\widehat{G}^{k}\|_{L_{\xi}^{\infty} \mathcal{G}_y}\|\widehat{G}^{\ell}\|_{L_{\xi}^{\infty} \mathcal{G}_y} \\ \lesssim & \min _{\{j, k, \ell\}=\{1,2,3\}}\|G^{j}\|_{L_{x, y}^{2}}\|G^{k}\|_{Z}\|G^{\ell}\|_{Z}.
\end{align*}
By Lemma \ref{Lemma 6.2}, we deduce that 
\begin{align*}
\left\|\mathcal{R}\left[G^{1}, G^{2}, G^{3}\right]\right\|_{S'} \lesssim \max _{\{j, k, \ell\}=\{1,2,3\}}\|G^{j}\|_{S'}\|G^{k}\|_{Z}\|G^{\ell}\|_{Z}.
\end{align*}
Similarly, we can deduce that
\begin{align*}
\left\|x \mathcal{R}\left[G^{1}, G^{2}, G^{3}\right]\right\|_{L_x^2 H_y^2} & \lesssim \max _{\{j, k, \ell\}=\{1,2,3\}}\|x G^{j}\|_{L_{x}^2 H_y^2}\|G^{k}\|_{Z}\|G^{\ell}\|_{Z} \\ & + \max _{\{j, k, \ell\}=\{1,2,3\}}\| G^{j}\|_{L_{x}^2 H_y^2}\|x G^{k}\|_{Z}\|G^{\ell}\|_{Z}.
\end{align*}
Combining the above inequalities, we obtain (\ref{4.10}).
\end{proof}
\begin{proposition}
\label{Proposition 4.5}
$\text { Let } G_{0} \in \mathcal{Y}^{+},\left\|G_{0}\right\|_{\mathcal{Y}^{+}}=\varepsilon \text { with } \varepsilon \text { small enough, }$ and $G$ evolves according to (\ref{4.1}). Then there holds that for $t \geq 1$,
\begin{equation}
\label{4.12}
\|G(\pi \ln t)\|_{Z} \simeq \left\|G_{0}\right\|_{Z},
\end{equation}
\begin{equation}
\label{4.121}
\|x G(\pi \ln t)\|_{Z} \lesssim (1+|t|)^{\delta_1} \left\|x G_{0}\right\|_{Z},
\end{equation}
\begin{equation}
\label{4.122}
\|G(\pi \ln t)\|_{S'} \lesssim(1+|t|)^{\delta_2}\left\|G_{0}\right\|_{S'},
\end{equation}
\begin{equation}
\label{4.123}
\|G(\pi \ln t)\|_{\mathcal{Y}} \lesssim(1+|t|)^{\delta_3}\left\|G_{0}\right\|_{\mathcal{Y}},
\end{equation}
\begin{equation}
\label{4.13}
\|G(\pi \ln t)\|_{\mathcal{Y}^{+}} \lesssim(1+|t|)^{\delta_4}\left\|G_{0}\right\|_{\mathcal{Y}^{+}},
\end{equation}
with $\delta_1, \delta_2, \delta_3, \delta_4 \simeq\left\|G_{0}\right\|_{Z}^{2}$.
\end{proposition}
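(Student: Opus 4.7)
The plan is to combine the fiberwise decoupling established in Proposition~\ref{Proposition 4.2} with a cascade of Gronwall inequalities in the time variable $\tau = \pi \ln t$, starting from conserved quantities inherited from the cubic Szeg\H{o} equation.

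First, I would prove (\ref{4.12}). After taking $\mathcal{F}_x$ in Proposition~\ref{Proposition 4.2}, at each frozen frequency $\xi \in \mathbb{R}$ the functions $\widehat{G}_\pm(\tau,\xi,\cdot)$ satisfy decoupled cubic Szeg\H{o} equations on $\mathbb{R}_y$. By the Lax-pair structure of the cubic Szeg\H{o} equation and Peller's theorem recalled earlier in the excerpt, both $\|\widehat{G}_\pm(\tau,\xi,\cdot)\|_{L^2_y}$ and $\|\widehat{G}_\pm(\tau,\xi,\cdot)\|_{\dot{B}^1_{1,1}(\mathbb{R}_y)}$ are conserved in $\tau$ for every $\xi$. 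Using $\Pi_\pm(\mathcal{G}) \subset \mathcal{G}$, this gives $\|\widehat{G}(\tau,\xi,\cdot)\|_\mathcal{G} \simeq \|\widehat{G}_0(\xi,\cdot)\|_\mathcal{G}$ uniformly in $(\tau,\xi)$, and the supremum in $\xi$ yields (\ref{4.12}).

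For each of the remaining bounds I would run an energy-type estimate: differentiate $\|G(\tau)\|_X$ and use $i\partial_\tau G = \mathcal{R}[G,G,G]$ together with the trilinear estimates of Lemma~\ref{Lemma 4.4} to obtain a differential inequality of the form $\tfrac{d}{d\tau}\|G(\tau)\|_X \lesssim \|G_0\|_Z^2 \,\|G(\tau)\|_X$. The key algebraic fact is that because $\mathcal{R}$ acts fiberwise in $\xi$ (only a $y$-projector $\Pi_\pm^y$ appears), multiplication by $x$, which is $i\partial_\xi$ under $\mathcal{F}_x$, distributes across $\mathcal{R}$ by the Leibniz rule,
\begin{equation*}
x\,\mathcal{R}[G,G,G] \;=\; \mathcal{R}[xG,G,G] \,-\, \mathcal{R}[G,xG,G] \,+\, \mathcal{R}[G,G,xG],
\end{equation*}
the minus sign arising from $\partial_\xi\overline{\widehat{G}} = i\,\overline{\widehat{xG}}$; iterating produces an analogous identity for $x^2\mathcal{R}$, and the operator $(1+|D_x|)$, being multiplication by $(1+|\xi|)$ on the fiber side, admits the same kind of decomposition. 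Gronwall's inequality then yields $\|G(\tau)\|_X \lesssim \|G_0\|_X \exp(C\|G_0\|_Z^2 \tau)$, and the substitution $\tau = \pi \ln t$ turns this into a polynomial bound with exponent $\delta = C\pi\|G_0\|_Z^2 \simeq \|G_0\|_Z^2$.

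I would apply these ingredients in a specific order so that each estimate only uses what has already been proved: (\ref{4.121}) (and its $x^2$-analogue needed for $\mathcal{Y}^+$) from (\ref{4.8}) and the decomposition above; (\ref{4.122}) from (\ref{4.81}); (\ref{4.123}) from (\ref{4.10}); and finally (\ref{4.13}) by bundling the previous $Z$-, $x$- and $x^2$-weighted estimates with the $S^+$-components, for which $(1+|D_x|)\mathcal{R}[G,G,G]$ is handled analogously to $x\mathcal{R}[G,G,G]$. The main obstacle is keeping every exponent at the common scale $\simeq \|G_0\|_Z^2$ without letting cross terms like $\|G_0\|_Z\|xG_0\|_Z$ sneak in. This is most delicate for (\ref{4.123}) and (\ref{4.13}): the naive use of (\ref{4.10}) produces the forcing term $\|G\|_{L^2_x H^2_y}\|xG\|_Z\|G\|_Z$, and since $\|xG(\tau)\|_Z$ already grows like $e^{C\|G_0\|_Z^2 \tau}$ by (\ref{4.121}), integrating would inflate the final exponent. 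The remedy is to expand $x\mathcal{R}[G,G,G]$ via the Leibniz identity above and then bound each resulting trilinear term by an $L^2_x H^2_y$-analogue of (\ref{4.81}) that places the $xG$-factor in the $L^2_x H^2_y$-slot (which is part of $\mathcal{Y}$) rather than in the $Z$-slot, so that the only surviving factor of $Z$-norm is the conserved $\|G_0\|_Z^2$; the Gronwall inequality then closes with $\delta_3 \simeq \|G_0\|_Z^2$, and an entirely parallel argument gives $\delta_4 \simeq \|G_0\|_Z^2$ for the $\mathcal{Y}^+$-estimate.
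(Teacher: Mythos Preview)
Your overall architecture is exactly the paper's: conservation of the $\mathcal{G}$-norm fiberwise in $\xi$ gives (\ref{4.12}), and then a cascade of Gronwall inequalities driven by Lemma~\ref{Lemma 4.4} together with the Leibniz identity for $x\mathcal{R}$ handles (\ref{4.121}) and (\ref{4.122}) just as you describe.

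The gap is in your handling of (\ref{4.123}) and (\ref{4.13}). The ``$L^2_xH^2_y$-analogue of (\ref{4.81}) that places the $xG$-factor in the $L^2_xH^2_y$-slot'' does not exist in the form you need. The tame product estimate in $H^2_y$ reads
\[
\|fgh\|_{H^2_y}\;\lesssim\;\|f\|_{H^2_y}\|g\|_{L^\infty_y}\|h\|_{L^\infty_y}+\text{(cyclic permutations)},
\]
and this is a \emph{sum}, not a choice: the cross-derivative terms $\partial_y f\,\partial_y g\,h$ force at least one $y$-derivative onto a second factor, and the $\mathcal{G}$-norm does not control $\|\partial_y g\|_{L^\infty_y}$. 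Hence when you bound $\|\mathcal{R}[xG,G,G]\|_{L^2_xH^2_y}$ you inevitably pick up $\|G\|_{L^2_xH^2_y}\|xG\|_Z\|G\|_Z$. This is precisely why the second line of (\ref{4.85}) and the second line of (\ref{4.10}) are present in Lemma~\ref{Lemma 4.4}; they are not removable.

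Your diagnosis of the difficulty is also off: the forcing term does \emph{not} inflate the exponent. By (\ref{4.121}) and (\ref{4.122}) the forcing grows like $t^{\delta_1+\delta_2}$ with $\delta_1+\delta_2\simeq\|G_0\|_Z^2$, so feeding it into the \emph{inhomogeneous} Gronwall inequality yields a final exponent $\delta_3\simeq\max(C\|G_0\|_Z^2,\delta_1+\delta_2)\simeq\|G_0\|_Z^2$. This is what the paper does: it uses (\ref{4.10}) as written, substitutes the already-proven bounds (\ref{4.121})--(\ref{4.122}) into the forcing term, and closes with inhomogeneous Gronwall. The same bootstrap, now also feeding in (\ref{4.123}) and separate Gronwall bounds for $\|x^2G\|_Z$, $\|(1+|D_x|)G\|_Z$ and $\|xG\|_{L^2_xH^3_y}$, yields (\ref{4.13}).
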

\begin{proof}
For (\ref{4.12}), we use Proposition \ref{Proposition 4.2} to transform (\ref{4.1}) to the non chiral Szeg\H o equation, and then we use the integrability of the cubic Szeg\H o equation, which indicates the conservation of the $\mathcal{G}$ norm.  Combine with the Lax Pairs, we have
\begin{align*}
\|\widehat{G}(\xi, t)\|_{\mathcal{G}_y} \simeq \|\widehat{G}_{0}(\xi)\|_{\mathcal{G}_y},
\end{align*}
which implies (\ref{4.12}). \\\\
By (\ref{4.8}), we deduce that 
\begin{equation}
\label{4.131}
\left\|x \mathcal{R}\left[G, G, G\right]\right\|_{Z} \lesssim \left\|x G\right\|_{Z}\left\|G\right\|_{Z}\left\|G\right\|_{Z}.
\end{equation}
We take $\widetilde{G}(t)=G(\pi \ln t)$, then $\widetilde{G}$ satisfies
\begin{equation}
i \partial_{t} \widetilde{G}=\frac{\pi}{t} \mathcal{R}[\widetilde{G}, \widetilde{G}, \widetilde{G}]
\end{equation}
and 
\begin{equation}
i \partial_{t} ( x \widetilde{G})=\frac{\pi}{t} x \mathcal{R}[\widetilde{G}, \widetilde{G}, \widetilde{G}].
\end{equation}
By (\ref{4.81}) and (\ref{4.131}), we have
\begin{align*}
\partial_{t}\|\widetilde{G}\|_{S'} \lesssim \frac{1}{t}\|\mathcal{R}[\widetilde{G}, \widetilde{G}, \widetilde{G}]\|_{S'} \lesssim \frac{1}{t}\|\widetilde{G}\|_{Z}^{2}\|\widetilde{G}\|_{S'} \lesssim \frac{1}{t}\left\|G_{0}\right\|_{Z}^{2}\|\widetilde{G}\|_{S'}
\end{align*}
and
\begin{align*}
\partial_{t}\|x \widetilde{G}\|_{Z} \lesssim \frac{1}{t}\|x \mathcal{R}[\widetilde{G}, \widetilde{G}, \widetilde{G}]\|_{Z} \lesssim \frac{1}{t}\|\widetilde{G}\|_{Z}^{2}\|x\widetilde{G}\|_{Z} \lesssim \frac{1}{t}\left\|G_{0}\right\|_{Z}^{2}\|x\widetilde{G}\|_{Z}.
\end{align*}
By Gronwall's inequality, we get (\ref{4.121}) and (\ref{4.122}). Then by (\ref{4.10}), we deduce that
\begin{align*}
& \partial_{t}\| \widetilde{G}\|_{\mathcal{Y}} \\ \lesssim & \frac{1}{t}\| \mathcal{R}[\widetilde{G}, \widetilde{G}, \widetilde{G}]\|_{\mathcal{Y}} \\ \lesssim & \frac{1}{t}\|\widetilde{G}\|_{Z}^{2}\|\widetilde{G}\|_{\mathcal{Y}} +  \frac{1}{t}\|\widetilde{G}\|_{Z} \|x\widetilde{G}\|_{Z} \|\widetilde{G}\|_{S'}\\ \lesssim & \frac{1}{t}\|G_0\|_{Z}^{2}\|\widetilde{G}\|_{\mathcal{Y}} +  \frac{1}{t^{1-\delta_1-\delta_2}}\left\|G_{0}\right\|_{Z} \|x G_0\|_{Z} \|G_0\|_{S'}.
\end{align*}
By inhomogeneous Gronwall's ineqaulity, we obtain (\ref{4.123}). \\\\
To estimate $\|G(\pi \ln t)\|_{\mathcal{Y}^{+}}$, firstly we estimate $\|x^2 \widetilde{G}\|_Z$ and $\|(1+|D_x|) \widetilde{G}\|_Z$. We have
\begin{equation}
\label{4.20}
\partial_{t}\|x^2 \widetilde{G}\|_{Z} \lesssim \frac{1}{t}\|x^2 \mathcal{R}[\widetilde{G}, \widetilde{G}, \widetilde{G}]\|_{Z} \lesssim \frac{1}{t}\|G_0\|_{Z}^{2}\|x^2 \widetilde{G}\|_{Z} + \frac{1}{t^{1-2\delta_1}}\left\|G_{0}\right\|_{Z}\|x G_0\|_{Z}^2,
\end{equation}
and
\begin{equation}
\label{4.205}
\partial_{t}\|(1+|D_x|)  \widetilde{G}\|_{Z} \lesssim \frac{1}{t}\|(1+|D_x|)  \mathcal{R}[\widetilde{G}, \widetilde{G}, \widetilde{G}]\|_{Z} \lesssim \frac{1}{t}\|G_0\|_{Z}^{2}\|(1+|D_x|) \widetilde{G}\|_{Z}.
\end{equation}
Similary to (\ref{4.85}), we have
\begin{equation}
\label{4.21}
\partial_{t}\|x \widetilde{G}\|_{L_x^2 H_y^3} \lesssim \frac{1}{t}\|x \mathcal{R}[\widetilde{G}, \widetilde{G}, \widetilde{G}]\|_{L_x^2 H_y^3} \lesssim \frac{1}{t}\|G_0\|_{Z}^{2}\|x \widetilde{G}\|_{L_x^2 H_y^3} + \frac{1}{t^{1-\delta_1-\delta_2}}\|G_0\|_{S'}\left\|x G_{0}\right\|_{Z}\|G_0\|_{Z}.
\end{equation}
Again by (\ref{4.85}) and (\ref{4.10}), we have
\begin{equation}
\label{4.22}
\begin{aligned}
& \partial_{t}\| x \widetilde{G}\|_{\mathcal{Y}} \\ \lesssim & \frac{1}{t}\| x \mathcal{R}[\widetilde{G}, \widetilde{G}, \widetilde{G}]\|_{\mathcal{Y}} \\ \lesssim & \frac{1}{t}\|G_0\|_{Z}^{2}\|x\widetilde{G}\|_{\mathcal{Y}} +  \frac{1}{t^{1-\delta_1-\delta_3}}\left\|G_{0}\right\|_{Z} \|x G_0\|_{Z} \|G_0\|_{\mathcal{Y}}\\ + & \frac{1}{t^{1-2\delta_1-\delta_2}} \|G_0\|_{S'} \|x G_0\|_Z^2 +\frac{1}{t^{1-\delta_2}} \|G_0\|_{S'}\|x^2\widetilde{G}\|_Z \|G_0\|_Z
\end{aligned}
\end{equation}
and 
\begin{equation}
\label{4.225}
\begin{aligned}
& \partial_{t}\| (1+|D_x|) \widetilde{G}\|_{\mathcal{Y}} \\ \lesssim & \frac{1}{t}\| (1+|D_x|) \mathcal{R}[\widetilde{G}, \widetilde{G}, \widetilde{G}]\|_{\mathcal{Y}} \\ \lesssim & \frac{1}{t}\|G_0\|_{Z}^{2}\|(1+|D_x|)\widetilde{G}\|_{\mathcal{Y}} +  \frac{1}{t^{1-\delta_3}}\left\|G_{0}\right\|_{Z} \|(1+|D_x|) \widetilde{G}\|_{Z} \|G_0\|_{\mathcal{Y}}\\ + & \frac{1}{t^{1-\delta_1-\delta_2}} \|G_0\|_{S'} \|x G_0\|_Z \|G_0\|_Z.
\end{aligned}
\end{equation}
According to (\ref{4.20}), (\ref{4.205}), (\ref{4.21}), (\ref{4.22}), (\ref{4.225}) and inhomogeneous Gronwall's inequality, we get  (\ref{4.13}).
\end{proof}
\section{Proof of the main results}
In this section, we prove our main results. We will start with constructing a modified wave operators and then obtain the corresponding cascade result, which have been introduced in Section 1.
\subsection{Modified wave operators}
\begin{theorem}
\label{Theorem 5.1}
There exists $\varepsilon >0$ such that if $G_{0} \in \mathcal{Y}^{+}$ satisfies 
\begin{equation}
\label{5.1}
\left\|G_{0}\right\|_{\mathcal{Y}^{+}} \leq \varepsilon,
\end{equation}
and if $G$ is the solution of (\ref{1.7}) with initial data $G_0$, then there exists a unique solution $U$ of (\ref{1.1}) such that $\mathrm{e}^{-i t \mathcal{A}} U(t) \in C([0, \infty): \mathcal{Y})$ and 
\begin{align*}
\left\|\mathrm{e}^{-i t \mathcal{A}} U(t)-G(\pi \ln t)\right\|_{\mathcal{Y}} \rightarrow 0 \text { as } t \rightarrow \infty.
\end{align*}
\end{theorem}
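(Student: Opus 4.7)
The plan is to build $F(t) := e^{-it\mathcal{A}}U(t)$ as a limit of approximants matching $\widetilde{G}(t) := G(\pi \ln t)$ at a sequence of times $T_n \to \infty$. Setting $H := F - \widetilde{G}$ and using $i\partial_t \widetilde{G} = \frac{\pi}{t}\mathcal{R}[\widetilde{G},\widetilde{G},\widetilde{G}]$ together with the decomposition $\mathcal{N}^t = \frac{\pi}{t}\mathcal{R} + \mathcal{E}^t$ from Proposition \ref{Proposition 3.1}, the perturbation satisfies
\begin{equation*}
i\partial_t H = \mathcal{E}^t[F,F,F] + \frac{\pi}{t}\bigl(\mathcal{R}[F,F,F]-\mathcal{R}[\widetilde{G},\widetilde{G},\widetilde{G}]\bigr).
\end{equation*}
By Proposition \ref{Proposition 4.5}, $\widetilde{G}$ is uniformly bounded in $\mathcal{Y}$ and grows at most like $t^{C\varepsilon^2}$ in $\mathcal{Y}^{+}$, which is $\ll t^{\delta}$ for $\varepsilon$ sufficiently small; so the whole problem reduces to closing a bootstrap for $H$ that forces it to vanish as $t\to\infty$.

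For each large $T_n$, solve (\ref{2.5}) backward from the terminal condition $F_n(T_n) = \widetilde{G}(T_n)$ on some interval $[T_0, T_n]$ using the local well-posedness of the gauged equation in $\mathcal{Y}$. A bootstrap in $X_{T_n}^{+}$ yields $\|F_n\|_{X_{T_n}^{+}} \lesssim \varepsilon$ uniformly in $n$: under an a priori $2C\varepsilon$ bound, on each dyadic slice $[T/2, T]$ with $T_0 \leq T \leq T_n$, Proposition \ref{Proposition 3.1} under the stronger assumption (\ref{3.4}) gives
\begin{equation*}
\Bigl\|\int_{T/2}^{T} \mathcal{E}^s[F_n,F_n,F_n]\,ds\Bigr\|_{\mathcal{Y}} \lesssim T^{-20\delta}\varepsilon^{3},
\end{equation*}
while Lemma \ref{Lemma 4.4} bounds the $\mathcal{R}$-difference by a constant times $\varepsilon^2\|H\|_{\mathcal{Y}}$; Gronwall and dyadic summation then close the loop for the $\mathcal{Y}$ component. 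The $\partial_t F_n$ estimate follows directly from the equation and Lemma \ref{Lemma 2.1}, and the $\mathcal{Y}^{+}$ part is propagated using the corresponding weighted estimates on $\mathcal{E}^t$ and on $\mathcal{R}$ applied to $xF_n$, $x^2 F_n$ and $(1+|D_x|)F_n$ (Lemmas \ref{Lemma 3.7} and \ref{Lemma 4.4}).

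Next, I would run a Cauchy argument: for $T_m < T_n$, the difference $D_{n,m} := F_n - F_m$ satisfies $D_{n,m}(T_m) = F_n(T_m) - \widetilde{G}(T_m)$, whose $\mathcal{Y}$-norm is $\lesssim T_m^{-20\delta}\varepsilon$ by the same estimates applied on $[T_m, T_n]$. Running Duhamel for $D_{n,m}$ backward on $[T_0, T_m]$, the trilinear sources are controlled by the same machinery (losing at most a $T_m^{3\delta}$ factor from the weaker estimates in Proposition \ref{Proposition 3.1}, still net decaying thanks to the factor $T_m^{-20\delta}$ on the initial data), so $(F_n)$ is Cauchy in $C([T_0, T], \mathcal{Y})$ on every compact interval. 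The limit $F$ inherits $\|F\|_{X_T^{+}} \lesssim \varepsilon$ for every $T$, solves (\ref{2.5}), and $\|F(t)-\widetilde{G}(t)\|_{\mathcal{Y}} \to 0$ with explicit polynomial rate. Setting $U(t) := e^{it\mathcal{A}}F(t)$ and extending on $[0, T_0]$ by local well-posedness gives the required solution of (\ref{1.1}); uniqueness is another Gronwall applied to two candidate modified wave operators.

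The main obstacle is calibrating the powers of $T^{\delta}$ so that every component of the bootstrap closes. The gain $T^{-20\delta}$ on $\int \mathcal{E}$ is available only under the higher $X_T^{+}$ assumption, while $\widetilde{G}$ itself costs $t^{C\varepsilon^2}$ in $\mathcal{Y}^{+}$ and the Gronwall step on the $\mathcal{R}$-difference produces an extra $\log t$; all of these must collaborate to leave a net negative power of $T$, which is exactly why the paper fixes $\delta < 10^{-4}$. A secondary delicate point is that the estimate on $\mathcal{E}_3$ in Proposition \ref{Proposition 3.1} only decays as $t^{-1/3}$ (not $t^{-1-\delta}$ as for $\mathcal{E}_1$), so the $\mathcal{E}_2$ contribution has to be handled by integration by parts through its primitive $\mathcal{E}_3$ and not pointwise; and the new weighted $x^2 F \in Z$ control (absent in Xu's cylinder setting) must be propagated by the $Z$-norm estimates of Lemma \ref{Lemma 4.4} together with the $\mathcal{E}^t$ bounds on $x^2$-weights, which is precisely what the higher norms in $\mathcal{Y}^{+}$ were designed to accommodate.
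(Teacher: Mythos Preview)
Your strategy is genuinely different from the paper's. The paper does \emph{not} build approximants with terminal data and pass to the limit; instead it sets $K(t)=\mathrm{e}^{-it\mathcal{A}}U(t)-\widetilde{G}(t)$ and runs a direct contraction for the map
\[
\Phi(K)(t)=i\int_t^\infty\Bigl\{\mathcal{N}^\sigma[K+\widetilde{G},K+\widetilde{G},K+\widetilde{G}]-\tfrac{\pi}{\sigma}\mathcal{R}[\widetilde{G},\widetilde{G},\widetilde{G}]\Bigr\}\,d\sigma
\]
on a small ball in a norm $\mathfrak{A}$ built only from $\mathcal{Y}$-level quantities on $K$ (namely $\|xK\|_{L^2_xH^2_y}$, $\|K\|_{S'}$, $\|K\|_Z$ and $\|\partial_tK\|_{\mathcal{Y}}$, each with suitable $t^{\pm\delta}$ weights). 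The point is that all $\mathcal{Y}^{+}$ information is carried by $\widetilde{G}$ alone via Proposition~\ref{Proposition 4.5}; the unknown $K$ is never asked to sit in $\mathcal{Y}^{+}$ or $X_T^{+}$.

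That design choice is exactly what your plan fails to respect, and this is where the gap is. Your bootstrap asks for $\|F_n\|_{X_{T_n}^{+}}\lesssim\varepsilon$, hence in particular for control of $\|x^2F_n(t)\|_Z$ and $\|xF_n(t)\|_S$ under the full nonlinear flow $i\partial_tF_n=\mathcal{N}^t[F_n,F_n,F_n]$. The paper's estimates do not close this: from~(\ref{2.1601}) one has $\|\mathcal{N}^t[F,G,H]\|_Z\lesssim t^{-1}\|F\|_S\|G\|_S\|H\|_S$, so after distributing $x^2$ one is led to quantities like $\|x^2F\|_S=\|x^2F\|_{H^N_{x,y}}+\|x^3F\|_{L^2_xH^2_y}$, i.e.\ a third $x$-weight, which is not part of $\mathcal{Y}^{+}$. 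Lemmas~\ref{Lemma 3.7} and~\ref{Lemma 4.4} that you cite give $\mathcal{Y}$-level (or $S'$, $Z$, $L^2_xH^2_y$) bounds, not $\mathcal{Y}^{+}$-level ones, and Proposition~\ref{Proposition 3.1} is likewise stated only in $\mathcal{Y}$. The only place in the paper where $\mathcal{Y}^{+}$ is propagated is Proposition~\ref{Proposition 4.5}, and that uses the special structure of the \emph{resonant} system (Szeg\H{o} conservation laws for the $\mathcal{G}$ norm), not the full $\mathcal{N}^t$. Consequently, invoking the stronger conclusion of Proposition~\ref{Proposition 3.1} under assumption~(\ref{3.4}) with $F=G=H=F_n$ is not justified; you would have to first expand $\mathcal{N}^t[F_n,F_n,F_n]$ around $\widetilde{G}$ and place the $X_T^{+}$ hypothesis only on $\widetilde{G}$, at which point you are effectively reproducing the paper's decomposition $\mathcal{E}^t[\widetilde{G},\widetilde{G},\widetilde{G}]+\mathcal{L}^t[K,\widetilde{G}]+\mathcal{Q}^t[K,\widetilde{G}]$ and its contraction estimates, not a bootstrap on $F_n$ in $X_T^{+}$.
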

\begin{proof}
Let 
\begin{align*}
\widetilde{G}(t)=G(\pi \ln t), K(t)=\mathrm{e}^{-i t \mathcal{A}} U(t)-\widetilde{G}(t),
\end{align*}
and define a mapping 
\begin{align*}
\Phi(K)(t)=i \int_{t}^{\infty}\left\{\mathcal{N}^{\sigma}[K+\widetilde{G}, K+\widetilde{G}, K+\widetilde{G}]-\frac{\pi}{\sigma} \mathcal{R}[\widetilde{G}(\sigma), \widetilde{G}(\sigma), \widetilde{G}(\sigma)]\right\} d \sigma.
\end{align*}
Then we define
\begin{align*}
\mathfrak{A} &:=\left\{K \in C^{1}([1, \infty): \mathcal{Y}):\|K\|_{\mathfrak{A}}<\infty\right\}, \\\|K\|_{\mathfrak{A}} &:=\sup _{t>1}\left\{(1+|t|)^{\delta}\|xK(t)\|_{L_x^2 H_y^2}+(1+|t|)^{2\delta}\|K(t)\|_{S'}+(1+|t|)^{3 \delta}\|K(t)\|_{Z}+(1+|t|)^{1-\delta}\left\|\partial_{t} K(t)\right\|_{\mathcal{Y}}\right\}.
\end{align*}
We want to prove $\Phi$ defines a contraction on the complete metric space $\left\{K \in \mathfrak{A}:\|K\|_{\mathfrak{A}} \leq \varepsilon_{1}\right\}$ with $\varepsilon$ and $\varepsilon_1$ small enough. We decompose
\begin{equation}
\mathcal{N}^{t}[K+\widetilde{G}, K+\widetilde{G}, K+\widetilde{G}]-\frac{\pi}{t} \mathcal{R}[\widetilde{G}, \widetilde{G}, \widetilde{G}]=\mathcal{E}^{t}[\widetilde{G}, \widetilde{G}, \widetilde{G}]+\mathcal{L}^{t}[K, \widetilde{G}]+\mathcal{Q}^{t}[K, \widetilde{G}],
\end{equation}
where
\begin{align*}
\left\{\begin{array}{l}\mathcal{E}^{t}[\widetilde{G}, \widetilde{G}, \widetilde{G}]:=\mathcal{N}^{t}[\widetilde{G}, \widetilde{G}, \widetilde{G}]-\frac{\pi}{t} \mathcal{R}[\widetilde{G}, \widetilde{G}, \widetilde{G}], \\ \mathcal{L}^{t}[K, \widetilde{G}]:=\mathcal{N}^{t}[\widetilde{G}, \widetilde{G}, K]+\mathcal{N}^{t}[K, \widetilde{G}, \widetilde{G}]+\mathcal{N}^{t}[\widetilde{G}, K, \widetilde{G}], \\ \mathcal{Q}^{t}[K, \widetilde{G}]:=\mathcal{N}^{t}[K, K, \widetilde{G}]+\mathcal{N}^{t}[\widetilde{G}, K, K]+\mathcal{N}^{t}[K, \widetilde{G}, K]+\mathcal{N}^{t}[K, K, K].\end{array}\right.
\end{align*}
For $K \in \mathfrak{A}$, we have
\begin{equation}
(1+|t|)^{3 \delta}\|K(t)\|_{Z}+(1+|t|)^{2\delta}\|K(t)\|_{S'}+(1+|t|)^{\delta}\|xK(t)\|_{L_x^2H_y^2}+(1+|t|)^{1-\delta}\left\|\partial_{t} K(t)\right\|_{\mathcal{Y}} \lesssim \varepsilon_{1}.
\end{equation}
By Proposition \ref{Proposition 4.5}, we take $\varepsilon \lesssim \delta^{1/2}$ such that
\begin{equation}
\label{5.5}
\begin{aligned}\|\widetilde{G}(t)\|_{\mathcal{Y}^{+}}+(1+|t|)\left\|\partial_{t} \widetilde{G}(t)\right\|_{\mathcal{Y}} & \lesssim \varepsilon(1+|t|)^{\delta / 100}, \\\|\widetilde{G}(t)\|_{Z} & \lesssim \varepsilon. \end{aligned}
\end{equation}
To prove the contraction, we only need to prove the following inequalities,
\begin{equation}
\label{5.6}
\left\|\int_{t}^{\infty} \mathcal{E}^{\sigma}[\widetilde{G}, \widetilde{G}, \widetilde{G}] d \sigma\right\|_{\mathfrak{A}} \lesssim \varepsilon^{3},
\end{equation}
\begin{equation}
\label{5.7}
\left\|\int_{t}^{\infty} \mathcal{L}^{\sigma}[K, \widetilde{G}] d \sigma\right\|_{\mathfrak{A}} \lesssim \varepsilon^{2}\|K\|_{\mathfrak{A}},
\end{equation}
\begin{equation}
\label{5.8}
\left\|\int_{t}^{\infty} \mathcal{Q}^{\sigma}[K, \widetilde{G}] d \sigma\right\|_{\mathfrak{A}} \lesssim \varepsilon \varepsilon_1\|K\|_{\mathfrak{A}},
\end{equation}
\begin{equation}
\label{5.9}
\left\|\int_{t}^{\infty}\left\{\mathcal{Q}^{\sigma}\left[K_{1}, \widetilde{G}\right]-\mathcal{Q}^{\sigma}\left[K_{2}, \widetilde{G}\right]\right\} d \sigma\right\|_{\mathfrak{A}} \lesssim \varepsilon \varepsilon_{1}\left\|K_{1}-K_{2}\right\|_{\mathfrak{A}}.
\end{equation}
$\textbf{Proof of }$ (\ref{5.6}). By the definition of $\mathcal{E}^{t}$, for $t > 1$, we have
\begin{equation}
\begin{aligned}\left\|\mathcal{E}^{t}[\widetilde{G}, \widetilde{G}, \widetilde{G}]\right\|_{\mathcal{Y}} &=\left\|\mathcal{N}^{t}[\widetilde{G}, \widetilde{G}, \widetilde{G}]-\frac{1}{t} \mathcal{R}[\widetilde{G}, \widetilde{G}, \widetilde{G}]\right\|_{\mathcal{Y}} \\ & \leq\left\|\mathcal{N}^{t}[\widetilde{G}, \widetilde{G}, \widetilde{G}]\right\|_{\mathcal{Y}}+\frac{1}{t}\|\mathcal{R}[\widetilde{G}, \widetilde{G}, \widetilde{G}]\|_{\mathcal{Y}}. \end{aligned}
\end{equation}
By (\ref{2.16}), we have
\begin{align*}
\left\|\mathcal{N}^{t}[\widetilde{G}, \widetilde{G}, \widetilde{G}]\right\|_{\mathcal{Y}} \lesssim t^{-1}\|\widetilde{G}\|_{S}^{3} \lesssim t^{-1+\delta} \varepsilon^{3},
\end{align*}
and by (\ref{4.701}),
\begin{align*}
\left\|\mathcal{R}^{t}[\widetilde{G}, \widetilde{G}, \widetilde{G}]\right\|_{\mathcal{Y}} \lesssim \|\widetilde{G}\|_{S}^3  \lesssim t^{\delta} \varepsilon^{3},
\end{align*}
then
\begin{align*}
\left\|\mathcal{E}^{t}[\widetilde{G}, \widetilde{G}, \widetilde{G}]\right\|_{\mathcal{Y}} \lesssim t^{-1+\delta} \varepsilon^{3}.
\end{align*}
Thus we have
\begin{align*}
t^{1-\delta}\left\|\partial_{t}\left(\int_{t}^{\infty} \mathcal{E}^{\sigma}[\widetilde{G}, \widetilde{G}, \widetilde{G}] d \sigma\right)\right\|_{\mathcal{Y}} \lesssim \varepsilon^{3}.
\end{align*}
For the other three terms of the $\mathfrak{A}$ norm, by (\ref{5.5}), we have $\|\widetilde{G}\|_{X_{T}^{+}} \lesssim \varepsilon$ for any $T \geq 1$, so $\left\|\int_{t}^{\infty} \mathcal{E}^{\sigma}[\widetilde{G}, \widetilde{G}, \widetilde{G}] d \sigma\right\|_{S'}$, $\left\|\int_{t}^{\infty} \mathcal{E}^{\sigma}[\widetilde{G}, \widetilde{G}, \widetilde{G}] d \sigma\right\|_{L_x^2 H_y^2}$  and $\left\|\int_{t}^{\infty} \mathcal{E}^{\sigma}[\widetilde{G}, \widetilde{G}, \widetilde{G}] d \sigma\right\|_{Z}$ can be controlled by the estimates in Proposition \ref{Proposition 3.1}. \\\\
$\textbf{Proof of }$ (\ref{5.7}). We estimate the norm $\left\|\int_{t}^{\infty} \mathcal{N}^{\sigma}[K,\widetilde{G},\widetilde{G}] d \sigma\right\|_{\mathfrak{A}}$ for example, and other terms in $\mathcal{L}^{t}[K, \widetilde{G}]$ can be estimated in the same way.

By (\ref{2.16}) and (\ref{5.5}), we control the time derivative in the $\mathfrak{A}$ norm, 
\begin{align*}
\left\|\mathcal{N}^{t}[K,\widetilde{G}, \widetilde{G} ]\right\|_{\mathcal{Y}} \lesssim t^{-1}\|\widetilde{G}\|_{S}^{2}\|K\|_{S} \lesssim t^{-1+\delta} \varepsilon^{2}\|K\|_{\mathfrak{A}}.
\end{align*}
For the other three terms in the $\mathfrak{A}$ norm, we will do the decomposition as we did in the proof of Proposition \ref{Proposition 3.1} on $\mathcal{N}^{t}[G, G, K]$. By Lemma \ref{Lemma 3.3} and Lemma \ref{Lemma 3.5}, we only need to show
\begin{equation}
\label{5.10}
\|\mathcal{R}[K, \widetilde{G}, \widetilde{G}]\|_{Z} \lesssim (1+|t|)^{-3\delta} \varepsilon^{2} \|K\|_{\mathfrak{A}},
\end{equation}
\begin{equation}
\label{5.11}
\left\|\mathcal{N}_{0}^{t}[K, \widetilde{G},\widetilde{G}]-\frac{\pi}{t} \mathcal{R}[K, \widetilde{G}, \widetilde{G}]\right\|_{Z} \lesssim  (1+|t|)^{-1-3\delta} \varepsilon^{2} \|K\|_{\mathfrak{A}},
\end{equation}
\begin{equation}
\label{5.12}
\left\|\mathcal{N}_{0}^{t}[G, G, K]\right\|_{S'} \lesssim (1+|t|)^{-1-2\delta} \varepsilon^{2} \|K\|_{\mathfrak{A}}.
\end{equation}
\begin{equation}
\label{5.13}
\left\|x\mathcal{N}_{0}^{t}[K,\widetilde{G},\widetilde{G}]\right\|_{L_x^2 H_y^2 } \lesssim (1+|t|)^{-1-\delta} \varepsilon^{2} \|K\|_{\mathfrak{A}}.
\end{equation}
(\ref{5.10}) comes from (\ref{4.8}),
\begin{align*}
\|\mathcal{R}[K, \widetilde{G}, \widetilde{G}]\|_{Z} \lesssim\|\widetilde{G}\|_{Z}^{2}\|K\|_{Z} \lesssim (1+|t|)^{-3\delta} \varepsilon^2 \|K\|_{\mathfrak{A}}.
\end{align*}
(\ref{5.11}) comes from from (\ref{3.41}),
\begin{align*}
\left\|\mathcal{N}_{0}^{t}[K, \widetilde{G}, \widetilde{G}]-\frac{\pi}{t} \mathcal{R}[K, \widetilde{G}, \widetilde{G}]\right\|_{Z} \lesssim(1+|t|)^{-1-23 \delta}\|\widetilde{G}\|_{S}^{2}\|K\|_{S} \lesssim (1+|t|)^{-1-3\delta} \varepsilon^{2} \|K\|_{\mathfrak{A}}.
\end{align*}
(\ref{5.12}) comes from (\ref{3.38}),
\begin{align*}
\left\|\mathcal{N}_{0}^{t}[K,\widetilde{G}, \widetilde{G}]\right\|_{S'} & \lesssim (1+|t|)^{-1} \left(\|\widetilde{G}\|_{\widetilde{Z}_{t}}^{2}\|K\|_{S'} + \|\widetilde{G}\|_{\widetilde{Z}_{t}}\|K\|_{\widetilde{Z}_{t}}\|\widetilde{G}\|_{S'} \right)\\ & \lesssim (1+|t|)^{-1-2\delta} \varepsilon^{2} \|K\|_{\mathfrak{A}}.
\end{align*}
(\ref{5.13}) comes from (\ref{3.385}),
\begin{align*}
\left\|x\mathcal{N}_{0}^{t}\left[K, \widetilde{G}, \widetilde{G}\right]\right\|_{L_x^2 H_y^2}  & \lesssim (1+|t|)^{-1} \left\| x K \right\|_{L_{x}^2 H_y^2} \|\widetilde{G} \|_{\widetilde{Z}_{t}} \| \widetilde{G}\|_{\widetilde{Z}_{t}} \\ & + (1+|t|)^{-1} \|K\|_{S'} \|\widetilde{G}\|_{S^+} \|\widetilde{G}\|_{\widetilde{Z}_{t}}\\ & + (1+|t|)^{-1} \|K\|_{\widetilde{Z}_{t}} \|\widetilde{G}\|_{S} \|\widetilde{G}\|_{S} \\ & \lesssim (1+|t|)^{-1-\delta} \varepsilon^{2} \|K\|_{\mathfrak{A}}.
\end{align*}
$\textbf{Proof of }$ (\ref{5.8}). The proof of (\ref{5.8}) is similar to the proof of (\ref{5.7}), but we notice that we use (\ref{2.16}) directly to estimate $\left\|x\mathcal{N}_{0}^{t}\left[K, K, \widetilde{G}\right]\right\|_{L_x^2 H_y^2}$, 
\begin{align*}
\left\|x\mathcal{N}_{0}^{t}\left[K, K, \widetilde{G}\right]\right\|_{L_x^2 H_y^2}  & \lesssim (1+|t|)^{-1} \left\| K \right\|_{S} \left\|K \right\|_{S} \| \widetilde{G}\|_{S} \\ & \lesssim (1+|t|)^{-1-\delta} \varepsilon \varepsilon_1 \|K\|_{\mathfrak{A}}.
\end{align*}
$\textbf{Proof of }$ (\ref{5.9}). We have
\begin{align*}
\mathcal{N}^{t}\left[K_{1}, K_{1}, G\right]-\mathcal{N}^{t}\left[K_{2}, K_{2}, G\right]=& \mathcal{N}^{t}\left[K_{1}, K_{1}, G\right]-\mathcal{N}^{t}\left[K_{1}, K_{2}, G\right]+\mathcal{N}^{t}\left[K_{1}, K_{2}, G\right] \\ &-\mathcal{N}^{t}\left[K_{2}, K_{2}, G\right] \\=& \mathcal{N}^{t}\left[K_{1}, K_{1}-K_{2}, G\right]+\mathcal{N}^{t}\left[K_{1}-K_{2}, K_{2}, G\right],
\end{align*}
we take similar decompositions on the terms $\mathcal{N}^{t}\left[K_{1}, G, K_{1}\right]-\mathcal{N}^{t}\left[K_{2}, G, K_{2}\right]$, $\mathcal{N}^{t}\left[G, K_{1}, K_{1}\right]-\mathcal{N}^{t}\left[G, K_{2}, K_{2}\right]$ and $\mathcal{N}^{t}\left[K_{1}, K_{1}, K_{1}\right]-\mathcal{N}^{t}\left[K_{2}, K_{2}, K_{2}\right]$. Similar strategy we used to prove (\ref{5.8}) can be applied to obtain the estimate (\ref{5.9}). The proof of is complete.
\end{proof}
\subsection{Cascade result}
Before we go to prove Theorem \ref{Theorem 1.6}, we recall Proposition \ref{Proposition 1.4}.
\begin{proposition}
\label{Proposition 5.2}
Let $u \in H_{+}^1(\mathbb{R})$ be a rational solution to the cubic Szeg\H o equation on the line (\ref{1.51})  such that $H_u$ has singular values $\lambda_{1}, \cdots, \lambda_{N}$, with $\lambda_1$ being multiple and $\lambda_j$ being simple for every $j \geq 2$. Then
\begin{align*}
0<\liminf_{t \rightarrow\infty} \frac{\left\|\partial_{x} u(t)\right\|_{L^{2}}}{|t|} \leq \limsup_{t \rightarrow\infty} \frac{\left\|\partial_{x} u(t)\right\|_{L^{2}}}{|t|} < +\infty,
\end{align*}
where $H_u$ is an Hankel operator, and we say that $\lambda > 0$ is a singular value of $H_u$ if the corresponding Schmidt subspace
\begin{align*}
E_{H_{u}}(\lambda)=\operatorname{Ker}\left(H_{u}^{2}-\lambda^{2} I\right)
\end{align*}
is not $\{ 0 \}$. \\\\
Moreover, the set
\begin{align*}
\begin{array}{l}\mathcal{J}=\{u_{0} \in H_{+}^{1}\left(\mathbb{R}\right): \text {the solution to (\ref{1.51}) with initial data $u_0$ satisfying } \\  \liminf_{t\to\infty}\frac{\|u(t)\|_{H^{1}}}{|t|} >0  \}\end{array}
\end{align*}
is a dense subset of $H_{+}^{1}\left(\mathbb{R}\right)$.
\end{proposition}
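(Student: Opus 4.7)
The proposition has two parts: (i) a quantitative cascade statement for rational solutions whose Hankel operator $H_u$ has a multiple top singular value and otherwise simple spectrum, and (ii) the density in $H_{+}^{1}(\mathbb{R})$ of initial data producing such cascade. Part (i) is the deep Gérard--Pushnitski result; part (ii) is an approximation argument. The plan is to outline the key steps of each.

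For part (i), the central tool is the Lax pair structure of the cubic Szeg\H o equation: if $v$ solves $i\partial_t v = \Pi_+(|v|^2 v)$ then $\frac{d}{dt}H_v = [B_v, H_v]$ with $B_v = \frac{i}{2}H_v^2 - i T_{|v|^2}$. Consequently the family $\{H_{v(t)}\}$ is unitarily conjugate to $H_{v(0)}$, so the singular values $\lambda_1, \dots, \lambda_N$ and the dimensions of the Schmidt subspaces $E_{H_v}(\lambda_j)$ are conserved; for a rational $v$ the flow reduces to a finite-dimensional dynamical system on the product of Schmidt subspaces. I would then use the explicit formula expressing $u(t)$ in terms of the action of $e^{-itH_u^2/2}$ and $e^{-it T_{|u|^2}}$ on a generating vector. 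When $\dim E_{H_u}(\lambda_1) \geq 2$, the action of $e^{-itH_u^2/2}$ inside this subspace, combined with the Toeplitz factor, produces a phase whose interaction with the simple components $E_{H_u}(\lambda_j)$, $j\geq 2$, yields a resonance: writing $\|\partial_x u(t)\|_{L^2}^2$ as a sum of matrix elements in the spectral decomposition of $H_u$, one obtains a dominant oscillatory term of size $\asymp t$, and bounded remainder terms. This yields both $\liminf |t|^{-1}\|\partial_x u(t)\|_{L^2} > 0$ and $\limsup |t|^{-1}\|\partial_x u(t)\|_{L^2} < \infty$.

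For part (ii), the strategy is to approximate an arbitrary $f \in H_+^1(\mathbb{R})$ by a rational element whose Hankel operator has the required spectral structure. By Kronecker's theorem, functions in $H_+^1(\mathbb{R})$ whose Hankel operator has finite rank (i.e.\ rational Hardy functions) are dense in $H_+^1(\mathbb{R})$, so given $\varepsilon > 0$ I would first pick a rational $w$ with $\|f - w\|_{H^1} < \varepsilon/2$. If $H_w$ already satisfies the spectral assumption, we are done; otherwise I would perturb $w$ by adding a small rational piece such as $\kappa/(x-p)$ with $p$ in the upper half-plane, tuned so that the largest singular value of the perturbed operator becomes double while the remaining singular values, stable under small perturbations, remain simple (any accidental coincidence among $\lambda_2, \dots, \lambda_N$ can be removed by further generic perturbation of $p$). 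Choosing $\kappa$ small keeps the total $H^1$ perturbation under $\varepsilon$, so the perturbed initial data still approximates $f$ and belongs to $\mathcal{J}$ by part (i).

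The main obstacle is clearly part (i): making rigorous the statement that a multiple top singular value forces linear-in-$t$ growth of $\|\partial_x u(t)\|_{L^2}$. The heart of the matter is that within the multi-dimensional eigenspace $E_{H_u}(\lambda_1)$ the dynamics fails to average out, producing a coherent secular contribution of order $t$, whereas simple singular values contribute only bounded oscillations. Establishing uniform non-vanishing of the relevant matrix coefficients over all large $t$, and showing they dominate the lower-order terms produced by the other $\lambda_j$, is the genuinely delicate step and the technical core of the Gérard--Pushnitski analysis.
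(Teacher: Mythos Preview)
The paper does not prove Proposition~\ref{Proposition 5.2}: part (i) is simply the statement of Proposition~\ref{Proposition 1.4}, quoted verbatim from G\'erard--Pushnitski, and part (ii) is asserted without argument (and in fact is not used in the proof of Theorem~\ref{Theorem 1.6}, which only requires the \emph{existence} of one rational datum with the stated spectral structure). So there is no ``paper's own proof'' to compare against; your outline for (i) is a reasonable pointer to the cited reference, consistent with what the paper does.

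Your argument for (ii), however, has a real gap. You propose to approximate $f$ by a rational $w$ and then add a small piece $\kappa/(x-p)$ ``tuned so that the largest singular value of the perturbed operator becomes double.'' But multiplicity of a singular value of $H_u$ (equivalently, a repeated eigenvalue of the positive self-adjoint operator $H_u^2$) is a positive-codimension condition in the space of finite-rank Hankel symbols; generic small perturbations of an operator with simple spectrum \emph{preserve} simplicity rather than create collisions. With only the parameters $(\kappa,p)$ at your disposal you have no transversality statement guaranteeing that the multiplicity locus is hit for arbitrarily small $\|\kappa/(x-p)\|_{H^1}$, and nothing you wrote rules out that the required $\kappa$ is bounded below. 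A correct density proof would instead go through the inverse spectral theory developed in the G\'erard--Pushnitski paper: one prescribes spectral data with exactly one multiple singular value and constructs the corresponding rational symbol directly, then shows these symbols are $H^1$-dense. Also note a minor misreading: the hypothesis does not say $\lambda_1$ is the \emph{largest} singular value, only that one singular value is multiple and the others simple.
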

Let $u(t,\cdot) \in H_{+}^1(\mathbb{R})$ be a rational solution to the cubic Szeg\H o equation on the line (\ref{1.51}) such that $H_{u}$ has singular values $\lambda_{1}, \cdots, \lambda_{N}$, with $\lambda_1$ being multiple and $\lambda_j$ being simple for every $j \geq 2$ and let $u_0(y) := u(0,y)$. According to Proposition \ref{Proposition 5.2}, we have
\begin{align*}
0<\liminf_{t \rightarrow\infty} \frac{\left\|\partial_{y} u(t)\right\|_{L^{2}}}{|t|} \leq \limsup_{t \rightarrow\infty} \frac{\left\|\partial_{y} u(t)\right\|_{L^{2}}}{|t|} < +\infty.
\end{align*}
We take a cutoff function $\psi \in C_c^\infty(\mathbb{R})$ as follows:  $\psi(\xi) = 1 $ when $|\xi| \leq 1$, $0<\psi(\xi)<1 $ when $1< |\xi| < 2 $ and $\psi(\xi)=0$ when $|\xi| \geq 2$. Let $\widehat{G}_0(\xi,y) : = \rho\,\psi(\xi)u_0(y)$, where $\rho>0$ is a constant to be determined. We are to verify that $G_0(x,y) \in \mathcal{Y}^{+}$.

We notice that the Fourier transform of a rational function in $H_{+}^1(\mathbb{R})$ is a linear combination of
\begin{align*}
\mathbbm{1}_{\eta \geq 0} \, \eta^{k} \mathrm{e}^{-\alpha \eta},
\end{align*}
where $k$ is a nonnegative integer and $\alpha$ is a complex number of positive real part. So we can easily verify that $u_0 \in H^p(\mathbb{R}),\, \forall p \geq 0$. Also, as $u_0 \in H_{+}^1(\mathbb{R}) \subset L^\infty(\mathbb{R})$ is a rational function, the corresponding Hankel operator $H_{u_0}$ has finite rank, so $\sqrt{H_{u_0}^2}$ has finite trace norm. By Peller's theorem in [\ref{[5]}], we have the equivalence between the trace norm of $\sqrt{H_{u_0}^2}$ and $\|u_0\|_{\dot{B}_{1,1}^1(\mathbb{R})}$, so we have $u_0 \in \mathcal{G}$. Moreover, since $\psi \in C_{c}^{\infty}(\mathbb{R})$, we have $(1+|\xi|)^n\psi(\xi)$ and $ \partial_\xi^m\psi(\xi)$ are all in $L^2(\mathbb{R}) \cap L^\infty(\mathbb{R})$ for any $n,m\geq0$. In summary, we verify that $G_0(x,y) \in \mathcal{Y}^{+}$.

For any $\varepsilon > 0$, by a good choice of $\rho_{\varepsilon}$, we can make $G_0$ saitisfy $\|G_0\|_{\mathcal{Y}^{+}} \leq \varepsilon$, which verifies the condition of Theorem \ref{Theorem 5.1}.  Then we observe that $G(t,x,y) = \mathcal{F}_{\xi\to x}^{-1}\left(\rho_{\varepsilon}\psi(\xi) u(\rho_{\varepsilon}^2\psi(\xi)^2 t,\xi,y)\right)$ satisfies the following equation,
\begin{equation}
\left\{\begin{array}{l}i \partial_{t} G=\mathcal{R}[G, G, G],\quad (x,y) \in \mathbb{R}\times\mathbb{R}, \\ G(0)=G_{0}.\end{array}\right.
\end{equation}
Then we have
\begin{equation}
\label{5.14}
\begin{aligned}
\liminf_{t\to\infty} \frac{\|G(t)\|_{L_{x}^{2} H_{y}^{1}}}{|t|} & \gtrsim \liminf_{t\to\infty} \frac{\left(\int_{|\xi|\leq 1}\|\widehat{G}(t, \xi,y)\|_{H_{y}^{1}}^{2} d \xi\right)^{1 / 2}}{|t|} \\ & = \sqrt{2} \rho_{\varepsilon}^3\liminf_{t\to\infty}\frac{\|u(\rho_{\varepsilon}^2t,y)\|_{H_{y}^{1}}}{\rho_{\varepsilon}^2|t|} >0
\end{aligned}
\end{equation}
and 
\begin{equation}
\label{5.145}
\begin{aligned}
\limsup_{t\to\infty} \frac{\|G(t)\|_{L_{x}^{2} H_{y}^{1}}}{|t|} & \lesssim \limsup_{t\to\infty} \frac{\left(\int_{|\xi| < 2}\|\widehat{G}(t, \xi,y)\|_{H_{y}^{1}}^{2} d \xi\right)^{1 / 2}}{|t|} \\ & \lesssim \rho_{\varepsilon}^3 \left( \int_{|\xi| < 2} \limsup_{t\to\infty} \frac{\|u(\rho_{\varepsilon}^2\psi(\xi)^2t,y)\|_{H_y^1}^2}{\rho_{\varepsilon}^4\psi(\xi)^4 |t|^2}\right)^{\frac{1}{2}}<+\infty.
\end{aligned}
\end{equation}
From (\ref{5.14}), (\ref{5.145}) and Theorem \ref{Theorem 5.1}, we deduce the corresponding cascade result for the solutions to (\ref{1.1}), which is Theorem \ref{Theorem 1.6}, and we rewrite it as follows.
\begin{theorem}
Given $N \geq 3$, then for any $\varepsilon > 0$, there exists $U_{0} \in \mathcal{Y}^{+}$ with $\left\|U_{0}\right\|_{\mathcal{Y}^{+}} \leq \varepsilon$, such that the corresponding solution to (\ref{1.1}) satisfies
\begin{equation}
0<\liminf _{t \rightarrow \infty} \frac{\|U(t)\|_{L_{x}^{2} H_{y}^{1}}}{(1+\log |t|)} \leq  \limsup _{t \rightarrow \infty} \frac{\|U(t)\|_{L_{x}^{2} H_{y}^{1}}}{(1+\log |t|)} < +\infty.
\end{equation}
\end{theorem}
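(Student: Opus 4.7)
The plan is to deduce Theorem 1.6 from the modified wave operators result (Theorem 1.5) combined with the Gérard--Pushnitski cascade result (Proposition 1.4) for rational solutions to the cubic Szegő equation on the line. The key idea is to transfer the linear-in-$t$ growth of $\|\partial_y u(t)\|_{L^2}$ for a rational Szegő solution $u$ into a $\log t$ growth of $\|U(t)\|_{L_x^2 H_y^1}$, via the logarithmic time reparametrization $t \mapsto \pi \ln t$ built into the modified scattering. First I would pick a rational $u_0 \in H_{+}^{1}(\mathbb{R})$ satisfying the hypotheses of Proposition 1.4, so that the Szegő solution $u(t,y)$ it generates obeys $0 < \liminf_{|t|\to\infty}\|\partial_y u(t)\|_{L^2}/|t| \leq \limsup_{|t|\to\infty}\|\partial_y u(t)\|_{L^2}/|t| < \infty$. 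Fixing a cutoff $\psi \in C_c^\infty(\mathbb{R})$ equal to $1$ on $[-1,1]$ and vanishing outside $[-2,2]$, I would set $\widehat{G}_0(\xi,y) := \rho_\varepsilon \psi(\xi) u_0(y)$ for a small parameter $\rho_\varepsilon>0$ to be tuned; since $u_0 \in H_+^1$ and the non-chiral resonant operator $\mathcal{R}$ decouples in Fourier on each $\xi$-slice into the scalar Szegő nonlinearity, the resonant solution separates as $\widehat{G}(t,\xi,y) = \rho_\varepsilon \psi(\xi) u\bigl(\rho_\varepsilon^{2}\psi(\xi)^2 t,\, y\bigr)$.

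Next I would verify $G_0 \in \mathcal{Y}^+$ with $\|G_0\|_{\mathcal{Y}^+} \leq \varepsilon$ by taking $\rho_\varepsilon$ sufficiently small. For the $x$-side factors of $\mathcal{Y}^+$ (the $H^N_x$ and $x$-weighted and $|D_x|$-weighted pieces), finiteness is immediate because $\psi \in C_c^\infty$ has a Schwartz inverse Fourier transform; for the $y$-side, a rational $u_0$ lies in $H^s_y$ for all $s$, and since the Hankel operator $H_{u_0}$ has finite rank, the trace norm of $\sqrt{H_{u_0}^2}$ is finite, so Peller's theorem yields $u_0 \in \dot B^{1}_{1,1}$ and hence $u_0 \in \mathcal{G}$. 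Theorem 1.5 then produces a solution $U$ of (1.1) with $\|\mathrm{e}^{-it\mathcal{A}}U(t) - G(\pi \ln t)\|_{\mathcal{Y}} \to 0$, which forces the $L^2_x H^1_y$ error to vanish; since $\mathrm{e}^{it\mathcal{A}}$ is an isometry of $L^2_x H^1_y$, I conclude $\|U(t)\|_{L^2_x H^1_y} = \|G(\pi \ln t)\|_{L^2_x H^1_y} + o(1)$ as $t \to \infty$. Restricting the Fourier integral in $x$ to $|\xi| \leq 1$, where $\psi \equiv 1$, gives the lower bound $\|G(\tau)\|_{L^2_x H^1_y} \gtrsim \rho_\varepsilon \|\partial_y u(\rho_\varepsilon^{2}\tau)\|_{L^2}$, which by Proposition 1.4 is $\gtrsim \rho_\varepsilon^{3}\tau$ for large $\tau$; an analogous integral over $|\xi| \leq 2$ yields the matching upper bound $\|G(\tau)\|_{L^2_x H^1_y} \lesssim \rho_\varepsilon^{3}\tau$. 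Setting $\tau = \pi \ln t$ produces both sides of Theorem 1.6.

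The main obstacle I expect is in the upper bound for the $\limsup$, because the integral $\int_{|\xi|<2}\|u(\rho_\varepsilon^{2}\psi(\xi)^2\tau,\cdot)\|_{H^1_y}^{2}\,d\xi$ samples the Szegő solution at a continuum of rescaled times, including arbitrarily small ones where $\psi(\xi)$ is close to $0$. One therefore needs the uniform control $\|u(s)\|_{H^1_y} \lesssim 1 + |s|$ for all $s \in \mathbb{R}$, which follows from the $\limsup$ half of Proposition 1.4 together with continuity of $s \mapsto \|u(s)\|_{H^1}$ on bounded intervals (rational Szegő solutions being smooth and global). A secondary, milder difficulty is the transition annulus $1 < |\xi| < 2$ where $\psi$ is strictly between $0$ and $1$: once the uniform linear bound above is available, the contribution of this annulus is majorized by $C\rho_\varepsilon^{2}\tau^{2}\cdot |\{1<|\xi|<2\}|$, which sits inside the desired $\tau^2$ bound. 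After these are in hand, Theorem 1.6 follows by substituting $\tau = \pi \ln t$ and absorbing the $o(1)$ scattering error.
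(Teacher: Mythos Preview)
Your proposal is correct and follows essentially the same approach as the paper: construct $\widehat{G}_0(\xi,y)=\rho_\varepsilon\psi(\xi)u_0(y)$ from a rational Szeg\H{o} datum $u_0$ satisfying Proposition~1.4, verify $G_0\in\mathcal{Y}^+$ via Peller's theorem and the Schwartz structure of $\psi$, use the explicit separation $\widehat{G}(t,\xi,y)=\rho_\varepsilon\psi(\xi)u(\rho_\varepsilon^2\psi(\xi)^2 t,y)$ to obtain the two-sided bound $\|G(\tau)\|_{L^2_xH^1_y}\simeq\tau$, and then transfer via Theorem~1.5 and $\tau=\pi\ln t$. Your explicit invocation of the uniform bound $\|u(s)\|_{H^1_y}\lesssim 1+|s|$ is in fact the rigorous justification for the step the paper writes as pulling the $\limsup$ inside the $\xi$-integral in (5.145), so your treatment of the upper bound is if anything slightly more careful than the paper's.
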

\begin{remark}
As in Proposition \ref{Proposition 5.2}, we expect that there exists some Banach space $B$ such that the set
\begin{align*}
J:=\left\{U_{0} \in B: \text{ the solution to (\ref{1.1}) with initial data } U_0 \text{ satisfying } \liminf_{t \rightarrow \infty} \frac{\|U(t)\|_{L_x^2 H_y^1}}{1+\log |t|}>0\right\}
\end{align*}
is a dense subset of $B$. The difficulty comes from the gap between $\mathcal{Y}$ and $\mathcal{Y}^{+}$ in the modified wave operator argument, which also exists in the result of H. Xu [\ref{[2]}].
\end{remark}
\section{Appendix}
We now turn to our basic lemma allowing to transform suitable $L_{x,y}^2$ bounds to bounds in terms of the $L_{x,y}^2$-based spaces $S, \mathcal{Y}$ and $S^{+}, \mathcal{Y}^{+}$. We define an LP-family $\widetilde{Q}=\left\{\widetilde{Q}_{A}\right\}_{A}$ to be a family of operators (indexed by the dyadic integers) of the form
\begin{align*}
\widehat{\widetilde{Q}_1 f}(\xi)=\widetilde{\varphi}(\xi) \widehat{f}(\xi), \quad \widehat{\widetilde{Q}_{A} f}(\xi)=\widetilde{\phi}\left(\frac{\xi}{A}\right) \widehat{f}(\xi), \quad A \geq 2
\end{align*}
for two smooth functions $\widetilde{\varphi}, \widetilde{\phi} \in C_{c}^{\infty}(\mathbb{R}) \text { with } \widetilde{\phi} \equiv 0 \text { in a neighborhood of } 0$. \\\\
We define the set of admissible transformations to be the family of operators $\left\{T_{A}\right\}$ where for any dyadic number $A$,
\begin{center}
$T_{A}=\lambda_{A} \widetilde{Q}_{A}, \quad\left|\lambda_{A}\right| \leq 1$
\end{center}
for some LP-family $\widetilde{Q}$.

Let $\mathcal{B}$ be a $L_{x,y}^2$-based space. If $F \in \mathcal{B}$, then for any admissible transformation family $T=\left\{T_{A}: A \text { dyadic numbers }\right\}$, $\sum_{A} T_{A} F$ converges in $\mathcal{B}$. And this norm $\mathcal{B}$ is called admissible if 
\begin{equation}
\left\|\sum_{A} T_{A} F\right\|_{\mathcal{B}} \lesssim\|F\|_{\mathcal{B}}.
\end{equation}
We observe that $S$ and $Z$ are admissible.\\\\
Given a trilinear operator $\mathfrak{T}$ and a set $\Lambda$ of 4-tuples of dyadic integers, we define an admissible realization of $\mathfrak{T}$ at $\Lambda$ to be an operator of the form which converges in $L^2$,
\begin{equation}
\label{6.2}
\mathfrak{T}_{\Lambda}[F, G, H]=\sum_{(A, B, C, D) \in \Lambda} T_{D} \mathfrak{T}\left[T_{A}^{\prime} F, T_{B}^{\prime \prime} G, T_{C}^{\prime \prime \prime} H\right]
\end{equation}
for some admissible transformations $T, T^{\prime}, T^{\prime \prime}, T^{\prime \prime \prime}$. Then we introduce the following transfer lemma, which is also introduced in [\ref{[2]}].
\begin{lemma}
[\ref{[2]}, Lemma 5.2]
\label{Lemma 6.2}
Consider a trilinear operator $\mathfrak{T}$ which satisfies
\begin{equation}
\label{6.3}
Z \mathfrak{T}[F, G, H]=\mathfrak{T}[Z F, G, H]+\mathfrak{T}[F, Z G, H]+\mathfrak{T}[F, G, Z H],
\end{equation}
for $Z \in\left\{x, \partial_{x}, \partial_{y}\right\}$ and let $\Lambda$ be a set of 4-tuples of dyadic integers. Assume that for all admissible realizations of $\mathfrak{T}$ at $\Lambda$, 
\begin{equation}
\label{6.4}
\left\|\mathfrak{T}_{\Lambda}\left[F^{a}, F^{b}, F^{c}\right]\right\|_{L_{x,y}^{2}} \lesssim K \min _{\{\alpha, \beta, \gamma\}=\{a, b, c\}}\left\|F^{\alpha}\right\|_{L_{x,y}^{2}}\left\|F^{\beta}\right\|_{\mathcal{B}}\left\|F^{\gamma}\right\|_{\mathcal{B}}
\end{equation}
for some admissible norm $\mathcal{B}$ such that the Littlewood–Paley projectors $P_{\leq M}$(both in $x$ and in $y$) are uniformly bounded on $\mathcal{B}$. Then, for all admissible realizations of $\mathfrak{T}$ at $\Lambda$, we have
\begin{equation}
\label{6.5}
\left\|\mathfrak{T}_{\Lambda}\left[F^{a}, F^{b}, F^{c}\right]\right\|_{S'} \lesssim K \max _{\{\alpha, \beta, \gamma\}=\{a, b, c\}}\left\|F^{\alpha}\right\|_{S'}\left\|F^{\beta}\right\|_{\mathcal{B}}\left\|F^{\gamma}\right\|_{\mathcal{B}}.
\end{equation}
Moreover, for all admissible realizations of $\mathfrak{T}$ at $\Lambda$, we have
\begin{equation}
\label{6.51}
\left\|\mathcal{F}_{x \to \xi }\mathfrak{T}_{\Lambda}\left[F^{a}, F^{b}, F^{c}\right]\right\|_{L_{\xi}^\infty L_y^2} \lesssim K \max _{\{\alpha, \beta, \gamma\}=\{a, b, c\}}\left\|F^{\alpha}\right\|_{S^{\prime}}\left\|F^{\beta}\right\|_{\mathcal{B}}\left\|F^{\gamma}\right\|_{\mathcal{B}}.
\end{equation}
\end{lemma}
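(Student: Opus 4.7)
The strategy is to use the Leibniz-type identity \eqref{6.3} to reduce every operation defining the $S'$ norm to an operation on the inputs of $\mathfrak{T}_\Lambda$, and then invoke the hypothesis \eqref{6.4}. Concretely, $\|F\|_{S'}=\|F\|_{H^N_{x,y}}+\|xF\|_{L^2_{x,y}}$, so we handle the weight $x$ and the derivatives $\partial_x,\partial_y$ separately. For the weight, applying \eqref{6.3} with $Z=x$ gives
\[
x\,\mathfrak{T}_\Lambda[F^a,F^b,F^c]=\mathfrak{T}_\Lambda[xF^a,F^b,F^c]+\mathfrak{T}_\Lambda[F^a,xF^b,F^c]+\mathfrak{T}_\Lambda[F^a,F^b,xF^c],
\]
and each summand is bounded by placing the factor containing $xF^\alpha$ in the ``min'' slot of \eqref{6.4}; this directly produces the $\|xF^\alpha\|_{L^2_{x,y}}$ contribution to $\|F^\alpha\|_{S'}$.

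For the Sobolev part, fix a multi-index with $|\alpha|+|\beta|\le N$ and iterate \eqref{6.3} with $Z\in\{\partial_x,\partial_y\}$. This rewrites $\partial_x^\alpha\partial_y^\beta\mathfrak{T}_\Lambda[F^a,F^b,F^c]$ as a finite sum of terms $\mathfrak{T}_\Lambda[\partial^{\gamma_1}F^a,\partial^{\gamma_2}F^b,\partial^{\gamma_3}F^c]$ with $|\gamma_1|+|\gamma_2|+|\gamma_3|=|\alpha|+|\beta|$. Next, insert a Littlewood--Paley partition of unity in the $(x,y)$-frequencies in each input and split according to which of the three LP pieces has the largest frequency; say it is the first one. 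On a fixed piece $P_{M_a}F^a$ one has $\|\partial^{\gamma_1}P_{M_a}F^a\|_{L^2}\lesssim M_a^{|\gamma_1|}\|P_{M_a}F^a\|_{L^2}$ by Bernstein, and, crucially, each $M_a^{|\gamma_j|}\partial^{\gamma_j}P_{M_j}$ factorises as an admissible transformation times a scalar of size $\lesssim 1$ (after renormalisation by $M_a^{|\gamma_1|+|\gamma_2|+|\gamma_3|}\le M_a^N$). Thus the sum over $M_a\ge M_b,M_c$ can be recast as a \emph{new} admissible realisation of $\mathfrak{T}$ at $\Lambda$, to which hypothesis \eqref{6.4} applies; we invoke it with the first input in the $L^2$ slot, and the resulting $M_a^N\|P_{M_a}F^a\|_{L^2_{x,y}}$ is summed via Plancherel into $\|F^a\|_{H^N_{x,y}}$, while admissibility of $\mathcal{B}$ keeps $\|P_{M_b}F^b\|_\mathcal{B}$ and $\|P_{M_c}F^c\|_\mathcal{B}$ under control. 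Taking the maximum over which factor carries the $S'$ norm yields \eqref{6.5}.

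For \eqref{6.51}, start from the one-variable bound
\[
\|\widehat{g}\|_{L^\infty_\xi L^2_y}\le\|g\|_{L^1_x L^2_y}\lesssim\|g\|_{L^2_{x,y}}^{1/2}\|xg\|_{L^2_{x,y}}^{1/2},
\]
the second inequality following from Cauchy--Schwarz split at a radius $R\simeq\|xg\|_{L^2}/\|g\|_{L^2}$. Apply this to $g=\mathfrak{T}_\Lambda[F^a,F^b,F^c]$: the first factor is controlled by \eqref{6.4} and the second by the $xF$ argument above. A geometric-mean combination then gives the desired $\max_{\{\alpha,\beta,\gamma\}=\{a,b,c\}}\|F^\alpha\|_{S'}\|F^\beta\|_\mathcal{B}\|F^\gamma\|_\mathcal{B}$ bound.

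The main obstacle is the bookkeeping in the Sobolev step: one must verify that after distributing $\partial_x^\alpha\partial_y^\beta$ by Leibniz, introducing LP cutoffs, and normalising by $M_a^N$, the resulting triple sum over $(M_a,M_b,M_c)$ is genuinely of the admissible form \eqref{6.2} with coefficients $|\lambda_A|\lesssim 1$ uniformly. This requires a careful choice of the LP-family applied to the highest-frequency input (so that $M_a^{|\gamma_1|}\partial^{\gamma_1}$ produces an LP-family with compact Fourier support away from $0$) and a routine but attentive check that the two lower-frequency inputs can be paired with LP-families whose zero-mode issue is absorbed by $\widetilde{\varphi}$ rather than $\widetilde{\phi}$; once this is arranged, summability in $M_b,M_c\le M_a$ is immediate from admissibility of $\mathcal{B}$.
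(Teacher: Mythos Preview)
Your overall strategy matches the paper's, but there are two points where your argument as written does not go through, and both are worth flagging.

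For the weighted part, the Leibniz rule \eqref{6.3} is stated for $\mathfrak{T}$, not for $\mathfrak{T}_\Lambda=\sum T_D\mathfrak{T}[T_A'\,\cdot\,,T_B''\,\cdot\,,T_C'''\,\cdot\,]$. Multiplication by $x$ does \emph{not} commute with the Fourier multipliers $T_A$, so the identity $x\mathfrak{T}_\Lambda[F^a,F^b,F^c]=\mathfrak{T}_\Lambda[xF^a,\cdot,\cdot]+\cdots$ is false as stated: there are four additional commutator contributions coming from $[x,T_D]$, $[x,T_A']$, $[x,T_B'']$, $[x,T_C''']$. The paper observes that $[x,Q_A]=A^{-1}Q_A'$ is itself an admissible transformation, so each commutator term is again an admissible realization and \eqref{6.4} applies to it. This is a small repair, but you should make it explicit.

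For the Sobolev part, your plan to ``recast the sum over $M_a\ge M_b,M_c$ as a new admissible realisation'' does not work as stated: the admissible transformations in this paper are \emph{$x$-frequency} multipliers only, whereas the Littlewood--Paley projectors $P_{M}$ you insert act on full $(x,y)$-frequencies. The composition $T_A'\circ(\lambda P_{M_a})$ is therefore not an admissible transformation in the sense of \eqref{6.2}. The paper sidesteps this entirely: it does not try to absorb the $P_{M_j}$ into the admissible family, but simply applies the hypothesis \eqref{6.4} with the specific inputs $Z^\alpha P_{M_j}F^j$ (valid since \eqref{6.4} holds for arbitrary inputs), then uses Bernstein to convert derivatives to powers of the largest $M_j$, and finally resums the lower-frequency factors via the assumed uniform boundedness of $P_{\le M}$ on $\mathcal{B}$ together with a Schur test in the output frequency $M$. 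Your (6.51) argument via $\|\widehat g\|_{L^\infty_\xi L^2_y}\lesssim\|g\|_{L^2}^{1/2}\|xg\|_{L^2}^{1/2}$ is exactly what the paper does.
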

\begin{proof}
We recall that $S'$ consists of two norms: the $x$ weighed $L_{x,y}^2$ norm and the $H_{x,y}^N$ norm.\\\\
1. \textbf{The $x$ weighted $L_{x,y}^2$ norm. }By $x T_{A}=\left[x, T_{A}\right]+T_{A} x$ and using (\ref{6.3}), we have
\begin{align*} x \mathfrak{T}_{\Lambda}\left[F^{a}, F^{b}, F^{c}\right] &=\sum_{(A, B, C, D) \in \Lambda} x T_{D} \mathfrak{T}\left[T_{A}^{\prime} F^{a}, T_{B}^{\prime \prime} F^{b}, T_{C}^{\prime \prime \prime} F^{c}\right] \\ &=\sum_{(A, B, C, D) \in \Lambda}\left[x, T_{D}\right] \mathfrak{T}\left[T_{A}^{\prime} F^{a}, T_{B}^{\prime \prime} F^{b}, T_{C}^{\prime \prime \prime} F^{c}\right] \\ & +\sum_{(A, B, C, D) \in \Lambda} T_{D} \mathfrak{T}\left[\left[x, T_{A}^{\prime}\right] F^{a}, T_{B}^{\prime \prime} F^{b}, T_{C}^{\prime \prime \prime} F^{c}\right] \\ & +\sum_{(A, B, C, D) \in \Lambda} T_{D} \mathfrak{T}\left[T_{A}^{\prime} F^{a},\left[x, T_{B}^{\prime \prime}\right] F^{b}, T_{C}^{\prime \prime \prime} F^{c}\right]\\ &+\sum_{(A, B, C, D) \in \Lambda} T_{D} \mathfrak{T}\left[T_{A}^{\prime} F^{a}, T_{B}^{\prime \prime} F^{b},\left[x, T_{C}^{\prime \prime \prime}\right] F^{c}\right] \\ & +\mathfrak{T}_{\Lambda}\left[x F^{a}, F^{b}, F^{c}\right]+\mathfrak{T}_{\Lambda}\left[F^{a}, x F^{b}, F^{c}\right]+\mathfrak{T}_{\Lambda}\left[F^{a}, F^{b}, x F^{c}\right].\end{align*}
We also have
\begin{align*}
\left[x, Q_A\right]=A^{-1} Q_{A}^{\prime}.
\end{align*}
We observe that if $Q_{A}$ is an LP-family, $Q_{A}^{\prime}$ is also an LP-family, then $\left[x, T_{A}\right]$ is also anadmissible transformation. Thus, we consider $x\mathfrak{T}_{\Lambda}\left[F^{a}, F^{b}, F^{c}\right]$ as the following summation,
\begin{equation}
\label{6.13}
\mathfrak{T}_{\Lambda}\left[F^{a}, F^{b}, F^{c}\right]+\mathfrak{T}_{\Lambda}\left[x F^{a}, F^{b}, F^{c}\right]+\mathfrak{T}_{\Lambda}\left[F^{a}, x F^{b}, F^{c}\right]+\mathfrak{T}_{\Lambda}\left[F^{a}, F^{b}, x F^{c}\right],
\end{equation}
then $\left\|x \mathfrak{T}_{\Lambda}\left[F^{a}, F^{b}, F^{c}\right]\right\|_{L_{x,y}^{2}}$ follows from (\ref{6.4}). \\\\
2.\textbf{ The $H_{x,y}^N$ norm. }We have
\begin{align*}
\|F\|_{H_{x,y}^{N}}^{2}:=\sum_{M \text { dyadic }} M^{2 N}\left\|P_{M} F\right\|_{L_{x,y}^{2}}^{2},
\end{align*}
with $P_{M}$ as the Littlewood–Paley projections on $\mathbb{R} \times \mathbb{R}$. Then we decompose
\begin{align*}
P_{M} \mathfrak{T}_{\Lambda}\left[F^{a}, F^{b}, F^{c}\right]=P_{M} \mathfrak{T}_{\Lambda, l o w}\left[F^{a}, F^{b}, F^{c}\right]+P_{M} \mathfrak{T}_{\Lambda, h i g h}\left[F^{a}, F^{b}, F^{c}\right],
\end{align*}
with $\mathfrak{T}_{\Lambda, l o w}\left[F^{a}, F^{b}, F^{c}\right]=\mathfrak{T}_{\Lambda}\left[P_{\leq M} F^{a}, P_{\leq M} F^{b}, P_{\leq M} F^{c}\right]$.\\\\
Firstly we have
\begin{equation}
\sum_{M \text { dyadic }} M^{2 N}\left\|P_{M} \mathfrak{T}_{\Lambda, h i g h}\left[F^{a}, F^{b}, F^{c}\right]\right\|_{L_{x,y}^{2}}^{2} \lesssim K^{2} \max _{\{\alpha, \beta, \gamma\}=\{a, b, c\}}\left\|F^{\alpha}\right\|_{H_{x,y}^{N}}^{2}\left\|F^{\beta}\right\|_{\mathcal{B}}^{2}\left\|F^{\gamma}\right\|_{\mathcal{B}}^{2},
\end{equation}
since
\begin{equation}
\begin{array}{l}\sum_{M}|M|^{2 N}\left\|P_{M} \mathfrak{T}_{\Lambda}\left[P_{\geq 2 M} F^{a}, F^{b}, F^{c}\right]\right\|_{L_{x,y}^{2}}^{2} \lesssim K^{2} \sum_{M}|M|^{2 N}\left\|P_{\geq 2 M} F^{a}\right\|_{L_{x,y}^{2}}^{2}\left\|F^{b}\right\|_{\mathcal{B}}^{2}\left\|F^{c}\right\|_{\mathcal{B}}^{2} \\ \quad \lesssim K^{2}\left\|F^{a}\right\|_{H_{x,y}^{N}}^{2}\left\|F^{b}\right\|_{\mathcal{B}}^{2}\left\|F^{c}\right\|_{\mathcal{B}}^{2}.\end{array}
\end{equation}
Let $Z \in\left\{\partial_{x}, \partial_{y}\right\}$, we bound the contribution of $\mathfrak{T}_{\Lambda, low}$ as follows,
\begin{equation}
\begin{aligned}
& M^{N}\left\|P_{M} \mathfrak{T}_{\Lambda, low }\right\|_{L_{x,y}^{2}} \\ \lesssim & M^{-N}\left\|Z^{2 N} P_{M} \mathfrak{T}_{\Lambda, l o w}\left[P_{\leq M} F^{a}, P_{\leq M} F^{b}, P_{\leq M} F^{c}\right]\right\|_{L_{x,y}^{2}} \\ = & M^{-N} \left\|\sum_{\alpha+\beta+\gamma \leq 2 N} \sum_{M_{1}, M_{2}, M_{3} \leq M} P_{M} \mathfrak{T}_{\Lambda, l o w}\left[Z^{\alpha} P_{M_{1}} F^{a}, Z^{\beta} P_{M_{2}} F^{b}, Z^{\gamma} P_{M_{3}} F^{c}\right] \right\|_{L_{x,y}^{2}}.
\end{aligned}
\end{equation}
Without loss of generality, we assume $M_{1}=\max \left(M_{1}, M_{2}, M_{3}\right) \leq M$, then we have
\begin{equation}
\begin{aligned} M^{N}\left\|P_{M} \mathfrak{T}_{\Lambda, low}\right\|_{L_{x,y}^{2}} & \lesssim \sum_{M_{1} \leq M} M^{-N} M_{1}^{2 N} \sum_{M_{2}, M_{3} \leq M_{1}}\left\|\mathfrak{T}_{\Lambda, low}\left[P_{M_{1}} F^{a}, P_{M_{2}} F^{b}, P_{M_{3}} F^{c}\right]\right\|_{L_{x,y}^{2}} \\ & \lesssim K \sum_{M_{1} \leq M}\left(\frac{M_{1}}{M}\right)^{-N} M_{1}^{N}\left\|P_{M_{1}} F^{a}\right\|_{L_{x,y}^{2}}\left\|F^{b}\right\|_{\mathcal{B}}\left\|F^{c}\right\|_{\mathcal{B}}. \end{aligned}
\end{equation}
By Schur test, the above sum is in $\ell_{M}^{2}$, then we have
\begin{equation}
\sum_{M \text { dyadic }} M^{2 N}\left\|P_{M} \mathfrak{T}_{\Lambda, low }\left[F^{a}, F^{b}, F^{c}\right]\right\|_{L_{x,y}^{2}}^{2} \lesssim K^{2} \max _{\{\alpha, \beta, \gamma\}=\{a, b, c\}}\left\|F^{\alpha}\right\|_{H_{x,y}^{N}}^{2}\left\|F^{\beta}\right\|_{\mathcal{B}}^{2}\left\|F^{\gamma}\right\|_{\mathcal{B}}^{2}.
\end{equation}
Thus we bound the $H_{x,y}^N$ norm, which indicates the estimate (\ref{6.5}). \\\\
Moreover, since we have (\ref{6.4}) and (\ref{6.5}), we can deduce that
\begin{equation}
\begin{aligned}
 \left\|\mathcal{F}_{x \to \xi }\mathfrak{T}_{\Lambda}\left[F^{a}, F^{b}, F^{c}\right]\right\|_{L_{\xi}^\infty L_y^2} \lesssim & \left\|\mathcal{F}_{x \to \xi }\mathfrak{T}_{\Lambda}\left[F^{a}, F^{b}, F^{c}\right]\right\|_{L_y^2 L_{\xi}^\infty }  \\  \lesssim & \left\|\mathfrak{T}_{\Lambda}\left[F^{a}, F^{b}, F^{c}\right]\right\|_{L_y^2 L_{x}^1 }  \\ \lesssim & \left\|\mathfrak{T}_{\Lambda}\left[F^{a}, F^{b}, F^{c}\right]\right\|_{L_{x,y}^2 }^{\frac{1}{2}}\left\|x\, \mathfrak{T}_{\Lambda}\left[F^{a}, F^{b}, F^{c}\right]\right\|_{L_{x,y}^2 }^{\frac{1}{2}} \\ \lesssim & K \max _{\{\alpha, \beta, \gamma\}=\{a, b, c\}}\left\|F^{\alpha}\right\|_{S^{\prime}}\left\|F^{\beta}\right\|_{\mathcal{B}}\left\|F^{\gamma}\right\|_{\mathcal{B}},
\end{aligned}
\end{equation}
\end{proof}
According to Lemma \ref{Lemma 6.2}, if we can give an estimate such as (\ref{6.4}) for a trilinear operator $\mathfrak{T}$ which satisfies (\ref{6.2}), then we can have the estimates such as (\ref{6.5}) and (\ref{6.51}) for this trilinear operator $\mathfrak{T}$. The following lemma is to give one of the condition for the derivation of (\ref{6.4}), and from this we have even more precise estimates.
\begin{lemma}
\label{Lemma 6.21}
Consider a trilinear operator $\mathfrak{T}$ which satisfies
\begin{equation}
\label{6.7}
Z \mathfrak{T}[F, G, H]=\mathfrak{T}[Z F, G, H]+\mathfrak{T}[F, Z G, H]+\mathfrak{T}[F, G, Z H],
\end{equation}
for $Z \in\left\{x, \partial_{x}, \partial_{y}\right\}$ and let $\Lambda$ be a set of 4-tuples of dyadic integers. Assume that for all admissible realizations of $\mathfrak{T}$ at $\Lambda$, we have
\begin{equation}
\label{6.8}
\left\|\mathfrak{T}_{\Lambda}\left[F^{a}, F^{b}, F^{c}\right]\right\|_{L_{x}^{2}} \lesssim K \min _{\{\alpha, \beta, \gamma\}=\{a, b, c\}}\left\|F^{\alpha}\right\|_{L_{x}^{2}}\left\|F^{\beta}\right\|_{L_x^2}^{\frac{1}{2}}\left\|x F^{\beta}\right\|_{L_x^2}^{\frac{1}{2}} \left\|F^{\gamma}\right\|_{L_x^2}^{\frac{1}{2}}\left\|x F^{\gamma}\right\|_{L_x^2}^{\frac{1}{2}},
\end{equation}
then we have
\begin{equation}
\label{6.9}
\begin{aligned}
\left\|\mathfrak{T}_{\Lambda}\left[F^{a}, F^{b}, F^{c}\right]\right\|_{L_{x,y}^{2}} & \lesssim  K \min _{\{\alpha, \beta, \gamma\}=\{a, b, c\}}\left\|F^{\alpha}\right\|_{L_{x,y}^{2}}\left\|F^{\beta}\right\|_{L_x^2 H_y^1}^{\frac{1}{2}}\left\|x F^{\beta}\right\|_{L_x^2 H_y^1}^{\frac{1}{2}} \left\|F^{\gamma}\right\|_{L_x^2 H_y^1}^{\frac{1}{2}}\left\|x F^{\gamma}\right\|_{L_x^2 H_y^1}^{\frac{1}{2}} \\ & \lesssim K \min _{\{\alpha, \beta, \gamma\}=\{a, b, c\}}\left\|F^{\alpha}\right\|_{L_{x,y}^{2}}\left\|F^{\beta}\right\|_{S}\left\|F^{\gamma}\right\|_{S},
\end{aligned}
\end{equation}
and
\begin{equation}
\label{6.10}
\begin{aligned}
\left\|x^i(\partial_y^2)^{j}\,\mathfrak{T}_{\Lambda}\left[F^{a}, F^{b}, F^{c}\right]\right\|_{L_y^k L_{x}^{2}} & \lesssim K \left\|x^i F^{a}\right\|_{L_{x}^{2}H_y^2}\left\|F^{b}\right\|_{L_x^2 H_y^2}^{\frac{1}{2}}\left\|x F^{b}\right\|_{L_x^2 H_y^2}^{\frac{1}{2}} \left\|F^{c}\right\|_{L_x^2 H_y^2}^{\frac{1}{2}}\left\|x F^{c}\right\|_{L_x^2 H_y^2}^{\frac{1}{2}} \\ & + K \left\|x^i F^{b}\right\|_{L_{x}^{2}H_y^2}\left\|F^{a}\right\|_{L_x^2 H_y^2}^{\frac{1}{2}}\left\|x F^{a}\right\|_{L_x^2 H_y^2}^{\frac{1}{2}} \left\|F^{c}\right\|_{L_x^2 H_y^2}^{\frac{1}{2}}\left\|x F^{c}\right\|_{L_x^2 H_y^2}^{\frac{1}{2}} \\ & + K \left\|x^i F^{c}\right\|_{L_{x}^{2} H_y^2}\left\|F^{a}\right\|_{L_x^2 H_y^2}^{\frac{1}{2}}\left\|x F^{a}\right\|_{L_x^2 H_y^2}^{\frac{1}{2}} \left\|F^{b}\right\|_{L_x^2 H_y^2}^{\frac{1}{2}}\left\|x F^{b}\right\|_{L_x^2 H_y^2}^{\frac{1}{2}} \\ & \lesssim K \|F^a\|_{S}\|F^b\|_{S} \|F^c\|_{S}.
\end{aligned}
\end{equation}
where $i,j \in \{0,1\}, k \in \{1,2\}$.\\\\
In particular, we have
\begin{equation}
\label{6.171}
\begin{aligned}
\left\|x\,\mathfrak{T}_{\Lambda}\left[F^{a}, F^{b}, F^{c}\right]\right\|_{L_{x}^{2} H_y^2} & \lesssim K \left\|x F^{a}\right\|_{L_{x}^{2}H_y^2}\left\|F^{b}\right\|_{L_x^2 H_y^2}^{\frac{1}{2}}\left\|x F^{b}\right\|_{L_x^2 H_y^2}^{\frac{1}{2}} \left\|F^{c}\right\|_{L_x^2 H_y^2}^{\frac{1}{2}}\left\|x F^{c}\right\|_{L_x^2 H_y^2}^{\frac{1}{2}} \\ & + K \left\|x F^{b}\right\|_{L_{x}^{2}H_y^2}\left\|F^{a}\right\|_{L_x^2 H_y^2}^{\frac{1}{2}}\left\|x F^{a}\right\|_{L_x^2 H_y^2}^{\frac{1}{2}} \left\|F^{c}\right\|_{L_x^2 H_y^2}^{\frac{1}{2}}\left\|x F^{c}\right\|_{L_x^2 H_y^2}^{\frac{1}{2}} \\ & + K \left\|x F^{c}\right\|_{L_{x}^{2} H_y^2}\left\|F^{a}\right\|_{L_x^2 H_y^2}^{\frac{1}{2}}\left\|x F^{a}\right\|_{L_x^2 H_y^2}^{\frac{1}{2}} \left\|F^{b}\right\|_{L_x^2 H_y^2}^{\frac{1}{2}}\left\|x F^{b}\right\|_{L_x^2 H_y^2}^{\frac{1}{2}} \\ & \lesssim K \|F^a\|_{S}\|F^b\|_{S} \|F^c\|_{S}.
\end{aligned}
\end{equation}
Moreover, we have
\begin{equation}
\label{6.181}
\left\|\mathfrak{T}_{\Lambda}\left[F^{a}, F^{b}, F^{c}\right]\right\|_{S} \lesssim K \|F^a\|_{S}\|F^b\|_{S} \|F^c\|_{S}.
\end{equation}
\end{lemma}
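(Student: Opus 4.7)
The lemma upgrades the one-dimensional estimate (\ref{6.8}), which treats $y$ as a silent parameter, to genuinely two-dimensional estimates. My three tools will be the slice-wise application of (\ref{6.8}), the one-dimensional Sobolev embedding $H^{1}(\mathbb{R}_y)\hookrightarrow L^{\infty}(\mathbb{R}_y)$ combined with the elementary inequality $\|G\|_{L^{\infty}_y L^{2}_x}\leq \|G\|_{L^{2}_x L^{\infty}_y}$ (a consequence of $\sup_y\int\leq\int\sup_y$ for nonnegative integrands), and the Leibniz-type commutation rules (\ref{6.7}).

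To establish (\ref{6.9}), I would fix $y$ and apply (\ref{6.8}) to the slices $F^{\alpha}(\cdot,y)$, yielding a pointwise-in-$y$ bound of the form $\|\mathfrak{T}_{\Lambda}[F^{a},F^{b},F^{c}](\cdot,y)\|_{L^{2}_x}\lesssim K\,f(y)g(y)h(y)$, where $f$ is the $L^{2}_x$-norm of the minimizing slot and $g,h$ are geometric means of $\|\cdot\|_{L^{2}_x}$ and $\|x\cdot\|_{L^{2}_x}$ of the remaining two. Squaring and integrating in $y$ gives $\|fgh\|_{L^{2}_y}\leq\|f\|_{L^{2}_y}\|g\|_{L^{\infty}_y}\|h\|_{L^{\infty}_y}$; by Fubini $\|f\|_{L^{2}_y}=\|F^{\alpha}\|_{L^{2}_{x,y}}$, while each $L^{\infty}_y$-factor is controlled by $\|G\|_{L^{2}_x H^{1}_y}^{1/2}\|xG\|_{L^{2}_x H^{1}_y}^{1/2}$ via the Sobolev chain above. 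Cycling through the choice of minimizing slot and using $\|G\|_{L^{2}_x H^{1}_y}^{1/2}\|xG\|_{L^{2}_x H^{1}_y}^{1/2}\leq\|G\|_{S}$ yields (\ref{6.9}).

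For (\ref{6.10}) and its specialization (\ref{6.171}), I would use (\ref{6.7}) to expand $x^{i}(\partial_y^2)^{j}\mathfrak{T}_{\Lambda}[F^{a},F^{b},F^{c}]$ as a finite Leibniz sum of trilinear expressions $\mathfrak{T}_{\Lambda}[\tilde F^{a},\tilde F^{b},\tilde F^{c}]$ in which each argument carries at most one $x$-factor and at most two $y$-derivatives -- this is precisely why $H^{2}_y$-regularity appears on the right-hand side. For $k=2$ each term is controlled exactly as in the previous step. For $k=1$ I would instead split the Hölder exponents as $\|fgh\|_{L^{1}_y}\leq\|f\|_{L^{2}_y}\|g\|_{L^{\infty}_y}\|h\|_{L^{2}_y}$ and bound the second $L^{2}_y$ factor by the elementary Cauchy--Schwarz estimate $\|h\|_{L^{2}_y}\leq\|H\|_{L^{2}_{x,y}}^{1/2}\|xH\|_{L^{2}_{x,y}}^{1/2}\leq\|H\|_{L^{2}_x H^{2}_y}^{1/2}\|xH\|_{L^{2}_x H^{2}_y}^{1/2}$. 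Summing over the finitely many Leibniz terms produced in the expansion gives (\ref{6.10}), of which (\ref{6.171}) is essentially the specialization $(i,j,k)=(1,1,2)$.

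For (\ref{6.181}), the weighted part $\|x\mathfrak{T}_{\Lambda}[F^{a},F^{b},F^{c}]\|_{L^{2}_x H^{2}_y}$ is exactly (\ref{6.171}), so it remains to control the $H^{N}_{x,y}$-part. For this I would invoke the transfer lemma (Lemma \ref{Lemma 6.2}) with $\mathcal{B}=S$ and base bound (\ref{6.9}) rewritten in the weaker form $\|\mathfrak{T}_{\Lambda}[F^{a},F^{b},F^{c}]\|_{L^{2}_{x,y}}\lesssim K\min_{\{\alpha,\beta,\gamma\}=\{a,b,c\}}\|F^{\alpha}\|_{L^{2}_{x,y}}\|F^{\beta}\|_{S}\|F^{\gamma}\|_{S}$; admissibility of the $S$-norm is immediate from its Littlewood--Paley description. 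Adding the two pieces yields (\ref{6.181}). I expect the main technical obstacle to be the careful bookkeeping of the Leibniz expansion in the second step: ensuring that the total $y$-derivative budget per term never exceeds two and that at most one slot absorbs the $x$-weight, so that the $S$-norm on the right-hand side is sufficient to close the estimate.
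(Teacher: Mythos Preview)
Your proposal is correct and follows essentially the same route as the paper's proof: slice-wise application of (\ref{6.8}), H\"older in $y$ combined with one-dimensional Sobolev embedding, Leibniz expansion via (\ref{6.7}) for (\ref{6.10})--(\ref{6.171}), and an appeal to Lemma~\ref{Lemma 6.2} with $\mathcal{B}=S$ to recover the $H^N_{x,y}$-part of (\ref{6.181}). The only cosmetic difference is the H\"older splitting in $y$ for the $L^1_y$ case of (\ref{6.10}): the paper uses $L^2_y\times L^4_y\times L^4_y$ on the three slots (then $H^2_y\hookrightarrow L^4_y$), whereas you use $L^2_y\times L^\infty_y\times L^2_y$; both close, provided you place the slot carrying the heaviest $\partial_y$-load in $L^2_y$ and the $x$-weighted slot in the $L^2_x$ position of (\ref{6.8}) to avoid an unwanted $x^2$.
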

\begin{proof}
Since we have (\ref{6.7}), we can deduce that
\begin{align*}
\left\|\mathfrak{T}_{\Lambda}\left[F^{a}, F^{b}, F^{c}\right]\right\|_{L_{x, y}^{2}} & \lesssim K \min _{\{\alpha, \beta, \gamma\}=\{a, b, c\}}\left\|\left\|F^{\alpha}\right\|_{L_{x}^{2}}\left\|F^{\beta}\right\|_{L_x^2}^{\frac{1}{2}}\left\|x F^{\beta}\right\|_{L_x^2}^{\frac{1}{2}} \left\|F^{\gamma}\right\|_{L_x^2}^{\frac{1}{2}}\left\|x F^{\gamma}\right\|_{L_x^2}^{\frac{1}{2}}\right\|_{L_y^2} \\  & \lesssim K \min _{\{\alpha, \beta, \gamma\}=\{a, b, c\}}\left\|F^{\alpha}\right\|_{L_{x,y}^{2}}\left\|F^{\beta}\right\|_{L_x^2 L_y^\infty}^{\frac{1}{2}}\left\|x F^{\beta}\right\|_{L_x^2 L_y^\infty}^{\frac{1}{2}} \left\|F^{\gamma}\right\|_{L_x^2 L_y^\infty}^{\frac{1}{2}}\left\|x F^{\gamma}\right\|_{L_x^2 L_y^\infty}^{\frac{1}{2}} \\ & \lesssim  K \min _{\{\alpha, \beta, \gamma\}=\{a, b, c\}}\left\|F^{\alpha}\right\|_{L_{x,y}^{2}}\left\|F^{\beta}\right\|_{L_x^2 H_y^1}^{\frac{1}{2}}\left\|x F^{\beta}\right\|_{L_x^2 H_y^1}^{\frac{1}{2}} \left\|F^{\gamma}\right\|_{L_x^2 H_y^1}^{\frac{1}{2}}\left\|x F^{\gamma}\right\|_{L_x^2 H_y^1}^{\frac{1}{2}} \\ & \lesssim K \min _{\{\alpha, \beta, \gamma\}=\{a, b, c\}}\left\|F^{\alpha}\right\|_{L_{x,y}^{2}}\left\|F^{\beta}\right\|_{S}\left\|F^{\gamma}\right\|_{S}.
\end{align*}
Then by Lemma \ref{Lemma 6.2}, we can deduce that
\begin{equation}
\label{6.191}
\left\|\mathfrak{T}_{\Lambda}\left[F^{a}, F^{b}, F^{c}\right]\right\|_{S'} \lesssim \max _{\{\alpha, \beta, \gamma\}=\{a, b, c\}}\left\|F^{\alpha}\right\|_{S'}\left\|F^{\beta}\right\|_{S}\left\|F^{\gamma}\right\|_{S}.
\end{equation}
To prove (\ref{6.10}), we prove the estimate on $\left\|x\,\mathfrak{T}_{\Lambda}\left[\partial_y^2F^{a}, F^{b}, F^{c}\right]\right\|_{L_y^1 L_{x}^{2}}$ for example, other cases can be deduced in the same way. By (\ref{6.7}) and (\ref{6.8}), we have
\begin{align*}
& \left\|x\,\mathfrak{T}_{\Lambda}\left[\partial_y^2F^{a}, F^{b}, F^{c}\right]\right\|_{L_y^1 L_{x}^{2}} \\ \lesssim & \, K \left\|x\partial_y^2 F^a\right\|_{L_{x,y}^2} \left\|F^b\right\|_{L_x^2 L_y^4}^{\frac{1}{2}}\left\|xF^b\right\|_{L_x^2 L_y^4}^{\frac{1}{2}}\left\|F^c\right\|_{L_x^2 L_y^4}^{\frac{1}{2}}\left\|xF^c\right\|_{L_x^2 L_y^4}^{\frac{1}{2}} \\ + & \, K \left\|x F^b\right\|_{L_{x}^2 L_y^4} \left\|\partial_y^2F^a\right\|_{L_{x,y}^2}^{\frac{1}{2}}\left\|x\partial_y^2F^a\right\|_{L_{x,y}^2}^{\frac{1}{2}}\left\|F^c\right\|_{L_x^2 L_y^4}^{\frac{1}{2}}\left\|xF^c\right\|_{L_x^2 L_y^4}^{\frac{1}{2}} \\ + & \,K \left\|x F^c\right\|_{L_{x}^2 L_y^4} \|\partial_y^2F^a\|_{L_{x,y}^2 }^{\frac{1}{2}}\left\|x\partial_y^2F^a\right\|_{L_{x,y}^2}^{\frac{1}{2}}\left\|F^b\right\|_{L_x^2 L_y^4}^{\frac{1}{2}}\left\|xF^b\right\|_{L_x^2 L_y^4}^{\frac{1}{2}} \\ \lesssim & \, K \left\|x F^{a}\right\|_{L_{x}^{2}H_y^2}\left\|F^{b}\right\|_{L_x^2 H_y^2}^{\frac{1}{2}}\left\|x F^{b}\right\|_{L_x^2 H_y^2}^{\frac{1}{2}} \left\|F^{c}\right\|_{L_x^2 H_y^2}^{\frac{1}{2}}\left\|x F^{c}\right\|_{L_x^2 H_y^2}^{\frac{1}{2}} \\  + & \, K \left\|x F^{b}\right\|_{L_{x}^{2}H_y^2}\left\|F^{a}\right\|_{L_x^2 H_y^2}^{\frac{1}{2}}\left\|x F^{a}\right\|_{L_x^2 H_y^2}^{\frac{1}{2}} \left\|F^{c}\right\|_{L_x^2 H_y^2}^{\frac{1}{2}}\left\|x F^{c}\right\|_{L_x^2 H_y^2}^{\frac{1}{2}} \\ + & \, K \left\|x F^{c}\right\|_{L_{x}^{2} H_y^2}\left\|F^{a}\right\|_{L_x^2 H_y^2}^{\frac{1}{2}}\left\|x F^{a}\right\|_{L_x^2 H_y^2}^{\frac{1}{2}} \left\|F^{b}\right\|_{L_x^2 H_y^2}^{\frac{1}{2}}\left\|x F^{b}\right\|_{L_x^2 H_y^2}^{\frac{1}{2}} \\ \lesssim & \, K \|F^a\|_{S}\|F^b\|_{S} \|F^c\|_{S}.
\end{align*}
So we have proved (\ref{6.10}), and (\ref{6.10}) implies (\ref{6.171}). (\ref{6.171}) and (\ref{6.191}) also imply (\ref{6.181}). The proof is complete.
\end{proof}
\begin{remark}
\label{remark 6.4}
By (\ref{6.9}) and (\ref{6.10}), we have more precise estimate on $\left\|\mathcal{F}_{x \to \xi }\mathfrak{T}_{\Lambda}\left[F^{a}, F^{b}, F^{c}\right]\right\|_{L_{\xi}^\infty L_y^2}$, which is
\begin{equation}
\label{6.23}
\begin{aligned}
& \left\|\mathcal{F}_{x \to \xi }\mathfrak{T}_{\Lambda}\left[F^{a}, F^{b}, F^{c}\right]\right\|_{L_{\xi}^\infty L_y^2} \\ \lesssim &  \left\|\mathcal{F}_{x \to \xi }\mathfrak{T}_{\Lambda}\left[F^{a}, F^{b}, F^{c}\right]\right\|_{L_y^2 L_{\xi}^\infty }  \\  \lesssim & \left\|\mathfrak{T}_{\Lambda}\left[F^{a}, F^{b}, F^{c}\right]\right\|_{L_y^2 L_{x}^1 }  \\ \lesssim & \left\|\mathfrak{T}_{\Lambda}\left[F^{a}, F^{b}, F^{c}\right]\right\|_{L_{x,y}^2 }^{\frac{1}{2}}\left\|x\, \mathfrak{T}_{\Lambda}\left[F^{a}, F^{b}, F^{c}\right]\right\|_{L_{x,y}^2 }^{\frac{1}{2}} \\ \lesssim & \, K \|F^a\|_{L_x^2 H_y^2 }^{\frac{1}{4}}\|xF^a\|_{L_x^2 H_y^2 }^{\frac{1}{4}} \|F^b\|_{L_x^2 H_y^2 }^{\frac{1}{4}}\|xF^b\|_{L_x^2 H_y^2 }^{\frac{1}{4}} \|F^c\|_{L_x^2 H_y^2 }^{\frac{1}{4}} \|xF^c\|_{ L_x^2 H_y^2 }^{\frac{1}{4}} \|F^a\|_S^{\frac{1}{2}}\|F^b\|_S^{\frac{1}{2}}\|F^c\|_S^{\frac{1}{2}} \\ \lesssim & \, K \|F^a\|_S \|F^b\|_S \|F^c\|_S.
\end{aligned}
\end{equation}
\end{remark}
\begin{remark}
\label{remark 6.41}
Let $k_1, k_2 \in [0,1]$. If we change the assumptions (\ref{6.8}) to
\begin{equation}
\left\|\mathfrak{T}_{\Lambda}\left[F^{a}, F^{b}, F^{c}\right]\right\|_{L_{x}^{2}} \lesssim K \min _{\{\alpha, \beta, \gamma\}=\{a, b, c\}}\left\|F^{\alpha}\right\|_{L_{x}^{2}}\left\|\langle x\rangle^{k_1} F^{\beta}\right\|_{L_x^2} \left\|\langle x\rangle^{k_2} F^{\gamma}\right\|_{L_x^2},
\end{equation}
we can still deduce that
\begin{equation}
\left\|\mathfrak{T}_{\Lambda}\left[F^{a}, F^{b}, F^{c}\right]\right\|_{L_{x,y}^2} \lesssim K \min _{\{\alpha, \beta, \gamma\}=\{a, b, c\}}\left\|F^{\alpha}\right\|_{L_{x,y}^{2}}\left\|F^{\beta}\right\|_{S}\left\|F^{\gamma}\right\|_{S},
\end{equation}
\begin{equation}
\left\|x^i(\partial_y^2)^{j}\,\mathfrak{T}_{\Lambda}\left[F^{a}, F^{b}, F^{c}\right]\right\|_{L_y^k L_{x}^{2}} \lesssim K \|F^a\|_{S}\|F^b\|_{S} \|F^c\|_{S},
\end{equation}
\begin{equation}
\left\|\mathfrak{T}_{\Lambda}\left[F^{a}, F^{b}, F^{c}\right]\right\|_{S} \lesssim K \|F^a\|_{S}\|F^b\|_{S} \|F^c\|_{S}
\end{equation}
and
\begin{equation}
\left\|\mathcal{F}_{x \to \xi }\mathfrak{T}_{\Lambda}\left[F^{a}, F^{b}, F^{c}\right]\right\|_{L_{\xi}^\infty L_y^2} \lesssim K \|F^a\|_{S}\|F^b\|_{S} \|F^c\|_{S},
\end{equation}
where $i,j \in \{0,1\}, k \in \{1,2\}$.
\end{remark}
\begin{remark}
\label{remark 6.6}
Let $\psi : \mathbb{R} \to \mathbb{R}$ be a bounded function. In this paper, we usually give the estimate on some trilinear operator $\psi(D_y)\mathfrak{T}$ with the norm $Z$. For the part of $\left\|\mathcal{F}_{x \rightarrow \xi} \psi(D_y) \mathfrak{T}_{\Lambda}\left[F^{a}, F^{b}, F^{c}\right]\right\|_{L_{\xi}^{\infty} \dot{B}_y^1}$, we have the following estmate,
\begin{align*}
& \left\|\mathcal{F}_{x \rightarrow \xi} \psi(D_y) \mathfrak{T}_{\Lambda}\left[F^{a}, F^{b}, F^{c}\right]\right\|_{L_{\xi}^{\infty} \dot{B}_y^1} \\ \lesssim & \sum_{k \leq 0} 2^{k}\left\|\int_{\mathbb{R}} \mathrm{e}^{i y \eta}\psi(\eta) \phi\left(\frac{\eta}{2^{k}}\right) d \eta\right\|_{L_{y}^{1}} \left\|\mathcal{F}_{x \rightarrow \xi}\mathfrak{T}_{\Lambda}\left[F^{a}, F^{b}, F^{c}\right]\right\|_{L_{\xi}^{\infty} L_y^1}\\ + & \sum_{k > 0} 2^{-k}\left\|\int_{\mathbb{R}} \mathrm{e}^{i y \eta} \psi(\eta) \phi\left(\frac{\eta}{2^{k}}\right) d \eta\right\|_{L_{y}^{1}} \left\|\mathcal{F}_{x \rightarrow \xi} \partial_y^2 \mathfrak{T}_{\Lambda}\left[F^{a}, F^{b}, F^{c}\right]\right\|_{L_{\xi}^{\infty} L_y^1} \\ \lesssim & \sum_{k \leq 0} 2^{k}\left\|\int_{\mathbb{R}} \mathrm{e}^{i y \eta}\psi(\eta) \phi\left(\frac{\eta}{2^{k}}\right) d \eta\right\|_{L_{y}^{1}} \left\|\mathfrak{T}_{\Lambda}\left[F^{a}, F^{b}, F^{c}\right]\right\|_{L_{x,y}^1}\\  + & \sum_{k > 0} 2^{-k}\left\|\int_{\mathbb{R}} \mathrm{e}^{i y \eta} \psi(\eta) \phi\left(\frac{\eta}{2^{k}}\right) d \eta\right\|_{L_{y}^{1}}  \left\|\partial_y^2\mathfrak{T}_{\Lambda}\left[F^{a}, F^{b}, F^{c}\right]\right\|_{L_{x,y}^1} \\ \lesssim & \sum_{k \leq 0} 2^{k}\left\|\int_{\mathbb{R}} \mathrm{e}^{i y \eta}\psi(\eta) \phi\left(\frac{\eta}{2^{k}}\right) d \eta\right\|_{L_{y}^{1}}\left\|\mathfrak{T}_{\Lambda}\left[F^{a}, F^{b}, F^{c}\right]\right\|_{L_{y}^1 L_x^2}^{\frac{1}{2}}\left\|x\mathfrak{T}_{\Lambda}\left[F^{a}, F^{b}, F^{c}\right]\right\|_{L_{y}^1 L_x^2}^{\frac{1}{2}} \\ + & \sum_{k > 0} 2^{-k}\left\|\int_{\mathbb{R}} \mathrm{e}^{i y \eta} \psi(\eta) \phi\left(\frac{\eta}{2^{k}}\right) d \eta\right\|_{L_{y}^{1}}  \left\|\partial_y^2\mathfrak{T}_{\Lambda}\left[F^{a}, F^{b}, F^{c}\right]\right\|_{L_{y}^1 L_x^2}^{\frac{1}{2}}\left\|x\partial_y^2\mathfrak{T}_{\Lambda}\left[F^{a}, F^{b}, F^{c}\right]\right\|_{L_{y}^1 L_x^2}^{\frac{1}{2}},
\end{align*}
where the second term after the first inequality above is deduced by Bernstein's inequality. And we rewrite the above estimate as 
\begin{equation}
\label{6.14}
\begin{aligned}
& \left\|\mathcal{F}_{x \rightarrow \xi}\psi(D_y) \mathfrak{T}_{\Lambda}\left[F^{a}, F^{b}, F^{c}\right]\right\|_{L_{\xi}^{\infty} \dot{B}_y^1}\\ \lesssim & \sum_{k \leq 0} 2^{k}\left\|\int_{\mathbb{R}} \mathrm{e}^{i y \eta}\psi(\eta) \phi\left(\frac{\eta}{2^{k}}\right) d \eta\right\|_{L_{y}^{1}}\left\|\mathfrak{T}_{\Lambda}\left[F^{a}, F^{b}, F^{c}\right]\right\|_{L_{y}^1 L_x^2}^{\frac{1}{2}}\left\|x\mathfrak{T}_{\Lambda}\left[F^{a}, F^{b}, F^{c}\right]\right\|_{L_{y}^1 L_x^2}^{\frac{1}{2}} \\ + & \sum_{k > 0} 2^{-k}\left\|\int_{\mathbb{R}} \mathrm{e}^{i y \eta} \psi(\eta) \phi\left(\frac{\eta}{2^{k}}\right) d \eta\right\|_{L_{y}^{1}}  \left\|\partial_y^2\mathfrak{T}_{\Lambda}\left[F^{a}, F^{b}, F^{c}\right]\right\|_{L_{y}^1 L_x^2}^{\frac{1}{2}}\left\|x\partial_y^2\mathfrak{T}_{\Lambda}\left[F^{a}, F^{b}, F^{c}\right]\right\|_{L_{y}^1 L_x^2}^{\frac{1}{2}}.
\end{aligned}
\end{equation}
Then by (\ref{6.10}) in Lemma \ref{Lemma 6.21},  we have
\begin{align*}
& \left\|\mathfrak{T}_{\Lambda}\left[F^{a}, F^{b}, F^{c}\right]\right\|_{L_{y}^1 L_x^2}^{\frac{1}{2}}\left\|x\mathfrak{T}_{\Lambda}\left[F^{a}, F^{b}, F^{c}\right]\right\|_{L_{y}^1 L_x^2}^{\frac{1}{2}} \\  \lesssim & \, K \|F^a\|_{L_x^2 H_y^2 }^{\frac{1}{4}}\|xF^a\|_{L_x^2 H_y^2 }^{\frac{1}{4}} \|F^b\|_{L_x^2 H_y^2 }^{\frac{1}{4}}\|xF^b\|_{L_x^2 H_y^2 }^{\frac{1}{4}} \|F^c\|_{L_x^2 H_y^2 }^{\frac{1}{4}} \|xF^c\|_{ L_x^2 H_y^2 }^{\frac{1}{4}} \|F^a\|_S^{\frac{1}{2}}\|F^b\|_S^{\frac{1}{2}}\|F^c\|_S^{\frac{1}{2}} 
\end{align*}
and
\begin{align*}
& \left\|\partial_y^2\mathfrak{T}_{\Lambda}\left[F^{a}, F^{b}, F^{c}\right]\right\|_{L_{y}^1 L_x^2}^{\frac{1}{2}}\left\|x\partial_y^2\mathfrak{T}_{\Lambda}\left[F^{a}, F^{b}, F^{c}\right]\right\|_{L_{y}^1 L_x^2}^{\frac{1}{2}} \\  \lesssim & \, K \|F^a\|_{L_x^2 H_y^2 }^{\frac{1}{4}}\|xF^a\|_{L_x^2 H_y^2 }^{\frac{1}{4}} \|F^b\|_{L_x^2 H_y^2 }^{\frac{1}{4}}\|xF^b\|_{L_x^2 H_y^2 }^{\frac{1}{4}} \|F^c\|_{L_x^2 H_y^2 }^{\frac{1}{4}} \|xF^c\|_{ L_x^2 H_y^2 }^{\frac{1}{4}} \|F^a\|_S^{\frac{1}{2}}\|F^b\|_S^{\frac{1}{2}}\|F^c\|_S^{\frac{1}{2}}. 
\end{align*}
So we have 
\begin{equation}
\label{6.21}
\begin{aligned}
& \left\|\mathcal{F}_{x \rightarrow \xi} \mathfrak{T}_{\Lambda}\left[F^{a}, F^{b}, F^{c}\right]\right\|_{L_{\xi}^{\infty} \dot{B}_y^1} \\ \lesssim & \, K \left(\sum_{k \leq 0} 2^{k}\left\|\int_{\mathbb{R}} \mathrm{e}^{i y \eta}\psi(\eta) \phi\left(\frac{\eta}{2^{k}}\right) d \eta\right\|_{L_{y}^{1}} + \sum_{k > 0} 2^{k}\left\|\int_{\mathbb{R}} \mathrm{e}^{i y \eta}\psi(\eta) \phi\left(\frac{\eta}{2^{k}}\right) d \eta\right\|_{L_{y}^{1}} \right) \times \\ & \|F^a\|_{L_x^2 H_y^2 }^{\frac{1}{4}}\|xF^a\|_{L_x^2 H_y^2 }^{\frac{1}{4}} \|F^b\|_{L_x^2 H_y^2 }^{\frac{1}{4}}\|xF^b\|_{L_x^2 H_y^2 }^{\frac{1}{4}} \|F^c\|_{L_x^2 H_y^2 }^{\frac{1}{4}} \|xF^c\|_{ L_x^2 H_y^2 }^{\frac{1}{4}} \|F^a\|_S^{\frac{1}{2}}\|F^b\|_S^{\frac{1}{2}}\|F^c\|_S^{\frac{1}{2}} \\ \lesssim & \, K  \left(\sum_{k \leq 0} 2^{k}\left\|\int_{\mathbb{R}} \mathrm{e}^{i y \eta}\psi(\eta) \phi\left(\frac{\eta}{2^{k}}\right) d \eta\right\|_{L_{y}^{1}} + \sum_{k > 0} 2^{k}\left\|\int_{\mathbb{R}} \mathrm{e}^{i y \eta}\psi(\eta) \phi\left(\frac{\eta}{2^{k}}\right) d \eta\right\|_{L_{y}^{1}} \right) \|F^a\|_S\|F^b\|_S\|F^c\|_S.
\end{aligned}
\end{equation}
\end{remark}

\end{document}